\DeclareSymbolFont{bbold}{U}{bbold}{m}{n}
\DeclareSymbolFontAlphabet{\mathbbold}{bbold}
\newcommand{\Z}{\ensuremath{\mathbb{Z}}}
\newcommand{\Q}{\ensuremath{\mathbb{Q}}}
\newcommand{\R}{\ensuremath{\mathbb{R}}}
\newcommand{\C}{\ensuremath{\mathbb{C}}}
\newcommand{\A}{\ensuremath{\mathbb{A}}}
\newcommand{\Sph}{\ensuremath{\mathbb{S}}}	
\newcommand{\Gal}[1]{\ensuremath{\mathrm{Gal}(#1)}}	
\newcommand{\gr}{\ensuremath{\mathrm{gr}}}
\newcommand{\Aut}{\ensuremath{\mathrm{Aut}}}
\newcommand{\Tr}{\ensuremath{\mathrm{tr}\xspace}}
\newcommand{\Resprod}{\ensuremath{{\prod}'}}
\newcommand{\Sym}{\ensuremath{\mathrm{Sym}}}
\newcommand{\dd}{\ensuremath{\;\mathrm{d}}}
\newcommand{\angles}[1]{\ensuremath{\langle #1 \rangle}}
\newcommand{\mes}{\ensuremath{\mathrm{mes}}}
\newcommand{\Stab}{\ensuremath{\mathrm{Stab}}}
\newcommand{\identity}{\ensuremath{\mathrm{id}}}
\newcommand{\Hom}{\ensuremath{\mathrm{Hom}}}
\newcommand{\rightiso}{\ensuremath{\stackrel{\sim}{\rightarrow}}}
\newcommand{\leftiso}{\ensuremath{\stackrel{\sim}{\leftarrow}}}
\newcommand{\Ker}{\ensuremath{\mathrm{ker}\xspace}}
\newcommand{\Image}{\ensuremath{\mathrm{Im}\xspace}}
\newcommand{\Lie}{\ensuremath{\mathrm{Lie}\xspace}}
\newcommand{\Ad}{\ensuremath{\mathrm{Ad}\xspace}}
\newcommand{\ad}{\ensuremath{\mathrm{ad}\xspace}}
\newcommand{\mult}{\ensuremath{\mathrm{mult}}}
\newcommand{\Gm}{\ensuremath{\mathbb{G}_\mathrm{m}}}
\newcommand{\Ga}{\ensuremath{\mathbb{G}_\mathrm{a}}}
\newcommand{\Supp}{\ensuremath{\mathrm{Supp}}}
\newcommand{\GL}{\ensuremath{\mathrm{GL}}}
\newcommand{\SO}{\ensuremath{\mathrm{SO}}}
\newcommand{\Spin}{\ensuremath{\mathrm{Spin}}}
\newcommand{\SU}{\ensuremath{\mathrm{SU}}}
\newcommand{\Sp}{\ensuremath{\mathrm{Sp}}}
\newcommand{\Mp}{\ensuremath{\widetilde{\mathrm{Sp}}}}
\newcommand{\MMp}[1]{\ensuremath{\widetilde{\mathrm{Sp}}^{(#1)}}}
\newcommand{\inv}{\ensuremath{\mathrm{inv}}}
\newcommand{\Ind}{\ensuremath{\mathrm{Ind}}}
\newcommand{\Lgrp}[1]{\ensuremath{{}^{\mathrm{L}} #1}}
\newcommand{\WD}{\ensuremath{\mathrm{WD}}}
\newcommand{\We}{\ensuremath{\mathrm{W}}}
\theoremstyle{plain}
\newtheorem{proposition}{Proposition}[subsection]
\newtheorem{lemma}[proposition]{Lemma}
\newtheorem{theorem}[proposition]{Theorem}
\newtheorem{corollary}[proposition]{Corollary}
\theoremstyle{definition}
\newtheorem{definition}[proposition]{Definition}
\newtheorem{definition-theorem}[proposition]{Definition-Theorem}
\newtheorem{definition-proposition}[proposition]{Definition-Proposition}
\newtheorem{hypothesis}[proposition]{Hypothesis}
\theoremstyle{remark}
\newtheorem{remark}[proposition]{Remark}
\newcommand{\bmu}{\ensuremath{\bm{\mu}}}
\newcommand{\bdelta}{\ensuremath{\bm{\delta}}}
\newcommand{\bomega}{{\ensuremath{\bm{\omega}}}}
\newcommand{\noyau}{\ensuremath{\bm{\varepsilon}}} 
\newcommand{\rev}{\ensuremath{\mathbf{p}}} 
\newcommand{\asp}{\ensuremath{\dashrule[.7ex]{2 2 2 2}{.4}}} 
\newcommand{\Endo}{\ensuremath{!}}  
\newcommand{\EndoE}{\ensuremath{\mathcal{E}}}  
\newcommand{\tildering}[1]{\ensuremath{\tilde{\mathring{#1}}}}	
\newcommand{\orbI}{\ensuremath{\mathcal{I}}}	
\newcommand{\Icusp}{\ensuremath{\mathcal{I}_\mathrm{cusp}}}	
\newcommand{\Iasp}{\ensuremath{\mathcal{I}_{\asp}}}	
\newcommand{\Iaspcusp}{\ensuremath{\mathcal{I}_{\substack{\asp \\ \mathrm{cusp}}}}}	
\renewcommand{\Re}{\ensuremath{\mathrm{Re}\xspace}}
\renewcommand{\Im}{\ensuremath{\mathrm{Im}\xspace}}
\title{Spectral transfer for metaplectic groups. I.\\ Local character relations}
\author{Wen-Wei Li}
\date{}
\begin{document}

\maketitle

\begin{abstract}
  Let $\Mp(2n)$ be the metaplectic covering of $\Sp(2n)$ over a local field of characteristic zero. The core of the theory of endoscopy for $\Mp(2n)$ is the geometric transfer of orbital integrals to its elliptic endoscopic groups. The dual of this map, called the spectral transfer, is expected to yield endoscopic character relations which should reveal the internal structure of $L$-packets. As a first step, we characterize the image of the collective geometric transfer in the non-archimedean case, then reduce the spectral transfer to the case of cuspidal test functions by using a simple stable trace formula. In the archimedean case, we establish the character relations and determine the spectral transfer factors by rephrasing the works by Adams and Renard.
\end{abstract}

{\scriptsize
	\begin{tabular}{ll}
		\textbf{MSC (2010)} & Primary 22E50; Secondary 11F72 \\
		\textbf{Keywords} & endoscopy, character relation, metaplectic group, Arthur-Selberg trace formula
\end{tabular}}

\tableofcontents

\section{Introduction}\label{sec:intro}

Our aim is to initiate the study of local spectral transfer for Weil's metaplectic covering $\Mp(W) \twoheadrightarrow \Sp(W)$ over a local field $F$ of characteristic zero. To begin with, let us explain the ideas by reviewing the case of reductive groups. 

\paragraph{Review of the reductive case}
Under simplifying assumptions, the theory of endoscopy for a connected reductive $F$-group $G$ provides a collective transfer map
\begin{align*}
  \mathcal{T}^\EndoE: \orbI(G) & \longrightarrow \bigoplus_{G^\Endo \in \EndoE_\text{ell}(G)} S\orbI(G^\Endo) \\
  f_G & \longmapsto f^\EndoE = (f^\Endo)_{G^\Endo \in \EndoE_\text{ell}(G)}
\end{align*}
where
\begin{itemize}
  \item $\orbI(G)$ stands for the space of strongly regular semisimple orbital integrals on $G$, viewed as functions on the set $\Gamma_\text{reg}(G)$ of strongly regular semisimple conjugacy classes;
  \item $\EndoE_\text{ell}(G)$ is the set of equivalence classes of elliptic endoscopic data $(G^\Endo, \mathcal{G}^\Endo, s, \hat{\xi})$ for $G$ with $G^\Endo$ being the corresponding endoscopic group of $G$, and we often identify such a datum with $G^\Endo$ by abusing notations;
  \item $S\orbI(G^\Endo)$ stands for the space of stable strongly regular semisimple orbital integrals on $G^\Endo$, viewed as functions on the stable avatar $\Delta_\text{reg}(G^\Endo)$ of $\Gamma_\text{reg}(G^\Endo)$.
\end{itemize}
For every endoscopic datum there are:
\begin{inparaenum}[(i)]
  \item a correspondence between $\Gamma_\text{reg}(G)$ and $\Delta_\text{reg}(G^\Endo)$, conveniently denoted as $\gamma \leftrightarrow \delta$;
  \item a notion of compatible Haar measures for comparing the orbital integrals along $\delta$ and the stable orbital integrals along $\gamma$, for various $\gamma \leftrightarrow \delta$;
  \item a transfer factor $\Delta(\gamma, \delta)$ for $(\gamma, \delta) \in \Delta_\text{reg}(G^\Endo) \times \Gamma_\text{reg}(G)$ satisfying $\gamma \leftrightarrow \delta$; the choice is unique up to $\C^\times$.
\end{inparaenum}  
The transfer map $\mathcal{T}^\EndoE$ is characterized by requiring that for every $f_G \in \orbI(G)$, the component $f^\Endo \in S\orbI(G^\Endo)$ of $f^\EndoE$ satisfies
$$ f^\Endo(\gamma) = \sum_{\substack{\delta \in \Gamma_\text{reg}(G) \\ \gamma \leftrightarrow \delta}} \Delta(\gamma, \delta) f_G(\delta) $$
for $\gamma \in \Delta_\text{reg}(G^\Endo)$ in general position, called the $G$-regular ones; here we use compatible Haar measures to define $f_G(\delta)$ and $f^\Endo(\gamma)$. For this reason $\mathcal{T}^\EndoE$ is deemed the \emph{geometric transfer}, or the collective geometric transfer since we considered all $G^\Endo$ at once. Here we omit some subtleties such as $z$-extensions, etc.

This theory is invented by Langlands and his collaborators in order to stabilize the Arthur-Selberg trace formula; we refer to \cite{CHLN11} for a detailed survey. The bottleneck turns out to be the existence of transfer, the so-called Fundamental Lemma in the unramified case and its weighted version, which are established in a series of works by Waldspurger, B. C. Ngô, Chaudouard and Laumon. By dualizing $f_G \mapsto f^\Endo$, we may transfer stable distributions on $G^\Endo(F)$ (i.e.\ elements of $S\orbI(G^\Endo)^\vee$) to invariant distributions on $G(F)$, for various $G^\Endo$. This is expected to yield endoscopic local character relations. Granting the local Langlands conjecture, the tempered stable characters of $G^\Endo$ should be mapped to certain virtual tempered characters on $G$ whose coefficients should be explicitly given in terms of the \emph{internal structure} of the $L$-packets on $G$ --- whence the name ``endoscopy''. These issues are discussed in depth in Arthur's articles \cite{Ar89-unip,Ar96}.


The local Langlands correspondence and the endoscopic spectral transfers have been settled in \cite{Ar13} for many classical groups, including the split $\SO(2n+1)$ that we will need. A general treatment on the spectral transfer for real groups can be found in \cite{Sh10,Sh08}.

\paragraph{Metaplectic coverings}
As is well-known, certain coverings of connected reductive groups play a prominent rôle in the study of automorphic representations and arithmetic. Let $F$ be local as before and fix an additive character $\psi$ of $F$. Our basic example is Weil's metaplectic covering of $G = \Sp(W)$, where $(W, \angles{\cdot|\cdot})$ is a symplectic $F$-vector space of dimension $2n$. It is a central extension
\begin{gather*}
  1 \to \bmu_8 \to \tilde{G} \xrightarrow{\rev} G(F) \to 1, \\
  \bmu_8 := \{z \in \C^\times: z^8 = 1\}
\end{gather*}
of locally compact groups. It is customary to use $\bmu_2 = \{\pm 1\}$ instead, but the enlarged version (obtained by push-forward via $\bmu_2 \hookrightarrow \bmu_8$) in preferred in this article. Its global avatar is crucial in understanding the Siegel modular forms of half-integral weights. In a harmonic analyst's eyes, the upshot is to investigate
\begin{itemize}
  \item the irreducible admissible representations $\pi$ of $\tilde{G}$ which are genuine, meaning that $\pi(\noyau)=\noyau\cdot\identity$ for all $\noyau \in \bmu_8$;
  \item the anti-genuine test functions $f$ on $\tilde{G}$, meaning that $f(\noyau\tilde{x}) = \noyau^{-1} f(\tilde{x})$;
  \item the orbital integrals of anti-genuine test functions.
\end{itemize}
Furthermore, we have to consider the inverse images under $\rev$ of Levi subgroups of $G(F)$ as well, hereafter called Levi subgroups of $\tilde{G}$. Thanks to the choice of eightfold coverings, they take the form $\tilde{M} = \prod_{i \in I} \GL(n_i) \times \Mp(W^\flat)$ with $2\sum_i n_i + \dim_F W^\flat = \dim_F W$, called the coverings of metaplectic type. For the rudiments of harmonic analysis on general coverings, see \cite{Li12b}.

Our long-term goal is to bring $\tilde{G}$ into Langlands' formalism. The first requirement seems to be a reasonable theory of endoscopy for $\tilde{G}$, together with transfer, fundamental lemma and a partial stabilization of the trace formula as a reality check. Based upon prior works by Adams and Renard in the case $F=\R$, such a theory has been proposed in \cite{Li11}. Roughly speaking, folk wisdom suggests a close relation between $\tilde{G}$ and the split $\SO(2n+1)$, as exemplified by the $\theta$-correspondence for the dual pair $(O(V,q), \Sp(W))$ with $(V,q)$ being a $(2n+1)$-dimensional quadratic $F$-space. Accordingly, one may imagine that the dual group of $\tilde{G}$ ``is'' $\Sp(2n,\C)$. These definitions have evident generalizations to coverings of metaplectic type. The precise formalism will be reviewed in \S\ref{sec:endoscopy}.

For the impatient reader, the recipe is simply to replace $G$ by $\tilde{G}$ (passage to coverings) and $\orbI(G)$ by $\Iasp(\tilde{G})$ (passage to anti-genuine orbital integrals) in all the foregoing discussions. The elliptic endoscopic data are in bijection with pairs $(n', n'') \in \Z_{\geq 0}^2$ with $n'+n''=n$, with $G^\Endo = \SO(2n'+1) \times \SO(2n''+1)$. The transfer of orbital integrals and the fundamental lemma still hold true in this setup. The stabilization of the elliptic terms of the Arthur-Selberg trace formula is established in \cite{Li15}, which in turn relies on the invariant trace formula established in \cite{Li14b} for a more general class of coverings.


\paragraph{Main ingredients}
The next stage is to undertake the spectral transfer for $\tilde{G}$. We shall take our lead from \cite{Ar96}. The first step is to set up the spectral parameters and regard $f_{\tilde{G}}$, $f^\EndoE$ as functions on the relevant spaces. On the endoscopic side, we form the space of endoscopic spectral parameters $T^\EndoE_\text{ell}(\tilde{G})$ by taking the disjoint union of discrete series $L$-parameters of each $G^\Endo$. Doing the same construction for every Levi subgroup $\tilde{M}$, we define
$$ T^\EndoE(\tilde{G}) = \bigsqcup_{M/\text{conj}} T^\EndoE_\text{ell}(\tilde{M})/W^G(M) $$
where $W^G(M) = N_G(M)(F)/M(F)$. As to the side of $\tilde{G}$, a reasonable choice is to take the genuine tempered spectrum $\Pi_{\text{temp},-}(\tilde{G})$. However, Arthur's formalism in \cite{Ar93} is to work with a space
$$ T_-(\tilde{G}) = \bigsqcup_{M/\text{conj}} T_{\text{ell},-}(\tilde{M})/W^G(M) $$
of virtual genuine tempered characters of $\tilde{G}$ derived from Knapp-Stein theory, that is better-suited for the study of orthogonality questions; cf.\ \cite[\S 5.4]{Li12b} for the case of coverings. Note that there are parallel constructions for the geometric parameters, i.e.\ conjugacy and stable conjugacy classes.

To begin with, assume $F$ non-archimedean. Let $\orbI^\EndoE(\tilde{G})$ stand for the image of $\mathcal{T}^\EndoE$. By the trace Paley-Wiener theorem, elements in $\Iasp(\tilde{G})$ (resp. $\orbI^\EndoE(\tilde{G})$) may be regarded as certain $\C$-valued functions on $T_-(\tilde{G})$ (resp. $T^\EndoE(\tilde{G})$); these spaces of functions are called Paley-Wiener spaces. The desideratum is an equality of the form
\begin{gather}\label{eqn:desideratum}
  f^\EndoE(\phi) = \sum_{\tau \in T_-(\tilde{G})} \Delta(\phi, \tau) f_{\tilde{G}}(\tau), \quad \phi \in T^\EndoE(\tilde{G})
\end{gather}
for suitably \emph{spectral transfer factors} $\Delta(\phi, \tau)$, where $f^\EndoE = \mathcal{T}^\EndoE(f_{\tilde{G}}) \in \orbI^\EndoE(\tilde{G})$. This is our Main Theorem \ref{prop:character-relation}, reinterpreted as in Remark \ref{rem:character-relation-equiv}. The approach in \cite{Ar94} may be rephrased as follows.
\begin{enumerate}
  \item An element $f_{\tilde{G}} \in \Iasp(\tilde{G})$ is called cuspidal if it vanishes off the elliptic locus. There is a similar notion on the endoscopic side, and geometric transfer preserves cuspidality.
  \item Characterize the image of $\Iaspcusp(\tilde{G})$ under $\mathcal{T}^\EndoE$; this is a technical \textit{tour de force}.
  \item It is straightforward to obtain \eqref{eqn:desideratum} for cuspidal $f_{\tilde{G}}$ and spectral parameters $(\phi, \tau) \in T^\EndoE_\text{ell}(\tilde{G}) \times T_{\text{ell},-}(\tilde{G})$, with uniquely determined spectral transfer factors $\Delta(\phi, \tau)$. The upshot is to extend \eqref{eqn:desideratum} to all $f_{\tilde{G}}$; the case of non-elliptic $(\phi, \tau)$ will then follow by the compatibility of geometric transfer with parabolic induction, together with recurrence on $\dim_F W$. As a consequence, we can also characterize the full image $\orbI^\EndoE(\tilde{G})$ of $\mathcal{T}^\EndoE$ (Corollary \ref{prop:transfer-surj}).
\end{enumerate}

In the last step we invoke a local-global argument based on the simple stable trace formula for $\tilde{G}$ in \cite{Li15}. Equation \eqref{eqn:desideratum} may also be seen as a description of the image of $\phi$ (as a linear functional on $\orbI^\EndoE(\tilde{G})$) under the dual of $\mathcal{T}^\EndoE$. This is what we mean by local character relations.

In this Introduction, we cannot explain in depth the basic ideas of proof, or even the relevant definitions. Nonetheless, we point out below a few differences between the present work and Arthur's.
\begin{enumerate}
  \item For the covering $\tilde{G}$, the geometric transfer $\mathcal{T}^\EndoE$, fundamental lemma and the simple stable trace formula are surely highly non-trivial. Fortunately, they are already dealt with in \cite{Li11,Li15}.
  \item To characterize the geometric transfer of cuspidal genuine orbital integrals, we proceed as in \cite{Ar96} via Harish-Chandra's technique of descent. For $\tilde{G}$ this is the technical core of \cite{Li11}, and eventually we are reduced to a similar characterization of the images of
    \begin{compactitem}
      \item standard endoscopic transfer for classical groups,
      \item non-standard endoscopic transfer for the datum $(\Spin(2n+1), \Sp(2n), \cdots)$,
    \end{compactitem}
    both on the level of Lie algebras. For the first case, we may reuse Arthur's results in \cite{Ar96} with minor improvements. The second case is even simpler, since non-standard endoscopic transfer is bijective by its very definition. Cf. the proof of Theorem \ref{prop:transfer-cusp-surj}.
  \item Recall that the spectral parameters in $T^\EndoE(\tilde{G})$ are built as a disjoint union over conjugacy classes of Levi subgroups of $\tilde{G}$: they are essentially the discrete series $L$-parameters of elliptic endoscopic groups $M^\Endo$ of $\tilde{M}$. But one can also go the other way around: consider $M^\Endo$ as a Levi subgroup of some elliptic endoscopic group $G^\Endo$ of $\tilde{G}$. The situation is conveniently represented by
    $$ \begin{tikzcd}
    G^\Endo \arrow[-,dashed]{r}[above]{\text{ell.}}[below]{\text{endo.}} & \tilde{G} \\
    M^\Endo \arrow[-,dashed]{r}[above]{\text{ell.}}[below]{\text{endo.}} \arrow[hookrightarrow]{u}[left]{\text{Levi}} & \tilde{M} \arrow[hookrightarrow]{u}[right]{\text{Levi}}
    \end{tikzcd} $$
    Such endoscopic data $G^\Endo$ together with embeddings $M^\Endo \hookrightarrow G^\Endo$ (up to conjugacy) can be parametrized by a finite set $\EndoE_{M^\Endo}(\tilde{G}) \neq \varnothing$; we will write $G^\Endo = G[s]$ if it is parametrized by $s \in \EndoE_{M^\Endo}(\tilde{G})$. In contrast with the case of reductive groups, passing from the route $M^\Endo \dashrightarrow \tilde{M} \dashrightarrow \tilde{G}$ to $M^\Endo \dashrightarrow G^\Endo \dashrightarrow \tilde{G}$ introduces a twist by some $z[s] \in Z_{M^\Endo}(F)$ in geometric transfer (Lemma \ref{prop:z[s]}).
    
    This phenomenon is inherent in the endoscopy for $\tilde{G}$ since the correspondence of conjugacy classes has a twist by $-1$. We have to take care of these subtle twists throughout this article. Note that $z[s]$ are essentially certain sign factors; their effect might be compared with the \emph{central signs} in Waldspurger's work \cite{Wa91} for $n=1$.
  \item The local-global argument is more delicate for coverings. One potential problem is that on an adélic covering $\tildering{G} \twoheadrightarrow \mathring{G}(\A)$, it is not so straightforward to extract the local component $\tilde{\gamma}_V$ of a rational element $\mathring{\gamma} \in \mathring{G}(\mathring{F}) \hookrightarrow \tildering{G}$, where $V$ is a sufficiently large set of places. Moreover, \textit{a priori} it is unclear if two conjugate elements in $\mathring{G}(\mathring{F})$ have conjugate local components, whenever the latter make sense. The question has been studied in \cite{Li14a} for general coverings; here we need some input from the Weil representation of $\tilde{G}$. Cf.\ the proof of Proposition \ref{prop:non-vanishing}.
\end{enumerate}

One obvious defect is that we have no formula for $\Delta(\phi, \tau)$. Hopefully this will be treated in a sequel to this article. New techniques will be needed.

\paragraph{The archimedean case}
The case $F=\R$ has been considered by Adams \cite{Ad98} for the principal endoscopic datum $(n,0)$, and Renard \cite{Re98,Re99} in a somewhat different framework. In this article, we will rephrase their results and establish the spectral transfer as identities
\begin{gather}
  f^\EndoE(\phi) = \sum_{\pi \in \Pi_{\text{temp},-}(\tilde{G})} \Delta(\phi, \pi) f_{\tilde{G}}(\pi), \quad \phi \in T^\EndoE(\tilde{G})
\end{gather}
for certain spectral transfer factors $\Delta(\phi, \pi)$, cf.\ \eqref{eqn:desideratum}. A striking result (Theorem \ref{prop:Li-lifting-real}) is that for discrete series $L$-parameters $\phi$, the factor $\Delta(\phi,\pi)$ are certain natural generalizations of the \textit{central signs} in Waldspurger's work \cite{Wa91} for the case $n=1$.

Following Shelstad's approach \cite{Sh08}, we will define the adjoint spectral transfer factors $\Delta(\pi, \phi)$ and establish the relevant inversion formulas (Theorem \ref{prop:spectral-inversion-R} and Corollary \ref{prop:spectral-inverted-transfer}) for $\phi \in T^\EndoE(\tilde{G})$ and $\pi \in \Pi_{2\uparrow,-}(\tilde{G})$, where $\Pi_{2\uparrow,-}(\tilde{G})$ is an explicitly defined set of genuine limits of discrete series; the awkward notation means ``lifted from $\Pi_2(G^\Endo)$'' for some elliptic endoscopic group $G^\Endo$. This is surely just the first step. A simple yet important observation is that inversion can be achieved without the $K$-group machinery in \cite{Sh08}.

\paragraph{Layout of this article}
In \S\ref{sec:review-metaplectic}, we summarize the basic properties of the eightfold local metaplectic coverings $\rev: \Mp(W) \to \Sp(W)$ as well as the coverings of metaplectic types. The unramified and adélic settings are also briefly reviewed. Endoscopy for $\Mp(W)$ is reviewed in \S\ref{sec:review-endoscopy}. These results are all contained in the prior works \cite{Li11,Li12a,Li12b,Li15}, albeit in French.

In \S\ref{sec:results-SO} we collect Arthur's basic results in \cite{Ar13} on the local Langlands correspondence for $\SO(2n+1)$, as well as a discussion on the archimedean cases.

An in-depth study of geometric transfer is undertaken in \S\ref{sec:geom-transfer}. We introduce the collective geometric transfer $\mathcal{T}^\EndoE$ and the key notion of adjoint transfer, due to Kottwitz. Then we establish the key Theorem \ref{prop:transfer-cusp-surj} that $\mathcal{T}^\EndoE$ restricts to an isometric isomorphism $\Iaspcusp(\tilde{G}) \rightiso \bigoplus_{G^\Endo} S\Icusp(G^\Endo)$ for non-archimedean $F$.

The non-archimedean spectral transfer is discussed in \S\ref{sec:spectral-transfer}; some of the formalisms are also used later in the archimedean case. The Main Theorem \ref{prop:character-relation} and its equivalent forms are stated. In \S\ref{sec:archimedean}, the results by Adams and Renard for $F=\R$ are rephrased in our formalism. The local character relations are explicitly written down. A short discussion for the case $F=\C$ is also included.

The proof of Theorem \ref{prop:character-relation} occupies \S\ref{sec:proof}, in which the necessary stable simple trace formula and reduction steps are set up.

\paragraph{Acknowledgements}
The author is greatly indebted to Jean-Loup Waldspurger for his valuable corrections and comments on an earlier draft of this article. He is also grateful to the referee for pointing out a mistake in \S\ref{sec:supplements-limits}.

\subsection*{Conventions}
\paragraph{Generalities}
Set $\Sph^1 := \{z \in \C^\times : |z|=1 \}$. For every $m \in \Z_{\geq 1}$ we set\index{$\bmu_m$}
$$ \bmu_m := \{ z \in \C^\times : z^m=1 \}. $$

The group of permutations on a set $I$ is denoted by $\mathfrak{S}(I)$.

The dual space of a vector space $E$ will be denoted by $E^\vee$ unless otherwise specified; $\Sym\; E$ stands for the symmetric algebra of $E$. The complexification of an $\R$-vector space $E$ is denoted by $E_\C$. The trace of an endomorphism $A: E \to E$ of trace class is denoted by $\Tr A$.


\paragraph{Fields}
Let $F$ be a local field. Fix a separable closure $\bar{F}$ of $F$ and define
\begin{compactitem}
  \item $\Gamma_F := \Gal{\bar{F}/F}$: the absolute Galois group;
  \item $\We_F$: the Weil group of $F$;
  \item the Weil-Deligne group is $\WD_F := \We_F \times \SU(2)$ when $F$ is non-archimedean, otherwise $\WD_F := W_F$;
  \item $|\cdot|$: the normalized absolute value on $F$.
\end{compactitem}
For non-archimedean $F$, we denote by $\mathfrak{o}_F$ the ring of integers, and $\mathfrak{p}_F \subset \mathfrak{o}_F$ the maximal ideal. Write $q_F := |\mathfrak{o}_F/\mathfrak{p}_F|$.

For a global field $F$ we still have $\Gamma_F$ and $\We_F$. The completion at a place $v$ is denoted by $F_v$, with ring of integers $\mathfrak{o}_v$ and maximal ideal $\mathfrak{p}_v$. We denote by $\A := \A_F = \Resprod_v F_v$ the ring of adèles of $F$. For any finite set $V$ of place of $F$, we write $F_V = \prod_{v \in V} F_v$ and $F^V = \Resprod_{v \notin V} F_v$.

In any case, the Galois cohomology over $F$ is denoted by $H^\bullet(F, \cdot)$; the relevant groups of cocycles and coboundaries are denoted as $Z^\bullet(\cdots)$ and $B^\bullet(\cdots)$, respectively.

\paragraph{Groups}
Let $k$ be a commutative ring with $1$. For any $k$-scheme $X$ and a $k$-algebra $A$, we denote by $X(A)$ the set of $A$-points of $X$. For a ring extension $k' \supset k$ we write $X_{k'} := X \times_k k'$ for the base change. Now take $k = F$ to be a field and assume $X$ is a group variety. The identity connected component of $X$ will be denoted by $X^0$. If $F$ is endowed with a topology, we topologize $X(F)$ accordingly.

Specialize now to the case of $F$-group varieties, or simply the $F$-groups. Let $G$ be an $F$-group. We write $\mathfrak{g} := \Lie(G)$. When $F$ is algebraically closed, $G$ will be systematically identified with $G(F)$. Centralizers (resp. normalizers) in $G$ are denoted by $Z_G(\cdot)$ (resp. $N_G(\cdot)$); denote by $Z_G$ the center of $G$. These notations also pertain to abstract groups.

The symbol $\Ad(\cdots)$ denotes the adjoint action of a group on itself, namely $\Ad(x): g \mapsto gxg^{-1}$. An $F$-group $G$ also acts on its Lie algebra by the adjoint action, written as $X \mapsto gXg^{-1}$; on the other hand, for every $X \in \mathfrak{g}$ we have $\ad(X): Y \mapsto [X, Y]$, $Y \in \mathfrak{g}$.

Hereafter $G$ is assumed to be connected and reductive. The derived subgroup (resp. adjoint group) of $G$ is written as $G_\text{der}$ (resp. $G_\text{AD} := G/Z_G$). For $\delta \in G(F)$, we set $G^\delta := Z_G(\delta)$ and $G_\delta := Z_G(\delta)^0$. A maximal $F$-torus $T$ in $G$ is called elliptic if $T/Z_G$ is anisotropic. The set of semisimple elements of $G(F)$ is denoted by $G(F)_\text{ss}$. An element $\delta \in G(F)_\text{ss}$ is called \emph{strongly regular} if $G^\delta = G_\delta$ is a torus; note that strong regularity is equivalent to the usual regularity when $G_\text{der}$ is simply connected. The Zariski open dense subset of strongly regular elements in $G$ is denoted by $G_\text{reg}$. More generally, for any subvariety $U \subset G$ we will denote
$$ U_\text{reg} := U \cap G_\text{reg}. $$

An element $\delta \in G_\text{reg}(F)$ is called elliptic if $G_\delta$ is an elliptic maximal $F$-torus.

The general notion of \emph{stable conjugacy} in $G(F)$ can be found in \cite[\S 3]{Ko82}; in this article the following special cases will suffice. Let $x, y \in G(F)$ be semisimple. If $G_\text{der}$ is simply connected or if $x, y$ are strongly regular, then $x$ and $y$ are stably conjugate if and only if they are conjugate in $G(\bar{F})$, where $\bar{F}$ is an algebraic closure of $F$.

Define the spaces
\begin{align*}
  \Gamma_\text{ss}(G) & := \{ \text{semisimple conjugacy classes in } G(F)\}, \\
  \Delta_\text{ss}(G) & := \{ \text{semisimple stable conjugacy classes in } G(F) \}.
\end{align*}
These notations have self-evident variants such as $\Gamma_\text{reg}(G)$, $\Gamma_\text{reg,ell}(G)$, $\Delta_\text{reg}(G)$, $\Delta_\text{reg,ell}(G)$, etc. If $M$ is a Levi subgroup of $G$, an element of $M(F)$ is called \emph{$G$-strongly regular} if it becomes strongly regular in $G$; define $\Gamma_{G-\text{reg}}(M)$ and $\Delta_{G-\text{reg}}(M)$, etc., accordingly.

Note that the ``stable'' notions will mainly be applied to quasisplit groups.

Call two maximal $F$-tori $T_1, T_2$ of $G$ \emph{stably conjugate} if there exists $g \in G(\bar{F})$ such that
\begin{compactitem}
  \item $g T_{1, \bar{F}} g^{-1} = T_{2, \bar{F}}$,
  \item $g^{-1} \tau(g) \in T_1(\bar{F})$ for every $\tau \in \Gamma_F$.
\end{compactitem}
In this case $\Ad(g)$ defines an isomorphism $T_1 \rightiso T_2$ of $F$-tori, and vice versa. If $\delta_1, \delta_2 \in G_\text{reg}(F)$, $T_i := G_{\delta_i}$ and $g \in G(\bar{F})$ realizes a stable conjugation $g \delta_1 g^{-1} = \delta_2$, then $g$ realizes a stable conjugation $\Ad(g): T_1 \rightiso T_2$ between maximal $F$-tori. 

Using the adjoint action of $G$ on $\mathfrak{g}$, we define $\Gamma_\text{reg,ell}(\mathfrak{g})$, $\Delta_\text{reg,ell}(\mathfrak{g})$, etc.; for $X \in \mathfrak{g}(F)$ we write $G^X := Z_G(X)$, $G_X :=Z_G(X)^0$.

The Langlands dual group of $G$ is defined over $\C$; the relevant definitions will be reviewed in \S\ref{sec:L-parameters}.

\paragraph{Classical groups}
The general linear group on a finite-dimensional $F$-vector space $V$ is written as $\GL(V)$, or simply as $\GL(n)$ if $\dim_F V = n$.

Assume the field $F$ to be of characteristic $\neq 2$. In this article, the notation $\SO(2n+1)$ always means a split special orthogonal group associated to a quadratic form on an $F$-vector space $V$ of dimension $2n+1$. We give a precise recipe below: take $V$ with basis $e_{-n}, \ldots, e_{-1}, e_0, e_1, \ldots, e_n$, with the quadratic form $q: V \times V \to F$ given by
\begin{gather*}
  q(e_i|e_{-j}) := \bdelta_{i,j},  \quad -n \leq i,j \leq n.
\end{gather*}
Here $\bdelta_{i,j}$ is Kronecker's delta. Similar conventions pertain to $\Spin(2n+1)$.

The symplectic group associated to a symplectic $F$-vector space $(W, \angles{\cdot|\cdot})$ is denoted by $\Sp(W)$, or sometimes by $\Sp(2n)$ if $\dim_F W = 2n$. It will be reviewed in detail in \S\ref{sec:the-central-extension}. 

Unless otherwise specified, we shall identify all these $F$-group schemes with their groups of $F$-points, to save clutter.

\paragraph{Combinatorics}
As usual, $G$ denotes a connected reductive $F$-group. For a Levi subgroup $M$ of $G$, define the following finite sets
\begin{itemize}
  \item $\mathcal{P}(M)$: the set of parabolic subgroups of $G$ with Levi component $M$;
  \item $\mathcal{L}(M)$: the set of Levi subgroups of $G$ containing $M$;
  \item $W(M) := N_G(M)(F)/M(F)$, a finite group.
\end{itemize}
The Levi decomposition is written as $P=MU$, where $U$ denotes the unipotent radical of $P$. When the rôle of $G$ is to be emphasized, we shall write $\mathcal{P}^G(M)$, $\mathcal{L}^G(M)$ and $W^G(M)$ instead. For a minimal Levi subgroup $M = M_0$, we use the shorthand $W^G_0 := W^G(M_0)$.

Assume $F$ to be of characteristic zero and fix a maximal $F$-torus $T \subset G$. The associated set of absolute roots is denoted by $\Sigma(G, T)_{\bar{F}}$ and the absolute coroots by $\Sigma(G, T)^\vee_{\bar{F}}$; the root-coroot correspondence is written as $\alpha \leftrightarrow \alpha^\vee$. There are at least three types of Weyl groups that we need.
\begin{enumerate}
  \item The relative Weyl group $W(G, T) := N_G(T)(F)/T(F)$.
  \item The group $W(G, T)(F) := \left( N_G(T)/Z_G(T) \right)(F)$; here we regard $N_G(T)/Z_G(T)$ as an $F$-group scheme. By Galois descent we have 
    \begin{gather}\label{eqn:W-intermediate}
      W(G, T)(F) = \left\{ g \in N_G(T)(\bar{F}) : \forall \tau \in \Gamma_F, \; g^{-1}\tau(g) \in T(\bar{F}) \right\} \bigg/ T(\bar{F}).
    \end{gather}
  \item The absolute Weyl group $W(G,T)(\bar{F}) := W(G_{\bar{F}}, T_{\bar{F}}) = \left( N_G(T)/Z_G(T) \right)(\bar{F})$, or equivalently $N_G(T)(\bar{F})/T(\bar{F})$.
\end{enumerate}
Note that $W(G,T)(\bar{F}) \supset W(G,T)(F) \supset W(G,T)$. Galois descent gives
$$ W(G,T)(F) = \left\{ w \in W(G_{\bar{F}}, T_{\bar{F}}) : \Ad(w): T_{\bar{F}} \rightiso T_{\bar{F}} \text{ is defined over } F \right\}. $$

For an $F$-torus $T$, we write $X^*(T) := \Hom_{F-\text{grp}}(T, \Gm)$ and $X_*(T) := \Hom_{F-\text{grp}}(\Gm, T)$; note that $\Gamma_F$ acts on $X^*(T_{\bar{F}})$ and $X_*(T_{\bar{F}})$.

In a similar manner, define $X^*(G) := \Hom_{F-\text{grp}}(G, \Gm)$ and set $\mathfrak{a}_G := \Hom(X^*(G), \R)$. For every $M$ as above, there is a canonically split short exact sequence of finite-dimensional $\R$-vector spaces
$$ 0 \to \mathfrak{a}_G \to \mathfrak{a}_M \leftrightarrows \mathfrak{a}^G_M \to 0. $$
Their dual spaces are denoted by $\mathfrak{a}^*_G$, etc. When $F$ is local, the Harish-Chandra homomorphism $H_G: G(F) \to \mathfrak{a}_G$ is the homomorphism characterized by
$$ \angles{\chi, H_G(x)} = \log|\chi(x)|_F, \quad \chi \in X^*(G). $$

\paragraph{Representations}
The representations under consideration are all over $\C$-vector spaces. Let $G$ be a connected reductive $F$-group where $F$ is a local field. The representations of $G(F)$ are supposed to be smooth, admissible etc., which will be clear according to the context. Define
\begin{itemize}
  \item $\Pi(G)$: the set of equivalence classes of representations of $G(F)$;
  \item $\Pi_{\text{unit}}(G)$: the subset of unitarizable representations;
  \item $\Pi_{\text{temp}}(G)$: the subset of tempered representations;
  \item $\Pi_{2,\text{temp}}(G)$: the subset of unitarizable representations which are square-integrable modulo the center, i.e.\ the discrete series representations.
\end{itemize}
These notions have obvious variants for finite coverings $\tilde{G}$ of $G(F)$. The appropriate object turns out to be the set of \emph{genuine} representations $\Pi_-(\tilde{G})$: see \S\ref{sec:the-central-extension}. For an abstract group $S$, the notation $\Pi(S)$ will also be used to denote its set of irreducible representations, taken up to equivalence.

For $\lambda \in \mathfrak{a}^*_{G,\C}$ and $\pi \in \Pi(G)$, we define $\pi_\lambda \in \Pi(G)$ by
\begin{gather}\label{eqn:pi-twist}
  \pi_\lambda := e^{\angles{\lambda, H_G(\cdot)}} \otimes \pi.
\end{gather}
Note that $\pi_{\lambda+\mu} = (\pi_\lambda)_\mu$. When restricted to $\lambda \in i\mathfrak{a}^*_G$, this operation preserves $\Pi_\text{temp}(G)$ and $\Pi_{2,\text{temp}}(G)$. 

Consider a parabolic subgroup $P=MU$ of $G$. The modulus character $\delta_P: P(F) \to \R_{> 0}$ is specified by
$$ (\text{left Haar measure}) = \delta_P \cdot (\text{right Haar measure}). $$
The \emph{normalized parabolic induction} functor from $P$ to $G$ is denoted by $I^G_P(\cdot) := \Ind^G_P(\delta^{\frac{1}{2}}_P \otimes \cdot)$, often abbreviated as $I_P(\cdot)$.

\section{Review of the metaplectic covering}\label{sec:review-metaplectic}
We shall review the materials in \cite{Li11} concerning the eightfold metaplectic covering $\rev: \Mp(W) \twoheadrightarrow \Sp(W)$ attached to a symplectic vector space $W$. The usual twofold version $\MMp{2}(W) \twoheadrightarrow \Sp(W)$ will be reviewed in Remark \ref{rem:eightfold}. \index{$\MMp{2}(W)$}

\subsection{The central extension}\label{sec:the-central-extension}
\paragraph{Weil's metaplectic covering}
Let $F$ be a field of characteristic $\neq 2$. By a \emph{symplectic $F$-vector space}, we mean a pair $(W, \angles{\cdot|\cdot})$ where $W$ is a finite dimensional $F$-vector space, and $\angles{\cdot|\cdot}: W \times W \to F$ is a non-degenerate alternating bilinear form. A maximal totally isotropic subspace (i.e.\ on which $\angles{\cdot|\cdot}$ is identically zero) of $W$ is called a \emph{Lagrangian}, usually denoted by $\ell$. Define
$$ \Sp(W) := \left\{g \in \GL(W) : \forall x,y \in W, \; \angles{gx|gy} = \angles{x|y} \right\}. $$
This actually defines a semisimple $F$-group.

Assume henceforth that $F$ is a local field of characteristic zero, so that $\Sp(W)$ becomes a locally compact group. Fix a non-trivial additive character $\psi: F \to \Sph^1$. In this article, Weil's \emph{metaplectic covering} is a central extension \index{covering!metaplectic}
\begin{gather}\label{eqn:central-extension-Mp}
  1 \to \bmu_8 \to \Mp(W) \xrightarrow{\rev} \Sp(W) \to 1
\end{gather}
of locally compact groups. This covering is non-linear, i.e.\ does not come from a central extension of $F$-groups, unless $F=\C$. We will identify $\bmu_8$ as a subgroup of $\Mp(W)$.

Since the symplectic $F$-vector spaces are classified up to isomorphism by their dimension, say $\dim_F W = 2n$, we will occasionally write $\Sp(2n)$, $\Mp(2n)$ instead. If $W = W_1 \oplus W_2$ as symplectic $F$-vector spaces, there is then a canonical homomorphism
$$ j: \Mp(W_1) \times \Mp(W_2) \to \Mp(W) $$
such that
\begin{compactitem}
  \item $\Ker(j) = \left\{ (\noyau, \noyau^{-1}) : \noyau \in \bmu_8 \right\}$,
  \item $j$ covers the natural embedding $\Sp(W_1) \times \Sp(W_2) \hookrightarrow \Sp(W)$.
\end{compactitem}

\paragraph{Covering groups in general}
Consider a connected reductive $F$-group $G$ together with a central extension of locally compact groups
\begin{gather}\label{eqn:central-extension-general}
  1 \to \bmu_m \to \tilde{G} \xrightarrow{\rev} G(F) \to 1
\end{gather}
where $m \in \Z_{\geq 1}$. These data form a \emph{covering group}, and there is an evident notion of isomorphisms between coverings. By parabolic (resp. Levi) subgroups of $\tilde{G}$ we mean the inverse images of parabolic (resp. Levi) subgroups of $G(F)$.

The objects living on $\tilde{G}$ will be systematically decorated with a $\sim$, such as $\tilde{x} \in \tilde{G}$; we also write $x := \rev(\tilde{x})$ in that case. If $E$ is a subset of $G(F)$, we will denote $\tilde{E} := \rev^{-1}(E)$. We say that an element of $\tilde{G}$ is semisimple, regular, etc., if its image in $G(F)$ is. Note that $G(F)$ acts on $\tilde{G}$ by conjugation: we will denote this action by $\tilde{x} \mapsto g\tilde{x}g^{-1}$, for all $g \in G(F)$.

\begin{itemize}
  \item The notions of $C^\infty_c$ functions, orbital integrals, smooth and admissible representations, etc., make sense on $\tilde{G}$; see \cite{Li12b}. Note that when $F$ is archimedean, $\tilde{G}$ is a group in \emph{Harish-Chandra class}; our notions of parabolic and Levi subgroups of $\tilde{G}$ are also consistent with the usual ones.
  \item A function $f: \tilde{G} \to \C$, or more generally $f: \tilde{E} \to \C$ where $E$ is a subset of $G(F)$, is called \emph{anti-genuine} if
    $$ \forall \noyau \in \bmu_m, \quad f(\noyau \cdot) = \noyau^{-1} f(\cdot). $$
    It is called genuine if we require $f(\noyau \cdot) = \noyau f(\cdot)$ instead.
  \item A representation $(\pi, V)$ of $\tilde{G}$ is called \emph{genuine} if
    $$ \forall \noyau \in \bmu_m, \quad \pi(\noyau) = \noyau \cdot \identity. $$
    It is called anti-genuine if we require $\pi(\noyau) = \noyau^{-1} \cdot \identity$ instead.
  \item A distribution $D$ (regarded as a linear functional $C^\infty_c(\tilde{G}) \to \C$) is called genuine if for all $f \in C^\infty_c(\tilde{G})$, we have
    $$ \forall \noyau \in \bmu_m, \quad D(f^{\noyau}) = \noyau \cdot D(f), $$
    where $f^{\noyau}(\cdot) := f(\noyau^{-1} \cdot)$. It is called anti-genuine if we require $D(f^{\noyau}) = \noyau^{-1} D(f)$ instead.
  \item We use the subscript $-$ (resp. $\asp$) to denote the genuine (resp. anti-genuine) objects. For example, $C^\infty_{c, \asp}(\tilde{G})$ denotes the space of anti-genuine $C^\infty_c$ functions on $\tilde{G}$, whereas $\Pi_-(\tilde{G})$ denotes the space of irreducible admissible representations of $\tilde{G}$, up to equivalence.
  \item Note that a genuine distribution $D$ is completely determined by its restriction on $C^\infty_{c, \asp}(\tilde{G})$. The character $\Theta_\pi: f \mapsto \Tr(\pi(f))$ of $\pi \in \Pi_-(\tilde{G})$ is a genuine distribution, as expected.
  \item The character $\Theta_\pi$ is actually locally $L^1$ and smooth on $\tilde{G}_\text{reg}$; for non-archimedean $F$, we have local character expansions around each semisimple element in terms of Fourier transforms of unipotent orbital integrals. This is Harish-Chandra's celebrated regularity theorem for characters, at least when $F$ is archimedean or when $\tilde{G}=G(F)$. For non-archimedean $F$, the corresponding result for coverings is proved in \cite[Théorème 4.3.2]{Li12b}.
\end{itemize}

In broad terms, the study of harmonic analysis on $\tilde{G}$ is the study of its genuine representations.

Denote by $\Gamma_\text{reg}(\tilde{G})$ (resp. $\Gamma_\text{ell, reg}(\tilde{G})$) the spaces of strongly regular (resp. strongly regular and elliptic) semisimple classes in $\tilde{G}$. They are equipped with natural maps $\Gamma_\text{reg}(\tilde{G}) \twoheadrightarrow \Gamma_\text{reg}(G)$, whose fibers are acted upon by $\bmu_m$, thus there is an evident notion of genuine/anti-genuine functions $\Gamma_\text{reg}(\tilde{G}) \to \C$.

Finally, suppose that a Haar measure on $G(F)$ is chosen. Define the Haar measure on $\tilde{G}$ by requiring that
\begin{gather}\label{eqn:measure}
  \mes(\tilde{E}) = \mes(E)
\end{gather}
for every measurable subset $E$ of $G(F)$. Consequently, for every function $\phi: \tilde{G} \to \C$ which factors through $G(F)$, we have $\int_{\tilde{G}} \phi = \int_{G(F)} \phi$ whenever it is integrable.

\subsection{The Weil representation and its character}\label{sec:Weil-rep}
For a chosen $\psi$, the covering group $\Mp(W)$ carries a special genuine representation $(\omega_\psi, S_\psi)$, called the \emph{Weil representation} or the \emph{oscillator representation}. It admits various realizations, such as the Schrödinger model, lattice model, mixed model, etc. We refer to \cite{MVW87} for details. We fix a Haar measure on $\Sp(W)$, hence a Haar measure on $\Mp(W)$ by the recipe \eqref{eqn:measure}.

The representation $\omega_\psi$ decomposes canonically into
$$ \omega_\psi = \omega_\psi^+ \oplus \omega_\psi^-, $$
called the even and odd parts of $\omega_\psi$, respectively. Each piece is an irreducible admissible unitarizable representation of $\Mp(W)$. It follows that the character
$$ \Theta^\pm_\psi : f \mapsto \Tr \left( \omega^\pm_\psi(f) \right) $$
as a genuine distribution on $\Mp(W)$, is locally $L^1$ and smooth on $\Mp(W)_\text{reg}$. So is $\Theta_\psi := \Theta_\psi^+ + \Theta_\psi^-$.

In fact, $\Theta_\psi$ is smooth on the larger dense open subset
$$ \Mp(W)^\dagger := \left\{ \tilde{x} \in \Mp(W) : \det(x-1|W) \neq 0 \right\}. $$
This result is originally due to Maktouf, who gave an elegant formula for $\Theta_\psi$ over $\Mp(W)^\dagger$. The reader may consult \cite[\S 4.1]{Li11} for a summary.

\begin{remark}\label{rem:eightfold}
  In the literature, the metaplectic covering is often a central extension of the form
  $$ 1 \to \bmu_2 \to \MMp{2}(W) \to \Sp(W) \to 1, $$
  and our extension can be described as
  $$ \Mp(W) = \left( \bmu_8 \times \MMp{2} \right) \Big/ (\pm \noyau, \tilde{x}) \sim (\noyau, \pm\tilde{x}), $$
  that is, the push-forward of central extensions via $\bmu_2 \hookrightarrow \bmu_8$.

  There is no difference between genuine and anti-genuine on $\MMp{2}(W)$. The genuine representations (resp. anti-genuine functions) on $\Mp(W)$ and $\MMp{2}(W)$ are in natural bijection, in view of the push-forward construction above.
\end{remark}

Working with $\bmu_8$ offers more flexibility. Below is a crucial instance.
\begin{definition}\label{def:-1}\index{$-1$}
  There exists a canonical element in $\rev^{-1}(-1)$, denoted by the same symbol $-1$, which satisfies
  $$ \omega_\psi^\pm(-1) = \pm \identity; $$
  consequently, if we write $-\tilde{x} = (-1)\tilde{x}$ then
  $$ (\Theta^+_\psi - \Theta^-_\psi)(-\tilde{x}) = (\Theta^+_\psi + \Theta^-_\psi)(\tilde{x}), \quad \tilde{x} \in \Mp(W)_\text{reg}. $$
\end{definition}
This property characterizes $-1 \in \Mp(W)$ as $\omega_\psi^\pm$ is genuine. It also follows that $-1$ is of order two. Such a choice is not always possible inside $\MMp{2}(W)$; see \cite[Définition 2.8]{Li11}.

In view of its genuineness, the character $\Theta_\psi$ will often be used to pin down elements in the fibers of $\rev: \Mp(W) \to \Sp(W)$, such as in the discussions on splittings (see \S\ref{sec:splittings}) or in the proof of Proposition \ref{prop:non-vanishing}.

\subsection{Splittings and coverings of metaplectic type}\label{sec:splittings}
\paragraph{Various splittings}
Let $\rev: \tilde{G} \to G(F)$ be a general covering as in \S\ref{sec:the-central-extension}. To do harmonic analysis on $\tilde{G}$, one has to specify splittings of the central extension over various subgroups.

\begin{enumerate}
  \item Let $P = MU$ be a parabolic subgroup of $G$. There is a canonical section
    $$ s: U(F) \to \tilde{U} $$
    of $\rev$ (see \cite[Appendice 1]{MVW87} or \cite[\S 2.2]{Li14a}). It is equivariant under $P(F)$-conjugation. Hence we may write $\tilde{P}=\tilde{M}U$ and define the parabolic induction functor $I^{\tilde{G}}_{\tilde{P}}(\cdot)$ for coverings.
  \item Specialize now to the metaplectic covering $\tilde{G} = \Mp(W) \xrightarrow{\rev} \Sp(W) = G(F)$ (fix $\psi$). For a Lagrangian $\ell \subset W$, its stabilizer $P = \Stab_{\Sp(W)}(\ell)$ is a parabolic subgroup of $G$; such subgroups are called Siegel parabolics. Thanks to our choice of $\bmu_8$, the Schrödinger model for the Weil representation furnishes a section
    $$ \sigma_\ell: P(F) \to \tilde{P} $$
    of $\rev$. It agrees with $s$ on the unipotent radical. Moreover $\sigma_\ell(-1)$ sends $-1 \in \GL(\ell)$ to the element $-1 \in \tilde{G}$ of Definition \ref{def:-1}. See \cite[Proposition 2.7]{Li11}.

    Let $\ell'$ be another Lagrangian such that $W = \ell \oplus \ell'$, which always exists. Denote the corresponding stabilizer by $P'$. Then
    $$ M := P \cap P' \simeq \GL(\ell) \qquad \text{(canonically)}, $$
    is a common Levi component of $P$ and $P'$. It turns out that $(\sigma_\ell)|_{M(F)}$ is independent of the choice of $\ell, \ell'$ \cite[Remarque 4.8]{Li11}. More generally, the Levi subgroups of $G$ arise from data
    $$ (\ell^i, \ell_i)_{i \in I}, \quad W^\flat $$
    where
    \begin{compactenum}[(i)]
      \item $I$ is a finite set,
      \item for each $i \in I$, $(\ell^i \oplus \ell_i, \angles{\cdot|\cdot})$ is a symplectic $F$-vector space for which $\ell^i$, $\ell_i$ are Lagrangians,
      \item $(W^\flat, \angles{\cdot|\cdot})$ is a symplectic $F$-vector space,
      \item we require that $W = \bigoplus_{i \in I} (\ell^i \oplus \ell_i) \oplus W^\flat$ as symplectic $F$-vector spaces (orthogonal direct sum).
    \end{compactenum}
    The Levi subgroup is then given by $M = \prod_{i \in I} \GL(\ell_i) \times \Sp(W^\flat) \hookrightarrow G$. We will often forget the Lagrangians and write
    $$ M = \prod_{i \in I} \GL(n_i) \times \Sp(W^\flat) $$
    whenever $I$ is a given set of indexes and
    $$ \dim_F W^\flat + 2\sum_{i \in I} n_i = 2n. $$
    The conjugacy classes of Levi subgroups of $\Sp(W)$ are in bijection with equivalence classes of data $(I, (n_i)_{i \in I}, W^\flat)$ subject to the conditions above.

    Now comes the covering. There exists a canonical isomorphism
    $$ \prod_{i \in I} \GL(\ell_i) \times \Mp(W^\flat) \rightiso \tilde{M} $$
    making the following diagram commutes
    $$ \begin{tikzcd}
      \prod_{i \in I} \GL(\ell_i) \times \Mp(W^\flat) \arrow{r}[above]{\sim} \arrow{rd}[left, inner sep=1em]{(\identity, \rev)} & \tilde{M} \arrow{d}[right]{\rev} \arrow[hookrightarrow]{r} & \tilde{G} \arrow{d}[right]{\rev} \\
      & \prod_{i \in I} \GL(\ell_i) \times \Sp(W^\flat) \arrow[hookrightarrow]{r} & G
    \end{tikzcd} $$
    see \cite[\S 5.4]{Li11}. These sections are all characterized using the character $\Theta_\psi$ of the Weil representation.
  \item Since we will invoke global arguments, the unramified setting is also needed. Assume $F$ is non-archimedean of residual characteristic $p>2$, $\psi|_{\mathfrak{o}_F} \equiv 1$ but $\psi|_{\mathfrak{p}_F^{-1}} \not\equiv 1$, and that $(W, \angles{\cdot|\cdot})$ admits an $\mathfrak{o}_F$-model with ``good reduction''; we refer to \cite[\S 2.3]{Li11} for precise definitions. Set $L := W(\mathfrak{o}_F)$, a lattice in $W = W(F)$, then
    $$ K := \Stab_{\Sp(W)}(L) $$
    is a \emph{hyperspecial subgroup} of $\Sp(W)$ -- it is exactly the group of $\mathfrak{o}_F$-points of $\Sp(W)$ with respect to the relevant integral model. Moreover, the lattice model for the Weil representation furnishes a section $s_L: K \to \tilde{K}$ of $\rev$. We record the following useful fact.

    \begin{lemma}
      The element $s_L(-1)$ equals the $-1 \in \Mp(W)$ in Definition \ref{def:-1}.
    \end{lemma}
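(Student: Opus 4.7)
The element $-1 \in \Mp(W)$ of Definition \ref{def:-1} is uniquely characterized inside $\rev^{-1}(-1) \subset \Mp(W)$ by the property that $\omega_\psi^{\pm}(-1) = \pm\identity$: any other lift differs from it by a scalar $\zeta \in \bmu_8$, and the characterization forces $\zeta = 1$. Since $s_L(-1)$ is a lift of $-1 \in \Sp(W)$, it suffices to verify that $\omega_\psi(s_L(-1))$ preserves the canonical decomposition $\omega_\psi = \omega_\psi^+ \oplus \omega_\psi^-$ with eigenvalues $+1$ on $\omega_\psi^+$ and $-1$ on $\omega_\psi^-$.

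\noindent First, I would realize $\omega_\psi$ concretely in the lattice model $S_L$ attached to $L$. By the very construction of the section $s_L$, the operator $\omega_\psi(s_L(k))$ is the explicit automorphism $T_L(k)$ of $S_L$ dictated by the lattice-model formulas, for every $k \in K = \Stab_{\Sp(W)}(L)$. The key technical point is that under the unramified hypotheses ($p>2$, $\psi|_{\mathfrak{o}_F}\equiv 1$ but non-trivial on $\mathfrak{p}_F^{-1}$, and $L$ self-dual with respect to $\psi \circ \angles{\cdot\mid\cdot}$), the would-be Weil-index cocycle trivializes on $K$, so that $k \in K$ simply acts by $(T_L(k)f)(x) = f(k^{-1}x)$ on the natural $\C$-vector space underlying $S_L$ (functions on $W/L$ satisfying the appropriate $\psi$-equivariance along $L$). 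Specializing to $k = -1$ gives the involution $(T_L(-1)f)(x) = f(-x)$.

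\noindent Second, the involution $T_L(-1)$ decomposes $S_L$ into its $(+1)$- and $(-1)$-eigenspaces, i.e.\ the subspaces of even and odd functions. Since $-1$ is central in $\Sp(W)$, this decomposition is $\Mp(W)$-stable, hence by the known irreducibility of $\omega_\psi^+$ and $\omega_\psi^-$ it must coincide with $\omega_\psi^+ \oplus \omega_\psi^-$ in some order. To match the two, I would pin down a distinguished vector: the characteristic function of $0$ in the lattice model is even, non-zero, and unramified-spherical, and it represents the same vector as the Gaussian-like vector in the Schrödinger model generating $\omega_\psi^+$ (via any standard intertwiner between the two models). This identifies the even part with $\omega_\psi^+$ and the odd part with $\omega_\psi^-$, so $\omega_\psi^{\pm}(s_L(-1)) = \pm\identity$, which is exactly the defining property of Definition \ref{def:-1}. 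Uniqueness then yields $s_L(-1) = -1 \in \Mp(W)$.

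\noindent The main obstacle I anticipate is the bookkeeping inside the lattice model: ensuring that no stray $8$-th root of unity is absorbed into $T_L(-1)$, and that the intertwiner from the Schrödinger to the lattice model really sends the distinguished even vector to an even function (and not, say, to $\psi(\tfrac{1}{2}\angles{\cdot\mid\cdot})$ times one). Both rest on the good-reduction and $p>2$ assumptions, which make all the relevant Weil indices trivial. Once this is checked, the argument is a one-line character computation combined with the uniqueness built into Definition \ref{def:-1}, in the spirit of the remark that $\Theta_\psi$ is used throughout the paper to pin down elements in fibers of $\rev$.
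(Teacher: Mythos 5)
Your argument is correct, and it takes a genuinely different route from the paper's. The paper gives no self-contained proof: it simply points to \cite{Li11}, either to Proposition~2.13 directly, or to a comparison of the explicit character formulas in Corollaire~4.6 (Maktouf's formula for $\Theta_\psi$ on $\Mp(W)^\dagger$) and Proposition~4.21 (the character computed in the lattice model), the latter route identifying $s_L(-1)$ with the canonical $-1$ by matching values of the genuine function $\Theta_\psi$ --- in keeping with the paper's general philosophy of using the Weil character to pin down elements in fibers of $\rev$. You instead argue on the representation space itself: $s_L(-1)$ acts in the lattice model by pure parity $f\mapsto f(-\cdot)$ (the section $s_L$ is built so that $K$ acts by precomposition, with no extraneous $8$-th root of unity absorbed, thanks to the unramified hypotheses with $p>2$), and the resulting even/odd decomposition coincides with $\omega_\psi^+\oplus\omega_\psi^-$ because the $K$-fixed vector $\mathbbm{1}_L$ is even and lies in $\omega_\psi^+$, so irreducibility of $\omega_\psi^\pm$ together with non-triviality of the odd subspace forces the match; Definition~\ref{def:-1} then gives $s_L(-1)=-1$. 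Your route avoids the explicit character formulas at the cost of the parity bookkeeping you flag. One simplification over what you wrote: you do not actually need to chase an intertwiner from the Schr\"odinger to the lattice model. The $K$-fixed line in $\omega_\psi$ is one-dimensional and, in the Schr\"odinger model, lies in the even functions, i.e.\ in $\omega_\psi^+$ by definition; since $\mathbbm{1}_L$ is $K$-fixed it is automatically in $\omega_\psi^+$, and that is all you need.
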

    \begin{proof}
      This results either from \cite[Proposition 2.13]{Li11}, or from the comparison between the character formulas \cite[Corollaire 4.6]{Li11} and \cite[Proposition 4.21]{Li11} under the hypothesis $p>2$.
    \end{proof}

    In general, let $G$ be a reductive $F$-group. Then $G(F)$ possesses hyperspecial subgroups if and only if $G$ is quasisplit and splits over a unramified extension of $F$ (i.e.\ $G$ is \emph{unramified}); in this case, the hyperspecial subgroups are conjugate under $G_\text{AD}(F)$. Thus it makes sense to define the \emph{unramified Haar measure} on $G(F)$ by requiring that any hyperspecial subgroup has mass $1$. We shall use unramified measures in Theorem \ref{prop:FL}.
\end{enumerate}

In all cases, it is safe to omit the symbols $s$, $\sigma_\ell$, $s_L$. Unless otherwise specified, the subgroups $U(F)$, $K$ will thus regarded as subgroups of $\Mp(W)$; the inverse image $\tilde{M}$ of a Levi subgroup of $\Sp(W)$ will be identified with $\prod_{i \in I} \GL(n_i) \times \Mp(W^\flat)$.

\paragraph{Coverings of metaplectic type}
We need a very mild generalization of Weil's metaplectic coverings. Such a class of coverings should contain all $\Mp(W)$ and is stable under passage to Levi subgroups.\index{covering!metaplectic type}

\begin{definition}[cf. {\cite[Définition 3.1.1]{Li12a}}]\label{def:metaplectic-type}
  Coverings of the form
  $$ (\identity, \rev): \prod_{i \in I} \GL(n_i, F) \times \Mp(W) \to \prod_{i \in I} \GL(n_i, F) \times \Sp(W) $$
  are called the coverings of metaplectic type.
\end{definition}

Modulo the knowledge of $\GL(n_i)$, the harmonic analysis on coverings of metaplectic type reduces immediately to that of $\Mp(W)$. We record a crucial property for Weil's metaplectic coverings.

\begin{theorem}\label{prop:Mp-commute}
  Let $\rev: \tilde{M} \twoheadrightarrow M(F)$ be a covering of metaplectic type. Two elements $\tilde{x}, \tilde{y} \in \tilde{M}$ commute if and only if there images $x, y \in M(F)$ commute.
\end{theorem}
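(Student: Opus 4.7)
The ``only if'' direction follows by applying $\rev$. For ``if'', since a covering of metaplectic type decomposes as $\tilde{M} = \prod_{i \in I} \GL(n_i,F) \times \Mp(W^\flat)$ with the $\GL$ factors uncovered, it suffices to treat $\tilde{G} = \Mp(W)$.

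Given commuting $x, y \in \Sp(W)$ with arbitrary lifts $\tilde{x}, \tilde{y}$, both $\tilde{x}\tilde{y}$ and $\tilde{y}\tilde{x}$ cover $xy$, so $\tilde{x}\tilde{y} = c(x,y) \cdot \tilde{y}\tilde{x}$ for a unique $c(x,y) \in \bmu_8$, independent of the lifts. A routine check shows that $c$ is bimultiplicative and antisymmetric on the commuting variety: if $x_1, x_2$ and $y$ pairwise commute, then $c(x_1 x_2, y) = c(x_1,y) c(x_2,y)$. Applying Jordan decomposition $x = x_s x_u$, $y = y_s y_u$ (whose four factors pairwise commute), bimultiplicativity reduces the task to three cases: both semisimple, both unipotent, or one of each.

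When both $x, y$ are unipotent, they lie in the unipotent radical $U$ of some Borel subgroup, and the canonical section $s: U(F) \to \tilde{U}$ recalled in \S\ref{sec:splittings} is a group homomorphism; hence $s(x), s(y)$ commute, and $c(x,y) = 1$. For the remaining two cases, the main tool is the Weil representation character $\Theta_\psi$: as a class function, one has $\Theta_\psi(\tilde{x}\tilde{y}) = \Theta_\psi(\tilde{x}(\tilde{y}\tilde{x})\tilde{x}^{-1}) = \Theta_\psi(\tilde{y}\tilde{x})$, while genuineness yields $\Theta_\psi(\tilde{x}\tilde{y}) = c(x,y)\,\Theta_\psi(\tilde{y}\tilde{x})$. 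By Maktouf's explicit formula recalled in \S\ref{sec:Weil-rep}, $\Theta_\psi$ is nowhere-zero on the dense open set $\Mp(W)^\dagger = \{\tilde{z} : \det(z-1|W) \neq 0\}$, so $c(x,y) = 1$ whenever $\det(yx - 1|W) \neq 0$.

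To remove the non-vanishing hypothesis, fix a semisimple $x$ and view $c(x,\cdot)$ as a continuous homomorphism $Z_{\Sp(W)}(x)(F) \to \bmu_8$, necessarily locally constant. Since $x$ is semisimple, $Z_{\Sp(W)}(x)$ is reductive, and the set $\{y : \det(yx-1|W) \neq 0\}$ is the complement of a proper Zariski-closed subset, hence open and dense in $Z_{\Sp(W)}(x)(F)$ in the $F$-topology. Local constancy then forces $c(x,\cdot) \equiv 1$, settling the mixed and semisimple-semisimple cases. The main obstacle is verifying that this complement is indeed non-empty for every semisimple $x$, which follows from the abundance of regular elements in the reductive group $Z_{\Sp(W)}(x)$: a sufficiently generic $y$ shifts the spectrum of $yx$ away from $1$. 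Technical care is also needed to legitimately separate the commutator along a Jordan decomposition, since the individual lifts $\tilde{x}_s, \tilde{x}_u$ are only defined up to $\bmu_8$, but this ambiguity cancels in the bimultiplicative formula.
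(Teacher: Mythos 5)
Your proof is correct, but it takes a genuinely different route from the paper's. The paper disposes of the theorem in one line, citing \cite[Chapitre 2]{MVW87}, where the commutation property of $\Mp(W)$ is established by explicit analysis of the Rao/Perrin cocycle; the citation as given only covers the non-archimedean case. Your argument instead builds a self-contained proof out of ingredients already set up in \S\ref{sec:review-metaplectic}: the commutator obstruction $c(x,y)\in\bmu_8$ (well-defined because $\bmu_8$ is central, bimultiplicative on commuting triples), the canonical homomorphic splitting $s: U(F)\to\tilde U$ over unipotent radicals, and the Weil character $\Theta_\psi$ -- specifically, its invariance under conjugation and its nowhere-vanishing on $\Mp(W)^\dagger$ via Maktouf's formula. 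Using $\Theta_\psi$ as a ``coordinate'' to detect $c=1$ on a dense open set, then spreading out by local constancy, is a nice idea; and the Jordan-decomposition reduction neatly separates the cases. What this buys: a uniform treatment of archimedean and non-archimedean $F$, and a conceptual explanation for why the commutation property is a genuinely metaplectic phenomenon (tied to the existence and explicit form of $\Theta_\psi$). What the paper's citation buys: brevity.

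Two steps are stated more loosely than they deserve and should be filled in. First, in the unipotent-unipotent case, you need commuting unipotent elements of $\Sp(W)(F)$ to lie in $R_u(P)(F)$ for some $F$-rational parabolic $P$, not merely in the unipotent radical of some $\bar F$-Borel; this follows from the theorem of Borel-Tits on unipotent $F$-subgroups (the Zariski closure of the group generated by $x$ and $y$ is a connected commutative unipotent $F$-subgroup). You then use that $s$ is a group homomorphism on $R_u(P)(F)$, which is indeed part of the construction cited in \S\ref{sec:splittings} and is forced by the uniqueness of continuous sections over a uniquely divisible group. Second, the non-emptiness of $\{y\in Z_{\Sp(W)}(x)(F):\det(yx-1\,|\,W)\neq 0\}$ -- equivalently, that $\det(yx-1)=0$ does not hold identically on $Z_{\Sp(W)}(x)$ -- is asserted via ``abundance of regular elements'' but not argued. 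The cleanest route: since $\Sp(W)$ is simply connected, $Z_{\Sp(W)}(x)$ is connected reductive (Steinberg) and $x$ lies in its center, hence in every maximal $F$-torus $T$ of it; the weights of $T$ on $W$ are nonzero characters, so $\det(yx-1)=0$ cuts out a proper closed subset of $T$, and Zariski density of $T(F)$ in $T$ (tori are rational) gives a suitable $F$-point $y$.
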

\begin{proof}
  As said before, it suffices to prove this for $\Mp(W) \twoheadrightarrow \Sp(W)$. This is well-known, see eg.\ \cite[Chapitre 2]{MVW87} for the non-archimedean case.
\end{proof}

\subsection{The global case}
Assume that $F$ is a number field. Fix a non-trivial additive character $\psi = \prod_v \psi_v: \A_F/F \to \Sph^1$.

Let $(W, \angles{\cdot|\cdot})$ be a symplectic $F$-vector space equipped with an $\mathfrak{o}_F$-model. Denote by $\Sp(W, \A_F) := \Resprod_v \Sp(W_v)$ the locally compact group of $\A_F$-points of $\Sp(W)$, where $W_v := W \otimes_F F_v$. One may still define Weil's metaplectic covering in the adélic setup: it is an eightfold central extension
$$ 1 \to \bmu_8 \to \Mp(W, \A_F) \xrightarrow{\rev} \Sp(W, \A_F) \to 1 $$
of locally compact groups. Moreover,
\begin{enumerate}[(i)]
  \item one may identify $\Mp(W, \A_F)$ with the quotient $\Resprod_v \Mp(W_v)/\mathbf{N}$, where
    \begin{compactitem}
      \item $\Resprod_v \Mp(W_v)$ is the restricted product of the local metaplectic groups with respect to the hyperspecial subgroups $K_v = \Sp(W(\mathfrak{o}_v)) \hookrightarrow \Mp(W_v)$ alluded to above, for almost all $v \nmid \infty$, and
      \item we take
	$$ \mathbf{N} := \Ker \left( \bigoplus_v \bmu_8 \xrightarrow{\text{product}} \bmu_8 \right); $$
    \end{compactitem}
  \item $\Mp(W, \A_F)$ still carries the Weil representation $\omega_\psi$, which is identified with the tensor product $\bigotimes_v \omega_{\psi_v}$ of its local avatars;
  \item there exists a unique section $i: \Sp(W, F) \hookrightarrow \Mp(W, \A_F)$ of $\rev$, by which we regard $\Sp(W, F)$ as a discrete subgroup of $\Mp(W, \A_F)$ of finite covolume.
\end{enumerate}
See \cite[\S 2]{Wa88} or \cite[\S 2.5]{Li11} for a detailed construction using the Stone-von Neumann theorem. The upshot is that
\begin{inparaenum}[(i)]
  \item one can develop the theory of automorphic forms and automorphic representations on the covering $\rev: \Mp(W, \A_F) \to \Sp(W, \A_F)$,
  \item and it satisfies all the requirements for the invariant trace formula à la Arthur: see \cite[\S 3.4, IV]{Li14b}. 
\end{inparaenum}

\section{Review of endoscopy}\label{sec:review-endoscopy}
In this section, $F$ always denotes a local field of characteristic zero.

\subsection{Orbital integrals}\label{sec:orbital-integrals}
Let $M$ be an arbitrary connected reductive $F$-group. We fix a Haar measure on $M(F)$. The notations here come from Arthur \cite{Ar96}.

Let $\delta \in M(F)_\text{ss}$ be semisimple. The \emph{Weyl discriminant} of $\delta$ is defined as
$$ D^M(\delta) := \det(1-\Ad(\delta)|\mathfrak{m}/\mathfrak{m}_\delta) \in F^\times. $$

Assume henceforth that $\delta \in M_\text{reg}(F)$, so that $M_\delta$ is an $F$-torus. Fix a Haar measure on $M_\delta(F)$. The normalized orbital integral of $f \in C^\infty_c(M(F))$ along the conjugacy class of $\delta$ is
\begin{gather*}
  f_M(\delta) := |D^M(\delta)|^{\frac{1}{2}} \int_{M_\delta(F) \backslash M(F)} f(x^{-1} \delta x) \dd x
\end{gather*}
where the quotient measure on $M_\delta(F) \backslash M(F)$ is used.

Throughout this article, we assume that these centralizers $M_\delta(F)$ carry prescribed Haar measures that respect conjugacy: for every $x \in M(F)$, the isomorphism $\Ad(x)$ transports the measure on $M_\delta(F)$ to $M_{x\delta x^{-1}}(F)$. Therefore for a given $f$, one may regard $f_M$ as a function $\Gamma_\text{reg}(M) \to \C$.

When $M$ is quasisplit, we have defined the set $\Delta_\text{reg}(M)$ of strongly regular semisimple stable conjugacy classes in $M(F)$. There is an obvious map from $\Gamma_\text{reg}(M) \twoheadrightarrow \Delta_\text{reg}(M)$; it will be written in the form $\delta \mapsto \sigma$. Furthermore, we assume that the Haar measures on the centralizers $M_\delta(F)$ respects stable conjugacy. Define the normalized stable orbital integral along the stable class $\sigma$ as \index{$f^M(\sigma)$}
\begin{align*}
  f^M(\sigma) := \sum_{\delta \mapsto \sigma} f_M(\delta).
\end{align*}
Again, one may regard $f^M$ as a function $\Delta_\text{reg}(M) \to \C$.

The same definitions can be easily adapted to the Lie algebra $\mathfrak{m}$, which will be used in our proofs later.  Define the Weyl discriminant of $X \in \mathfrak{m}_\text{reg}(F)$ as
$$ D^M(X) := \det(\ad(X)| \mathfrak{m}/\mathfrak{m}_X) \in F^\times. $$

Keep the same conventions on Haar measures. The normalized orbital integral of $f \in C^\infty_c(\mathfrak{m}(F))$ is now defined as
\begin{gather*}
  f_M(X) := |D^M(X)|^{\frac{1}{2}} \int_{M_\delta(F) \backslash M(F)} f(x^{-1} X x) \dd x.
\end{gather*}

When $M$ is quasisplit, the stable version $f^M(Y) = \sum_{X \mapsto Y} f_M(X)$ is defined analogously: simply use the notion of stable conjugacy on the level of Lie algebras.

Now consider a covering
$$ 1 \to \bmu_m \to \tilde{M} \xrightarrow{\rev} M(F) \to 1 $$
that is of metaplectic type, or more generally a covering satisfying the assertion in Theorem \ref{prop:Mp-commute}. For $\tilde{\delta} \in \tilde{M}_\text{reg}$ and $f \in C^\infty_{c, \asp}(\tilde{M})$, we may still define the normalized orbital integral \index{$f_{\tilde{M}}(\tilde{\delta})$}
\begin{gather*}
  f_{\tilde{M}}(\tilde{\delta}) := |D^M(\delta)|^{\frac{1}{2}} \int_{M_\delta(F) \backslash M(F)} f(x^{-1} \tilde{\delta} x) \dd x.
\end{gather*}
with the same convention of Haar measures (on $M$).  Note that $f \mapsto f_{\tilde{M}}(\tilde{\delta})$ can be regarded as a genuine distribution on $\tilde{M}$. Moreover, $f_{\tilde{M}}(\noyau\tilde{\delta}) = \noyau^{-1} f_{\tilde{M}}(\tilde{\delta})$ for every $\noyau \in \Ker(\rev) \subset \C^\times$.

A comprehensive treatment of the orbital integrals on coverings can be found in \cite[\S 4]{Li12b}.

\subsection{Spaces of orbital integrals}\label{sec:space-orbital-integral}
\paragraph{The spaces $\orbI(G)$}
Let $G$ be a connected reductive $F$-group. The Haar measures are prescribed in a coherent manner as in \S\ref{sec:orbital-integrals}. For $f \in C^\infty_c(G(F))$ and $\delta \in G_\text{reg}(F)$, we have defined the normalized orbital integral $f_G(\delta)$. Define the $\C$-vector space of orbital integrals on $G$ as
$$ \orbI(G) := \left\{f_G:  \Gamma_\text{reg}(G) \to \C : f \in C^\infty_c(G(F)) \right\}. $$

It is endowed with the linear surjection $C^\infty_c(G(F)) \twoheadrightarrow \orbI(G)$ given by $f \mapsto f_G$. The elements therein are characterized in terms of
\begin{compactenum}[(i)]
  \item local expansion in Shalika germs around each semisimple element, when $F$ is non-archimedean \cite{Vi81};
  \item Harish-Chandra's jump relations, when $F$ is archimedean \cite{Bo94b}.
\end{compactenum}

\begin{remark}\label{rem:completed-otimes}
  Note that $\orbI(G)$ is topologized in the archimedean case. It is actually a strict inductive limit of Fréchet spaces, also known as \emph{LF spaces}; see \cite[pp. 580--581]{Bo94b}. The map $C^\infty_c(G(F)) \twoheadrightarrow \orbI(G)$ is then an open surjection by the remarks in \cite[p.581]{Bo94b}. We also have a natural isomorphism
  $$ \orbI(G_1 \times G_2) \simeq
    \begin{cases}
      \orbI(G_1) \hat{\otimes} \orbI(G_2), & F \text{ archimedean}, \\
      \orbI(G_1) \otimes \orbI(G_2), & \text{otherwise}
    \end{cases} $$
  parallel to
  $$ C^\infty_c(G_1(F) \times G_2(F)) \simeq
    \begin{cases}
      C^\infty_c(G_1(F)) \hat{\otimes} C^\infty_c(G_2(F)), & F \text{ archimedean}, \\
      C^\infty_c(G_1(F)) \otimes C^\infty_c(G_2(F)), & \text{otherwise}
    \end{cases} $$
  The non-archimedean case is quite trivial, whereas in the archimedean case $\hat{\otimes}$ denotes the topological tensor product and the isomorphism is in the category of LF spaces. Any satisfactory description of $\hat{\otimes}$ would require the notion of \emph{nuclear spaces} \cite[\S 50]{Tr67} introduced by Grothendieck. The space $\orbI(G)$ is indeed nuclear since it is a quotient of $C^\infty_c(G(F))$, which is nuclear (cf. the Corollary to \cite[Theorem 51.5]{Tr67}). Regarding the isomorphism between $\orbI(G_1 \times G_2)$ and $\orbI(G_1) \hat{\otimes} \orbI(G_2)$, we refer to the arguments in \cite[Theorem 51.6]{Tr67}.
\end{remark}

The references \cite{Vi81,Bo94b} actually included the case of coverings as well. In particular, we may define the spaces $\Iasp(\tilde{G})$ of anti-genuine orbital integrals for a covering $\rev: \tilde{G} \to G(F)$. It is regarded as a space of anti-genuine functions $\Gamma_\text{reg}(\tilde{G}) \to \C$, equipped with the surjection $C^\infty_{c, \asp}(\tilde{G}) \twoheadrightarrow \Iasp(\tilde{G})$ and an appropriate topology in the archimedean case. \index{$\Iasp(\tilde{G})$}

When $G$ is quasisplit, we may also define the space $S\orbI(G)$ of stable orbital integrals, together with the linear surjection \index{$S\orbI(G)$}
\begin{align*}
  C^\infty_c(G(F)) & \longrightarrow S\orbI(G) \\
  f & \longmapsto \left[ \sigma \mapsto f^G(\sigma) = \sum_{\delta \mapsto \sigma} f_G(\delta) \right];
\end{align*}
in brief, the diagram
$$ \begin{tikzcd}[column sep=small]
  {} & C^\infty_c(G(F)) \arrow[twoheadrightarrow]{ld} \arrow[twoheadrightarrow]{rd} & \\
  \orbI(G) \arrow[twoheadrightarrow]{rr} & & S\orbI(G)
\end{tikzcd} \quad
\begin{tikzcd}[column sep=small]
  {} & f \arrow[mapsto]{ld} \arrow[mapsto]{rd} & \\
  f_G \arrow[mapsto]{rr} & & f^G
\end{tikzcd} $$
commutes. In the archimedean case, the characterization and topology of $S\orbI(G)$ are discussed in \cite[\S 6]{Bo94b}.

Elements in the dual spaces $\orbI(G)^\vee$ (resp. $\Iasp(\tilde{G})^\vee$, $S\orbI(G)^\vee$) are called \emph{invariant distributions} (resp. \emph{invariant genuine distributions}, \emph{stable distributions}) on $G$ or $\tilde{G}$. In each case, an element in $\orbI(G)$, $\orbI(\tilde{G})$ or $S\orbI(G)$ is determined by its restriction to any open dense subset of $G_\text{reg}(F)$ or $\tilde{G}_\text{reg}$, by \emph{the continuity of orbital integrals}.

All these constructions readily generalize to Lie algebras, the relevant definitions will be recalled in \S\ref{sec:image-transfer-cusp}.

\paragraph{Parabolic descent and induction}
Let $P = MU$ be a parabolic subgroup of $G$. Choose an appropriate maximal compact subgroup $K$ of $G(F)$ in good position relative to $M$ (see \cite[Définition 2.4.1]{Li14a}) so that the Iwasawa decomposition $G(F)=P(F)K$ holds. The Haar measures are normalized so that $\mes(K)=1$ and
$$ \int_{G(F)} f(x) \dd x = \iiint_{U(F) \times M(F) \times K} \delta_P(m)^{-1} f(umk) \dd u \dd m \dd k. $$

For $f \in C_c^{\infty}(G(F))$, define
$$ f_P: x \longmapsto \delta_P(x)^{\frac{1}{2}} \iint_{K \times U(F)} f(k^{-1} xu k) \dd u \dd k, \quad x \in M(F). $$
Then $f_P \in C^\infty_c(M(F))$. The linear map $f \mapsto f_P$ induces the \emph{parabolic descent} maps
\begin{align*}
  \orbI(G) & \longrightarrow \orbI(M)^{W^G(M)}, \\
  f_G & \longmapsto f_M
\end{align*}
and its stable version for quasisplit $G$
\begin{align*}
  S\orbI(G) & \longrightarrow S\orbI(M)^{W^G(M)} \\
  f^G & \longmapsto f^M.
\end{align*}
where $W^G(M)$ has a well-defined action on $\orbI(M)$ and $S\orbI(M)$ by conjugation.

For the next result, we recall briefly a well-known comparison between stable conjugacy and ordinary conjugacy. Let $H$ be any connected reductive $F$-group and $S$ be an $F$-subgroup of $H$. Define the pointed set \index{$\mathfrak{D}(S, H; F)$}
\begin{gather}\label{eqn:D}
  \mathfrak{D}(S, H; F) := \Ker\left[ H^1(F, S) \to H^1(F, H)  \right].
\end{gather}
Let $\sigma \in H_\text{reg}(F)$ and take $S=H_\sigma$. We claim that the conjugacy classes in the stable class of $\sigma$ is parametrized by $\mathfrak{D}(H_\sigma, H; F)$. Indeed, consider a $1$-cocycle $c: \Gamma_F \ni \tau \mapsto c(\tau)$ in $H_\sigma$ with trivial image in $H^1(F, H)$, we have $c(\tau) = x^{-1}\tau(x) \in H_\sigma(\bar{F})$ for some $x \in H(\bar{F})$ independent of $\tau$; the corresponding conjugacy class is then represented by $\sigma' := x \sigma x^{-1}$. Therefore we may define the ``relative position'' or the invariant as
\begin{gather}\label{eqn:inv}
  \inv(\sigma', \sigma) := [c] \in \mathfrak{D}(H_\sigma, H; F)
\end{gather}
for any $\sigma'$ stably conjugate to $\sigma$.

\begin{proposition}\label{prop:descent-orbint}
  For $\delta \in (M \cap G_\mathrm{reg})(F)$, we have
  $$ f_G(\delta) = f_M(\delta) $$
  for any $f \in C^\infty_c(G(F))$. When $G$ is quasisplit, for any $\sigma \in M_{G-\mathrm{reg}}(F)$ we have
  $$ f^G(\sigma) = f^M(\sigma). $$
\end{proposition}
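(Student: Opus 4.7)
The plan is to reduce both identities to a direct computation via the Iwasawa decomposition. Since $\delta \in M_\text{reg}(F)$ is strongly regular in $G$, its centralizers satisfy $G_\delta = M_\delta$ (the connected centralizer is a torus sitting inside $M$). Starting from
\[
  f_G(\delta) = |D^G(\delta)|^{\frac{1}{2}} \int_{G_\delta(F) \backslash G(F)} f(x^{-1} \delta x) \dd x,
\]
I would substitute the Iwasawa decomposition $G(F) = U(F)M(F)K$ with its associated measure decomposition. This rewrites the integral as
\[
  |D^G(\delta)|^{\frac{1}{2}} \iiint_{K \times (M_\delta(F) \backslash M(F)) \times U(F)} \delta_P(m)^{-1} f(k^{-1} m^{-1} u^{-1} \delta u m k) \dd u \dd m \dd k.
\]

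The heart of the computation is the $U(F)$-integral. Since $\delta$ is $G$-regular, the map $u \mapsto \delta u \delta^{-1} u^{-1}$ is an automorphism of $U(F)$ whose Jacobian is $|\det(1 - \Ad(\delta)^{-1} | \mathfrak{u})|$. Performing this change of variables transforms $u^{-1}\delta u$ into $\delta \cdot (\delta^{-1} u \delta u^{-1})$, which after reorganisation lets one pull $\delta$ outside and identify the resulting integrand with $f(k^{-1} m^{-1} \delta u' m k)$ (up to replacing $u$ by $u' \in U(F)$). The measure correction combines with the $U$-integration to produce precisely the function $\delta_P(m)^{\frac{1}{2}}(\iint f(k^{-1} m^{-1} \delta u k) \dd u \dd k)$ evaluated at $m$ inside the $M$-integral, which is the definition of $f_P$ evaluated on $\delta$ conjugated by $m$. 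The remaining comparison of discriminants uses the decomposition $\mathfrak{g}/\mathfrak{g}_\delta \simeq \mathfrak{m}/\mathfrak{m}_\delta \oplus \mathfrak{u} \oplus \overline{\mathfrak{u}}$, giving
\[
  |D^G(\delta)| = |D^M(\delta)| \cdot |\det(1 - \Ad(\delta)|\mathfrak{u})| \cdot |\det(1 - \Ad(\delta)|\overline{\mathfrak{u}})|,
\]
and the two $\mathfrak{u}, \overline{\mathfrak{u}}$ factors cancel exactly the Jacobian coming from the $u$-substitution together with a $\delta_P$ factor (using that $|\det(\Ad(\delta)|\mathfrak{u})|$ and $\delta_P(\delta)$ are related, and that stable transport of measures on $M_\delta(F)$ is consistent with that on $G_\delta(F)$). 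Collecting everything yields $f_G(\delta) = f_M(\delta)$.

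For the stable version I would argue that both sides can be written as sums over $M(F)$-conjugacy classes of $\delta$ whose image in $\Delta_\text{reg}(M)$ equals $\sigma$, and apply the first part termwise. The only subtlety is the matching of classes: a $\delta$ that is $G$-stably conjugate to $\sigma$ and lies in $M$ need not be $M$-stably conjugate to $\sigma$; however, since $M_\sigma = G_\sigma$ is a torus, the fibres of the map $\mathfrak{D}(G_\sigma, M; F) \to \mathfrak{D}(G_\sigma, G; F)$ between the pointed sets of \eqref{eqn:D} can be analysed via the exact sequence in Galois cohomology coming from $M_\sigma \hookrightarrow M \hookrightarrow G$, and in the sum defining $f^G(\sigma)$ each $G(F)$-conjugacy class meeting $M(F)$ contributes through its $M(F)$-representatives exactly once (after weighting by the order of the fibre), which is precisely what is needed to match $f^M(\sigma)$.

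The main obstacle, and the only step requiring real care, is the bookkeeping of Haar measures and discriminants in the $U$-substitution; once this identity is pinned down, the stable case is a formal consequence. The argument is essentially Harish-Chandra's classical descent together with the $G$-regularity hypothesis which forces $G_\delta = M_\delta$ and makes $1 - \Ad(\delta)$ invertible on $\mathfrak{u}$.
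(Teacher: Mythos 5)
Your treatment of the first identity is the standard Harish-Chandra descent (Iwasawa decomposition, the $u \mapsto \delta^{-1}u^{-1}\delta u$ substitution with Jacobian $|\det(1-\Ad(\delta)^{-1}|\mathfrak{u})|$, and the factorization of $|D^G(\delta)|$), and while there are minor sign/Jacobian slips the plan is sound; the paper dismisses this part as ``standard'' and does not spell it out.

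The stable case, however, has a gap. You correctly identify that the whole matter is to compare the index sets of the two sums, but the resolution you sketch is not right as stated. The sum defining $f^G(\sigma)$ runs over $\mathfrak{D}(G_\sigma, G; F)$ and that defining $f^M(\sigma)$ over $\mathfrak{D}(M_\sigma, M; F)$; both are subsets of $H^1(F,G_\sigma)$ and one always has the inclusion $\mathfrak{D}(M_\sigma, M; F) \subseteq \mathfrak{D}(G_\sigma, G; F)$ (a cocycle killed in $H^1(F,M)$ is killed in $H^1(F,G)$). The nontrivial content is the \emph{reverse} inclusion, i.e.\ that this inclusion is an equality, and that is precisely the statement that $H^1(F,M)\hookrightarrow H^1(F,G)$ for a Levi subgroup $M$ --- a specific theorem (used by the paper), not a formal consequence of ``the exact sequence in Galois cohomology coming from $M_\sigma \hookrightarrow M \hookrightarrow G$''. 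Your phrase ``after weighting by the order of the fibre'' is a red flag: neither $f^G(\sigma)$ nor $f^M(\sigma)$ carries any weights in its definition, so if the map $\mathfrak{D}(M_\sigma, M; F)\to\mathfrak{D}(G_\sigma, G; F)$ had nontrivial fibres (or failed to be surjective) the termwise identification would simply be false. The correct argument is that the map is a genuine bijection because of the injectivity $H^1(F,M)\hookrightarrow H^1(F,G)$, after which the first assertion applies termwise with no further bookkeeping.
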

Note that these equalities do not extend to $M_{\text{reg}}(F) \smallsetminus G_{\text{reg}}(F)$, in view of the germ expansions.
\begin{proof}
  The first equality is standard. For the second, observe that for a chosen $\sigma \in M_{G-\text{reg}}(F)$ we have $M_\sigma = G_\sigma$. It is well-known that $H^1(F, M) \hookrightarrow H^1(F, G)$, hence the natural map $\mathfrak{D}(M_\sigma, M; F) \to \mathfrak{D}(G_\sigma, G; F)$ is a bijection. Unwinding definitions, we see that $f^G(\sigma)$ equals
  $$ \sum_{\underbrace{\delta \mapsto \sigma}_{\text{in } G}} f_G(\delta) = \sum_{\underbrace{\delta \mapsto \sigma}_{\text{in } M}} f_G(\delta) = \sum_{\underbrace{\delta \mapsto \sigma}_{\text{in } M}} f_M(\delta) = f^M(\sigma) $$
  by the first assertion.
\end{proof}

\begin{corollary}\label{prop:parabolic-descent-transitivity}
  If $M \subset L$ are Levi subgroups of $G$, then
  $$ (f_L)_M = f_M, \quad f \in C^\infty_c(G(F)) $$
  and, when $G$ is quasisplit:
  $$ (f^L)^M = f^M. $$
\end{corollary}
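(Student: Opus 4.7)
The plan is to verify the identity pointwise on a Zariski-dense open subset of $M_{\text{reg}}(F)$ by iterated application of Proposition \ref{prop:descent-orbint}, and then extend it to all of $\orbI(M)$ by invoking the continuity of orbital integrals. Let $M_{G-\text{reg}}(F) := (M \cap G_{\text{reg}})(F)$; it is Zariski open in $M(F)$ and, since every maximal $F$-torus of $M$ that is also maximal in $G$ (such tori exist by standard theory) contributes regular points, it is dense in $M_{\text{reg}}(F)$. For $\delta \in M_{G-\text{reg}}(F)$ the chain $M_\delta \subset L_\delta \subset G_\delta$ forces $L_\delta$ and $M_\delta$ to be tori, so $\delta$ is strongly $L$-regular and strongly $M$-regular.

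Applying Proposition \ref{prop:descent-orbint} to the three pairs $(G, L)$, $(L, M)$, and $(G, M)$ then yields
$$ (f_L)_M(\delta) = f_L(\delta) = f_G(\delta) = f_M(\delta), \quad \delta \in M_{G-\text{reg}}(F), $$
where the middle equality is obtained by applying Proposition \ref{prop:descent-orbint} (in the form ``$h_L(\delta) = h_M(\delta)$'') to any representative $h \in C^\infty_c(L(F))$ with $h_L = f_L$ in $\orbI(L)$; the choice of $h$ is immaterial since the descent $\orbI(L) \to \orbI(M)$ of \S\ref{sec:space-orbital-integral} is already well-defined. As $(f_L)_M$ and $f_M$ both lie in $\orbI(M)$ and are determined by their restrictions to any open dense subset of $M_{\text{reg}}(F)$ (continuity of orbital integrals, recalled in \S\ref{sec:space-orbital-integral}), coincidence on $M_{G-\text{reg}}(F)$ implies coincidence in $\orbI(M)$. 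The stable version for quasisplit $G$ is proved identically, replacing each step by the stable counterpart in Proposition \ref{prop:descent-orbint} and invoking the continuity of stable orbital integrals on $\Delta_{\text{reg}}(M)$.

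The argument is routine given Proposition \ref{prop:descent-orbint}; the only points requiring care are the density of $M_{G-\text{reg}}(F)$ in $M_{\text{reg}}(F)$ and the fact that the intermediate descent from $L$ to $M$ factors through $\orbI(L)$, so one may freely replace $f_L$ by any $C^\infty_c$-representative. No serious obstacle is anticipated, and in particular no additional manipulations of Haar measures or modulus characters (which would be needed in a direct function-theoretic proof that $(f_P)_Q = f_R$ for $R := M U_Q U_P$) are required.
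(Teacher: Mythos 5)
Your argument is correct and is exactly the intended one: the paper gives no explicit proof, but the corollary is clearly meant to follow by iterating Proposition \ref{prop:descent-orbint} on $(M \cap G_{\text{reg}})(F)$ and invoking the continuity of (stable) orbital integrals, which is precisely what you do. One small labeling slip: in the chain $(f_L)_M(\delta) = f_L(\delta) = f_G(\delta) = f_M(\delta)$ it is the \emph{first} equality, not the middle one, that requires the representative $h \in C^\infty_c(L(F))$ with $h_L = f_L$ (the middle equality $f_L(\delta) = f_G(\delta)$ is the direct form of the Proposition for the pair $(G,L)$) — this does not affect the validity of the proof.
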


The orbital integrals are dense in the space of invariant distributions relative to the topology induced by $\orbI(G)$. Hence $\orbI(G) \to \orbI(M)^{W^G(M)}$ depends only on $M$, not on the choice of $P$ and $K$. The same reasoning works for the stable orbital integrals.

Dualization gives \emph{parabolic induction} maps for distributions
\begin{align*}
  I_P: \orbI(M)^\vee / W^G(M) & \longrightarrow \orbI(G)^\vee, \\
  S\orbI(M)^\vee / W^G(M) & \longrightarrow S\orbI(G)^\vee, \quad G \text{ quasisplit }.
\end{align*}
The naming is justified since for every $\pi \in \Pi(M)$ we have the well-known identity
\begin{gather}\label{eqn:para-descent-character}
  \angles{I_P(\Theta_\pi), f} := \angles{\Theta_\pi, f_P} = \angles{\Theta_{I_P(\pi)}, f}, \quad f \in C^\infty_c(G(F)).
\end{gather}

In exactly the same manner, the parabolic descent $\Iasp(\tilde{G}) \to \Iasp(\tilde{M})^{W^G(M)}$ and its dual are defined for covering groups $\rev: \tilde{G} \to G(F)$.

\subsection{Endoscopy}\label{sec:endoscopy}
Fix a symplectic $F$-vector space $(W, \angles{\cdot|\cdot})$ of dimension $2n$. Write $G := \Sp(W)$ and $\rev: \tilde{G} := \Mp(W) \twoheadrightarrow G(F)$ be the metaplectic covering with $\Ker(\rev) = \bmu_8$.

\paragraph{Elliptic endoscopic data}
\begin{definition}\index{$\EndoE_\text{ell}(\tilde{G})$}
  An elliptic endoscopic datum for $\tilde{G}$ is a pair $(n',n'') \in \Z_{\geq 0}^2$ verifying $n'+n''=n$. The corresponding endoscopic group is
  $$ G^\Endo := \SO(2n'+1) \times \SO(2n''+1). $$
  The set of elliptic endoscopic data of $\tilde{G}$ is denoted by $\EndoE_\text{ell}(\tilde{G})$.
\end{definition}

\begin{remark}
  The definition is similar to that of $\SO(2n+1)$ in some aspects. However,
  \begin{compactitem}
    \item there is no symmetry $n' \leftrightarrow n''$,
    \item there is no ``outer automorphisms'' acting on $G^\Endo$.
  \end{compactitem}
  The latter point will be made clear in the proofs of our main theorems. By a standard abuse of notations, we will often write $G^\Endo$ to mean the endoscopic datum $(n', n'')$ together with the endoscopic group. However, we reiterate that due to the lack of symmetry, the endoscopic data \textsc{cannot} be identified with the corresponding $G^\Endo$. For example, the group $\SO(2n+1)$ appears twice in elliptic endoscopy for $\tilde{G}$, with different rôles.
\end{remark}

\begin{definition}\index{correspondence of classes}
  Fix $(n', n'') \in \EndoE_\text{ell}(\tilde{G})$. There is a correspondence between the semisimple stable conjugacy classes of $G(F)$ and that of $G^\Endo(F)$ in terms of eigenvalues. More precisely, we say that two semisimple elements $\delta \in G(F)$ and $\gamma=(\gamma', \gamma'') \in G^\Endo(F)$ correspond, if the eigenvalues of $\gamma'$, $\gamma''$ and $\delta$ (as elements in $\GL(2n'+1)$, $\GL(2n''+1)$ and $\GL(2n)$) can be arranged into the form
  \begin{gather*}
    a'_{n'}, \ldots, a'_1, 1, (a'_1)^{-1} \ldots (a'_{n'})^{-1}, \\
    a''_{n''}, \ldots, a''_1, 1, (a''_1)^{-1} \ldots (a''_{n''})^{-1}, \quad \text{and} \\
    a'_{n'}, \ldots, a'_1, (a'_1)^{-1} \ldots (a'_{n'})^{-1}, -a''_{n''}, \ldots, -a''_1, -(a''_1)^{-1} \ldots -(a''_{n''})^{-1}
  \end{gather*}
  respectively. In this case we write $\gamma \leftrightarrow \delta$.
\end{definition}
This is similar to the endoscopy for $\SO(2n+1)$, but there is a crucial difference: the eigenvalues of $\delta$ coming from $\SO(2n''+1)$ are twisted by $-1$.

We note that for a given $\gamma$, the conjugacy classes in $G(F)$ corresponding to $\gamma$ form a single stable conjugacy class. In fact, this correspondence induces a map between stable semisimple classes
\begin{gather}\label{eqn:mu}
  \mu: \Delta_\text{ss}(G^\Endo) \longrightarrow \Delta_\text{ss}(G)
\end{gather}
with finite fibers. An element of $\Delta_\text{ss}(G^\Endo)$ is called $G$-regular if its image under $\mu$ is regular. The notion of $G$-regularity is geometric: such elements form the $F$-points of a Zariski open dense subset $G^\Endo_{G-\text{reg}}$ of $G^\Endo$. Note that $G$-regular implies strongly regular in $G^\Endo$.

In \cite{Li11} it is shown that $G^\Endo_\gamma \simeq G_\delta$ for regular classes $\gamma \leftrightarrow \delta$, by describing their centralizers directly. Here we do it in another way which is closer to the spirit of endoscopy. As in \cite[\S 5.1]{Li11}, fix Borel pairs defined over $F$
$$ (B, T_s), \quad (B^\Endo, T^\Endo_s) $$
for $G$ and $G^\Endo$, respectively. Obviously we have
\begin{compactitem}
  \item an isomorphism $T^\Endo_s \rightiso T_s$ between split $F$-tori that preserves eigenvalues (we neglect the eigenvalues $1$ in $G^\Endo$, cf. the definition of the correspondence of classes),
  \item a homomorphism $W(G^\Endo, T^\Endo_s) \to W(G, T_s)$, making $T^\Endo_s \to T_s$ equivariant.
\end{compactitem}

Fix such homomorphisms. For $\gamma \leftrightarrow \delta$ as before, set $T := G_\delta$ and $T^\Endo := G^\Endo_\gamma$. There is a commutative diagram
\begin{equation}\label{eqn:diagram} \begin{tikzcd}
  T_{\bar{F}} \arrow{r}[above]{\Ad(x)} & (T_s)_{\bar{F}} \\
  T^\Endo_{\bar{F}} \arrow{r}[below]{\Ad(y)} \arrow[dashed]{u}[left]{\theta} & (T^\Endo_s)_{\bar{F}} \arrow{u}[right]{\text{defined over } \; F}
\end{tikzcd} \end{equation}
for some $x \in G(\bar{F})$ and $y \in G^\Endo(\bar{F})$; each arrow is invertible.

In view of the definition of $\gamma \leftrightarrow \delta$, we have $\theta(\gamma) \in W(G_{\bar{F}}, T_{\bar{F}}) \cdot \delta$. Upon modifying $x$, it can even be arranged that
\begin{equation}\label{eqn:theta-gamma-delta}
  \delta = \theta(\gamma).
\end{equation}

\begin{lemma}[cf. {\cite[Lemma 5.1]{Ad98}}]\label{prop:diagram}
  In the setup of \eqref{eqn:diagram}, one can choose $x, y$ so that $\theta$ is defined over $F$. The $F$-isomorphism $\theta$ so obtained is unique up $W(G, T)(F)$-action.

  In particular, $\gamma \leftrightarrow \delta$ implies $G^\Endo_\gamma \simeq G_\delta$.
\end{lemma}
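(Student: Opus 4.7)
The plan is to define $\theta$ explicitly as the composition
\[ \theta := \Ad(x)^{-1} \circ \iota \circ \Ad(y) : T^\Endo_{\bar F} \rightiso T_{\bar F}, \]
where $\iota : (T^\Endo_s)_{\bar F} \rightiso (T_s)_{\bar F}$ is the $F$-defined isomorphism of split tori fixed by the chosen Borel pairs, then adjust $x, y$ by Weyl elements to kill the Galois obstruction. First I would use the hypothesis $\theta(\gamma) \in W(G_{\bar F}, T_{\bar F}) \cdot \delta$ to replace $x$ by $w_0 x$ for a suitable $w_0 \in N_G(T_s)(\bar F)$, thereby achieving \eqref{eqn:theta-gamma-delta}, namely $\theta(\gamma) = \delta$.

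The core computation is the Galois twist. For $\tau \in \Gamma_F$, since $\iota$, $T$ and $T^\Endo$ are all $F$-defined,
\[ \tau(\theta) \circ \theta^{-1} = \Ad(\tau(x))^{-1} \circ \iota \circ \Ad(\tau(y) y^{-1}) \circ \iota^{-1} \circ \Ad(x). \]
The element $\tau(y) y^{-1}$ lies in $N_{G^\Endo}(T^\Endo_s)(\bar F)$, and the given homomorphism $W(G^\Endo, T^\Endo_s) \to W(G, T_s)$ that makes $\iota$ equivariant allows us to rewrite the middle piece as $\Ad$ of an element of $N_G(T_s)(\bar F)$. Combining with the outer conjugations by $x^{\pm 1}$ (noting $x T_{\bar F} x^{-1} = (T_s)_{\bar F}$), we conclude that $\tau(\theta) \circ \theta^{-1} = \Ad(w_\tau)$ for some $w_\tau \in N_G(T)(\bar F)$, i.e.\ an element of the absolute Weyl group $W(G_{\bar F}, T_{\bar F})$.

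The key observation is then that both $\gamma$ and $\delta$ are $F$-rational. Therefore
\[ \tau(\theta)(\gamma) \;=\; \tau\bigl(\theta(\tau^{-1}\gamma)\bigr) \;=\; \tau(\theta(\gamma)) \;=\; \tau(\delta) \;=\; \delta \;=\; \theta(\gamma), \]
so $\Ad(w_\tau)$ fixes $\delta$. Strong regularity of $\delta$ gives $Z_G(\delta) = T$, forcing $w_\tau \in T(\bar F)$, i.e.\ $\Ad(w_\tau)$ is trivial on $T_{\bar F}$ and $\tau(\theta) = \theta$. Thus $\theta$ is already defined over $F$ once \eqref{eqn:theta-gamma-delta} holds, and the resulting $F$-isomorphism induces $G^\Endo_\gamma = T^\Endo \simeq T = G_\delta$ over $F$.

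For uniqueness, two valid $F$-defined $\theta, \theta'$ differ by $\theta' \circ \theta^{-1}$, an $F$-defined automorphism of $T_{\bar F}$ induced by conjugation by some element of $N_G(T)(\bar F)$; by the Galois-descent characterization \eqref{eqn:W-intermediate} this is exactly an element of $W(G, T)(F)$, and conversely any such element produces another valid $\theta$ by post-composition. The only delicate point in the whole argument is the bookkeeping that ensures $w_\tau$ genuinely lies in the absolute Weyl group rather than in the larger torus automorphism group --- this is precisely where the assumed compatibility $W(G^\Endo, T^\Endo_s) \to W(G, T_s)$ of the Borel pairs is used.
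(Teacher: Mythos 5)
Your proof is correct and follows essentially the same route as the paper's: you explicitly compute the Galois twist $\tau(\theta)\circ\theta^{-1}$ as $\Ad(w_\tau)$ for some $w_\tau$ representing an element of the absolute Weyl group, then invoke strong regularity of $\delta$ (via $\theta(\gamma)=\delta$ and $F$-rationality of both sides) to force $w_\tau \in T(\bar F)$, and for uniqueness you appeal to the characterization \eqref{eqn:W-intermediate} of $W(G,T)(F)$. The paper organizes the steps in the opposite order (uniqueness first, then existence) and phrases the Galois-twist step via a pair of commutative diagrams rather than the explicit cocycle formula, but the underlying ideas and the places where the hypotheses enter are identical.
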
 \index{standard isomorphism}
In \cite{Ad98}, Adams called such $\theta$ a \emph{standard isomorphism}. 

\begin{proof}
  Let us prove the uniqueness first. Suppose that the $F$-isomorphisms $\theta$, $\theta'$ arise from the pairs $(x,y)$ and $(x',y')$, respectively. We need to show that $\theta$ equals $\theta'$ up to $W(G, T)(F)$. Take $w \in W(G^\Endo_{\bar{F}}, T^\Endo_{\bar{F}})$ such that $\Ad(y') = \Ad(w)\Ad(y)$. Using the equivariance of $T^\Endo_s \to T_s$ with respect to $W(G^\Endo, T^\Endo_s) \to W(G, T_s)$, we choose $x''$ so that $\Ad(w)\Ad(x'') = \Ad(x')$. Then $\theta'$ equals the $F$-isomorphism arising from $(x'', y)$.
  
  Observe that $\Ad((x'')^{-1}x): T_{\bar{F}} \rightiso T_{\bar{F}}$ is defined over $F$. Hence $(x'')^{-1}x$ modulo $T(\bar{F})$ is in $W(G, T)(F)$, that is, $\theta$ and $\theta'$ differ by an element of $W(G, T)(F)$.
  
%
%
  To show the existence of a $\theta$ defined over $F$, we begin by choosing $x, y$ satisfying \eqref{eqn:theta-gamma-delta}. For any $\tau \in \Gamma_F$, the commutative diagrams
  $$ \begin{tikzcd}
    T_{\bar{F}} & \arrow{l}[above, inner sep=1em]{\Ad(x)^{-1}} (T_s)_{\bar{F}} \\
    T^\Endo_{\bar{F}} \arrow{r}[below, inner sep=1em]{\Ad(y)} \arrow{u}[left]{\theta} & (T^\Endo_s)_{\bar{F}} \arrow{u}
  \end{tikzcd} \quad
  \begin{tikzcd}
    T_{\bar{F}} & \arrow{l}[above, inner sep=1em]{\Ad(\tau(x))^{-1}} (T_s)_{\bar{F}} \\
    T^\Endo_{\bar{F}} \arrow{r}[below, inner sep=1em]{\Ad(\tau(y))} \arrow{u}[left]{{}^\tau\theta} & (T^\Endo_s)_{\bar{F}} \arrow{u}
  \end{tikzcd} $$
  both send $\gamma \in T^\Endo(F)$ to $\delta \in T(F)$. By the same reasoning as before, but without worry about rationality, we see that $\theta$ and ${}^\tau \theta$ differ by some element in $W(G_{\bar{F}}, T_{\bar{F}})$. The regularity of $\delta$ forces $\theta = {}^\tau\theta$, thereby proving the rationality of $\theta$.
\end{proof}

Next, consider a covering of metaplectic type
$$ (\identity, \rev): \tilde{M} = \prod_{i \in I} \GL(n_i, F) \times \Mp(W^\flat) \twoheadrightarrow \prod_{i \in I} \GL(n_i, F) \times \Sp(W^\flat) = M(F); $$
see Definition \ref{def:metaplectic-type}.

\begin{definition}\label{def:endoscopy-M}
  Let $\tilde{M} \twoheadrightarrow M(F)$ be a covering of metaplectic type described above. Define the set $\EndoE_\text{ell}(\tilde{M})$ of elliptic endoscopic data of $\tilde{M}$ to be $\EndoE_\text{ell}(\Mp(W^\flat))$. In other words, an elliptic endoscopic datum is a pair $(m', m'') \in \Z_{\geq 0}^2$ with $m'+m'' = \frac{1}{2} \dim_F W^\flat$; the corresponding endoscopic group is
  $$ M^\Endo := \prod_{i \in I} \GL(n_i) \times \SO(2m'+1) \times \SO(2m''+1). $$

  Let $\delta = (\delta_{\GL}, \delta^\flat)$ and $\gamma = (\gamma_{\GL}, \gamma^\flat)$ be semisimple elements in $M(F)$ and $M^\Endo(F)$, respectively, where the components in $\prod_i \GL(n_i)$ carry the subscript $\GL$. We say that $\delta$ corresponds to $\gamma$, denoted by $\delta \leftrightarrow \gamma$, if
  \begin{compactitem}
    \item $\delta_{\GL}$ and $\gamma_{\GL}$ are conjugate,
    \item $\delta^\flat \leftrightarrow \gamma^\flat$.
  \end{compactitem}

  As before, we obtain a map $\mu: \Delta_\text{ss}(M^\Endo) \to \Delta_\text{ss}(M)$ on semisimple conjugacy classes, of the form $(\identity, \mu^\flat)$. The notions of $M$-regularity, etc., are defined in the obvious way.
\end{definition}

Simply put, elliptic endoscopy for $\tilde{M}$ does not affect the components in $\prod_{i \in I} \GL(n_i)$.

\paragraph{Stable conjugacy}
Recall the notion of stable conjugacy in $\tilde{G}$ of regular semisimple elements in \cite[\S 5.2]{Li11}. Firstly, recall that $(\Theta^+_\psi - \Theta^-_\psi)(\cdot)$ can be evaluated on $\tilde{G}_\text{reg}$. \index{stable conjugacy}

\begin{definition}[Adams {\cite[\S 3]{Ad98}}]\label{def:stable-conj}
  Two regular semisimple elements $\tilde{\delta}, \tilde{\delta}' \in \tilde{G}$ are called \emph{stably conjugate} whenever
  \begin{compactenum}[(i)]
    \item their images $\delta, \delta'$ in $G(F)$ are stably conjugate in the ordinary sense, and
    \item $\Theta^+_\psi - \Theta^-_\psi$ takes the same value at $\tilde{\delta}$ and $\tilde{\delta}'$.
  \end{compactenum}
\end{definition}

\begin{lemma}\label{prop:stable-conj-lifting}
  Assume that $\delta_0, \delta_1 \in G_\mathrm{reg}(F)$ are stably conjugate, and let $\tilde{\delta}_0 \in \rev^{-1}(\delta_0)$. There exists a unique $\tilde{\delta}_1 \in \rev^{-1}(\delta_1)$ such that $\tilde{\delta}_1$ is stably conjugate to $\tilde{\delta}_0$.
\end{lemma}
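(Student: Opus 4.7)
My plan is to reduce the problem to a single scalar equation on the $\bmu_8$-torsor $\rev^{-1}(\delta_1)$. By Definition \ref{def:stable-conj}, given that $\delta_0$ and $\delta_1$ are already stably conjugate in $G(F)$, a lift $\tilde{\delta}_1 \in \rev^{-1}(\delta_1)$ is stably conjugate to $\tilde{\delta}_0$ if and only if
\[ (\Theta^+_\psi - \Theta^-_\psi)(\tilde{\delta}_1) = (\Theta^+_\psi - \Theta^-_\psi)(\tilde{\delta}_0). \]
Using the identity in Definition \ref{def:-1}, this is equivalent to the single equation $\Theta_\psi(-\tilde{\delta}_1) = \Theta_\psi(-\tilde{\delta}_0)$, and my task becomes producing a unique solution $\tilde{\delta}_1$ in the $\bmu_8$-torsor.

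The essential input will be Maktouf's formula for $\Theta_\psi$ on $\Mp(W)^\dagger$ recalled in \S\ref{sec:Weil-rep}. Before invoking it I need to check that $-\tilde{\delta}_0, -\tilde{\delta}_1 \in \Mp(W)^\dagger$: this follows from the elementary observation that a strongly regular element of $\Sp(W)$ can have neither $+1$ nor $-1$ as an eigenvalue, since in a symplectic representation such an eigenvalue would occur with even multiplicity and produce a non-toral centralizer. Hence $\det(-\delta_i - 1 \mid W) \neq 0$ for $i=0,1$, and Maktouf's formula expresses $\Theta_\psi(-\tilde{\delta}_i)$ as a nonzero product of an eighth-root-of-unity phase $c(-\tilde{\delta}_i) \in \bmu_8$ (depending genuinely on the lift) and a modulus factor $|\det(-\delta_i - 1 \mid W)|^{-1/2}$ depending only on the characteristic polynomial of $\delta_i$.

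From this, both halves of the lemma follow mechanically. For uniqueness: two candidate lifts of $\delta_1$ differ by some $\noyau \in \bmu_8$, and genuineness turns the scalar equation into $\noyau \cdot \Theta_\psi(-\tilde{\delta}_1) = \Theta_\psi(-\tilde{\delta}_0)$; both sides being nonzero pins down a unique $\noyau$. For existence: pick any initial lift $\tilde{\delta}_1^{(0)} \in \rev^{-1}(\delta_1)$; since $\delta_0$ and $\delta_1$ are $G(\bar{F})$-conjugate they share a characteristic polynomial, so the modulus factors cancel and the ratio $\Theta_\psi(-\tilde{\delta}_0) / \Theta_\psi(-\tilde{\delta}_1^{(0)})$ automatically lies in $\bmu_8$. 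Multiplying $\tilde{\delta}_1^{(0)}$ by this ratio delivers the required $\tilde{\delta}_1$. The only substantive step is invoking the precise shape of Maktouf's formula — its cleavage into a $\bmu_8$-valued phase and a stably invariant modulus — which is exactly the feature that the eightfold covering $\bmu_8$ was chosen to accommodate.
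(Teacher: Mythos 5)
Your argument is correct, and it runs on exactly the ingredients the paper has already set up: Definition \ref{def:stable-conj}, the identity in Definition \ref{def:-1}, and Maktouf's character formula. The reduction of stable conjugacy of lifts to the single scalar condition $\Theta_\psi(-\tilde{\delta}_1) = \Theta_\psi(-\tilde{\delta}_0)$ is clean, and the regularity argument excluding $\pm 1$ as eigenvalues (via even multiplicity forcing a non-toral centralizer) correctly places $-\tilde{\delta}_0, -\tilde{\delta}_1 \in \Mp(W)^\dagger$, so Maktouf's formula applies. The paper's own proof is a citation to \cite[Lemme 5.7]{Li11}, which is established in the same section of that reference where Maktouf's formula is reviewed, so your route is essentially the intended one.

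The only part worth flagging is the precise shape of Maktouf's formula that you are leaning on: you need not only $\Theta_\psi(-\tilde{\delta}_i) \neq 0$ (which gives uniqueness directly by genuineness), but also that it factors as (something in $\bmu_8$) times (a stably invariant modulus such as $|\det(-\delta_i - 1 \mid W)|^{-1/2}$), so that the ratio $\Theta_\psi(-\tilde{\delta}_0)/\Theta_\psi(-\tilde{\delta}_1^{(0)})$ is forced into $\bmu_8$ rather than merely into $\Sph^1$. That factorization --- Weil-index phase over a Cayley-type determinant --- is indeed what the formula provides, and it is precisely why the eightfold covering was chosen, as you observe. Since stably conjugate elements of $G(F)$ are $G(\bar F)$-conjugate, hence $\GL(W)$-conjugate, their characteristic polynomials and thus the determinants $\det(-\delta_i - 1 \mid W)$ agree; that closes the existence step. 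A reader should just be aware that this step is not formal: it genuinely uses the quantitative form of the character formula, not merely its nonvanishing.
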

\begin{proof}
  This is essentially \cite[Lemme 5.7]{Li11}.
\end{proof}
The notion of stable conjugacy is extended to all semisimple classes in \cite[Définition 4.1.1]{Li15}.

Naturally, stable conjugacy can also be defined for coverings of metaplectic type $\tilde{M} = \prod_{i \in I} \GL(n_i) \times \Mp(W^\flat)$. Recall that conjugacy and stably conjugacy are the same on the general linear groups $\GL(n_i)$.

\begin{definition}
  Two elements $((\delta_i)_{i \in I}, \tilde{\delta})$ and $((\delta_{i, 1})_{i \in I}, \tilde{\delta}_1)$ are called stable conjugate when
  \begin{compactenum}[(i)]
    \item $\delta_i$ and $\delta_{i, 1}$ are conjugate for each $i \in I$, and
    \item $\tilde{\delta}$ and $\tilde{\delta}_1$ are stably conjugate in $\Mp(W^\flat)$.
  \end{compactenum}
\end{definition}

\paragraph{Passage to Levi subgroups}
We are ready to define the general endoscopic data for $\tilde{G}$.
\begin{definition}
  The set of endoscopic data of $\tilde{G}$ is defined as
  $$ \EndoE(\tilde{G}) := \bigsqcup_{M/\text{conj}} \EndoE_{\text{ell}}(\tilde{M}) $$
  where $M$ runs over the Levi subgroups of $G$ modulo conjugation; the right hand side is well-defined since $W^G(M)$ leaves $\EndoE_{\text{ell}}(\tilde{M})$ intact.

  To each endoscopic datum in $\EndoE_{\text{ell}}(\tilde{M})$ we attach the endoscopic group $M^\Endo$. The correspondence of semisimple conjugacy classes is given by the composition of
  $$ \Delta_\text{ss}(M^\Endo) \longrightarrow \Delta_\text{ss}(M) \longrightarrow \Delta_\text{ss}(G) $$
  where the rightmost arrow is induced by the inclusion of Levi subgroup $M \hookrightarrow G$, well-defined up to $W^G(M)$.
\end{definition}

\begin{remark}
  We note that
  \begin{enumerate}[(a)]
    \item the same definition works when $F$ is a number field;
    \item our definition of endoscopic data is slightly different from that in \cite{Li12a}, cf. \cite[Proposition 3.1.8]{Li12a};
    \item there is an explication in terms of the dual group of $\tilde{G}$, defined as $\Sp(2n, \C)$ with trivial Galois action, see \cite[\S 3]{Li12a} for details;
    \item we will use diagrams of the form
    \begin{equation}\label{eqn:endo-incomplete} \begin{tikzcd}
      {} & \tilde{G} \\
      M^\Endo \arrow[-,dashed]{r}[above]{\text{ell.}}[below]{\text{endo.}} & \tilde{M} \arrow[hookrightarrow]{u}[right]{\text{Levi}}
    \end{tikzcd} \end{equation}
    to recapitulate the fact that $M^\Endo \in \EndoE_\text{ell}(\tilde{M}) \subset \EndoE(\tilde{G})$.
  \end{enumerate}
  Here the inclusions of Levi subgroup are always taken up to conjugacy.
\end{remark}

We wish to complete a given diagram \eqref{eqn:endo-incomplete} into \index{$G[s]$}
\begin{equation}\label{eqn:endo-complete} \begin{tikzcd}
  G[s] \arrow[-,dashed]{r}[above]{\text{ell.}}[below]{\text{endo.}} & \tilde{G} \\
  M^\Endo \arrow[-,dashed]{r}[above]{\text{ell.}}[below]{\text{endo.}} \arrow[hookrightarrow]{u}[left]{\text{Levi}} & \tilde{M} \arrow[hookrightarrow]{u}[right]{\text{Levi}}
\end{tikzcd} \end{equation}
in all possible ways, where $s$ is some parameter to be described. Such a recipe is given in \cite[\S 3.3]{Li12a} which we review below. Write
\begin{align*}
  M & = \prod_{i \in I} \GL(n_i) \times \Sp(W^\flat), \\
  M^\Endo & = \prod_{i \in I} \GL(n_i) \times \SO(2m' +1) \times \SO(2m'' + 1),
\end{align*}
the latter is attached to $(m', m'') \in \EndoE_\text{ell}(\tilde{M})$. For an endoscopic datum $(n', n'') \in \EndoE_\text{ell}(\tilde{G})$ giving rise to $G[s]$, the embedding $M^\Endo \hookrightarrow G[s]$ is determined by a decomposition $I = I' \sqcup I''$ up to conjugacy: set
\begin{align*}
  G[s] & := \SO(2n'+1) \times \SO(2n''+1), \\
  \prod_{i \in I'} \GL(n_i) \times \SO(2m'+1) & \hookrightarrow \SO(2n'+1), \\
  \prod_{i \in I''} \GL(n_i) \times \SO(2m''+1) & \hookrightarrow \SO(2n''+1).
\end{align*}
where $n' = m' + |I'|$ and $n'' = m'' + |I''|$ and the embeddings are the usual ones for Levi subgroups in odd $\SO$. It follows that the data in \eqref{eqn:endo-complete} are all encoded in the decomposition $I = I' \sqcup I''$, which we take to be the parameter $s$. We are led to the following. \index{$\EndoE_{M^\Endo}(\tilde{G})$}

\begin{definition}\label{def:s}
  Given a diagram \eqref{eqn:endo-incomplete}, we set $\EndoE_{M^\Endo}(\tilde{G})$ to be the set of all (ordered) pairs $(I', I'')$ such that $I = I' \sqcup I''$. To each $s = (I', I'') \in \EndoE_{M^\Endo}(\tilde{G})$ we attach
  $$ (n', n'') := (m' + |I'|, m'' + |I''|) \in \EndoE_\text{ell}(\tilde{G}), $$
  and obtain the completed diagram \eqref{eqn:endo-complete} by the recipe above.
\end{definition}

\begin{remark}\label{rem:EndoE-vs-embedding}
  In general, different elements in $\EndoE_{M^\Endo}(\tilde{G})$ can give rise to the same elliptic endoscopic datum of $\tilde{G}$. If $s, t \in \EndoE_{M^\Endo}(\tilde{G})$ give rise to the same endoscopic datum, the embeddings of $M^\Endo$ will differ by some element in $W^G(M) = \mathfrak{S}(I)$.
\end{remark}

In the diagram \eqref{eqn:endo-complete}, there are two ways to relate stable semisimple classes between $M^\Endo$ and $G$:
\begin{enumerate}[(i)]
  \item going in the direction $M^\Endo \leadsto G[s] \leadsto G$ furnishes a map
    \begin{gather}\label{eqn:mu_1}
      \mu_1: \Delta_\text{ss}(M^\Endo) \to \Delta_\text{ss}(G);
    \end{gather}
  \item going in the direction $M^\Endo \leadsto M \leadsto G$ furnishes a similar map
    \begin{gather}\label{eqn:mu_2}
      \mu_2: \Delta_\text{ss}(M^\Endo) \to \Delta_\text{ss}(G);
    \end{gather}
\end{enumerate}
In contrast with the endoscopy for linear reductive groups, the maps $\mu_1, \mu_2$ can be different.

\begin{lemma}[{\cite[Proposition 3.3.4]{Li12a}}]\label{prop:z[s]} \index{$z[s]$}
  Let $z[s] := ((z_i)_{i \in I}, 1) \in M^\Endo(F)$ be the element of order two defined by
  $$ z_i = \begin{cases} 1, & i \in I', \\ -1, & i \in I''. \end{cases} $$
  Then we have
  $$ \mu_1(t) = \mu_2(z[s] \cdot t), \quad t \in \Delta_\mathrm{ss}(M^\Endo) $$
\end{lemma}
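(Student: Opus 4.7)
The plan is to compute both $\mu_1(t)$ and $\mu_2(t)$ explicitly on the level of eigenvalues (equivalently, on the level of characteristic polynomials for the standard representations of the classical groups involved), since a stable semisimple class in $G=\Sp(W)$ is determined by the multiset of eigenvalues of the standard representation, and analogously on $M^\Endo$ and $M$. Once this translation is made, the statement becomes a purely combinatorial identity between two multisets of eigenvalues, and the role of $z[s]$ is transparent.

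First, I would fix a representative $t=\bigl((t_i)_{i\in I},t',t''\bigr)\in M^\Endo(F)$ with $t_i\in\GL(n_i,F)$ and $t',t''$ lying in $\SO(2m'+1)$, $\SO(2m''+1)$ respectively, and list the eigenvalues: $\{a_{i,j},a_{i,j}^{-1}\}$ from each $\GL(n_i)$ (after passing to Levi of a classical group), and for $t',t''$ the symmetric strings of the form $b'_\bullet,(b'_\bullet)^{-1},1$ and $b''_\bullet,(b''_\bullet)^{-1},1$. Then I would compute $\mu_1(t)$ by first embedding $M^\Endo$ as the Levi $\prod_{i\in I'}\GL(n_i)\times\SO(2m'+1)$ of $\SO(2n'+1)$ and $\prod_{i\in I''}\GL(n_i)\times\SO(2m''+1)$ of $\SO(2n''+1)$, then applying the correspondence $G[s]\leftrightarrow \tilde G$ of the definition, which drops the two trivial eigenvalues and twists by $-1$ precisely the eigenvalues coming from the $\SO(2n''+1)$-factor (this includes the $\GL(n_i)$-part for $i\in I''$). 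By contrast, $\mu_2(t)$ is obtained by first running the endoscopic correspondence $M^\Endo\leftrightarrow \tilde M$, which affects only the $\Sp(W^\flat)$-piece and leaves the $\GL$-components untouched, and then embedding $M$ as a Levi of $G$, where each $\GL(n_i)$ contributes symmetrically the eigenvalues $\{a_{i,j},a_{i,j}^{-1}\}$.

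Comparing the two multisets, the $\Sp(W^\flat)$-parts match verbatim (they both yield the $b'$-part untwisted and the $b''$-part twisted by $-1$). The only discrepancy is in the $\GL(n_i)$-part for $i\in I''$: in $\mu_1(t)$ these eigenvalues appear as $\{-a_{i,j},-a_{i,j}^{-1}\}$, while in $\mu_2(t)$ they appear as $\{a_{i,j},a_{i,j}^{-1}\}$. But multiplying $t_i$ by $z_i=-1$ replaces each $a_{i,j}$ by $-a_{i,j}$ (and hence $a_{i,j}^{-1}$ by $-a_{i,j}^{-1}$), so that $\mu_2(z[s]\cdot t)$ and $\mu_1(t)$ produce identical eigenvalue multisets. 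This gives the equality of stable classes.

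The only subtle point, and the main thing to verify with care, is that this eigenvalue computation really determines the stable class (not merely the conjugacy class) in $\Delta_\mathrm{ss}(G)$: for $\Sp$, two semisimple elements are stably conjugate if and only if they have the same characteristic polynomial in the standard representation, by the well-known description of stable conjugacy for symplectic groups. This legitimates reducing the lemma to the multiset computation above. Everything else is a bookkeeping exercise consistent with the recipe set up in Definition \ref{def:s} and with the correspondence of classes for $\tilde G$.
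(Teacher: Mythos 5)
Your proposal is correct. The paper does not include its own proof here (it cites \cite[Proposition 3.3.4]{Li12a}), but your eigenvalue bookkeeping is the natural argument and matches what that reference does. Two points you handle well and which are exactly where a careless proof would slip: first, you isolate where the two composites actually differ (only in the $\GL(n_i)$-eigenvalues with $i \in I''$, where $\mu_1$ produces $\{-a_{i,j},-a_{i,j}^{-1}\}$ and $\mu_2$ produces $\{a_{i,j},a_{i,j}^{-1}\}$, while the $\Sp(W^\flat)$-contributions agree in both composites since the $-1$-twist on the $\SO(2m''+1)$-eigenvalues appears either way); second, you justify why a stable semisimple class in $\Sp(W)$ is determined by the eigenvalue multiset of the standard representation, which is valid because $\Sp(2n)$ is simply connected (so stable conjugacy of semisimple elements is $\bar F$-conjugacy, detected by the Weyl group $\{\pm1\}^n\rtimes\mathfrak{S}_n$ acting on a maximal torus and hence exactly by the eigenvalue multiset). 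The only cosmetic remark is that $z[s]$ has order $2$ only when $I''\neq\varnothing$; when $I''=\varnothing$ it is the identity and the statement is trivial, so nothing is lost.
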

Since $z[s]$ is central, its translation action on conjugacy classes in $M^\Endo(F)$ is well-defined.

On the other hand, we may also start from some $(n', n'') \in \EndoE_\text{ell}(\tilde{G})$ with endoscopic group $G^\Endo$, together with a Levi subgroup $M^\Endo$ of $G^\Endo$, and try to embed it into a diagram
\begin{equation}\label{eqn:M^Endo-to-s} \begin{tikzcd}
  G^\Endo \arrow[-,dashed]{r}[above]{\text{ell.}}[below]{\text{endo.}} & \tilde{G} \\
  M^\Endo \arrow[-,dashed]{r}[above]{\text{ell.}}[below]{\text{endo.}} \arrow[hookrightarrow]{u}[left]{\text{Levi}} & \tilde{M} \arrow[hookrightarrow]{u}[right]{\text{Levi}}
\end{tikzcd} \end{equation}
and identify $G^\Endo$ with some $G[s]$ as endoscopic data. This situation turns out to be much simpler: write
$$ M^\Endo = \left( \prod_{i \in I'} \GL(n_i) \times \SO(2m'+1) \right) \times \left( \prod_{i \in I''} \GL(n_i) \times \SO(2m''+1)\right) $$
embedded as a Levi subgroup into $G^\Endo = \SO(2n'+1) \times \SO(2n''+1)$, according to the parentheses. 

\begin{lemma}\label{prop:M^Endo-to-s}
  Given a Levi subgroup $M^\Endo$ of $G^\Endo$ as above, there exist
  \begin{compactitem}
    \item $M$: a Levi subgroup of $G$,
    \item $(m', m'') \in \EndoE_\mathrm{ell}(\tilde{M})$ such that the $M^\Endo$ can be realized as the endoscopic group,
    \item $s \in \EndoE_{M^\Endo}(\tilde{M})$
  \end{compactitem}
  such that $G^\Endo = G[s]$ (see \eqref{eqn:endo-complete}) as elliptic endoscopic data of $\tilde{G}$. Moreover, the triple
  $$ \left( M/\mathrm{conj}, (m', m''), s \right) $$
  is unique. 
\end{lemma}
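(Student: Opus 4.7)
\emph{Proof proposal.} The statement is essentially the assertion that the construction in Definition \ref{def:s} can be inverted, so the plan is to directly read off the data $(M, (m',m''), s)$ from the Levi embedding $M^\Endo \hookrightarrow G^\Endo$, then verify both existence and uniqueness by matching the recipe of Definition \ref{def:s}.

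First, I would exploit the product structure $G^\Endo = \SO(2n'+1) \times \SO(2n''+1)$ to factor the Levi as $M^\Endo = M^\Endo_{(1)} \times M^\Endo_{(2)}$, where $M^\Endo_{(k)}$ is a Levi of the $k$-th factor. Using the standard classification of Levi subgroups of odd special orthogonal groups, each factor has the form
\[ M^\Endo_{(1)} = \prod_{i \in I'} \GL(n_i) \times \SO(2m'+1), \quad M^\Endo_{(2)} = \prod_{i \in I''} \GL(n_i) \times \SO(2m''+1), \]
with $m'+|I'|$-style rank conditions $\sum_{i \in I'} n_i + m' = n'$ and $\sum_{i \in I''} n_i + m'' = n''$. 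This produces canonically the decomposition $I = I' \sqcup I''$, the tuple $(n_i)_{i \in I}$, the pair $(m', m'')$, and hence a tentative candidate $s = (I', I'') \in \EndoE_{M^\Endo}(\tilde{G})$.

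Next I would build $M$ by the obvious rule: set $W^\flat$ to be a symplectic $F$-space of dimension $2(m'+m'')$ and take
\[ M = \prod_{i \in I} \GL(n_i) \times \Sp(W^\flat) \subset G = \Sp(W). \]
The identity $\sum_{i \in I} n_i + (m'+m'') = (n'+n'') = n$ shows that $M$ really is a Levi subgroup of $G$, unique up to conjugacy. Then $(m', m'') \in \EndoE_\text{ell}(\tilde{M})$ is a well-defined datum and, plugging $s = (I', I'')$ into Definition \ref{def:s}, one computes
\[ G[s] = \SO(2(m'+|I'|)+1) \times \SO(2(m''+|I''|)+1) = \SO(2n'+1) \times \SO(2n''+1) = G^\Endo, \]
with the embedding of $M^\Endo$ as a Levi of $G[s]$ matching, by construction, the original embedding into $G^\Endo$. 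This settles existence.

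For uniqueness, the key observation is that every piece of the triple is forced by the data $(G^\Endo, M^\Endo)$: the product decomposition $M^\Endo = M^\Endo_{(1)} \times M^\Endo_{(2)}$ is dictated by the product structure of $G^\Endo$; it then determines $I' \sqcup I'' = I$ (up to the expected relabeling in $\mathfrak{S}(I') \times \mathfrak{S}(I'')$) as well as $(m', m'')$; and the conjugacy class of $M$ in $G$ is determined by the multiset $\{n_i\}_{i \in I}$ together with $\dim W^\flat = 2(m'+m'')$. I do not anticipate any real obstacle: the lemma is essentially combinatorial bookkeeping, and the only thing to keep track of is that the rank identities match on both sides. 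The one subtle point worth making explicit is the compatibility of the $G[s] = G^\Endo$ identification as \emph{endoscopic data} (not merely as abstract groups), but since the datum is encoded by the ordered pair $(n', n'')$ and we produce precisely that pair, this is automatic.
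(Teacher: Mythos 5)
Your proof is correct and follows essentially the same route as the paper's: the paper had already written $M^\Endo$ in the factored form $\left(\prod_{i \in I'} \GL(n_i) \times \SO(2m'+1)\right) \times \left(\prod_{i \in I''} \GL(n_i) \times \SO(2m''+1)\right)$ just before the lemma, and its one-line proof simply reads off $I = I' \sqcup I''$, $M = \prod_{i \in I}\GL(n_i)\times \Sp(W^\flat)$, and $s = (I', I'')$; you reconstruct the same data explicitly from the Levi classification of odd $\SO$, which is a minor expansion rather than a different argument.
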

\begin{proof}
  We have already written $(m',m'')$ down. The remaining choices are clear: set $I := I' \sqcup I''$, $M := \prod_{i \in I} \GL(n_i) \times \Sp(W^\flat)$ with $\dim_F W^\flat = 2(m'+m'')$, and $s=(I', I'')$.
\end{proof}

\subsection{Geometric transfer and the fundamental lemma}\label{sec:transfer-FL}
\paragraph{Geometric transfer factors}
The transfer factors are defined in \cite{Li11}. In this article it is more appropriate to call them \emph{geometric transfer factors}. We begin by reviewing the case of an elliptic endoscopic datum $(n', n'')$ for $\tilde{G} = \Mp(W)$. Let $G^\Endo = \SO(2n'+1) \times \SO(2n''+1)$ be the endoscopic group. For $\delta \in G_\text{reg}(F)$, $\tilde{\delta} \in \rev^{-1}(\delta)$ and $\gamma = (\gamma', \gamma'') \in G^\Endo_\text{reg}(F)$ such that $\delta \leftrightarrow \gamma$, we obtain an orthogonal decomposition
\begin{align*}
  W &= W' \oplus W'' \\
  \delta &= (\delta', \delta'')
\end{align*}
by separating the eigenvalues from $\gamma'$ and $\gamma''$; see \cite[\S 5.3]{Li11} for details. The factor $\Delta(\gamma, \tilde{\delta})$ is of the form
$$ \Delta(\gamma, \tilde{\delta}) = \Delta_0(\delta', \delta'') \Delta'(\tilde{\delta}') \Delta''(\tilde{\delta}'') \quad \in \bmu_8 , $$
where \index{$\Delta(\gamma, \tilde{\delta})$}
\begin{align*}
  \tilde{\delta} & = j(\tilde{\delta}', \tilde{\delta}''), \\
  \Delta'(\tilde{\delta}') & := \dfrac{ \Theta^+_\psi - \Theta^+_- }{ |\Theta^+_\psi - \Theta^+_-| } (\tilde{\delta}') \qquad \text{defined relative to } W', \\
  \Delta''(\tilde{\delta}'') & := \dfrac{ \Theta^+_\psi + \Theta^+_- }{ |\Theta^+_\psi + \Theta^+_-| } (\tilde{\delta}'') \qquad \text{defined relative to } W'',
\end{align*}
and $\Delta_0(\delta', \delta'')$ is a factor which has nothing to do with the coverings or $\psi$; it is unnecessary to recall the precise definition here. Note that $\Delta'(\tilde{\delta}') \Delta''(\tilde{\delta}'')$ does not depend on the choice of $(\tilde{\delta}', \tilde{\delta}'')$.

The notations $\Delta_{G^\Endo, \tilde{G}}$ or $\Delta_{(n',n'')}$ will also be used whenever appropriate. Some basic properties are recorded below.
\begin{enumerate}
  \item \emph{Invariance.} The factor $\Delta$ depends only on the conjugacy class of $\tilde{\delta}$ and the stable conjugacy class of $\gamma$.
  \item \emph{Genuineness.} For every $\noyau \in \bmu_8$ we have $\Delta(\gamma, \noyau\tilde{\delta}) = \noyau \Delta(\gamma, \tilde{\delta})$.
  \item \emph{Symmetry.} Write $\Delta_{n', n''}(\cdots)$ to indicate the endoscopic datum in question. Then
    $$ \Delta_{(n',n'')}((\gamma', \gamma''), \tilde{\delta}) = \Delta_{n'', n'}((\gamma'', \gamma'), -\tilde{\delta}) $$
    where $-\tilde{\delta} := (-1) \cdot \tilde{\delta}$ using the $-1 \in \tilde{G}$ of Definition \ref{def:-1}. It is implicit assumed that $(\gamma'', \gamma') \leftrightarrow -\delta$ relative to $(n'', n') \in \EndoE_\text{ell}(\tilde{G})$, which is clear from the definition of the correspondence of classes.
  \item \emph{Cocycle condition.} Let $\delta \leftrightarrow \gamma$ as before, so that $G_\delta$ is a maximal $F$-torus in $G$. If $\tilde{\delta}$ is stably conjugate to $\tilde{\delta}_1$, one can attach a cohomological invariant
    $$ \inv(\tilde{\delta}, \tilde{\delta}_1) = \inv(\delta, \delta_1) \in H^1(F, G_\delta) $$
    by the recipe \eqref{eqn:inv}. Here $H^1(F, G_\delta)$ equals the pointed set $\mathfrak{D}(G_\delta, G; F)$ of \eqref{eqn:D} since $H^1(F, G)$ is trivial, thus equals the abelianized version $\mathfrak{E}(G_\delta, G; F)$ used in the stabilization of trace formula \cite[\S 3.1]{Li15}. It is actually a finite commutative group of exponent $2$. \index{transfer factor!cocycle condition}

    On the other hand, for these data we have the \emph{endoscopic character} \index{endoscopic character}\index{$\kappa_T$}
    $$ \kappa: H^1(F, G_\delta) \to \bmu_2. $$
    Below is a sketch of its definition in \cite[\S 5.3]{Li11}. Decompose $T := G_\delta$ into $T' \times T''$ by separating the eigenvalues from $\gamma'$ and $\gamma''$, then $H^1(F, T'') \rightiso (\bmu_2)^{I''}$ for some finite set $I''$ and $\kappa$ is simply the composition of $H^1(F, T) \twoheadrightarrow H^1(F, T'')$ with the product map $(\bmu_2)^{I''} \to \bmu_2$, $(\lambda_i)_i \mapsto \prod_i \lambda_i$. All in all, the cocycle condition asserts that
    $$ \Delta(\gamma, \tilde{\delta}_1) = \angles{\kappa, \inv(\delta, \delta_1)} \cdot \Delta(\gamma, \tilde{\delta}). $$
  \item \emph{Parabolic descent.} Consider the diagram \eqref{eqn:endo-complete} and keep the notations thereof. An element $\gamma \in M^\Endo \cap G^\Endo_\text{reg}(F)$ is expressed in the form $\gamma = ((\gamma_i)_{i \in I}, \gamma^\flat)$ where $\gamma^\flat \in \SO(2m'+1) \times \SO(2m''+1)$; similarly we write $\tilde{\delta} = ((\delta_i)_{i \in I}, \tilde{\delta}^\flat) \in \tilde{M} \cap G_\text{reg}(F)$ where $\tilde{\delta}^\flat \in \Mp(W^\flat)$. If $\gamma \leftrightarrow \delta$, parabolic descent asserts\index{transfer factor!parabolic descent}
    \begin{gather}\label{eqn:Delta-paradescent}
      \Delta(\gamma, \tilde{\delta}) = \Delta(\gamma^\flat, \tilde{\delta}^\flat),
    \end{gather}
    the right-hand side being defined relative to $(m', m'') \in \EndoE_\text{ell}(\Mp(W^\flat))$. This is the content of \cite[Proposition 5.18]{Li11}.
\end{enumerate}

\paragraph{Transfer of orbital integrals}
Now we can state the geometric transfer and the fundamental lemma. Fix an elliptic endoscopic datum $(n', n'')$ for $\tilde{G}$. For $\gamma \leftrightarrow \delta$ as above, we define the notion of compatible Haar measures on $G^\Endo_\gamma(F)$ and $G_\delta(F)$ via the $G^\Endo_\gamma \simeq G_\delta$ furnished by Lemma \ref{prop:diagram}; the choice of the isomorphism is immaterial. In what follows, the Haar measures on $G(F)$ and $G^\Endo(F)$ are fixed, and we use compatible Haar measures on $G^\Endo_\gamma(F)$ and $G_\delta(F)$ whenever $\gamma \leftrightarrow \delta$.

\begin{theorem}[Geometric transfer {\cite[Théorème 5.20]{Li11}}]\label{prop:geometric-transfer}
  Let $f \in C^\infty_{c, \asp}(\tilde{G})$. There exists $f^\Endo \in C^\infty_c(G^\Endo(F))$ such that for any $\gamma \in \Delta_{G-\mathrm{reg}}(G^\Endo)$, we have
  $$ \sum_{\delta \leftrightarrow \gamma} \Delta(\gamma, \tilde{\delta}) f_{\tilde{G}}(\tilde{\delta}) = (f^\Endo)^{G^\Endo}(\gamma), $$
  where $\delta$ ranges over $\Gamma_\mathrm{reg}(G)$ and $\tilde{\delta} \in \rev^{-1}(\delta)$ is arbitrary.
\end{theorem}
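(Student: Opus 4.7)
The plan is to show that the function
\[ \kappa f : \gamma \longmapsto \sum_{\delta \leftrightarrow \gamma} \Delta(\gamma, \tilde{\delta}) f_{\tilde{G}}(\tilde{\delta}), \qquad \gamma \in \Delta_{G-\text{reg}}(G^\Endo), \]
lies in the image of the stable-orbital-integral map $C^\infty_c(G^\Endo(F)) \twoheadrightarrow S\orbI(G^\Endo)$, restricted to the $G$-regular locus. That image is characterized by explicit local conditions near each semisimple stable class---Shalika-type germ expansions in the non-archimedean case, Harish-Chandra/Bouaziz jump relations in the archimedean case---so the task reduces to verifying those conditions for $\kappa f$.

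The easy half concerns the regular locus itself. The cocycle condition on $\Delta$, combined with the identification $G^\Endo_\gamma \simeq G_\delta$ of Lemma \ref{prop:diagram}, shows that $\kappa f(\gamma)$ depends only on the stable class of $\gamma$; smoothness on $G^\Endo_{G-\text{reg}}(F)$ inherits from the smoothness of anti-genuine orbital integrals on $\tilde{G}_\text{reg}$ and of $\Delta(\gamma, \cdot)$; compact support of $\kappa f$ on the space of stable classes follows from the compact support of $f$ together with the finiteness of the fibres of $\mu$ on strongly regular classes. The substantive content lies in the germ behavior around each non-regular semisimple class $\gamma_0$.

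There I would invoke Harish-Chandra's descent. Writing $\gamma_0 = (\gamma_0', \gamma_0'') \leftrightarrow \delta_0$ and decomposing $W = W' \oplus W''$ according to the eigenvalue pattern, the parabolic descent of the transfer factor \eqref{eqn:Delta-paradescent} together with an exponential or Cayley chart reduces the germ condition for $\kappa f$ at $\gamma_0$ to a matching statement between anti-genuine orbital integrals on $\Lie(Z_{\tilde{G}}(\delta_0))$ and stable orbital integrals on $\Lie(Z_{G^\Endo}(\gamma_0))$. This matching splits into a standard endoscopic transfer of classical type on the $W'$-side, and on the $W''$-side a non-standard endoscopic correspondence between $\syp(2k)$ and $\so(2k+1)$, together with the $-1$-twist accounting for the sign in the definition of $\leftrightarrow$. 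The non-standard piece is essentially tautological since the two root systems are Langlands-dual with the same Weyl group and the same semisimple orbits, so the transfer of orbital integrals is a bijection by construction.

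The main obstacle is the standard endoscopic transfer on classical Lie algebras: this is the substantive analytic input, supplied by Waldspurger's Lie-algebra transfer theorems for classical groups in the non-archimedean case, and by Shelstad's real transfer together with Bouaziz's characterization of $S\orbI$ in the archimedean case. Granting it, the germ expansion of $\kappa f$ at each $\gamma_0$ matches that of $(f^\Endo_{\gamma_0})^{G^\Endo}$ for some $f^\Endo_{\gamma_0} \in C^\infty_c(G^\Endo(F))$ supported near $\gamma_0$, and a standard partition of unity on semisimple stable classes assembles these local pieces into the global $f^\Endo$ required by the theorem.
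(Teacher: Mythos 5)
Your overall plan --- express $\kappa f$ as a putative stable orbital integral, verify invariance/smoothness/support on the regular locus, then establish germ (resp.\ jump) conditions near each singular semisimple class via Harish-Chandra descent, reducing to standard and non-standard endoscopic transfer on Lie algebras --- is indeed the route taken in the cited reference \cite{Li11}, and it matches the discussion in the present paper's introduction and the proof of Theorem \ref{prop:transfer-cusp-surj}, which uses precisely this descent machinery.

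There is, however, one genuine error. You write that the non-standard piece is ``essentially tautological since the two root systems are Langlands-dual with the same Weyl group and the same semisimple orbits, so the transfer of orbital integrals is a bijection by construction.'' This is not so. The isomorphism $\mathfrak{t}_1/W_1 \rightiso \mathfrak{t}_2/W_2$ does give a bijection between regular semisimple \emph{stable classes} of $\syp(2k)$ and $\so(2k+1)$, hence a bijection of \emph{functions} on those classes, but whether the composite of a function $a \in S\orbI(\mathfrak{g}_1)$ with this bijection again lies in $S\orbI(\mathfrak{g}_2)$ --- i.e.\ comes from a compactly supported function on $\mathfrak{g}_2(F)$ --- is exactly what must be proven. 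The Shalika germs (non-archimedean) or jump relations (archimedean) of the two Lie algebras are not a priori comparable, since the nilpotent cones, unipotent orbital integrals, and Fourier transforms on $\syp$ and $\so$ are genuinely different. That they do match is the content of Theorem \ref{prop:nonstd-endoscopy}, due to Ng\^o and Waldspurger, and it is a deep result intertwined with the fundamental lemma (see \cite[\S 1.8]{Wa08}). What \emph{is} formal, once the non-standard transfer theorem is known, is its bijectivity (Remark \ref{rem:nonstd-endoscopy}); you may have conflated this remark with triviality of the transfer itself. So the non-standard piece is a second substantive analytic input, on the same footing as the standard Lie-algebra transfer you correctly identify as deep, not a formality.

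A minor further remark: in the archimedean case the transfer $\mathcal{T}_{(n',n'')}$ is established by the combination of Adams' transfer $\mathcal{T}_{(n,0)}$ and Renard's $\kappa$-transfer $\mathcal{T}_R$ (see \S\ref{sec:Renard}), not directly from Shelstad's theorems; the characterizations of Bouaziz that you mention are the right analytic tool, but the key references are \cite{Ad98, Re98, Re99}.
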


Due to the genuineness of $\Delta$, the term $\Delta(\gamma, \tilde{\delta}) f_{\tilde{G}}(\tilde{\delta})$ is independent of the choice of $\tilde{\delta}$. Call $f^\Endo$ a transfer of $f$.

\begin{remark}\label{rem:ambiguity-transfer}\index{$\mathcal{T}_{(n',n'')}$}
  The transfer $f^\Endo$ as a function on $G^\Endo(F)$ is not unique, but it induces a canonical linear map
  $$ \mathcal{T}_{(n', n'')}: \Iasp(\tilde{G}) \to S\orbI(G^\Endo) $$
  between the spaces of normalized orbital integrals. When $F$ is archimedean, $\mathcal{T}_{(n', n'')}$ is even continuous: see \S\ref{sec:Renard}.
\end{remark}

\begin{theorem}[Fundamental lemma for the unit {\cite[Théorème 5.23]{Li11}}]\label{prop:FL}
  In the unramified setup summarized in \S\ref{sec:splittings}, take the hyperspecial subgroup $K := \Stab_{\Sp(W)}(L)$ of $G(F)$, regarded as a subgroup of $\tilde{G}$, and define $f_K \in C^\infty_{c, \asp}(\tilde{G})$ by \index{$f_K$}
  $$
    f_K(\tilde{x}) = \begin{cases}
      \noyau^{-1}, & \text{ if } \tilde{x} \in \noyau K, \quad \noyau \in \bmu_8, \\
      0, & \text{otherwise}.
    \end{cases}
  $$
  Choose the unramified Haar measures on $G(F)$ and $G^\Endo(F)$. Let $K^\Endo$ be any hyperspecial subgroup of $G^\Endo(F)$), then we may take
  $$ f^\Endo = \mathbf{1}_{K^\Endo} $$
  in Theorem \ref{prop:geometric-transfer} as a transfer of $f$.
\end{theorem}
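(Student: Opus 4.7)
By the continuity of orbital integrals, it suffices to verify the identity of Theorem \ref{prop:geometric-transfer} with $f = f_K$ and $f^\Endo = \mathbbm{1}_{K^\Endo}$ for $\gamma$ ranging over a dense subset of $G$-regular strongly regular elements. My plan is to evaluate both sides directly in the unramified setup: both should vanish off the ``integral'' locus, and on an integral stable class they should match via an explicit computation of the transfer factor using the lattice model of the Weil representation.

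First I would show the vanishing on both sides when no element in the stable class of $\gamma$ meets $K^\Endo$. On the endoscopic side this is immediate from the definition of a stable orbital integral. On the metaplectic side, one must argue that $(f_K)_{\tilde{G}}(\tilde{\delta}) = 0$ whenever $\tilde{\delta} \leftrightarrow \gamma$ but no conjugate of $\delta$ lies in $K$; this reduces to the observation that in the unramified setup with $p > 2$, the correspondence $\gamma \leftrightarrow \delta$ (defined through eigenvalues, with a twist by $-1$ on the second block) preserves integrality at the level of eigenvalues, so non-integrality on one side forces it on the other. Next, assume $\gamma \in K^\Endo \cap G^\Endo_{G\text{-reg}}(F)$ and pick $\delta \in K$ with $\gamma \leftrightarrow \delta$; set $\tilde{\delta}_0 := s_L(\delta) \in \tilde{K}$. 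Via the parametrization \eqref{eqn:inv} of a stable class by $\mathfrak{D}(G_\delta, G; F)$, together with the cocycle condition for $\Delta$ and the analogous parametrization for $G^\Endo$, the identity reduces to the two assertions: (i) $\Delta(\gamma, \tilde{\delta}_0) = 1$ for this distinguished integral pair, and (ii) a matching of the pointed sets $\mathfrak{D}(G_\delta, G; F)$ with $\mathfrak{D}(G^\Endo_\gamma, G^\Endo; F)$ under the endoscopic character $\kappa$ on integral classes, together with compatible Haar measure normalizations on the centralizers (which are automatic for unramified tori).

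The main obstacle lies in (i), the explicit evaluation of the unramified transfer factor. One decomposes $\Delta(\gamma, \tilde{\delta}_0) = \Delta_0(\delta', \delta'') \Delta'(\tilde{\delta}'_0) \Delta''(\tilde{\delta}''_0)$ as in \S\ref{sec:transfer-FL}, and evaluates $\Delta'$, $\Delta''$ using the explicit lattice-model formula for $\Theta^\pm_\psi$ on $\tilde{K}$ recalled in \S\ref{sec:Weil-rep}. The crucial compatibility is the identity $s_L(-1) = -1 \in \Mp(W)$ established in the lemma immediately preceding the statement, which synchronizes the Schrödinger-model normalization used to define $\Delta'$, $\Delta''$ with the hyperspecial section $s_L$; without it, one would only get the right answer up to an uncontrollable eighth root of unity. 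Once (i) is in hand, assertion (ii) together with the remaining sum reduces to the classical unramified fundamental lemma for the endoscopic pair $(\SO(2n'+1) \times \SO(2n''+1), \Sp(2n))$ of Langlands--Shelstad type, proved by Waldspurger and Ngô, and the proof concludes.
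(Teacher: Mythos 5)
The paper does not prove this theorem: it is cited verbatim to [Li11, Théorème 5.23], and no argument is given here. So there is no proof in the present paper to compare against; I will assess your proposal on its own merits.

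Your outline has the right flavor in its first half (vanishing off the integral locus; pinning down the transfer factor at a distinguished base point $\tilde{\delta}_0 = s_L(\delta)$; the crucial rôle of $s_L(-1) = -1$ in synchronizing the lattice and Schrödinger models — this is indeed exactly why that lemma is recorded just before the statement). But the final step contains a genuine gap that makes the plan unworkable as written. You propose to reduce, after pinning down $\Delta(\gamma,\tilde{\delta}_0)$ and invoking the cocycle condition, to ``the classical unramified fundamental lemma for the endoscopic pair $(\SO(2n'+1)\times\SO(2n''+1),\ \Sp(2n))$ of Langlands--Shelstad type, proved by Waldspurger and Ngô.'' No such classical fundamental lemma exists: $\SO(2n'+1)\times\SO(2n''+1)$ is not an elliptic endoscopic group for the linear group $\Sp(2n)$ in the Langlands--Shelstad sense (those have the shape $\Sp(2a)\times\SO(2b)$, etc.), and the correspondence of classes here carries a $(-1)$-twist on the second block which has no counterpart in standard endoscopy. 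The pair you name is precisely the metaplectic endoscopic datum whose fundamental lemma you are trying to prove; you cannot appeal to it as an external input.

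The actual mechanism — visible from the structure of the proof of Theorem \ref{prop:transfer-cusp-surj} and the descent formula \eqref{eqn:descent-Delta} — is Harish-Chandra descent to the Lie algebra around torsion elements of $K$. After descent the metaplectic transfer factor does not reduce to a single Langlands--Shelstad factor; rather it factorizes as a product of \emph{standard} endoscopic transfer factors for smaller classical groups together with a \emph{non-standard} endoscopy contribution for the pair $(\Sp(2n),\Spin(2n+1))$ on the Lie algebra level. One then quotes Ngô's and Waldspurger's Lie-algebra fundamental lemmas for both the standard and non-standard cases. Your proposal skips the descent entirely and has no place to use non-standard endoscopy, which is unavoidable — the twist by $-1$ in the class correspondence is exactly what creates the $(\Sp,\Spin)$ non-standard piece. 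A second, more minor issue: the assertion that $\Delta(\gamma,\tilde{\delta}_0)=1$ at the integral base point is plausible but is not established by the lattice-model formula alone; it also requires the factor $\Delta_0(\delta',\delta'')$ to be controlled, which again comes out of the Lie-algebra matching rather than a direct character computation.
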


More generally, consider a covering $\rev: \tilde{M} \twoheadrightarrow M(F)$ of metaplectic type and let $M^\Endo$ be the endoscopic group associated to some $(m', m'') \in \EndoE_\text{ell}(\tilde{M})$ (Definition \ref{def:endoscopy-M}). For regular semisimple elements $\gamma \leftrightarrow \delta$ and $\tilde{\delta} \in \rev^{-1}(\delta)$, one can still define the transfer factor $\Delta(\gamma, \tilde{\delta})$ as follows. Following \cite[Définition 3.2.3]{Li12a}, for $\gamma \in M^\Endo_\text{reg}(F)$ and $\tilde{\delta} \in \tilde{M}_\text{reg}$, write $\tilde{M} = \prod_{i \in I} \GL(n_i, F) \times \Mp(W^\flat)$ and
$$ \gamma = ((\gamma_i)_{i \in I}, \gamma^\flat), \quad \tilde{\delta} =  ((\delta_i)_{i \in I}, \tilde{\delta}^\flat) $$
as before. If $\gamma \leftrightarrow \delta$, we set
\begin{gather}\label{eqn:transfer-factor-Levi}
  \Delta_{M^\Endo, \tilde{M}}(\gamma, \tilde{\delta}) := \Delta(\gamma^\flat, \tilde{\delta}^\flat).
\end{gather}
The aforementioned properties of genuineness, symmetry, etc. of $\Delta$ still hold for $\Delta_{M^\Endo, \tilde{M}}$: the $\GL$-components do not interfere at all.

\begin{theorem}
  The assertions in Theorems \ref{prop:geometric-transfer} and \ref{prop:FL} continue to hold for an endoscopic datum $(m', m'') \in \EndoE_\mathrm{ell}(\tilde{M})$, with respect to the transfer factor \eqref{eqn:transfer-factor-Levi}.
\end{theorem}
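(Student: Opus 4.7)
The plan is to reduce everything to a product of two well-understood situations: the case of $\Mp(W^\flat)$ with the endoscopic datum $(m',m'')$, already handled by Theorems \ref{prop:geometric-transfer} and \ref{prop:FL}, and the ``trivial'' case of each factor $\GL(n_i)$, which is its own (unique) elliptic endoscopic group because stable and ordinary conjugacy coincide on $\GL(n_i)$. The key observation is that the transfer factor $\Delta_{M^\Endo, \tilde{M}}(\gamma, \tilde{\delta}) = \Delta(\gamma^\flat, \tilde{\delta}^\flat)$ depends only on the $\Mp(W^\flat)$-components, while the $\GL(n_i)$-components of $\gamma$ and $\delta$ are required to be merely conjugate. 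Thus the problem has a natural product structure.

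First I would fix anti-genuine test functions of product form $f = \bigotimes_{i \in I} f_i \otimes f^\flat$ on $\tilde{M}$, where $f_i \in C^\infty_c(\GL(n_i, F))$ and $f^\flat \in C^\infty_{c,\asp}(\Mp(W^\flat))$. By Remark \ref{rem:completed-otimes}, these span (in the appropriate sense, topologically in the archimedean case, algebraically otherwise) the full space $C^\infty_{c,\asp}(\tilde{M})$, so the assertion reduces to $f$ of product form. For such $f$, the orbital integrals factor: for $\tilde{\delta} = ((\delta_i)_i, \tilde{\delta}^\flat) \in \tilde{M}_{\text{reg}}$,
\begin{gather*}
  f_{\tilde{M}}(\tilde{\delta}) = \prod_{i \in I} (f_i)_{\GL(n_i)}(\delta_i) \cdot f^\flat_{\Mp(W^\flat)}(\tilde{\delta}^\flat),
\end{gather*}
provided Haar measures on the centralizers are chosen compatibly with the product decomposition, which we may arrange.

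Next, I apply Theorem \ref{prop:geometric-transfer} to $f^\flat$ and the endoscopic datum $(m',m'') \in \EndoE_\text{ell}(\Mp(W^\flat))$ to obtain a transfer $(f^\flat)^\Endo \in C^\infty_c(\SO(2m'+1) \times \SO(2m''+1))$ satisfying the character identity against the factor $\Delta(\gamma^\flat, \tilde{\delta}^\flat)$. Define
\begin{gather*}
  f^\Endo := \bigotimes_{i \in I} f_i \otimes (f^\flat)^\Endo \in C^\infty_c(M^\Endo(F)).
\end{gather*}
On the endoscopic side, for $\gamma = ((\gamma_i)_i, \gamma^\flat) \in M^\Endo_{M-\text{reg}}(F)$, the stable orbital integral factors as $(f^\Endo)^{M^\Endo}(\gamma) = \prod_i (f_i)_{\GL(n_i)}(\gamma_i) \cdot ((f^\flat)^\Endo)^{\SO \times \SO}(\gamma^\flat)$, because stable conjugacy equals conjugacy on each $\GL(n_i)$. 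By Definition \ref{def:endoscopy-M}, the correspondence $\delta \leftrightarrow \gamma$ for $M^\Endo$ requires exact conjugacy of the $\GL$-components and $\delta^\flat \leftrightarrow \gamma^\flat$ in $\Mp(W^\flat)$; so summing over $\delta \leftrightarrow \gamma$ amounts to summing only over $\delta^\flat \leftrightarrow \gamma^\flat$ (with the $\delta_i$ fixed equal to $\gamma_i$ up to conjugacy). Substituting \eqref{eqn:transfer-factor-Levi} and the factorization of $f_{\tilde{M}}$ reduces the desired identity to the one already furnished by Theorem \ref{prop:geometric-transfer} for $\Mp(W^\flat)$.

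For the fundamental lemma, in the unramified setup one sees that the unit $f_{\tilde{K}}$ (with $\tilde{K}$ a hyperspecial subgroup of $\tilde{M}$, compatible with the product decomposition) decomposes as $\bigotimes_i \mathbbm{1}_{\GL(n_i, \mathfrak{o}_F)} \otimes f_{K^\flat}$, where $K^\flat$ is the hyperspecial subgroup of $\Sp(W^\flat)$ furnished by the integral structure. Applying Theorem \ref{prop:FL} to the $\Mp(W^\flat)$-factor, and the trivial tautology that $\mathbbm{1}_{\GL(n_i, \mathfrak{o}_F)}$ transfers to itself, yields the unramified transfer $\mathbbm{1}_{K^\Endo}$ of the unit. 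The main subtleties, none of which seem to pose a real obstacle, are verifying that our conventions on Haar measures on centralizers (which must respect both ordinary and stable conjugacy across the decomposition $M = \prod_i \GL(n_i) \times \Sp(W^\flat)$) and the normalizations $\mes(\tilde{E}) = \mes(E)$ of \eqref{eqn:measure} propagate correctly through the product, and that in the archimedean setting the product-form functions are indeed dense in $C^\infty_{c,\asp}(\tilde{M})$ so as to extend the identity by continuity.
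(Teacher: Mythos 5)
Your argument is essentially the same as the paper's: reduce to pure tensor test functions $f = f_I \otimes f^\flat$, observe that the correspondence of classes is tautological on the $\GL$-factors (where stable conjugacy coincides with conjugacy) and the transfer factor \eqref{eqn:transfer-factor-Levi} depends only on the $\Mp(W^\flat)$-components, and thereby reduce to Theorems \ref{prop:geometric-transfer} and \ref{prop:FL} for $\Mp(W^\flat)$. The paper's proof is a terser statement of exactly this reduction; your fuller treatment of the measure conventions, the density of product-form test functions in the archimedean case, and the factorization of orbital integrals is correct and fills in details the paper leaves implicit.
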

\begin{proof}
  It suffices to consider the transfer of $f = f_I \otimes f^\flat \in C^\infty_{c, \asp}(\tilde{G})$, where $f_I \in C^\infty_c \left( \prod_{i \in I} \GL(n_i, F) \right)$ and $f^\flat \in C^\infty_{c, \asp}(\Mp(W^\flat))$. Then for $\gamma = ((\gamma_i)_{i \in I}, \gamma^\flat)$ as above,
  $$ \sum_{\delta \leftrightarrow \gamma} \Delta_{M^\Endo, \tilde{M}}(\gamma, \tilde{\delta}) f_{\tilde{M}}(\tilde{\delta}) = f_I((\gamma_i)_{i \in I}) \cdot \left(\; \sum_{\delta^\flat \leftrightarrow \gamma^\flat} \Delta(\gamma^\flat, \tilde{\delta}^\flat) f^\flat_{\Mp(W^\flat)}(\tilde{\delta}^\flat) \right) $$
  since the correspondence of classes is tautological on the $\GL$-components. Hence we are reduced to the corresponding assertions for $\Mp(W^\flat)$.
\end{proof}

For the next results, the maps $f \mapsto f_{\tilde{M}} \in \orbI(\tilde{M})$ and $f^\Endo \mapsto (f^\Endo)^{M^\Endo} \in S\orbI(M^\Endo)$ in \S\ref{sec:space-orbital-integral} will be used. In fact, we need an $s$-twisted version thereof.

\begin{definition}\label{def:s-descent}\index{$(f^\Endo)^{s, M^\Endo}$}
  Let $M$ be a Levi subgroup of $G = \Sp(W)$, and let $M^\Endo$ be an endoscopic group associated to $(m', m'') \in \EndoE_\mathrm{ell}(\tilde{M})$. Consider the diagram \eqref{eqn:endo-complete} determined by some $s \in \EndoE_{M^\Endo}(\tilde{G})$. Write $G^\Endo = G[s]$. For every $f^\Endo \in C^\infty_c(G^\Endo(F))$ we define
  $$ (f^\Endo)^{s, M^\Endo}: \gamma \longmapsto (f^\Endo)^{M^\Endo}(z[s]\gamma) $$
  where $z[s]$ is as in Lemma \ref{prop:z[s]}; this is an element in $S\orbI(M^\Endo)$ and depends only on the image of $(f^\Endo)$ in $S\orbI(G^\Endo)$. We shall call this the \emph{$s$-twisted parabolic descent}.
\end{definition}

\begin{theorem}\label{prop:transfer-parabolic}
  Let $G$, $M$, $M^\Endo$ and $s \in \EndoE_{M^\Endo}(\tilde{G})$ be as in the previous definition. For every $f \in C^\infty_{c, \asp}(\tilde{G})$ let $f^\Endo$ be a transfer of $f$ to $G^\Endo := G[s]$. Then
  $$ (f_{\tilde{M}})^\Endo = (f^\Endo)^{s, M^\Endo} $$
  as elements in $S\orbI(M^\Endo)$. In particular, $(f^\Endo)^{s, M^\Endo}$ is independent of $s$. Denote it by $f^{M^\Endo}$.
\end{theorem}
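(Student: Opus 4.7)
The plan is to chain two descents---parabolic descent of stable orbital integrals on the endoscopic side and parabolic descent of anti-genuine orbital integrals on the covering side---bridged by the transfer theorem \ref{prop:geometric-transfer}, with Lemma \ref{prop:z[s]} and the parabolic descent of transfer factors \eqref{eqn:Delta-paradescent} accounting for the twist by $z[s]$.

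Fix $\gamma \in M^\Endo(F)$ such that both $\gamma$ and $z[s]\gamma$ are $G[s]$-strongly regular; this is a dense open condition on $M^\Endo$, and it suffices to check the desired identity on such $\gamma$ in order to determine an element of $S\orbI(M^\Endo)$. Abbreviating $G^\Endo := G[s]$, Proposition \ref{prop:descent-orbint} applied to the Levi inclusion $M^\Endo \subset G^\Endo$ yields
\[ (f^\Endo)^{s, M^\Endo}(\gamma) = (f^\Endo)^{M^\Endo}(z[s]\gamma) = (f^\Endo)^{G^\Endo}(z[s]\gamma), \]
and Theorem \ref{prop:geometric-transfer} for the datum $(n', n'') \in \EndoE_\text{ell}(\tilde{G})$ expresses this as
\[ \sum_{\delta \leftrightarrow z[s]\gamma} \Delta_{G^\Endo, \tilde{G}}(z[s]\gamma, \tilde{\delta}) f_{\tilde{G}}(\tilde{\delta}), \]
where $\delta$ runs through $\Gamma_\text{reg}(G)$-classes corresponding to $z[s]\gamma$ along the $G^\Endo \leadsto \tilde{G}$ route.

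By Lemma \ref{prop:z[s]} we have $\mu_1(z[s]\gamma) = \mu_2(\gamma)$, so the common stable class of the $\delta$'s equals the image in $\Delta_\text{ss}(G)$ of the stable class $\sigma \in \Delta_\text{ss}(M)$ corresponding to $\gamma$ via $M^\Endo \to M$. The $G$-regularity of $\gamma$ forces $M_{\delta_0} = G_{\delta_0}$ for any representative $\delta_0$ of $\sigma$, and the vanishing of $H^1(F, \Sp(W))$ yields the bijection $\mathfrak{D}(M_{\delta_0}, M; F) \rightiso \mathfrak{D}(G_{\delta_0}, G; F)$, exactly as in the proof of Proposition \ref{prop:descent-orbint}. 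Consequently the $G$-conjugacy classes $\delta$ above correspond bijectively to $M$-conjugacy classes $\delta_M \in M_\text{reg}(F)$ with $\delta_M \leftrightarrow \gamma$ under $M^\Endo \to M$. For any lift $\tilde{\delta}_M \in \tilde{M}$, parabolic descent of orbital integrals on the covering side delivers $f_{\tilde{G}}(\tilde{\delta}_M) = f_{\tilde{M}}(\tilde{\delta}_M)$, and the parabolic descent \eqref{eqn:Delta-paradescent} of transfer factors, combined with the definition \eqref{eqn:transfer-factor-Levi}, gives
\[ \Delta_{G^\Endo, \tilde{G}}(z[s]\gamma, \tilde{\delta}_M) = \Delta((z[s]\gamma)^\flat, \tilde{\delta}_M^\flat) = \Delta(\gamma^\flat, \tilde{\delta}_M^\flat) = \Delta_{M^\Endo, \tilde{M}}(\gamma, \tilde{\delta}_M), \]
the middle step because $z[s]$ lies in the $\prod_i \GL(n_i)$-part and is trivial on the $\flat$-component. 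Substituting these two identities transforms the above sum into the transfer sum for $\tilde{M}$ with the datum $(m', m'')$, which by Theorem \ref{prop:geometric-transfer} applied in reverse equals $(f_{\tilde{M}})^\Endo(\gamma)$. The independence of $s$ follows from the fact that the left-hand side $(f_{\tilde{M}})^\Endo$ makes no mention of $s$.

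The crux---and the reason $z[s]$ appears in Definition \ref{def:s-descent} in the first place---is the compatibility encoded in Lemma \ref{prop:z[s]}: the ``$-$'' sign built into the eigenvalue recipe on the $\SO(2n''+1)$-side of metaplectic endoscopy forces the two routes $M^\Endo \leadsto M \leadsto G$ and $M^\Endo \leadsto G^\Endo \leadsto G$ to disagree by exactly this central element, so shifting $\gamma$ by $z[s]$ is precisely what aligns them and lets the already-established parabolic descent of transfer factors (stated in the $G^\Endo$-framework) be invoked verbatim. One must be careful that $z[s]\gamma$ stays in $G^\Endo$-regular position, but this is built into the dense open locus chosen at the outset.
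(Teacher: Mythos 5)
Your proposal is correct and takes essentially the same route as the paper's own proof: both pivot on Lemma \ref{prop:z[s]} to reconcile the two correspondences $\mu_1$ and $\mu_2$, on the parabolic descent \eqref{eqn:Delta-paradescent} together with the definition \eqref{eqn:transfer-factor-Levi} to identify the transfer factors, and on the $\mathfrak{D}(M_{\delta}, M; F) \rightiso \mathfrak{D}(G_{\delta}, G; F)$ bijection from the proof of Proposition \ref{prop:descent-orbint} to match the conjugacy classes — you simply run the chain of equalities from the $(f^\Endo)^{s, M^\Endo}$ side while the paper runs it from $(f_{\tilde{M}})^\Endo$. One small imprecision: the open condition you fix at the start should also require that $\gamma$ and $z[s]\gamma$ land in $G_{\text{reg}}$ under the respective correspondences $\mu_2$ and $\mu_1$ (not merely that they be strongly regular in $G[s]$), which is what the paper's $M^\Endo_{G-\text{reg}}$ encodes and what Theorem \ref{prop:geometric-transfer} actually needs.
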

Here $(f_{\tilde{M}})^\Endo \in S\orbI(M^\Endo)$ stands for the transfer of $f_{\tilde{M}} \in \Iasp(\tilde{M})$.

\begin{proof}
  It suffices to check this for $\gamma \in M^\Endo_\text{reg}(F)$ such that $\gamma$ correspond to elements in $G_\text{reg}(F)$ via both the maps $\mu_1$ and $\mu_2$ in \eqref{eqn:mu_1}, \eqref{eqn:mu_2}. Denote this Zariski open subset by $M^\Endo_{G-\text{reg}}$. By Proposition \ref{prop:descent-orbint} for coverings,
  \begin{align*}
    (f_{\tilde{M}})^\Endo(\gamma) & = \sum_{\gamma \leftrightarrow \delta} \Delta_{M^\Endo, \tilde{M}}(\gamma, \tilde{\delta}) f_{\tilde{M}}(\tilde{\delta}) \\
    & = \sum_{\gamma \leftrightarrow \delta} \Delta_{M^\Endo, \tilde{M}}(\gamma, \tilde{\delta}) f_{\tilde{G}}(\tilde{\delta}), \quad \tilde{\delta} \in \rev^{-1}(\delta) \text{ is arbitrary}.
  \end{align*}

  The sum is taken over $\Xi^M := \{\delta \in \Gamma_\text{reg}(M) : \gamma \leftrightarrow \delta\}$. By the definition of the correspondence of classes, $\Xi^M$ is the set of conjugacy classes in some stable class in $M_{G-\text{reg}}(F)$. Pick $\sigma \in \Xi^M$. It is a standard fact that $M \hookrightarrow G$ identifies $\Xi^M$ with the set $\Xi^G$ of conjugacy classes in the stable class of $\sigma$ in $G_\text{reg}(F)$; see the proof of the second part of Proposition \ref{prop:descent-orbint}. From Lemma \ref{prop:z[s]}, we conclude $\Xi^G = \{\delta \in \Gamma_\text{reg}(G): \gamma[s] \leftrightarrow \delta \}$ with respect to the elliptic endoscopic datum attached to $s$.

  The parabolic descent for transfer factors \eqref{eqn:Delta-paradescent} implies
  $$ \Delta_{M^\Endo, \tilde{M}}(\gamma, \tilde{\delta}) = \Delta(\gamma^\flat, \tilde{\delta}^\flat) = \Delta_{G[s], \tilde{G}}(\gamma[s], \tilde{\delta}), $$
  for the $\gamma$, $\tilde{\delta}$ in the sum. Hence
  $$ (f_{\tilde{M}})^\Endo(\gamma) = \sum_{\gamma[s] \leftrightarrow \delta} \Delta_{G[s], \tilde{G}}(\gamma[s], \tilde{\delta}) f_{\tilde{G}}(\tilde{\delta}). $$
  The right-hand side is just $(f^\Endo)(\gamma[s])$, which equals $(f^\Endo)^{M^\Endo}(\gamma[s])$ by Proposition \ref{prop:descent-orbint}.
\end{proof}

All in all, given $(n', n'') \in \EndoE_\text{ell}(\tilde{G})$, our goal is to understand the dual
$$ \mathcal{T}_{(n', n'')}^\vee: S\orbI(G^\Endo)^\vee \to \Iasp(\tilde{G}) $$
of the geometric transfer $\mathcal{T}_{(n',n'')}$, in spectral terms. In other words, we aim to settle the \emph{spectral transfer}.

\section{Results for the odd orthogonal groups}\label{sec:results-SO}
In this section, $F$ always denotes a local field of characteristic zero. The materials below are largely based upon Arthur's monumental  work \cite{Ar13}.

\subsection{$L$-parameters}\label{sec:L-parameters}
\paragraph{Generalities}
Let $G$ be a connected reductive $F$-group; we assume $G$ quasisplit for simplicity. Its dual group $\hat{G}$ is endowed with a $\Gamma_F$-action which factors through a finite quotient. The precise construction of $\hat{G}$ involves choosing an $F$-pinning $(T, B, (E_\alpha)_{\alpha \in \Delta(B, T)})$ and taking the dual based root datum. Then $\hat{G}$ is endowed with a dual pinning $(\hat{T}, \hat{B}, \cdots)$ that is $\Gamma_F$-stable. We refer to \cite{Bo79} for details. The $L$-group of $G$ is $\Lgrp{G} = \hat{G} \rtimes \We_F$.

An $L$-parameter for $G$ is a homomorphism
$$ \phi: \WD_F \longrightarrow \Lgrp{G} $$
such that
\begin{itemize}
  \item the composition of $\phi$ with the projection $\Lgrp{G} \to \We_F$ equals $\WD_F \to W_F$;
  \item $\phi$ is continuous;
  \item the projection of $\Im(\phi|_{W_F})$ to $\hat{G}$ consists of semisimple elements.
\end{itemize}

Call two $L$-parameters $\phi_1$, $\phi_2$ \emph{equivalent}, written as $\phi_1 \sim \phi_2$, if they are conjugate by $\hat{G}$. We say that $\phi$ is \emph{bounded} if the projection of $\Im(\phi)$ to $\hat{G}$ is relatively compact. Given $\phi$, define the $S$-group as
$$ S_\phi := Z_{\hat{G}}(\Im(\phi)). $$
Its identity connected component $S^0_\phi$ is a connected reductive subgroup of $\hat{G}$. Also define
\begin{align*}
  S_{\phi, \text{ad}} & := S_\phi/Z_{\hat{G}}^{\Gamma_F}, \\
  \mathscr{S}_{\phi,\ad} & := \pi_0(S_{\phi, \text{ad}}), \quad \text{defined using the base point $1$}.
\end{align*}
The same symbol $\phi$ will be used to denote the $L$-parameter and its equivalence class, if there is no ambiguity to worry about.

One consequence is a correspondence between conjugacy classes of Levi subgroups $M \subset G$ and their dual avatars $\Lgrp{M} \hookrightarrow \Lgrp{G}$, the inclusion respects the projections onto $\We_F$.

By \cite[Proposition 3.6]{Bo79} and its proof, we obtain the following properties:
\begin{enumerate}[(i)]
  \item the Levi subgroups $\Lgrp{M} \subset \Lgrp{G}$ \cite[\S 3.4]{Bo79} which contain $\Im(\phi)$ minimally are conjugate by $S^0_\phi$;
  \item let $\Lgrp{M}$ be such a Levi subgroup, then $Z_{\hat{M}}^{\Gamma_F, 0}$ is a maximal torus of $S^0_\phi$.
\end{enumerate}
For every equivalence class of $L$-parameters $\phi$, we pick such a Levi subgroup $\Lgrp{M_\phi}$ and denote by $M_\phi$ the corresponding Levi subgroup of $G$. Define
\begin{align*}
  \Phi(G) & := \{\phi : \WD_F \to \Lgrp{G}, \; \phi \text{ $L$-parameter}\}/\sim, \\
  \Phi_\text{bdd}(G) & := \{\phi \in \Phi(G) : \phi \text{ is bounded} \}, \\
  \Phi_2(G) & := \{\phi \in \Phi(G): M_\phi = G \}, \\
  \Phi_{2,\text{bdd}}(G) & := \Phi_2(G) \cap \Phi_\text{bdd}(G).
\end{align*}

Let $M$ be a Levi subgroup of $G$. There is a natural map $\Phi(M) \to \Phi(G)$ induced by $\Lgrp{M} \hookrightarrow \Lgrp{G}$. It restricts to a map $\Phi_\text{bdd}(M) \to \Phi_\text{bdd}(G)$. There is an action of $\mathfrak{a}^*_{M,\C}$ on $\Phi(M)$, written as $\phi \mapsto \phi_\lambda$. In fact,
\begin{align*}
  \mathfrak{a}^*_{M,\C} & = X^*(M) \otimes_\Z \C, \\
  Z^{\Gamma_F, 0}_{\hat{M}} & = X^*(M) \otimes_\Z \C^\times,
\end{align*}
thus it makes sense to define $|w|^\lambda \in Z^{\Gamma_F, 0}_{\hat{M}}$ for all $w \in \We_F$, $\lambda \in \mathfrak{a}^*_{M,\C}$; the twist is just $\phi_\lambda = |\cdot|^\lambda \cdot \phi$. Then $\Phi_\text{bdd}(M)$ and $\Phi_{2,\text{bdd}}(M)$ are stable under $i\mathfrak{a}^*_M$. The $i\mathfrak{a}^*_M$-orbit decomposition makes $\Phi_{2,\text{bdd}}(M)$ into a disjoint union of compact tori. We have
\begin{gather}\label{eqn:Phi_bdd-decomp}
  \Phi_\text{bdd}(G) = \bigsqcup_{M \in \mathcal{L}(M_0)/W^G_0} \Phi_{2,\text{bdd}}(M)/W^G(M).
\end{gather}

\begin{remark}
  For general $G$, one must choose a quasisplit inner twist $G_{\bar{F}} \rightiso G^*_{\bar{F}}$ together with an $F$-splitting for $G^*$ to define $\Lgrp{G}$. Moreover, a \emph{relevance} condition has to be imposed on $\phi$. Roughly speaking, this means that the Levi subgroup $M^*_\phi \subset G^*$ (up to conjugacy) attached to $\phi$ should come from $G$. We will not encounter non-quasisplit groups in this article.
\end{remark}

In the next subsection, we will attach a tempered $L$-packet $\Pi_\phi = \Pi^G_\phi$ to every $\phi \in \Phi_\text{bdd}(G)$; the archimedean case is largely a paraphrase of Harish-Chandra's theory. Since some aspects will be needed in \S\ref{sec:archimedean}, we shall give a very sketchy review below. Details can be found in \cite[\S 11]{Bo79} and \cite[(4.3)]{Sh82}.

\paragraph{The case $F = \C$}
In this case $\We_\C = \C^\times$. It is customary to identify the complex groups with their $\C$-points. Consider the toric case $G=T$ first. By the Langlands correspondence for $T$, or local class field theory over $\C$, the continuous homomorphisms $T \to \C^\times$ are in natural bijection with $L$-parameters $\phi: \C^\times \to \hat{T} = X^*(T) \otimes \C^\times$. Write $z^\lambda := \lambda \otimes z$ for each $\lambda \in X^*(T)$ and $z \in \C^\times$. The parameter $\phi$ can be uniquely expressed as
\begin{align*}
  \phi: \C^\times & \longrightarrow \hat{T} \\ 
  z & \longmapsto z^\lambda \bar{z}^\mu = \left(\frac{z}{|z|}\right)^{\lambda - \mu} |z|^{\lambda + \mu}
\end{align*}
with $\lambda, \mu \in X^*(T) \otimes \C$ satisfying $\lambda - \mu \in X^*(T)$; it is bounded if and only if $\Re(\lambda+\mu)=0$.

In general, choose be a maximal torus $T \subset G$. The natural map $\Phi_\text{bdd}(T) \to \Phi_\text{bdd}(G)$ induces a bijection
$$ \Phi_\text{bdd}(G) \simeq \Phi_\text{bdd}(T)/W(G, T). $$

Note that the maximal tori of $G$ are all conjugate. The description of $\Phi_\text{bdd}(G)$ is parallel to the following representation-theoretical fact.

\begin{theorem}[See eg.\ \cite{Du75}]\label{prop:cplx-group}
  Let $G$ be a connected reductive $\C$-group.
  \begin{enumerate}
    \item Choose a Borel pair $(B, T)$ for $G$. For each unitary, continuous character $\chi$ of $T$, the induced representation $I_B(\chi)$ is irreducible; $I_B(\chi_1) \simeq I_B(\chi_2)$ if and only if $w\chi_1 =\chi_2$ for some $w \in W(G, T)$.
    \item The representations $I_B(\chi)$ so obtained exhaust the tempered spectrum of $G$.
  \end{enumerate}
\end{theorem}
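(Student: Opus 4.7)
The plan is to derive both assertions from Harish-Chandra's general theory of tempered representations by exploiting the fact that a connected complex reductive group carries no nontrivial discrete series, so that Langlands' subrepresentation theorem always reduces one to \emph{minimal} principal series.

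First I would dispatch statement (2). By Harish-Chandra's classification of the tempered spectrum, every irreducible tempered $\pi \in \Pi_{\text{temp}}(G)$ is a direct summand of some $I_P(\sigma)$ with $P = MU$ a parabolic of $G$ and $\sigma \in \Pi_{2, \text{temp}}(M)$. Every Levi subgroup $M$ of $G$ is again a connected reductive complex group, and all of its Cartan subgroups are conjugate to a fixed complex torus isomorphic to $(\C^\times)^r$, which is noncompact whenever $r > 0$. Harish-Chandra's existence criterion for discrete series (a compact Cartan subgroup) therefore forces the semisimple part of $M$ to be trivial, i.e.\ $M = T$, and $\sigma$ is then a unitary character $\chi$ of $T$. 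Thus $\pi$ embeds in some $I_B(\chi)$.

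Next I would handle the irreducibility in (1). Since $\chi$ is unitary, Knapp-Stein theory tells us that reducibility of $I_B(\chi)$ is controlled by rank-one reductions along the roots $\alpha \in \Sigma(G, T)$: $I_B(\chi)$ fails to be irreducible only if some normalized intertwining operator $A_\alpha$ has a pole or a zero at $\chi$, which reduces to the analogous question on the derived subgroup of the rank-one Levi $M_\alpha$. That derived subgroup is a quotient of $\SL(2, \C)$, for which a classical direct calculation (Bruhat, Naimark) shows that the unitary principal series attached to any continuous unitary character of the diagonal Cartan is irreducible. Hence no root contributes reducibility, and $I_B(\chi)$ is irreducible.

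For the equivalence classification in (1), standard intertwining-operator theory provides isomorphisms $I_B(\chi) \rightiso I_B(w \chi)$ for every $w \in W(G, T)$, since all operators are regular on the unitary axis. Conversely, the infinitesimal character of $I_B(\chi)$ is the $W(G,T)$-orbit of the differential of $\chi$ (together with its antiholomorphic counterpart, since $\WD_\C = \C^\times$ encodes a pair $(\lambda, \mu)$); equality of infinitesimal characters of two irreducible principal series thus forces the inducing characters to lie in a single $W(G,T)$-orbit. The expected main obstacle is the bookkeeping in the Knapp-Stein step, i.e.\ checking that \emph{every} root gives an irreducible reduction for unitary $\chi$; once one isolates the $\SL(2,\C)$-computation this becomes routine, and in any case the statement is already contained in \cite{Du75} to which we may defer.
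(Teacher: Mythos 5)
The paper does not prove this theorem; it simply cites Duflo \cite{Du75} (the surrounding text also refers to \cite{Bo79,Sh82}). Your sketch reconstructs the standard argument one would extract from that literature, and the overall strategy is sound: the absence of a Cartan subgroup compact modulo centre in any complex Levi other than a torus rules out discrete series and gives (2), and the $\SL(2,\C)$ rank-one computation feeds Knapp-Stein theory for (1).

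Two steps are understated. First, the Knapp-Stein criterion for irreducibility needs not only that the rank-one $\mu$-functions vanish whenever $s_\alpha \chi = \chi$ (your $\SL(2,\C)$ input), but also that $W_\chi$ is generated by the reflections it contains, since the $R$-group is $W_\chi/W_\chi^0$. For unitary $\chi \leftrightarrow (\lambda,\mu)$, the condition $\Re(\lambda+\mu)=0$ together with $\lambda-\mu \in X^*(T)$ gives $\mu = -\bar\lambda$, so $W_\chi = W_\lambda \cap W_\mu = W_\lambda$ (as $W(G,T)$ commutes with complex conjugation on $X^*(T)_\C$), and $W_\lambda$ is a reflection group by Chevalley's theorem. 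Second, the infinitesimal character of $I_B(\chi)$ is the $W(G,T)\times W(G,T)$-orbit of the pair $(\lambda,\mu)$, not the diagonal $W(G,T)$-orbit, so ``same infinitesimal character forces a single $W$-orbit'' is not immediate; the same relation $\mu=-\bar\lambda$ rescues it, since if $w\lambda_1 = \lambda_2$ then $w\mu_1 = -w\bar\lambda_1 = -\overline{w\lambda_1} = -\bar\lambda_2 = \mu_2$, so one $w$ works for the whole pair. A cleaner route to the classification, avoiding infinitesimal characters entirely, is the character formula on $T$: $\Theta_{I_B(\chi)}$ is a fixed Weyl-denominator multiple of $\sum_{w\in W(G,T)} w\chi$, and linear independence of characters of $T$ gives the $W(G,T)$-orbit directly.
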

The admissible dual of $G$ can be explicitly determined in terms of Langlands quotients, see \cite[I.4]{Du75}. For us the tempered dual suffices.

\paragraph{The case $F = \R$: discrete series and their limits}
In this case
$$ \We_\R = \C^\times \cdot \angles{\tau}, \quad \tau^2 = -1, \quad \forall z \in \C^\times, \; \tau z \tau^{-1} = \bar{z}, $$
and $\We_\R \twoheadrightarrow \angles{\tau}/\{\pm 1\} = \Gamma_\R$. Choose the $\Gamma_F$-stable Borel pair $(\hat{B}, \hat{T})$ for $\hat{G}$ which is part of the dual group datum. Let $\phi$ be an $L$-parameter for $G$. Upon conjugation, we may assume $\phi(\C^\times) \subset \hat{T}$. We begin with the case $\phi \in \Phi_{2,\text{bdd}}(G)$ which will yield the $L$-packets of discrete series.

Indeed, we may attach to $\phi|_{\C^\times}$ a pair $(\lambda, \mu)$ as in the complex toric case. It turns out that the subgroup of $\Lgrp{G}$ generated by $\hat{T}$ and $\phi(\We_\R)$ is isomorphic to $\Lgrp{T}$ in the category of $L$-groups, where $T$ is a maximal $\R$-torus of $G$ such that $T/Z_G$ is anisotropic --- do not confuse it with the earlier symbol $T$ in the Borel pair. Also choose a maximal compact subgroup $K$ containing $(T \cap G_\text{der})(\R)$.

View $\lambda$, $\mu$ as elements of $X^*(T_\C) \otimes_\Z \C$; there is an ambiguity by $W(G_\C, T_\C)$, which is harmless. By \cite[\S 10.5]{Bo79}, $\lambda$ is regular in the sense that
\begin{gather}\label{eqn:regular-HC-parameter}
  \forall \alpha \in \Sigma(G, T)_\C, \; \angles{\lambda, \alpha^\vee} \neq 0.
\end{gather}

In what follows, we make the usual identification
\begin{gather}\label{eqn:X-vs-Lie}
  X^*(T_\C) \otimes_\Z \C \rightiso \mathfrak{t}^*_\C.
\end{gather}
Choose a Borel subgroup $B_\C \supset T_\C$ for which $\lambda$ is dominant, and let $\rho = \rho_{B_\C}$ be the half-sum of positive roots. It turns out that $\lambda \in \rho_{B_\C} + X^*(T_\C)$. Define a \emph{Harish-Chandra parameter}\index{Harish-Chandra parameter} to be a pair $\vec{\lambda} = (\lambda, B_\C)$ where $\lambda \in \mathfrak{t}^*_\C$ is $B_\C$-dominant, regular and $\lambda \in \rho_{B_\C} + X^*(T_\C)$; note that $B_\C$ is uniquely determined by $\lambda$. To $\vec{\lambda}$ one may attach a representation $\pi(\vec{\lambda}) \in \Pi_{2,\text{temp}}(G)$ with infinitesimal character $\lambda$ modulo $W(G_\C, T_\C)$.

There is an obvious $W(G_\C, T_\C)$-action on Harish-Chandra parameters. Recall that for any $\vec{\lambda}_1$, $\vec{\lambda}_2$,
$$ \left[ \pi(\vec{\lambda}_1) \simeq \pi(\vec{\lambda}_2) \right] \iff \left[ \exists w \in W(G,T), \; w\vec{\lambda}_1 = \vec{\lambda}_2 \right]. $$
The $L$-packet in question is simply
$$ \Pi_\phi = \Pi_\lambda := \left\{ \pi(w\vec{\lambda}) : w \in W(G, T) \backslash W(G_\C, T_\C) \right\}. $$
Equivalently, it is the set of $\pi \in \Pi_{2,\text{temp}}(G)$ such that
\begin{compactitem}
  \item $\pi$ has infinitesimal character $\lambda \mod W(G_\C, T_\C)$,
  \item the central character of $\pi$ equals the restriction of $\lambda-\rho$ to $Z_G(\R)$.
\end{compactitem}
The last property no longer holds in the metaplectic setting, see \eqref{eqn:Adams-packet-ds}.

Keep the assumption that $T/Z_G$ is anisotropic. In the definition of Harish-Chandra parameters $\vec{\lambda} = (\lambda, B_\C)$, if we allow the $B_\C$-dominant weight $\lambda \in \rho_{B_\C} + X^*(T_\C)$ to be singular, i.e.\ without assuming \eqref{eqn:regular-HC-parameter}, one can still associate a representation $\pi(\vec{\lambda})$ of $G(\R)$: it is either\index{limit of discrete series}
\begin{compactitem}
  \item zero, or
  \item tempered irreducible.
\end{compactitem}
By \cite[Theorem 1.1b]{KZ82-1}, the first case happens if and only if $\angles{\lambda, \alpha^\vee}=0$ for some compact $B_\C$-simple root\index{compact root} $\alpha$ (see \cite[p.249]{KV95}). In the latter case $\pi(\vec{\lambda})$ has infinitesimal character $\lambda$ modulo $W(G_\C, T_\C)$ and central character $(\lambda-\rho)|_{Z_G(\R)}$, called the \emph{limit of discrete series} parametrized by $\vec{\lambda}$. 

As in the case of discrete series, the parameter $\vec{\lambda}$ for such a representation is unique up to $W(G,T)$. Given a possibly singular infinitesimal character, we may choose a representative $\lambda$ which is dominant relative to a fixed $B_\C$. Define the corresponding packet to be
$$ \Pi_\lambda = \left\{ \pi(w\vec{\lambda}): \text{nonzero}, \; w \in W(G, T) \backslash W(G_\C, T_\C) \right\}. $$

Thus being a limit of discrete series is a property of $L$-packets. An overview on limits of discrete series can be found in \cite{KZ82-1}. The standard construction of these representations is via coherent continuation or Zuckerman's translation functor $\psi_{\lambda_1}^\lambda$\index{translation functor} \cite[Chapter VII]{KV95}, whose effect is to ``shift the infinitesimal character'' from $\lambda_1$ to $\lambda$. As for their $L$-parameters, we refer to \cite[(4.3.4)]{Sh82}. These constructions also work for covering of metaplectic type over $\R$: see Lemma \ref{prop:calculation-rho}.

\subsection{Stable tempered characters}\label{sec:stable-character-SO}
Let $G = \SO(2n+1)$, so that $\hat{G} = \Sp(2n, \C)$ endowed with trivial $\Gamma_F$-action. We shall review the basic local results in \cite{Ar13} for $G$, in the tempered case at least. The first one is the tempered local Langlands correspondence for $G$.

Fix a maximal compact subgroup $K \subset G(F)$. Assume that $K$ corresponds to a special vertex in the Bruhat-Tits building if $F$ is non-archimedean. In the unramified case we assume $K$ hyperspecial.

\begin{theorem}[{\cite[Theorem 1.5.1]{Ar13}}]
  There is a decomposition
  $$ \Pi_\mathrm{temp}(G) = \bigsqcup_{\phi \in \Phi_\mathrm{bdd}(G)} \Pi_\phi, $$
  where
  \begin{itemize}
    \item each $\Pi_\phi$ is a finite set of tempered irreducible representations of $G$;
    \item there is a canonical injection
      \begin{align*}
        \Pi_\phi & \longrightarrow \Pi(\mathscr{S}_{\phi, \text{ad}}), \\
	\pi & \longmapsto \angles{\cdot, \pi};
      \end{align*}
    \item the map above is bijective in the non-archimedean case;
    \item in the unramified case, $\angles{\cdot, \pi} = 1$ whenever $\pi$ is unramified with respect to $K$.
  \end{itemize}

  Moreover, the decomposition restricts to
  $$ \Pi_{2,\mathrm{temp}}(G)  = \bigsqcup_{\phi \in \Phi_{2,\mathrm{bdd}}(G)} \Pi_\phi. $$
\end{theorem}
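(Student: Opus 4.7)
The plan is to deduce this from Arthur's endoscopic classification, for which the ambient strategy is to compare the stable trace formula for $G=\SO(2n+1)$ with the twisted (endoscopic) trace formula for $\GL(2n)$ equipped with its outer automorphism $\theta: g \mapsto J {}^t g^{-1} J^{-1}$. The key dual-group observation is that $\hat{G}=\Sp(2n,\C)$ embeds into $\GL(2n,\C)=\widehat{\GL(2n)}$, and under this embedding the bounded $L$-parameters $\phi \in \Phi_\text{bdd}(G)$ correspond precisely to the $\theta$-stable tempered $L$-parameters of $\GL(2n,F)$ of symplectic (equivalently, orthogonal-type self-dual) kind. Since the local Langlands correspondence for $\GL(2n)$ is available (Harris--Taylor, Henniart in the $p$-adic case; Langlands in the archimedean case), one then has a well-defined $\theta$-stable tempered representation $\pi_\phi$ of $\GL(2n,F)$ attached to every such $\phi$, with an associated normalized twisted character $f \mapsto f_N(\phi)$.

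First I would set up the local stable and twisted linear forms. For $f \in C_c^\infty(G(F))$ one forms the stable orbital integrals $f^G \in S\orbI(G)$; for $\tilde f \in C_c^\infty(\GL(2n,F))$ one forms the twisted orbital integrals. Shahidi's tempered $L$-packet conjecture, combined with the transfer theorems (Waldspurger's reduction, Ngô's fundamental lemma, and Shelstad's archimedean transfer), supplies transfer maps in both directions and thus identifies a canonical stable linear form $f \mapsto f^G(\phi)$ attached to each $\phi \in \Phi_\text{bdd}(G)$, the ``stable character'' of the putative packet. The first task is to show that this stable linear form is indeed a finite nonnegative integral combination of tempered characters of $G(F)$; this defines the candidate packet $\Pi_\phi$, and the coefficients produce the pairing $\langle \cdot,\pi\rangle$ on $\mathscr{S}_{\phi,\ad}$.

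Next I would verify the partition of $\Pi_\text{temp}(G)$ and the bijectivity/injectivity claims. Disjointness of the packets follows from linear independence of stable characters with distinct infinitesimal parameters, which is an easy step once one has matching of characters on the regular semisimple locus. Exhaustion is the hard part: every tempered irreducible must appear in some $\Pi_\phi$. This is achieved globally via the simple stable trace formula --- one realises a given supercuspidal (or tempered constituent of a standard module) as a local component of a globally cuspidal automorphic representation, applies Arthur's multiplicity formula and the spectral decomposition, and extracts the local consequence. The bijectivity with $\Pi(\mathscr{S}_{\phi,\ad})$ in the non-archimedean case, and the injection together with the unramified normalisation, are then extracted from the endoscopic character identities relating $f^G(\phi)$ to the sum $\sum_\pi \langle s_\phi,\pi\rangle \Theta_\pi(f)$. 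The restriction to the square-integrable part $\Pi_{2,\text{temp}}(G) = \bigsqcup_{\phi \in \Phi_{2,\text{bdd}}(G)} \Pi_\phi$ follows by matching the Levi parameters: $\phi$ factors through a proper $\Lgrp{M} \subsetneq \Lgrp{G}$ if and only if every $\pi \in \Pi_\phi$ is a constituent of a properly parabolically induced tempered representation, which is the tempered $R$-group/Knapp--Stein analysis carried out on both sides.

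The principal obstacle, and the one that absorbs the bulk of \cite{Ar13}, is the \emph{stabilisation step} that turns the comparison of trace formulae into local character identities: one must induct simultaneously on all elliptic endoscopic groups (here $\SO(2n'+1) \times \SO(2n''+1)$), show the vanishing of various error terms (standard and twisted weighted orbital integrals), and handle the archimedean places where Shelstad's constructions have to be matched with Arthur's normalisations. A secondary but serious difficulty is the reliance on global inputs to prove a purely local statement: one needs strong globalisation results for tempered local representations of $G(F)$ together with the rigidity of the discrete spectrum of $\GL(2n)$, both of which in turn rest on the stable multiplicity formula. I would accept \cite{Ar13} as a black box for these, and present the above as the genuine outline rather than redo the argument.
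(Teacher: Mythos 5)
The paper gives no proof of this theorem: it is quoted wholesale from Arthur's monograph \cite[Theorem~1.5.1]{Ar13}, and the text immediately following the statement says explicitly that ``Arthur's approach of constructing $\Pi_\phi$ is based on realizing $G$ as a simple endoscopic group of the twisted group $\widetilde{\GL}(2n)$; we cannot delve into the details here.'' Your outline is therefore not comparable to any argument given in the paper, but it is an accurate high-level summary of Arthur's actual strategy (comparison of the stable trace formula for $\SO(2n+1)$ with the $\theta$-twisted trace formula for $\GL(2n)$ via the embedding $\Sp(2n,\C)\hookrightarrow\GL(2n,\C)$, the local transfer theorems and fundamental lemma, the stable multiplicity formula, and the extraction of local endoscopic character identities), and you are right to flag that the rigour must be delegated to \cite{Ar13} rather than reproduced. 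One minor slip worth correcting: the self-dual parameters of $\GL(2n)$ relevant to $\SO(2n+1)$ are those factoring through $\Sp(2n,\C)$, i.e.\ of \emph{symplectic} type; these are disjoint from, not equivalent to, the orthogonal-type self-dual parameters, so the parenthetical ``(equivalently, orthogonal-type self-dual)'' should be deleted.
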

The finite subset $\Pi_\phi$ is called the tempered \emph{$L$-packet} associated to $\phi$.\index{$\Pi^G_\phi$}

The construction of the $L$-packets of discrete series or their limits in the archimedean case has been reviewed in \S\ref{sec:L-parameters}; further results, such as the stability of packets, will require deeper techniques from Shelstad et al. For general $F$, Arthur's approach of constructing $\Pi_\phi$ is based on realizing $G$ as a simple endoscopic group of the twisted group $\widetilde{\GL}(2n)$; we cannot delve into the details here. 

\begin{definition}\index{$S\Theta^G_\phi$}
  For every $\phi \in \Phi_\mathrm{bdd}(G)$, define the map
  \begin{align*}
    S\Theta_\phi: & C^\infty_c(G(F)) \longrightarrow \C \\ 
    f & \longmapsto f^G(\phi) :=\sum_{\pi \in \Pi_\phi} \Tr(\pi(f)).
  \end{align*}
\end{definition}
Call it the \emph{stable character} associated to $\phi$. We will write $\Pi^G_\phi$, $S\Theta^G_\phi$ to emphasize the ambient group if need be.

The second fundamental result is the stability of packets.
\begin{theorem}[{\cite[Theorem 2.2.1]{Ar13}}]\label{prop:SO-stability}
  The map $S\Theta_\phi$ factors through $\orbI(G) \twoheadrightarrow S\orbI(G)$. In other words, $f \mapsto f^G(\phi)$ is a stable distribution on $G$.
\end{theorem}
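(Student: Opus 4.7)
The plan is to derive stability by descending $S\Theta_\phi$ from a twisted character on $\GL(2n)$ through twisted endoscopic transfer. The group $G = \SO(2n+1)$ is a twisted elliptic endoscopic datum for the pair $(\GL(2n), \theta)$, where $\theta$ is the outer involution $g \mapsto J\,{}^t g^{-1} J^{-1}$; on dual sides, this corresponds to $\hat{G} = \Sp(2n,\C) \hookrightarrow \GL(2n,\C)$ as the centralizer of the dual involution $\hat{\theta}$. Kottwitz--Shelstad twisted transfer provides a linear map
\[
\mathcal{T}: C^\infty_c(\GL(2n,F) \rtimes \theta) \longrightarrow S\orbI(G), \qquad \tilde{f} \longmapsto \tilde{f}^G,
\]
whose target is a space of stable objects by construction.

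First, for each $\phi \in \Phi_{\mathrm{bdd}}(G)$, the embedding $\hat{\xi}: \Lgrp{G} \hookrightarrow \Lgrp{\GL(2n)}$ produces a self-dual bounded $L$-parameter $\phi^{\GL}$ for $\GL(2n)$. The local Langlands correspondence for $\GL$ attaches to $\phi^{\GL}$ a tempered representation $\pi^{\GL}$ which is $\theta$-stable because $\phi^{\GL}$ is self-dual of symplectic type; Shahidi's Whittaker normalization then fixes a canonical intertwiner $\tilde{\pi}^{\GL}(\theta)$ and hence a twisted character $\Theta_{\tilde{\pi}^{\GL}}$ on $\GL(2n,F) \rtimes \theta$. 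The spectral identity to establish is
\[
\Theta_{\tilde{\pi}^{\GL}}(\tilde{f}) = \sum_{\pi \in \Pi_\phi} \Tr \pi(\tilde{f}^G) = S\Theta_\phi(\tilde{f}^G),
\]
which, in view of the surjectivity of $\mathcal{T}$ onto $S\orbI(G)$, forces $S\Theta_\phi$ to factor through $\orbI(G) \twoheadrightarrow S\orbI(G)$ as desired. This same identity serves in fact as the very definition of the packet $\Pi_\phi$, extracted by character-theoretic inversion on the $\GL$ side, so stability is built into the construction.

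To establish the spectral identity, the strategy is global and inductive. One globalizes $F$ and the local data, and compares the $\theta$-twisted invariant trace formula for $\GL(2n)$ with the stable trace formulas of $G$ and of its proper elliptic endoscopic groups $\SO(2m'+1) \times \SO(2m''+1)$ with $m'+m'' < n$; by induction on $n$, the contributions of the proper endoscopic groups are already known to be stable and accounted for. Isolating the $G$-contribution at one distinguished place $v_0$, varying the test functions at the remaining places, and invoking linear independence of characters with fixed infinitesimal character then localizes the identity at $v_0$, giving the required local spectral transfer.

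The main obstacle is precisely this inductive global comparison, which rests on substantial machinery: the twisted fundamental lemma (Waldspurger, drawing on Ngô's results), its weighted version (Chaudouard--Laumon), Arthur's stabilization of the twisted trace formula, and a meticulous accounting of transfer factors so that the self-dual parameters of \emph{symplectic} (rather than orthogonal) type are singled out by $\hat{\xi}$. Once the spectral identity is in place, the stability statement of Theorem~\ref{prop:SO-stability} reduces at once to the geometric stability of the twisted transfer $\mathcal{T}$, which holds by the very definition of $S\orbI(G)$.
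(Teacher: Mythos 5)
The paper itself gives no proof of this result: it is cited as a black box from \cite[Theorem 2.2.1]{Ar13}, and the route you outline is precisely Arthur's. Your sketch correctly identifies the main ingredients (twisted endoscopy for $(\GL(2n),\theta)$ with $\hat{G}=\Sp(2n,\C)$, Whittaker normalization of the intertwiner on the $\theta$-stable $\pi^{\GL}$, the spectral identity defining $\Pi_\phi$, the global comparison of trace formulas, and the twisted (weighted) fundamental lemma and stabilization that make it run).

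Two small corrections. First, the proper elliptic endoscopic data of $\SO(2n+1)$ are $\SO(2n'+1)\times\SO(2n''+1)$ with $n'+n''=n$ and $n',n''\geq 1$, not $n'+n''<n$; what strictly decreases is each individual factor, and that is what drives the induction. Second, the logic around surjectivity of $\mathcal{T}$ is slightly inverted: the spectral identity $\Theta_{\tilde\pi^{\GL}}(\tilde f)=S\Theta_\phi(\tilde f^G)$ presupposes that the right-hand side is a well-defined function of $\tilde f^G\in S\orbI(G)$, i.e.\ it presupposes stability. What the global argument actually delivers is that $\Theta_{\tilde\pi^{\GL}}$ annihilates $\Ker(\mathcal{T})$; combined with the injectivity of the dual transfer $\mathcal{T}^\vee$ (equivalently surjectivity of $\mathcal{T}$), this produces a unique stable distribution $\Sigma_\phi$ on $G$, which is then identified with a finite sum of irreducible tempered characters, and $S\Theta_\phi:=\Sigma_\phi$ is defined from that. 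Your closing remark that "stability is built into the construction" is the accurate formulation and should supersede the earlier sentence.
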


Let $M = \prod_{i \in I} \GL(n_i) \times G^\flat$ be a Levi subgroup of $G$, where $G^\flat$ is of the form $\SO(2n^\flat + 1)$. Note that $W^G(M)$ acts on $M$ by permuting $I$, up to inner automorphisms of $M(F)$. The aforementioned result extends to $M$. Naturally, they must be compatible with the local Langlands correspondence for the factors $\GL(n_i)$ (see \cite{He00}, for example), as well as the correspondence for the smaller orthogonal group $G^\flat$. All these are implicitly done in \cite{Ar13}.

\begin{theorem}\label{prop:LLC-vs-induction}
  Let $M = \prod_{i \in I} \GL(n_i) \times G^\flat$ be a Levi subgroup of $G$, and choose $P \in \mathcal{P}(M)$ arbitrarily. Suppose that $\phi_M \in \Phi_{2, \mathrm{bdd}}(M)$ has image $\phi$ in $\Phi_\mathrm{bdd}(G)$, then
  \begin{gather}\label{eqn:induced-packet}
    \Pi^G_\phi = \bigsqcup_{\sigma \in \Pi^M_{\phi_M}} \left\{ \text{the irreducible constituents of } I_P(\sigma) \right\}.
  \end{gather}

  Moreover, by decomposing $\phi_M$ as $\left(\boxtimes_{i \in I} \phi_i \right) \boxtimes \phi^\flat$, we have
  \begin{gather}\label{eqn:Levi-packet}
    \Pi^M_{\phi_M} = \left\{ (\boxtimes_{i \in I} \sigma_i ) \boxtimes \sigma^\flat : \sigma^\flat \in \Pi^{G^\flat}_{\phi^\flat} \right\}
  \end{gather}
  where $\sigma_i \in \Pi_2(\GL(n_i))$ is associated to $\phi_i \in \Phi_{2, \mathrm{bdd}}(\GL(n_i))$ by the local Langlands correspondence, for each $i \in I$.
\end{theorem}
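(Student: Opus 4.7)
The plan is to treat the two displayed identities separately, since \eqref{eqn:Levi-packet} is a structural statement about packets of product groups while \eqref{eqn:induced-packet} is the compatibility of the correspondence with parabolic induction.

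For \eqref{eqn:Levi-packet}, the idea is to exploit the product decomposition of $M$. The dual group $\hat{M}$ is correspondingly a direct product, so any $\phi_M \in \Phi_{2,\text{bdd}}(M)$ splits canonically as $(\boxtimes_{i \in I} \phi_i) \boxtimes \phi^\flat$. Since the formation of $L$-packets is compatible with direct products (both on the representation side and on the parameter side), one has $\Pi^M_{\phi_M} = \bigl(\boxtimes_i \Pi^{\GL(n_i)}_{\phi_i}\bigr) \boxtimes \Pi^{G^\flat}_{\phi^\flat}$. For each $\GL(n_i)$, the Harris-Taylor/Henniart local Langlands correspondence is a bijection $\Phi_{2,\text{bdd}}(\GL(n_i)) \rightiso \Pi_{2,\text{temp}}(\GL(n_i))$, so $\Pi^{\GL(n_i)}_{\phi_i} = \{\sigma_i\}$ is a singleton. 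The remaining factor is Arthur's packet for $G^\flat$. Combining these gives \eqref{eqn:Levi-packet}.

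For \eqref{eqn:induced-packet}, the main tool is the identity \eqref{eqn:para-descent-character} relating parabolic induction of characters with parabolic descent of test functions, together with Theorem \ref{prop:SO-stability} applied both at $G$ and at $M$. Specifically, for any $f \in C_c^\infty(G(F))$, one has
\begin{equation*}
  \sum_{\sigma \in \Pi^M_{\phi_M}} \Tr\bigl(I_P(\sigma)(f)\bigr) \;=\; \sum_{\sigma \in \Pi^M_{\phi_M}} \Tr\bigl(\sigma(f_P)\bigr) \;=\; (f_P)^M(\phi_M),
\end{equation*}
which is a stable distribution on $G$ since $S\Theta^M_{\phi_M}$ is stable on $M$ and parabolic descent respects stability (Proposition \ref{prop:descent-orbint}). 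Arthur's construction of $S\Theta^G_\phi$, via transfer from the twisted group $\widetilde{\GL}(2n)$ and an inductive argument on the Levi of minimal parameter, identifies this induced stable distribution with $S\Theta^G_\phi$ itself. Once this identification is granted, every constituent of every $I_P(\sigma)$ must appear among the $\pi \in \Pi^G_\phi$ (by the disjointness of the global decomposition $\Pi_\text{temp}(G) = \bigsqcup_\phi \Pi_\phi$, which forbids non-trivial cancellation of characters across distinct packets). Conversely, every $\pi \in \Pi^G_\phi$ is tempered, hence by the Knapp-Stein/Langlands classification appears as a constituent of $I_{P'}(\sigma')$ for some Levi $M' \supset M_\phi$ and discrete series $\sigma'$; the $L$-parameter of $\sigma'$ must then factor through $\Lgrp{M'}$ and land in the class of $\phi$, so by conjugating we may take $M' = M$ and $\sigma' \in \Pi^M_{\phi_M}$.

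The principal obstacle is the identification of the induced stable distribution with $S\Theta^G_\phi$. This is not purely formal: it requires tracing through Arthur's twisted endoscopic construction of packets to verify that the twisted trace identity at the Levi level, when parabolically induced, reproduces the twisted trace identity defining $\Pi^G_\phi$. The action of $W^G(M)$ on $\Phi_{2,\text{bdd}}(M)$ via \eqref{eqn:Phi_bdd-decomp} plays a role here and must be accounted for in the multiplicities, but in our setting (where we range over the full $W^G(M)$-orbit implicitly by fixing the image $\phi$) it causes no trouble. In the archimedean case this step can alternatively be made explicit via Shelstad's character identities for real groups.
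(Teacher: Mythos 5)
Your argument for \eqref{eqn:Levi-packet} via the product structure of $\hat{M}$ and the bijectivity of local Langlands for $\GL(n_i)$ is correct, and for \eqref{eqn:induced-packet} your character identity $\sum_{\sigma} \Tr I_P(\sigma) = S\Theta^G_\phi$ (together with the inductive deferral to Arthur's twisted construction) is essentially a paraphrase of \cite[Lemma 2.23]{Ar13}, which the paper cites outright. The substantive point you miss is the \emph{disjointness} of the union over $\sigma \in \Pi^M_{\phi_M}$, and this is the one thing the paper's own proof actually supplies. Your character identity does in principle encode it --- by linear independence of irreducible tempered characters and the fact that every coefficient on the right-hand side equals $1$, no constituent can occur for two distinct $\sigma$'s or with multiplicity $>1$ --- but you do not draw this out, and the reason you cite (``no cancellation across distinct packets'') addresses only the containment of constituents in $\Pi^G_\phi$, not the possibility of a common constituent of $I_P(\sigma_1)$ and $I_P(\sigma_2)$ with $\sigma_1 \neq \sigma_2$ inside the \emph{same} packet $\Pi^M_{\phi_M}$.

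The paper closes this by a more direct, purely representation-theoretic argument which you should adopt: if $I_P(\sigma_1)$ and $I_P(\sigma_2)$ intertwine, Langlands' disjointness theorem yields $w \in W^G(M) = \mathfrak{S}(I)$ with $w\sigma_1 \simeq \sigma_2$. By \eqref{eqn:Levi-packet}, every element of $\Pi^M_{\phi_M}$ has the same $\GL$-components, and $W^G(M)$ acts only by permuting the index set $I$ while fixing the $G^\flat$-factor; hence $w\sigma_1 \simeq \sigma_2$ already forces $\sigma_1 \simeq \sigma_2$. This is also why \eqref{eqn:Levi-packet} is stated first: the disjointness asserted in \eqref{eqn:induced-packet} genuinely depends on it.
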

\begin{proof}
  Discussed in the proof of \cite[Lemma 2.23]{Ar13}. Notice that the disjointness of the union \eqref{eqn:induced-packet} follows from Langlands' disjointness theorem: indeed, if $\sigma_1, \sigma_2 \in \Pi^M_{\phi_M}$ and $I_P(\sigma_1)$ intertwines with $I_P(\sigma_2)$, then there exists $w \in W^G(M)$ such that $w\sigma_1 \simeq \sigma_2$. In view of \eqref{eqn:Levi-packet} and the action of $W^G(M)$, this would imply $\sigma_1 \simeq \sigma_2$. 
\end{proof}

Finally, the representations $\sigma \in \Pi_\text{temp}(M)$ can be twisted by $\lambda \in i\mathfrak{a}^*_M$:
$$ \sigma \mapsto \sigma_\lambda := e^{\angles{\lambda, H_M(\cdot)}} \otimes \sigma. $$
It preserves $\Pi_{2,\text{temp}}(M)$ and is compatible with the similar twist on $L$-packets, namely $\Pi^M_{\phi_{M, \lambda}} = \{ \sigma_\lambda : \sigma \in \Pi^M_{\phi_M} \}$ where $\phi_M \in \Phi_\text{bdd}(M)$.

\section{Geometric transfer and its adjoint}\label{sec:geom-transfer}
Throughout this section, $F$ denotes a local field of characteristic zero; in \S\ref{sec:image-transfer-cusp} we will assume $F$ to be non-archimedean. Fix a non-trivial additive character $\psi: F \to \Sph^1$. 

\subsection{Collective geometric transfer}\label{sec:collective-geom-trans}
Fix a symplectic $F$-vector space $W$ of dimension $2n$, and define the metaplectic covering $\rev: \tilde{G} \to G(F)$ accordingly. Also fix a minimal Levi subgroup $M_0$ of $G$.

\paragraph{The unstable side}
In \S\ref{sec:space-orbital-integral} we have defined the space $\Iasp(\tilde{G})$ of normalized anti-genuine orbital integrals, realized as a space of anti-genuine functions $\Gamma_\text{reg}(\tilde{G}) \to \C$. Note that $\Gamma_\text{reg}(\tilde{G}) \to \Gamma_\text{reg}(G)$ is a $\bmu_8$-torsor by Theorem \ref{prop:Mp-commute}. Same for $\Gamma_\text{reg, ell}(\tilde{G})$, etc. For non-archimedean $F$, the following definitions are essentially from \cite{Ar96}; for discussions about the real case as well as some updates, see \cite[\S 1]{ArGerm}.

\begin{definition}\label{def:Iasp-filtration}
  For each $M \in \mathcal{L}(M_0)$, define $\mathcal{F}^M(\Iasp(\tilde{G}))$ to be the subspace of $f_{\tilde{G}} \in \Iasp(\tilde{G})$ such that $f_{\tilde{L}} = 0$ for every Levi subgroup $L$ that does not contain a conjugate of $M$. By Corollary \ref{prop:parabolic-descent-transitivity}, $M \subset L$ implies $\mathcal{F}^{L}(\Iasp(\tilde{G})) \subset \mathcal{F}^M(\Iasp(\tilde{G}))$. Therefore we get a filtration on $\Iasp(\tilde{G})$ indexed by the partially-ordered set $\mathcal{L}(M_0)$. It is thus natural to set \index{$\Iaspcusp(\tilde{G})$}\index{$\orbI_\gr(\tilde{G})$}
  \begin{align*}
    \Iaspcusp(\tilde{G}) & := \mathcal{F}^G(\Iasp(\tilde{G})), \\
    \gr^M \Iasp(\tilde{G}) & := \mathcal{F}^M \Iasp(\tilde{G}) \bigg/ \sum_{L\supsetneq M} \mathcal{F}^L \Iasp(\tilde{G}), \\
    \orbI_\gr(\tilde{G}) & := \bigoplus_{M \in \mathcal{L}(M_0)/W^G_0} \gr^M \Iasp(\tilde{G}).
  \end{align*}

  The same definitions also apply to the Levi subgroups of $\tilde{G}$, that is, coverings of metaplectic type. A functions whose image lies in $\Iaspcusp(\tilde{G})$ will be called \emph{cuspidal}.
\end{definition}

Note that for each $M \in \mathcal{L}(M_0)$, the group $W^G(M)$ operates on $\Iaspcusp(\tilde{M})$ by conjugation. By Corollary \ref{prop:parabolic-descent-transitivity}, the map $f_{\tilde{G}} \mapsto f_{\tilde{M}}$ induces an isomorphism
$$ \gr^M \Iasp(\tilde{G}) \rightiso \Iaspcusp(\tilde{M})^{W^G(M)}. $$
Hence
$$ \orbI_\gr(\tilde{G}) \rightiso \bigoplus_{M \in \mathcal{L}(M_0)/W^G_0} \Iaspcusp(\tilde{M})^{W^G(M)}. $$

In a parallel manner, the natural maps $\Gamma_{G-\text{reg}}(M) \to \Gamma_\text{reg}(G)$ induce
\begin{align*}
  \Gamma_\text{reg}(G) &= \bigsqcup_{M \in \mathcal{L}(M_0)/W^G_0} \Gamma_{G-\text{reg, ell}}(M) \bigg/ W^G(M), \\
  \Gamma_\text{reg}(\tilde{G}) &= \bigsqcup_{M \in \mathcal{L}(M_0)/W^G_0} \Gamma_{G-\text{reg, ell}}(\tilde{M}) \bigg/ W^G(M).
\end{align*}

We equip $\Gamma_\text{reg}(G)$ with a Radon measure as follows.
\begin{enumerate}
  \item For each $M \in \mathcal{L}(M_0)$, endow $\Gamma_\text{reg, ell}(M)$ with the Radon measure such that
    $$ \int_{\Gamma_\text{reg, ell}(M)} \alpha = \sum_{T: \text{ell.} / \text{conj.}}  |W(M, T)|^{-1} \int_{T(F)} \alpha $$
    for every $C_c$ test function $\alpha$. Here $T$ ranges over the elliptic maximal tori of $M$ modulo conjugacy.
  \item It is required that
    $$ \int_{\Gamma_\text{reg}(G)} \alpha = \sum_{M \in \mathcal{L}(M_0)/W^G_0} |W^G(M)|^{-1} \int_{\Gamma_{\text{reg, ell}}(M)} \alpha $$
    for every $C_c$ test function $\alpha$.
\end{enumerate}
This is exactly the definition in \cite[\S 1]{Ar96}; coverings do not intervene here. For $\alpha = f_G \in \orbI(G)$ it reduces to Weyl's integration formula for $f$.

\begin{definition}\label{def:pairing-geom}
  For $a_{\tilde{G}}, b_{\tilde{G}} \in \Iasp(\tilde{G})$, define the hermitian pairing
  $$ (a_{\tilde{G}} | b_{\tilde{G}}) := \int_{\Gamma_\text{reg}(G)} a_{\tilde{G}} \overline{b_{\tilde{G}}}. $$
  By the usual bounds for normalized orbital integrals \cite[Théorème 4.1.4]{Li12b}, the pairing is well-defined.
\end{definition}

\paragraph{The stable side}
Let $(n', n'') \in \EndoE_\text{ell}(\tilde{G})$ and denote the corresponding endoscopic group by $G^\Endo$ as usual. The preceding constructions have stable variants for $G^\Endo$ (see \cite[\S 1]{Ar96}), which we review below.

Fix a minimal Levi subgroup $M^\Endo_0$ of $G^\Endo$. The parabolic descent $f^{G^\Endo} \mapsto f^{M^\Endo}$ for various $M^\Endo \in \mathcal{L}(M^\Endo_0)$ allows us to define the filtration $\mathcal{F}^M(S\orbI(G^\Endo))$ as before. Similarly, using Corollary \ref{prop:parabolic-descent-transitivity} we define \index{$S\Icusp(G^\Endo)$} \index{$S\orbI_\gr(G^\Endo)$}
\begin{align*}
  S\Icusp(G^\Endo) & := \mathcal{F}^{G^\Endo}(S\orbI(G^\Endo)), \\
  \gr^{M^\Endo} S\orbI(G^\Endo) & := \mathcal{F}^{M^\Endo} S\orbI(G^\Endo) \bigg/ \sum_{L^\Endo \supsetneq M^\Endo} \mathcal{F}^{L^\Endo} S\orbI(G^\Endo) \\
  & \rightiso S\Icusp(M^\Endo)^{W^{G^\Endo}(M^\Endo)}, \\
  S\orbI_\gr(G^\Endo) & := \bigoplus_{M^\Endo \in \mathcal{L}(M^\Endo_0)/W^{G^\Endo}_0} \gr^{M^\Endo} S\orbI(G^\Endo) \\
  & \rightiso \bigoplus_{M^\Endo \in \mathcal{L}(M^\Endo_0)/W^{G^\Endo}_0} S\Icusp(M^\Endo)^{W^{G^\Endo}(M^\Endo)}.
\end{align*}
In this stable situation, a function whose image lies in $S\Icusp(G^\Endo)$ will be called \emph{cuspidal}.

The space of stable strongly regular semisimple classes decomposes as
$$ \Delta_\text{reg}(G^\Endo) = \bigsqcup_{M^\Endo \in \mathcal{L}(M^\Endo_0)/W^{G^\Endo}_0} \Delta_{G-\text{reg}}(M^\Endo)/W^{G^\Endo}(M^\Endo). $$

Prescribe a Radon measure on $\Delta_\text{reg}(G^\Endo)$ as follows.
\begin{enumerate}
  \item For each $M^\Endo \in \mathcal{L}(M^\Endo_0)$, endow $\Delta_\text{reg, ell}(M^\Endo)$ with the Radon measure such that
    $$ \int_{\Delta_\text{reg, ell}(M^\Endo)} \alpha = \sum_{T^\Endo: \text{ell.} / \text{st. conj.}}  |W(M^\Endo, T^\Endo)(F)|^{-1} \int_{T^\Endo(F)} \alpha $$
    for every $C_c$ test function $\alpha$. Here $T^\Endo$ ranges over the elliptic maximal $F$-tori of $M^\Endo$ modulo stable conjugacy.
  \item We require that
    $$ \int_{\Delta_\text{reg}(G^\Endo)} \alpha = \sum_{M^\Endo \in \mathcal{L}(M^\Endo_0)/W^{G^\Endo}_0} |W^{G^\Endo}(M^\Endo)|^{-1} \int_{\Delta_{\text{reg, ell}}(M^\Endo)} \alpha $$
    for every $C_c$ test function $\alpha$.
\end{enumerate}

To define the hermitian pairing of stable orbital integrals, we must incorporate the abelian group
\begin{gather}\label{eqn:E-group}
  \mathfrak{E}(G^\Endo_\sigma, G^\Endo; F) := \Ker\left[ H^1(F, G^\Endo_\sigma) \to H^1_\text{ab}(F, G^\Endo) \right]
\end{gather}
for every $\sigma \in \Delta_\text{reg}(G^\Endo)$; here $H^1_\text{ab}(F, -)$ stands for the functor of abelianized Galois cohomology, as recalled in \cite[\S 3.1]{Li15} (see also Borovoi \cite{Bor98} or Labesse \cite[I]{Lab99}). There is a functorial abelianization map $\text{ab}^1: H^1(F, -) \to H^1_\text{ab}(F, -)$ between pointed sets; it is bijective for tori or for non-archimedean $F$. In particular, $\mathfrak{D}(G^\Endo_\sigma, G^\Endo; F) \rightiso \mathfrak{E}(G^\Endo_\sigma, G^\Endo; F)$ in the non-archimedean setting.\index{$\mathfrak{E}(G^\Endo_\sigma, G^\Endo; F)$}

The hermitian pairing is defined as
$$
  (a^\Endo | b^\Endo) := \int_{\sigma \in \Delta_\text{reg}(G^\Endo)} |\mathfrak{E}(G^\Endo_\sigma, G^\Endo; F)|^{-1} a^\Endo(\sigma) \overline{b^\Endo(\sigma)}
$$
for every $a^\Endo, b^\Endo \in S\orbI(G^\Endo)$. As in the case for $\tilde{G}$, the convergence is guaranteed by the standard bounds for normalized stable orbital integrals. For archimedean $F$, the number $|\mathfrak{E}(G^\Endo_\sigma, G^\Endo; F)|$ is best explained by enlarging $G^\Endo$ to a $K$-group \cite[\S 4]{Ar02}: it then equals the number of conjugacy classes (in the sense of $K$-group) in the stable class of $\sigma$. See \cite[\S 4.17]{Wa14-1} for further discussions.

\begin{remark}\label{rem:measure-Sigma}
 The measure on $\Delta_\text{reg}(G^\Endo)$ satisfies
 $$ \int_{\sigma \in \Delta_\text{reg}(G^\Endo)} \sum_{\substack{\gamma \in \Gamma_{\text{reg}}(G^\Endo) \\ \gamma \mapsto \sigma}} \alpha(\gamma) = \int_{\gamma \in \Gamma_\text{reg}(G^\Endo)} \alpha $$
 for every $\alpha \in C_c(\Gamma_\text{reg}(G^\Endo))$. This is a direct consequence of our definition of measures: for non-archimedean $F$ it is actually  \cite[(1.3)]{Ar96}, whereas for $F=\R$, one may argue by Shelstad's description of stable conjugacy via $W(G,T) \backslash W(G,T)(F)$ reviewed in \S\ref{sec:st-Weyl}.
\end{remark}

\begin{remark}\label{rem:grading}
  Both $\orbI_\gr(\tilde{G})$ and $S\orbI(G^\Endo)$ carry natural $\mathcal{L}(M_0)$-filtrations coming from the gradings. In \S\ref{sec:spectral-formalism} we will exhibit filtration-preserving isomorphisms $\Iasp(\tilde{G}) \rightiso \orbI_\gr(\tilde{G})$ and $S\orbI(G^\Endo) \rightiso S\orbI_\gr(G^\Endo)$ by means of the trace Paley-Wiener theorems.
\end{remark}

\paragraph{Collective transfer}
The geometric transfers to various $G^\Endo$ can now be woven into a ``collective'' transfer. We shall follow \cite{Ar96} closely.

\begin{definition}
  Set
  $$ \Gamma_\text{reg,ell}^\EndoE(\tilde{G}) := \bigsqcup_{G^\Endo \in \EndoE_\text{ell}(\tilde{G})} \Delta_{G-\text{reg,ell}}(G^\Endo) $$
  where, by a standard abuse of notations, we use $G^\Endo$ to denote an endoscopic datum. An element in $\Gamma_\text{reg,ell}^\EndoE(\tilde{G})$ will be written in the form $(G^\Endo, \sigma)$ where $\sigma \in \Delta_{G-\text{reg,ell}}(G^\Endo)$, or simply as $\sigma$ according to the context. The same construction applies to any covering of metaplectic type $\tilde{M} = \prod_{i \in I} \GL(n_i, F) \times \Mp(2n^\flat)$ and its elliptic endoscopic data: the factors $\GL(n_i)$ will not interfere.

  Next, note that $W^G(M)$ has a well-defined action on $\Gamma_{G-\text{reg,ell}}^\EndoE(\tilde{M})$ for each $M \in \mathcal{L}(M_0)$, namely by permuting the indexing set $I$ of its $\GL$-components. Thus it makes sense to define
  $$ \Gamma^\EndoE_\text{reg}(\tilde{G}) := \bigsqcup_{M \in \mathcal{L}(M_0)/W^G_0} \Gamma_{G-\text{reg,ell}}^\EndoE(\tilde{M}) \bigg/ W^G(M). $$
\end{definition}

Equip each $\Gamma_\text{reg,ell}^\EndoE(\tilde{M}) \bigg/ W^G(M)$ with the quotient measure, and equip $\Gamma_\text{reg}^\EndoE(\tilde{G})$ with the measure of disjoint union. Elements in $\Gamma^\EndoE_\text{reg}(\tilde{G})$ are written as $(M^\Endo, \sigma)$ where $\sigma \in \Delta_{G-\text{reg,ell}}(M^\Endo)/W^G(M)$, or more succinctly as $\sigma$ whenever appropriate.

\begin{remark}
  In \cite{Ar96}, there are two ways to define the set $\Gamma_\text{reg}^\EndoE(\cdots)$ for reductive groups, say by looking at either
  \begin{inparaenum}[(i)]
    \item elliptic endoscopic data of Levi, or
    \item Levi of elliptic endoscopic data.
  \end{inparaenum}
  We have seen in Lemma \ref{prop:z[s]} that some subtleties arise in the metaplectic case. In this article we follow the previous viewpoint, as in \cite[(2.6) and (2.9)]{Ar96}.
\end{remark}

\begin{definition}\label{def:Endo-I} \index{$\orbI^\EndoE(\tilde{G})$} \index{$\Icusp^\EndoE(\tilde{G})$}
  Define a subspace $\orbI^\EndoE(\tilde{G})$ of $\bigoplus_{G^\Endo \in \EndoE_\text{ell}(\tilde{G})} S\orbI(G^\Endo)$ (always abusing notations...) as follows. Its elements are of the form $f^\EndoE = (f^{G^\Endo} \in S\orbI(G^\Endo))_{G^\Endo}$ such that
  \begin{itemize}
    \item for every $M \in \mathcal{L}(M_0)$ with $M^\Endo \in \EndoE_\text{ell}(\tilde{G})$, the function
      $$ \left( f^{G[s]} \right)^{s, M^\Endo} \in S\orbI(M^\Endo), \quad s \in \EndoE_{M^\Endo}(\tilde{G}) $$
      in Definition \ref{def:s-descent} is independent of $s$;
    \item denote the function above by $f^{M^\Endo}$, we require that $f^{M^\Endo} \in S\orbI(M^\Endo)^{W^G(M)}$, where $W^G(M)$ acts by permuting the indexing set $I$.
  \end{itemize}
  Also, define the cuspidal subspace
  \begin{gather*}
    \Icusp^\EndoE(\tilde{G}) := \bigoplus_{G^\Endo \in \EndoE_\text{ell}(\tilde{G})} S\orbI_\text{cusp}(G^\Endo) \quad \subset \orbI^\EndoE(\tilde{G}).
  \end{gather*}
\end{definition}

The definition of $\orbI^\EndoE(\tilde{G})$ is best explained by the proof of the following result.

\begin{proposition}\label{prop:image-in-EndoE}\index{$\mathcal{T}^\EndoE$}
  The transfer maps $f_{\tilde{G}} \mapsto f^{G^\Endo}$ assemble into the ``collective transfer''
  \begin{align*}
    \mathcal{T}^\EndoE : \Iasp(\tilde{G}) & \longrightarrow \orbI^\EndoE(\tilde{G}) \\
    f_{\tilde{G}} & \longmapsto f^\EndoE := \left( f^{G^\Endo} \right)_{G^\Endo \in \EndoE_\mathrm{ell}(\tilde{G})}.
  \end{align*}
  Its restriction to $\Iaspcusp(\tilde{G})$ gives $\Iaspcusp(\tilde{G}) \to \orbI^\EndoE_\mathrm{cusp}(\tilde{G})$.
\end{proposition}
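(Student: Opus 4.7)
The plan is to glue the individual transfers $\mathcal{T}_{(n',n'')}: \Iasp(\tilde{G}) \to S\orbI(G^\Endo)$ from Remark \ref{rem:ambiguity-transfer} into a single map, and then verify both conditions defining $\orbI^\EndoE(\tilde{G})$ in Definition \ref{def:Endo-I} together with preservation of cuspidality. Each component map is already well-defined, so the real tasks are: (i) independence of $s \in \EndoE_{M^\Endo}(\tilde{G})$ for the $s$-twisted descent $(f^{G[s]})^{s,M^\Endo}$; (ii) $W^G(M)$-invariance of the resulting function; and (iii) preservation of cuspidality.

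For (i) and the very existence of $f^{M^\Endo}$, Theorem \ref{prop:transfer-parabolic} supplies the key identity
$$ (f^{G[s]})^{s, M^\Endo} = (f_{\tilde{M}})^\Endo $$
in $S\orbI(M^\Endo)$: the right-hand side is manifestly independent of $s$ and depends only on the image $f_{\tilde{M}} \in \Iasp(\tilde{M})$ of $f_{\tilde{G}}$ under parabolic descent, so we may set $f^{M^\Endo} := (f_{\tilde{M}})^\Endo$. For (ii), recall that $W^G(M)$ acts on $M = \prod_{i \in I}\GL(n_i) \times \Sp(W^\flat)$ by permuting the index set $I$ while leaving $\Sp(W^\flat)$ untouched, and that parabolic descent lands in $\Iasp(\tilde{M})^{W^G(M)}$ (as recalled in \S\ref{sec:space-orbital-integral}). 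Since by Definition \ref{def:endoscopy-M} the elliptic endoscopy of $\tilde{M}$ only affects the symplectic factor, and since both the correspondence of classes and the transfer factor \eqref{eqn:transfer-factor-Levi} are tautological on the $\GL$-components, the endoscopic transfer intertwines this permutation action, so $(f_{\tilde{M}})^\Endo$ automatically lies in $S\orbI(M^\Endo)^{W^G(M)}$.

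For (iii), suppose $f_{\tilde{G}} \in \Iaspcusp(\tilde{G})$ and let $L^\Endo \subsetneq G^\Endo$ be a proper Levi. Lemma \ref{prop:M^Endo-to-s} produces a Levi $M$ of $G$, an elliptic endoscopic datum of $\tilde{M}$ whose endoscopic group is identified with $L^\Endo$, and some $s \in \EndoE_{L^\Endo}(\tilde{G})$ with $G^\Endo = G[s]$. Inspecting the recipe of Definition \ref{def:s}, $M = G$ would force $I = \varnothing$ and $(m',m'') = (n',n'')$, hence $L^\Endo = G^\Endo$; contrapositively, $L^\Endo \neq G^\Endo$ implies $M \neq G$. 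Cuspidality then gives $f_{\tilde{M}} = 0$, whence $(f^{G^\Endo})^{s, L^\Endo} = (f_{\tilde{M}})^\Endo = 0$ by Theorem \ref{prop:transfer-parabolic}. Since this equals $\gamma \mapsto (f^{G^\Endo})^{L^\Endo}(z[s]\gamma)$ and translation by the central element $z[s] \in L^\Endo(F)$ is a bijection of $\Delta_{G-\text{reg}}(L^\Endo)$, we obtain $(f^{G^\Endo})^{L^\Endo} = 0$, so $f^{G^\Endo} \in S\Icusp(G^\Endo)$.

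No step is genuinely difficult given the apparatus already in place: Theorem \ref{prop:transfer-parabolic} is tailor-made to absorb the central twist by $z[s]$ that distinguishes the metaplectic case from ordinary endoscopy for reductive groups, and the only delicate bit of bookkeeping is verifying that the properness of Levi subgroups passes correctly through the diagram \eqref{eqn:M^Endo-to-s}. The rest is a clean assembly of the transfer map and its parabolic compatibility.
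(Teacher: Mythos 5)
Your proof is correct and follows essentially the same route as the paper: Theorem \ref{prop:transfer-parabolic} to get $s$-independence via $(f^{G[s]})^{s,M^\Endo} = (f_{\tilde{M}})^\Endo$, and Lemma \ref{prop:M^Endo-to-s} combined with the $z[s]$-twist to deduce cuspidality. You are in fact a bit more explicit than the paper on two points it treats as immediate — the $W^G(M)$-equivariance of the transfer on the $\GL$-factors, and the verification that a proper Levi $L^\Endo \subsetneq G^\Endo$ forces $M \subsetneq G$ in Lemma \ref{prop:M^Endo-to-s} — which is a useful addition rather than a deviation.
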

\begin{proof}
  Let $f \in \Iasp(\tilde{G)}$. For every $(M^\Endo, \sigma) \in \Gamma^\EndoE_\text{reg}(\tilde{G})$ and $s_1, s_2 \in \EndoE_{M^\Endo}(\tilde{G})$, we have
  $$ \left(f^{G[s_1]}\right)^{s_1, M^\Endo} = (f_{\tilde{M}})^\Endo = \left(f^{G[s_2]}\right)^{s_2, M^\Endo} $$
  by Theorem \ref{prop:transfer-parabolic}. The required independence of $s$ and $W^G(M)$-invariance for the first assertion follow at once.
  
  Now assume $f \in \Iaspcusp(\tilde{G})$. Given $G^\Endo$ and its Levi subgroup $M^\Endo$, by Lemma \ref{prop:M^Endo-to-s} we produce a Levi subgroup $M$ of $G$ as well an $s \in \EndoE_{M^\Endo}(\tilde{G})$, such that $G^\Endo = G[s]$ as endoscopic data and we have a diagram as \eqref{eqn:endo-complete}. Elements in general position of $\Delta_\text{reg}(M^\Endo)$ can be expressed as $z[s]\sigma$ for some $\sigma \in \Delta_{G-\text{reg}}(M^\Endo)$. Theorem \ref{prop:transfer-parabolic} then implies
  $$ (f^{G^\Endo})^{M^\Endo}(z[s]\sigma) = f^{G^\Endo}(z[s]\sigma) = \left(f_{\tilde{M}}\right)^\Endo(\sigma) = 0. $$
  Hence $f^\EndoE \in \orbI^\EndoE_\text{cusp}(\tilde{G})$, as asserted.
\end{proof}

\begin{definition}\label{def:collective-factor}
  The correspondence of conjugacy classes and the transfer factors admit collective versions as follows. If $(M^\Endo, \sigma) \in \Gamma^\EndoE_\text{reg}(\tilde{G})$ and $\delta \in \Gamma_\text{reg}(G)$ corresponds to $\sigma$ via the diagram \eqref{eqn:endo-incomplete}, then $\delta \in \Gamma_{G-\text{reg,ell}}(M)/W^G(M)$; we write $\sigma \leftrightarrow \delta$ for such $(\sigma, \delta)$. For every $\tilde{\delta} \in \rev^{-1}(\delta)$ we set
  \begin{gather}\label{eqn:Delta-collective-nonell}
    \Delta(\sigma, \tilde{\delta}) := \sum_{w \in W^G(M)} \Delta_{M^\Endo, \tilde{M}}(w\sigma w^{-1}, \tilde{\delta}).
  \end{gather}
  Note that by $G$-regularity, there is at most one nonzero term in the sum. It is customary to set $\Delta(\sigma, \tilde{\delta})=0$ if $\sigma \not\leftrightarrow \delta$.
\end{definition}

In view of the previous definitions, the space $\orbI^\EndoE(\tilde{G})$ may be embedded into the space of functions $\Gamma_\text{reg}^\EndoE(\tilde{G}) \to \C$: an element $f^\EndoE = (f^{G^\Endo})_{G^\Endo \in \EndoE_{\text{ell}}(\tilde{G})}$ corresponds to the function
\begin{gather}
  (M^\Endo, \sigma) \longmapsto f^{M^\Endo}(\sigma) := \left( f^{G[s]} \right)^{s, M^\Endo}(\sigma)
\end{gather}
for any choice of $s \in \EndoE_{M^\Endo}(\tilde{G})$. Moreover, $\Icusp^\EndoE(\tilde{G})$ is precisely the subspace of functions supported in $\Gamma_\text{reg,ell}^\EndoE(\tilde{G})$. Now the geometric transfer can be rephrased in the collective terminology.

\begin{proposition}\index{$f^\EndoE$}
  For every $f_{\tilde{G}} \in \Iasp(\tilde{G})$, we have
  $$ f^\EndoE = \mathcal{T}^\EndoE(f_{\tilde{G}}): \sigma \mapsto \sum_{\sigma \leftrightarrow \delta} \Delta(\sigma, \tilde{\delta}) f_{\tilde{G}}(\tilde{\delta}) $$
  as a function on $\Gamma_\mathrm{reg}^\EndoE(\tilde{G})$.
\end{proposition}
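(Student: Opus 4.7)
The plan is to unwind definitions by chaining three earlier results: the embedding of $\orbI^\EndoE(\tilde{G})$ into functions on $\Gamma_\mathrm{reg}^\EndoE(\tilde{G})$ spelled out just before the statement, Theorem \ref{prop:transfer-parabolic} (which identifies the $s$-twisted descent with ``descent-then-transfer''), and Theorem \ref{prop:geometric-transfer} applied to $\tilde{M}$. The $W^G(M)$-symmetrization built into Definition \ref{def:collective-factor} will then match the natural reorganization of the sum over $\Gamma_\mathrm{reg}(M)$ as a sum over $\Gamma_\mathrm{reg}(G)$.

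Concretely, for $(M^\Endo, \sigma) \in \Gamma_\mathrm{reg}^\EndoE(\tilde{G})$, the embedding together with Definition \ref{def:Endo-I} gives $f^\EndoE(M^\Endo, \sigma) = (f^{G[s]})^{s, M^\Endo}(\sigma)$ for any $s \in \EndoE_{M^\Endo}(\tilde{G})$, which by Theorem \ref{prop:transfer-parabolic} coincides with $(f_{\tilde{M}})^\Endo(\sigma)$. Applying Theorem \ref{prop:geometric-transfer} to $\tilde{M}$ and then the covering-group version of Proposition \ref{prop:descent-orbint} (used implicitly in the proof of Theorem \ref{prop:transfer-parabolic}) yields
\[
 f^\EndoE(M^\Endo, \sigma) = \sum_{\substack{\delta' \in \Gamma_\mathrm{reg}(M) \\ \sigma \leftrightarrow \delta'}} \Delta_{M^\Endo, \tilde{M}}(\sigma, \tilde{\delta}')\, f_{\tilde{G}}(\tilde{\delta}'),
\]
with $G$-regularity of $\sigma$ forcing $\delta' \in \Gamma_{G-\mathrm{reg},\mathrm{ell}}(M)$. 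Since the correspondence $\leftrightarrow$ is $W^G(M)$-equivariant (it operates on stable classes, and $W^G(M)$ acts via $G$-conjugation), the set $\{\delta' : \sigma \leftrightarrow \delta'\}$ is a union of $W^G(M)$-orbits. Repackaging via $\Gamma_{G-\mathrm{reg},\mathrm{ell}}(M)/W^G(M) \hookrightarrow \Gamma_\mathrm{reg}(G)$, fixing a lift $\delta'_0 \in \Gamma_{G-\mathrm{reg},\mathrm{ell}}(M)$ of $\delta$ and $\tilde{\delta}_0 \in \rev^{-1}(\delta'_0)$, and exploiting the $G$-invariance of $f_{\tilde{G}}$ together with the covariance identity $\Delta_{M^\Endo, \tilde{M}}(\sigma, w\tilde{\delta}_0 w^{-1}) = \Delta_{M^\Endo, \tilde{M}}(w^{-1}\sigma w, \tilde{\delta}_0)$, the orbit contribution becomes $f_{\tilde{G}}(\tilde{\delta}_0)$ times a partial sum indexed by $W^G(M)/\Stab_{W^G(M)}(\delta'_0)$; after reindexing $w \mapsto w^{-1}$ this assembles into $\Delta(\sigma, \tilde{\delta}_0)$ in the sense of Definition \ref{def:collective-factor}.

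The main obstacle is the stabilizer bookkeeping of this last step: the $W^G(M)$-orbit of $\delta'_0$ has size $|W^G(M)|/|\Stab_{W^G(M)}(\delta'_0)|$, while the defining sum for $\Delta(\sigma, \tilde{\delta}_0)$ ranges over all of $W^G(M)$. The key observation, built into Definition \ref{def:collective-factor} and traceable to the $G$-regularity of $\sigma$, is that at most one $W^G(M)$-coset contributes a nonzero term to that symmetrized sum, so the multiplicity $|\Stab_{W^G(M)}(\sigma)|$ reconciles cleanly with the orbit-length normalization. Apart from this combinatorial verification, the proof is purely formal.
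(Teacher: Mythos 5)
Your overall strategy and sequence of reductions match the paper's proof exactly: first identify $f^\EndoE(M^\Endo,\sigma)$ with $(f_{\tilde M})^\Endo(\sigma)$ via Definition \ref{def:Endo-I} and Theorem \ref{prop:transfer-parabolic}, then expand by the transfer theorem for $\tilde M$, then reorganize into $W^G(M)$-orbits and recognize the collective transfer factor via Definition \ref{def:collective-factor}. The paper's proof is precisely this, written a bit more compactly.

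One intermediate claim you make is, however, incorrect as stated: for a fixed \emph{representative} $\sigma$ of the stable class, the set $\{\delta' : \sigma \leftrightarrow \delta'\}$ is \emph{not} a union of $W^G(M)$-orbits. It is the set of $M(F)$-conjugacy classes inside a single stable class $\theta$ of $M(F)$; applying $w \in W^G(M)$ replaces $\theta$ by $w\theta w^{-1}$, which by $G$-regularity (the $\GL$-components of $\theta$ have distinct eigenvalue packets) is a \emph{different} stable class unless $w=1$. The $W^G(M)$-equivariance you invoke only tells you that $\sigma \leftrightarrow \delta'$ implies $w\sigma w^{-1} \leftrightarrow w\delta' w^{-1}$, not that $\sigma \leftrightarrow w\delta' w^{-1}$. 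What actually happens in the reorganization is: one partitions all of $\Gamma_{G\text{-reg,ell}}(M)$ into $W^G(M)$-orbits; because $G$-regularity makes the action free, each orbit $\{w\delta_0 w^{-1} : w\in W^G(M)\}$ is in bijection with $W^G(M)$; within a given orbit, the condition $\sigma \leftrightarrow w\delta_0 w^{-1}$ (equivalently $w^{-1}\sigma w \leftrightarrow \delta_0$) is redundant once you absorb it into the vanishing of $\Delta_{M^\Endo,\tilde M}(w^{-1}\sigma w, \tilde\delta_0)$, yielding exactly the sum over all $w\in W^G(M)$ that defines $\Delta(\sigma,\tilde\delta_0)$. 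The stabilizer discussion in your last paragraph also resolves more cleanly than you suggest: $G$-regularity forces $\Stab_{W^G(M)}(\sigma)=\{1\}=\Stab_{W^G(M)}(\delta'_0)$ outright, so there is no multiplicity to reconcile and the coset sum over $W^G(M)/\Stab(\delta'_0)$ is simply the full sum over $W^G(M)$. With these two points corrected, your argument is the one in the paper.
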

\begin{proof}
  Let $(M^\Endo, \sigma) \in \Gamma^\EndoE_{\text{reg}}(\tilde{G})$. By Theorem \ref{prop:transfer-parabolic}, $f^\EndoE(\sigma)$ equals
  $$ \left( f^{G[s]} \right)^{s, M^\Endo}(\sigma) = (f_{\tilde{M}})^\Endo(\sigma). $$
  On the other hand, the $G$-regularity of $\sigma$ entails
  \begin{align*}
    (f_{\tilde{M}})^\Endo(\sigma) & = \sum_{\substack{\delta \in \Gamma_{G-\text{reg,ell}}(M) \\ \sigma \underset{M}{\leftrightarrow} \delta}} \Delta_{M^\Endo, \tilde{M}}(\sigma, \tilde{\delta}) f_{\tilde{M}}(\tilde{\delta}) \\
    & = \sum_{\substack{\delta \in \Gamma_{G-\text{reg,ell}}(M)/W^G(M) \\ (M^\Endo, \sigma) \leftrightarrow \delta}} \; \sum_{w \in W^G(M)} \Delta_{M^\Endo, \tilde{M}}(w\sigma w^{-1}, \tilde{\delta}) f_{\tilde{M}}(\tilde{\delta}) \\
    & = \sum_{\substack{\delta \in \Gamma_\text{reg}(G) \\ \sigma \leftrightarrow \delta}} \Delta(\sigma, \tilde{\delta}) f_{\tilde{G}}(\tilde{\delta}),
  \end{align*}
  in which the last $\Delta$ is the collective geometric transfer factor.
\end{proof}

The next result will serve as a change of variables in certain integrations over $\Gamma_\text{reg,ell}(\tilde{G})$.
\begin{lemma}[Cf.\ {\cite[Lemma 2.3]{Ar96}}]\label{prop:change-variables}
  For all $\alpha \in C_c(\Gamma_{\mathrm{reg}}(\tilde{G}))$ and $\beta \in C_c(\Gamma^\EndoE_{\mathrm{reg}}(\tilde{G}))$ such that $\alpha$ is genuine, we have
  \begin{multline*}
    \int_{\delta \in \Gamma_\mathrm{reg,ell}(G)} \quad \sum_{\substack{\sigma \in \Gamma^\EndoE_\mathrm{reg,ell}(\tilde{G}) \\ \sigma \leftrightarrow \delta}} \beta(\sigma) \Delta(\sigma, \tilde{\delta}) \alpha(\tilde{\delta})\; \dd\delta \\
    = \int_{\sigma \in \Gamma^\EndoE_\mathrm{reg,ell}(\tilde{G})} \quad \sum_{\substack{\delta \in \Gamma_\mathrm{reg,ell}(G) \\ \sigma \leftrightarrow \delta}} \beta(\sigma) \Delta(\sigma, \tilde{\delta}) \alpha(\tilde{\delta}) \;\dd\sigma
  \end{multline*}
  where $\tilde{\delta} \in \rev^{-1}(\delta)$ is arbitrarily chosen.
\end{lemma}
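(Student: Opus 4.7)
The statement is a Fubini-type interchange of summation and integration, and I would prove it by unwinding both sides torus by torus and comparing. Note first that the product $\Delta(\sigma,\tilde{\delta})\alpha(\tilde{\delta})$ is invariant under the $\bmu_8$-action on $\tilde{\delta}$ (by the genuineness hypothesis on $\alpha$ and the genuineness of $\Delta$), so it descends to a well-defined function of $\delta \in \Gamma_{\mathrm{reg,ell}}(G)$; in particular the expressions on both sides make sense.

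First, I would apply the definitions of the Radon measures on $\Gamma_\mathrm{reg,ell}(G)$ and $\Gamma^\EndoE_\mathrm{reg,ell}(\tilde{G})$ to rewrite the left-hand side as
$$ \sum_{T \subset G \text{ ell}/\mathrm{conj}} |W(G,T)|^{-1} \int_{T(F)} \alpha(\tilde{\delta}) \sum_{\sigma \leftrightarrow \delta} \beta(\sigma) \Delta(\sigma,\tilde{\delta}) \, d\delta, $$
and the right-hand side as
$$ \sum_{G^\Endo \in \EndoE_\mathrm{ell}(\tilde{G})} \; \sum_{T^\Endo \subset G^\Endo \text{ ell}/\mathrm{st.conj.}} |W(G^\Endo, T^\Endo)(F)|^{-1} \int_{T^\Endo(F)} \beta(\sigma) \sum_{\delta \leftrightarrow \sigma} \Delta(\sigma,\tilde{\delta})\alpha(\tilde{\delta})\, d\sigma. $$

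Second, I would use Lemma \ref{prop:diagram} to match the torus integrals. Whenever a regular pair $\sigma \leftrightarrow \delta$ occurs with $\sigma \in T^\Endo(F)$ and $\delta \in T(F)$, the standard isomorphism $\theta: T^\Endo \rightiso T$ is $F$-rational and canonical up to the action of $W(G,T)(F)$; under $\theta$ the correspondence of classes becomes a translation on the common torus (with the metaplectic twist by $-1$ only affecting which elements correspond, not the torus-theoretic structure). In this way the collection of triples $(T, G^\Endo, T^\Endo)$ contributing to a given stable class of $\delta$ is enumerated symmetrically from both sides, and after carrying out the two torus integrations on the same $F$-torus, the two expressions will differ at most by combinatorial weights.

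The main obstacle, and where the real content lies, is verifying that the combinatorial weights indeed balance. Concretely one must show that
$$ |W(G,T)|^{-1} \cdot \#\{\sigma \leftrightarrow \delta : \sigma \in T^\Endo(F)\} = |W(G^\Endo, T^\Endo)(F)|^{-1} \cdot \#\{\delta \leftrightarrow \sigma : \delta \in T(F)\} $$
after summing appropriately over the tori $T, T^\Endo$. This reduces, via the standard identification of conjugacy classes in a stable class with $\mathfrak{D}(T, G; F)$ and the exact sequence $1 \to W(G,T) \to W(G,T)(F) \to \mathfrak{D}(T,G;F)$, to a purely torus-theoretic calculation identical to the one carried out in \cite[Lemma 2.3]{Ar96}; the metaplectic specifics (the $-1$-twist and the $\bmu_8$-lift $\tilde{\delta}$) drop out because, as noted above, the integrand descends to a function of $\delta$. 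I therefore expect the proof to essentially reduce to Arthur's argument after the standard isomorphism has been applied.
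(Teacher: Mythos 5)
Your overall plan --- expand both sides via the measure definitions, transport the torus integrals through the standard isomorphism of Lemma~\ref{prop:diagram}, and match combinatorial weights --- is in the right spirit, and the observation that genuineness lets the product $\Delta(\sigma,\tilde\delta)\alpha(\tilde\delta)$ descend to a function of $\delta$ is both correct and necessary. But the balancing identity you propose is not the right formulation, and the step where you assert the reduction is ``identical to Arthur's'' is precisely where the content lives.

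Concretely, for a fixed pair $(T, T^\Endo)$ matched by the standard isomorphism, both cardinalities in your displayed equation equal $|H^1(F,T)|$ for generic $\delta,\sigma$ (the $\sigma$'s correspond bijectively to the endoscopic characters, and the $\delta$'s to the members of the stable class). The identity as written would then force $|W(G,T)| = |W(G^\Endo,T^\Endo)(F)|$. Already for the endoscopic datum $(n,0)$, where $\Sp(2n)$ and $\SO(2n+1)$ have canonically identified absolute Weyl groups, the standard isomorphism carries $W(G^\Endo,T^\Endo)(F)$ onto $W(G,T)(F)$, which strictly contains $W(G,T)$ whenever there are nontrivial stable conjugations. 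You hedge with ``after summing appropriately over the tori $T, T^\Endo$,'' but that summation --- accounting for the several conjugacy classes of tori inside one stable class --- is exactly the delicate part, and it is not spelled out.

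The paper's proof avoids this entirely by first invoking Remark~\ref{rem:measure-Sigma} to rewrite the left-hand side as an integral over $\Delta_\mathrm{reg,ell}(G)$ (stable classes, weights $|W(G,T)(F)|^{-1}$) of an inner finite sum over $\delta \mapsto \theta$. At that point the correspondence $\sigma \leftrightarrow \theta$ between $G$-regular elliptic stable classes is a bijection (the map $\mu$ of \eqref{eqn:mu} supplies the inverse), so there is no combinatorial balancing left to do: one describes $\Gamma^\EndoE_\mathrm{reg,ell}(\tilde G)$ as pairs $(T^\Endo,\sigma)$ with $\sigma \in T^\Endo_\mathrm{reg}(F)/W(G^\Endo,T^\Endo)(F)$, lets the standard isomorphism carry Haar measure and the $W(\cdot,\cdot)(F)$-quotient across, and the right-hand side rearranges term-by-term into the stabilized left-hand side. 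If you want to run the argument without Remark~\ref{rem:measure-Sigma}, you will effectively have to reprove its content inside your ``balancing'' step, and the equation you write down does not yet do that.
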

\begin{proof}
  In view of Remark \ref{rem:measure-Sigma}, the left-hand side of the assertion equals
  \begin{gather}\label{eqn:change-var-1}
    \int_{\theta \in \Delta_\text{reg,ell}(G)}\; \sum_{\substack{\sigma \in \Gamma^\EndoE_\text{reg,ell}(\tilde{G}) \\ \sigma \leftrightarrow \theta}} \; \sum_{\delta \mapsto \theta} \beta(\sigma) \Delta(\sigma, \tilde{\delta}) \alpha(\tilde{\delta}).
  \end{gather}
  Writing $\sigma \leftrightarrow \theta$ is legitimate, since the correspondence of classes depends only on the stable conjugacy classes.
  
  On the other hand, the second projection
  $$ \text{pr}_2: \left\{ (\theta, \sigma) \in \Delta_\text{reg,ell}(G) \times \Gamma^\EndoE_\text{reg,ell}(\tilde{G}) : \sigma \leftrightarrow \theta \right\} \longrightarrow \Gamma^\EndoE_\text{reg,ell}(\tilde{G}) $$
  is a bijective: its inverse is given by the maps \eqref{eqn:mu}. To do integration, note that $\Gamma^\EndoE_\text{reg,ell}(\tilde{G})$ can be described as the set of pairs $(T^\Endo, \sigma)$ where $T^\Endo \subset G^\Endo$ is an elliptic maximal $F$-torus (taken up to stable conjugacy), and $\sigma \in T^\Endo_\text{reg}(F)/W(G^\Endo, T^\Endo)(F)$. When $(T^\Endo, \sigma)$ varies, the inverse image by $\text{pr}_2$ runs over the triples $(T, \theta, \sigma)$ where
  \begin{compactitem}
    \item $T$: elliptic maximal torus of $G$, taken up to stable conjugacy;
    \item $\theta \in T_\text{reg}(F)/W(G,T)(F)$;
    \item $\sigma \in \Gamma^\EndoE_\text{ell,reg}(\tilde{G})$ satisfying $\sigma \leftrightarrow \theta$.
  \end{compactitem}
  Moreover, to such data we pick a standard isomorphism $T^\Endo \rightiso T$ sending $\sigma$ to $\theta$. Notice that each elliptic maximal torus $T \subset G$ occurs in some $(T, \theta, \sigma)$: this is contained in \cite[Lemme 5.2.1]{Li15}.

  In view of the definition of the measure on $\Gamma^\EndoE_\text{reg,ell}(\tilde{G})$, the right-hand side of the assertion can be transformed into
  $$ \sum_{\substack{T \subset G \\ \text{elliptic} \\ /\text{st. conj.}}} \quad \int_{\theta \in T_\text{reg}(F)/W(G, T)(F)} \; \sum_{\substack{\sigma \in \Gamma^\EndoE_\text{reg,ell}(\tilde{G}) \\ \sigma \leftrightarrow \theta }} \sum_{\delta \mapsto \theta} \beta(\sigma) \Delta(\sigma, \tilde{\delta}) \alpha(\tilde{\delta}). $$

  By the definition of the measure on $\Delta_\text{reg,ell}(G)$, this equals \eqref{eqn:change-var-1}.
\end{proof}

\subsection{Adjoint transfer}\label{sec:adjoint-transfer}
Hereafter we make systematic use of collective transfer factors and correspondences.

\begin{definition}\label{def:adjoint-factor}\index{$\Delta(\tilde{\delta}, \sigma)$}
  For $\sigma \in \Gamma^\EndoE_\text{reg,ell}(\tilde{G})$ and $\delta \in \Gamma_\text{reg,ell}(G)$ such that $\sigma \leftrightarrow \delta$, define the \emph{adjoint transfer factor} as
  $$ \Delta(\tilde{\delta}, \sigma) := |\mathfrak{D}(G_\delta, G; F)|^{-1} \overline{\Delta(\sigma, \tilde{\delta})}, $$
  where $\tilde{\delta} \in \rev^{-1}(\delta)$ as usual. The same definition carries over to coverings of metaplectic type and the elliptic conjugacy classes therein.
  
  More generally, let $\sigma \in \Gamma^\EndoE_\text{reg}(\tilde{G})$ and $\delta \in \Gamma_\text{reg}(G)$ such that $\sigma \leftrightarrow \delta$. By Definition \ref{def:collective-factor}, there exists a unique $M \in \mathcal{L}(M_0)/W^G_0$ such that $\sigma \in \Gamma^\EndoE_\text{$G$-reg,ell}(\tilde{M})/W^G(M)$, $\delta \in \Gamma_\text{$G$-reg,ell}(M)/W^G(M)$. Set
  $$ \Delta(\tilde{\delta}, \sigma) := \sum_{w \in W^G(M)} \Delta_{\tilde{M}}(\tilde{\delta}, w\sigma w^{-1}) $$
  where $\Delta_{\tilde{M}}(\cdots)$ denotes the adjoint transfer factor for $\tilde{M}$; it also equals $\sum_w \Delta_{\tilde{M}}(w\tilde{\delta}w^{-1}, \sigma)$. It is customary to set $\Delta(\tilde{\delta}, \sigma)=0$ if $\delta \not\leftrightarrow \sigma$.
\end{definition}

Recall that when $\sigma \leftrightarrow \delta$, the usual transfer factor satisfies a similar equation (see \eqref{eqn:Delta-collective-nonell})
$$ \Delta(\sigma, \tilde{\delta}) = \sum_{w \in W^G(M)} \Delta_{\tilde{M}}(w\sigma w^{-1}, \tilde{\delta}) = \sum_{w \in W^G(M)} \Delta_{\tilde{M}}(\sigma, w\tilde{\delta} w^{-1}). $$
In all cases, the sum over $W^G(M)$ contains at most one nonzero term. Also note that $\Delta(\tilde{\delta}, \sigma)$ is anti-genuine in $\tilde{\delta}$. Hence products of the form $\Delta(\sigma, \tilde{\delta}) \Delta(\tilde{\delta}, \sigma_1)$ only depend on $(\sigma, \delta, \sigma_1)$.

For $\tilde{\delta}, \tilde{\delta}_1 \in \Gamma_\text{reg}(\tilde{G})$, put
$$ \bdelta_{\tilde{\delta}, \tilde{\delta}_1} := \begin{cases} 
  \noyau, & \text{if } \tilde{\delta}_1 = \noyau\tilde{\delta}, \; \noyau \in \bmu_8, \\
  0, & \text{otherwise},
\end{cases} $$
whereas for $\sigma, \sigma_1 \in \Gamma^\EndoE_\text{reg}(\tilde{G})$, we denote by $\bdelta_{\sigma, \sigma_1} \in \{0, 1\}$ the usual Kronecker's delta. The following lemma for reductive linear groups can be found in \cite[Lemma 2.2, (2.10) and (2.11)]{Ar96}.

\begin{lemma}\label{prop:geometric-inversion}
  For all $\tilde{\delta}, \tilde{\delta}_1 \in \Gamma_\mathrm{reg}(\tilde{G})$,
  $$ \sum_{\sigma \in \Gamma^\EndoE_\mathrm{reg}(\tilde{G})} \Delta(\tilde{\delta}, \sigma) \Delta(\sigma, \tilde{\delta}_1) = \bdelta_{\tilde{\delta}, \tilde{\delta}_1}. $$
  For all $\sigma, \sigma_1 \in \Gamma^\EndoE_\mathrm{reg}(\tilde{G})$,
  $$ \sum_{\delta \in \Gamma_\mathrm{reg}(G)} \Delta(\sigma, \tilde{\delta}) \Delta(\tilde{\delta}, \sigma_1) = \bdelta_{\sigma, \sigma_1} $$
  where $\tilde{\delta} \in \rev^{-1}(\delta)$ is arbitrary.
\end{lemma}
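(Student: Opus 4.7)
The two identities are of the same nature, so I would prove them in parallel, reducing each to the elliptic case and then applying character orthogonality on a finite abelian group. First I would reduce to the elliptic case. The decomposition
$$ \Gamma_\text{reg}(\tilde{G}) = \bigsqcup_{M \in \mathcal{L}(M_0)/W^G_0} \Gamma_{G-\text{reg,ell}}(\tilde{M})/W^G(M) $$
and its endoscopic analogue, together with the support properties of $\Delta(\sigma, \tilde{\delta})$, force $\tilde{\delta}, \tilde{\delta}_1$ and any non-vanishing $\sigma$ to share the same underlying class of Levi subgroup $M$; unfolding the $W^G(M)$-averaged sums in Definitions \ref{def:collective-factor} and \ref{def:adjoint-factor} converts each identity for $\tilde{G}$ into the corresponding elliptic identity for the covering $\tilde{M}$ of metaplectic type. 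So I will assume throughout that $\tilde{\delta}, \tilde{\delta}_1 \in \Gamma_\text{reg,ell}(\tilde{G})$ and $\sigma, \sigma_1 \in \Gamma^\EndoE_\text{reg,ell}(\tilde{G})$.

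For the first identity, write $\delta, \delta_1$ for the images in $G(F)$. If $\delta$ and $\delta_1$ are not stably conjugate, then no $\sigma$ can correspond to both, and both sides vanish; $\bdelta_{\tilde{\delta}, \tilde{\delta}_1}$ is zero as well. Otherwise, set $T := G_\delta$; Lemma \ref{prop:stable-conj-lifting} gives a unique $\tilde{\delta}' \in \rev^{-1}(\delta_1)$ stably conjugate to $\tilde{\delta}$, and I write $\tilde{\delta}_1 = \noyau \tilde{\delta}'$ for a unique $\noyau \in \bmu_8$. Combining the cocycle condition $\Delta(\sigma, \tilde{\delta}') = \angles{\kappa_\sigma, \inv(\delta_1, \delta)}\Delta(\sigma, \tilde{\delta})$ recalled in \S\ref{sec:transfer-FL}, the genuineness of $\Delta(\sigma, -)$ in $\bmu_8$, and the fact that $|\Delta(\sigma, \tilde{\delta})| = 1$ whenever $\sigma \leftrightarrow \delta$, the left-hand side collapses to
$$ \noyau \cdot |\mathfrak{D}(T, G; F)|^{-1} \sum_{\substack{\sigma \in \Gamma^\EndoE_\text{reg,ell}(\tilde{G}) \\ \sigma \leftrightarrow \delta}} \angles{\kappa_\sigma, \inv(\delta_1, \delta)}. $$
At this point I would invoke Pontryagin duality: the classification of elliptic endoscopic data of $\tilde{G}$ from \cite{Li11, Li12a} should exhibit a bijection between $\{\sigma : \sigma \leftrightarrow \delta\}$ and the character group $\widehat{\mathfrak{D}(T, G; F)}$ via $\sigma \mapsto \kappa_\sigma$, whence character orthogonality forces the sum to be $\bdelta_{c, 1}$ with $c = \inv(\delta_1, \delta)$, equal to $1$ precisely when $\delta = \delta_1$. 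Combined with the factor $\noyau$, the right answer $\bdelta_{\tilde{\delta}, \tilde{\delta}_1}$ pops out.

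The second identity will be proved by the dual flavour of the same orthogonality: after the analogous elliptic reduction and the assumption that $\sigma, \sigma_1$ correspond to a common stable class in $G$, summation over conjugacy classes $\delta$ is summation over $c \in \mathfrak{D}(T, G; F)$; applying the cocycle condition to both $\Delta(\sigma, \tilde{\delta})$ and $\Delta(\sigma_1, \tilde{\delta})$ isolates $\sum_c \angles{\kappa_\sigma \kappa_{\sigma_1}^{-1}, c} = |\mathfrak{D}(T, G; F)| \cdot \bdelta_{\kappa_\sigma, \kappa_{\sigma_1}}$, and injectivity of $\sigma \mapsto \kappa_\sigma$ turns this into $\bdelta_{\sigma, \sigma_1}$. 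The hard part in all this is precisely the Pontryagin-duality step: one must verify that $\sigma \mapsto \kappa_\sigma$ surjects onto the full character group $\widehat{\mathfrak{D}(T, G; F)}$ as $\sigma$ ranges over elliptic endoscopic data of $\tilde{G}$ with $\sigma \leftrightarrow \delta$. This rests on a careful bookkeeping of the pairs $(n', n'') \in \EndoE_\text{ell}(\tilde{G})$, the decomposition $T = T' \times T''$ obtained by separating eigenvalues against $\gamma'$ and $\gamma''$, and the identification $H^1(F, T'') \rightiso (\bmu_2)^{I''}$ recalled in \S\ref{sec:transfer-FL}; the ``twist by $-1$'' intrinsic to metaplectic endoscopy (Lemma \ref{prop:z[s]}) introduces sign bookkeeping that must be tracked but does not disturb the final orthogonality count.
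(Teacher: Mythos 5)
Your proposal is correct and, for the first assertion, follows essentially the same path as the paper: reduce to the elliptic case via the Levi decomposition, dispose of the non-stably-conjugate case, then combine the cocycle condition, the genuineness of $\Delta(\sigma, \cdot)$, and the bijection $(\delta, \kappa) \leftrightarrow \sigma$ with $\kappa \in \mathfrak{R}(G_\delta, G; F)$, reducing to Fourier inversion on $H^1(F, G_\delta)$. You correctly single out the surjectivity of $\sigma \mapsto \kappa_\sigma$ onto the full character group as the nontrivial input; the paper obtains it from a local variant of \cite[Lemme 5.2.1]{Li13b}.

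For the second assertion you take a slightly different route. You run the dual orthogonality directly: fix a base point $\delta^0$ in the stable class, sum over $c = \inv(\delta, \delta^0) \in \mathfrak{D}(T, G; F)$, and apply the cocycle condition to both $\Delta(\sigma, \cdot)$ and $\Delta(\sigma_1, \cdot)$ to extract $\sum_c \angles{\kappa_\sigma \kappa_{\sigma_1}^{-1}, c}$. The paper instead forms the square matrices $A = \bigl(\Delta(\tilde{\delta}^i, \sigma^j)\bigr)$ and $B = \bigl(\Delta(\sigma^i, \tilde{\delta}^j)\bigr)$, observes that the first assertion says $AB = 1$, and deduces $BA = 1$ by pure linear algebra, so only one orthogonality computation is needed. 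Both arguments rest on the same bijection and the same count $m = |H^1(F, G_\delta)|$; the paper's is shorter, yours makes the duality between the two assertions explicit. One step you should spell out in your version: after orthogonality collapses the sum to the diagonal $\kappa_\sigma = \kappa_{\sigma_1}$ (hence $\sigma = \sigma_1$ by injectivity), the residual factor $\Delta(\sigma, \tilde{\delta}^0)\,\overline{\Delta(\sigma_1, \tilde{\delta}^0)}$ must be identified with $1$; this holds because it then equals $|\Delta(\sigma, \tilde{\delta}^0)|^2 = 1$, the transfer factor taking values in $\bmu_8$.
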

\begin{proof}
  We begin with the case $\tilde{\delta}, \tilde{\delta}_1 \in \Gamma_\text{reg,ell}(\tilde{G})$ for the first assertion. The elements $\sigma$ with $\Delta(\tilde{\delta}, \sigma) \Delta(\sigma, \tilde{\delta}_1) \neq 0$ are necessarily elliptic. We may assume that $\delta := \rev(\tilde{\delta})$ and $\delta_1 := \rev(\tilde{\delta}_1)$ are stably conjugate, otherwise both sides of the first assertion are zero. Then
  $$ \Delta(\tilde{\delta}, \sigma) \Delta(\sigma, \tilde{\delta}_1) = |\mathfrak{D}(G_\delta, G; F)|^{-1} \overline{\Delta(\sigma, \tilde{\delta})} \Delta(\sigma, \tilde{\delta}_1). $$
  Furthermore, by the genuineness of $\Delta(\sigma, \cdot)$ and Lemma \ref{prop:stable-conj-lifting}, we may assume that $\tilde{\delta}$ and $\tilde{\delta}_1$ are stably conjugate, so that $\bdelta_{\tilde{\delta}, \tilde{\delta}_1} = \bdelta_{\delta, \delta_1}$.

  By a local variant of \cite[Lemme 5.2.1]{Li15} (same proof), there is a natural bijection
  $$ (\delta, \kappa) \stackrel{1:1}{\longleftrightarrow} \sigma \in \Gamma^\EndoE_\text{ell,reg}(\tilde{G}), $$
  where $\delta \in \Delta_\text{reg,ell}(G)$ and $\kappa$ belongs to the Pontryagin dual $\mathfrak{R}(G_\delta, G; F)$ \index{$\mathfrak{R}(G_\delta, G; F)$} of $H^1(F, G_\delta)$; it is characterized by
  \begin{inparaenum}[(i)]
    \item $\sigma \leftrightarrow \delta$, and
    \item $\kappa$ is the endoscopic character associated to $\delta$ and $\sigma$ (recall \S\ref{sec:endoscopy}).
  \end{inparaenum}
  This applies to those $\sigma$ and $\delta \stackrel{\text{st.}}{\sim} \delta_1$ satisfying $\overline{\Delta(\sigma, \tilde{\delta})} \Delta(\sigma, \tilde{\delta}_1) \neq 0$: it follows from the cocycle property for $\Delta(\sigma, \cdot)$ that
  $$ \overline{\Delta(\sigma, \tilde{\delta})} \Delta(\sigma, \tilde{\delta}_1) = \angles{\kappa, \inv(\delta, \delta_1)} $$
  for the $\kappa$ so obtained. Note that $\mathfrak{D}(G_\delta, G; F) = H^1(F, G_\delta)$ since $H^1(F, G)=\{1\}$. Hence
  $$ \sum_{\sigma \in \Gamma^\EndoE_\mathrm{reg}(\tilde{G})} \overline{\Delta(\sigma, \tilde{\delta})} \Delta(\sigma, \tilde{\delta}_1) = \sum_\kappa \angles{\kappa, \inv(\delta, \delta_1)} = |\mathfrak{D}(G_\delta, G; F)| \bdelta_{\delta, \delta_1} $$
  by Fourier inversion on $H^1(F, G_\delta)$.
  
  As for the second assertion in the elliptic case, assume $\sigma, \sigma_1 \in \Gamma^\EndoE_\text{reg,ell}(\tilde{G})$. As before, the classes $\delta$ having nonzero contribution must lie in a single elliptic stable conjugacy class. Enumerate these classes as $\delta^1, \ldots, \delta^m$. Choose $\tilde{\delta}^i \in \rev^{-1}(\delta^i)$ for $i=1, \ldots, m$ so that all the $\tilde{\delta}^i$ are stably conjugate; this can always be done in view of Lemma \ref{prop:stable-conj-lifting}. On the other hand, the set $\left\{ \sigma \in \Gamma^\EndoE_\text{ell,reg}(\tilde{G}) : \sigma \leftrightarrow \delta \right\}$ also has cardinality $m$, and we may enumerate its elements as $\sigma^1, \ldots, \sigma^m$. Indeed, this follows from the aforementioned bijection $(\delta, \kappa) \stackrel{1:1}{\longleftrightarrow} \sigma$, and we have $m = |H^1(F, G_\delta)|$. Form the matrices
  $$ A := \left( \Delta(\tilde{\delta}^i, \sigma^j) \right)_{1 \leq i, j \leq m}, \quad B := \left( \Delta(\sigma^i, \tilde{\delta}^j) \right)_{1 \leq i, j \leq m}. $$
  The first assertion amounts to $AB=1$ whilst the second amounts to $BA=1$. This concludes the elliptic case. It is routine to extend this to coverings of metaplectic type.

  Finally, the general, non-elliptic case follow from Definition \ref{def:adjoint-factor}. Take the first assertion for example. As before, we reduce to the case in which $\delta, \delta_1 \in \Gamma_{G-\text{reg,ell}}(M)/W^G(M)$, and the sum may be taken over $\sigma \in \Gamma^\EndoE_{G-\text{reg,ell}}(\tilde{M})/W^G(M)$, where $M \in \mathcal{L}(M_0)$. Upon a $W^G(M)$-action we may assume $\delta \stackrel{\text{st}}{\sim} \delta_1$ in $M$. Then
  \begin{multline*}
    \sum_{\sigma \in \Gamma^\EndoE_{G-\text{reg,ell}}(\tilde{M})/W^G(M)} \Delta(\tilde{\delta}, \sigma) \Delta(\sigma, \tilde{\delta}_1) = \\
    \sum_{\substack{\sigma \in \Gamma^\EndoE_{G-\text{reg,ell}}(\tilde{M})/W^G(M) \\ u,v \in W^G(M)}} \Delta_{\tilde{M}}(u\tilde{\delta}u^{-1}, \sigma) \Delta_{\tilde{M}}(v\sigma v^{-1}, \tilde{\delta}_1).
  \end{multline*}
  And this reduces immediately to the elliptic case for $\tilde{M}$.
\end{proof}

\begin{definition}
  The adjoint transfer factor yields an adjoint transfer map
  \begin{align*}
    \mathcal{T}_\EndoE: \{\text{functions } \Gamma^\EndoE_\text{reg}(\tilde{G}) \to \C \} & \longrightarrow \{\text{anti-genuine functions } \Gamma_\text{reg}(\tilde{G}) \to \C \} \\
    b & \longmapsto \left[ \tilde{\delta} \mapsto \sum_{\sigma \in \Gamma^\EndoE_\text{reg}(\tilde{G})} \Delta(\tilde{\delta}, \sigma) b(\sigma) \right].
  \end{align*}
\end{definition}

Likewise, one may also regard $\mathcal{T}^\EndoE$ as a linear map
\begin{align*}
  \mathcal{T}^\EndoE: \{\text{anti-genuine functions } \Gamma_\text{reg}(\tilde{G}) \to \C \} & \longrightarrow \{\text{functions } \Gamma^\EndoE_\text{reg}(\tilde{G}) \to \C \} \\
  a & \longmapsto \left[ \sigma \mapsto \sum_{\sigma \leftrightarrow \delta} \Delta(\sigma, \tilde{\delta}) a(\tilde{\delta}) \right].
\end{align*}

\begin{proposition}\label{prop:transfer-injective}
  Extend the definition of $\mathcal{T}^\EndoE$ as above. Then $\mathcal{T}_\EndoE$ and $\mathcal{T}^\EndoE$ are mutually inverse. In particular, $\mathcal{T}_\EndoE \circ \mathcal{T}^\EndoE = \identity: \Iasp(\tilde{G}) \to \Iasp(\tilde{G})$; consequently the transfer map $\mathcal{T}^\EndoE: \Iasp(\tilde{G}) \to \orbI^\EndoE(\tilde{G})$ is injective.
\end{proposition}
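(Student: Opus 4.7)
The proof will be a direct formal manipulation using the two orthogonality relations of Lemma \ref{prop:geometric-inversion}. The one bookkeeping issue to watch is the choice of lifts $\tilde{\delta} \in \rev^{-1}(\delta)$ and the genuine/anti-genuine character of the factors involved; beyond that, it is just a Fubini-type swap of summations.

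The plan is to evaluate both compositions on an arbitrary test function. For $a \in \{\text{anti-genuine functions } \Gamma_\text{reg}(\tilde{G}) \to \C\}$ and $\tilde{\delta} \in \Gamma_\text{reg}(\tilde{G})$, unfold the definitions to write
$$
(\mathcal{T}_\EndoE \circ \mathcal{T}^\EndoE)(a)(\tilde{\delta}) = \sum_{\sigma \in \Gamma^\EndoE_\text{reg}(\tilde{G})} \Delta(\tilde{\delta}, \sigma) \sum_{\substack{\delta_1 \in \Gamma_\text{reg}(G) \\ \sigma \leftrightarrow \delta_1}} \Delta(\sigma, \tilde{\delta}_1) a(\tilde{\delta}_1),
$$
where $\tilde{\delta}_1 \in \rev^{-1}(\delta_1)$ is chosen arbitrarily. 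Because $\Delta(\sigma, \cdot)$ is genuine while $a$ is anti-genuine, the product $\Delta(\sigma, \tilde{\delta}_1) a(\tilde{\delta}_1)$ depends only on $\delta_1$; in particular we may fix the choice $\tilde{\delta}_1 := \tilde{\delta}$ whenever $\delta_1 = \delta$. Since $\Delta(\sigma, \tilde{\delta}_1) = 0$ when $\sigma \not\leftrightarrow \delta_1$, the restriction $\sigma \leftrightarrow \delta_1$ can be dropped, and for fixed $\tilde{\delta}, \tilde{\delta}_1$ only finitely many $\sigma$ give a nonzero contribution. Swap the two sums and apply the first identity of Lemma \ref{prop:geometric-inversion} to obtain
$$
\sum_{\delta_1 \in \Gamma_\text{reg}(G)} \left( \sum_{\sigma} \Delta(\tilde{\delta}, \sigma) \Delta(\sigma, \tilde{\delta}_1) \right) a(\tilde{\delta}_1) = \sum_{\delta_1} \bdelta_{\tilde{\delta}, \tilde{\delta}_1} a(\tilde{\delta}_1) = a(\tilde{\delta}).
$$
A symmetric computation for $(\mathcal{T}^\EndoE \circ \mathcal{T}_\EndoE)(b)(\sigma)$, using the second identity of Lemma \ref{prop:geometric-inversion}, shows it equals $b(\sigma)$.

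There is no real obstacle here: the content is already encoded in Lemma \ref{prop:geometric-inversion}, and what remains is purely formal. The only points to be careful about are (i) invoking the (anti-)genuineness to legitimize the arbitrary choice of $\tilde{\delta}_1$, and (ii) reconciling the constrained sums $\sum_{\sigma \leftrightarrow \delta_1}$ and $\sum_{\sigma \leftrightarrow \delta}$ in the definitions of $\mathcal{T}^\EndoE$ with the unrestricted sums appearing in the inversion identities, which is harmless since the corresponding factors vanish off the correspondence. The final assertion is then immediate: the map $\mathcal{T}^\EndoE$ of Proposition \ref{prop:image-in-EndoE} is the restriction of the function-space map just inverted, so $\mathcal{T}_\EndoE \circ \mathcal{T}^\EndoE = \identity$ on $\Iasp(\tilde{G})$, which forces $\mathcal{T}^\EndoE: \Iasp(\tilde{G}) \to \orbI^\EndoE(\tilde{G})$ to be injective.
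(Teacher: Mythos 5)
Your proof is correct and follows the same route as the paper's: unfold the two definitions, swap the finite sums, and apply the two orthogonality identities of Lemma \ref{prop:geometric-inversion}, using the anti-genuineness of $a$ at the final step to collapse $\sum_{\delta_1}\bdelta_{\tilde{\delta},\tilde{\delta}_1}a(\tilde{\delta}_1)$ to $a(\tilde{\delta})$. The extra remarks you make about the independence of the choice of lift $\tilde{\delta}_1$ and about dropping the correspondence constraint are exactly the bookkeeping the paper leaves implicit.
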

\begin{proof}
  Let $a \in \Iasp(\tilde{G})$, then its image under $\mathcal{T}_\EndoE$ sends $\tilde{\delta} \in \Gamma_\text{reg}(\tilde{G})$ to
  \begin{align*}
    \sum_\sigma \Delta(\tilde{\delta}, \sigma) \left(\sum_{\delta_1} \Delta(\sigma, \tilde{\delta}_1) a(\tilde{\delta}_1) \right) 
    &= \sum_{\delta_1} \left( \sum_\sigma \Delta(\tilde{\delta}, \sigma) \Delta(\sigma, \tilde{\delta}_1) \right) a(\tilde{\delta}_1) \\
    &= \sum_{\delta_1} \bdelta_{\tilde{\delta}, \tilde{\delta}_1} a(\tilde{\delta}_1) = a(\tilde{\delta})
  \end{align*}
  by Lemma \ref{prop:geometric-inversion}; note that each sum is finite, and the last equality follows from the definition of $\bdelta_{\tilde{\delta}, \tilde{\delta}_1}$ together with the anti-genuineness of $a$. This shows that $\mathcal{T}_\EndoE \mathcal{T}^\EndoE = \identity$. The proof for the other side is similar.
\end{proof}

Define an hermitian pairing on $\Icusp^\EndoE(\tilde{G})$ as follows
\begin{gather}\label{eqn:collective-ell-ip}
  (a^\EndoE \big| b^\EndoE) := \sum_{G^\Endo \in \EndoE_\text{ell}(\tilde{G})} \iota(\tilde{G}, G^\Endo) (a^\Endo|b^\Endo)
\end{gather}
where
\begin{itemize}
  \item $a^\EndoE = (a^\Endo)_{G^\Endo \in \EndoE_\text{ell}(\tilde{G})}$ and $b^\EndoE = (b^\Endo)_{G^\Endo \in \EndoE_\text{ell}(\tilde{G})}$ are in $\Icusp^\EndoE(\tilde{G})$;
  \item the hermitian pairing $(a^\Endo|b^\Endo)$ is defined relative to $G^\Endo$;
  \item for $G^\Endo$ arising from $(n',n'') \in \EndoE_\text{ell}(\tilde{G})$, we define
    $$ \iota(\tilde{G}, G^\Endo) = \left|Z_{\widehat{G^\Endo}}\right|^{-1} = \begin{cases}
    \frac{1}{4}, & n', n'' \geq 1, \\
    \frac{1}{2}, & n > 0, \; n'=0 \text{ or } n''=0, \\
    1, & n=0;
  \end{cases} $$
  this is the same as the global coefficient used in the stabilization of trace formula \cite[Définition 5.2.5]{Li15}.
\end{itemize}

\begin{corollary}\label{prop:isometry}
  The map $\mathcal{T}^\EndoE$ is an isometry from $\Iaspcusp(\tilde{G})$ onto its image inside $\Icusp^\EndoE(\tilde{G})$.
\end{corollary}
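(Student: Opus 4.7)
The plan is to show that $\mathcal{T}^\EndoE$ preserves the hermitian pairings on the cuspidal subspaces; combined with the injectivity of Proposition \ref{prop:transfer-injective}, this yields the desired isometric isomorphism onto the image. Concretely, for $a, b \in \Iaspcusp(\tilde{G})$ I aim to establish
\begin{equation*}
  (a \big| b) = (a^\EndoE \big| b^\EndoE).
\end{equation*}
Morally, the computation amounts to a ``squared'' version of the geometric inversion formula of Lemma \ref{prop:geometric-inversion}, now decorated with the normalizing constants $\iota(\tilde{G}, G^\Endo)$ and $|\mathfrak{E}|^{-1}$.

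First I unfold the right-hand side. Since $a, b$ are cuspidal, so are $a^\Endo, b^\Endo$ by Proposition \ref{prop:image-in-EndoE}, and \eqref{eqn:collective-ell-ip} is supported on $\Delta_\text{reg,ell}(G^\Endo)$. Substituting $a^\Endo(\sigma) = \sum_{\sigma \leftrightarrow \delta_1} \Delta(\sigma, \tilde{\delta}_1) a(\tilde{\delta}_1)$ together with the conjugate expression for $\overline{b^\Endo}$ yields a sum over $\sigma \in \Gamma^\EndoE_\mathrm{reg,ell}(\tilde{G})$ and pairs $\delta_1, \delta_2 \in \Gamma_\mathrm{reg,ell}(G)$ with $\delta_i \leftrightarrow \sigma$, weighted by $\iota(\tilde{G}, G^\Endo) \cdot |\mathfrak{E}(G^\Endo_\sigma, G^\Endo; F)|^{-1}$ and by $\Delta(\sigma, \tilde{\delta}_1) \overline{\Delta(\sigma, \tilde{\delta}_2)}$. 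Lemma \ref{prop:change-variables} (applied twice) lets me transfer $\delta_1, \delta_2$ to the outer variables; nonzero contributions force $\delta_1 \stackrel{\mathrm{st}}{\sim} \delta_2$, so after picking stably conjugate lifts $\tilde{\delta}_1, \tilde{\delta}_2$ via Lemma \ref{prop:stable-conj-lifting}, the cocycle property of the transfer factor simplifies
\begin{equation*}
  \Delta(\sigma, \tilde{\delta}_1) \overline{\Delta(\sigma, \tilde{\delta}_2)} = \angles{\kappa(\sigma), \inv(\delta_1, \delta_2)},
\end{equation*}
where $\kappa(\sigma) \in \mathfrak{R}(G_{\delta_2}, G; F)$ is the endoscopic character attached to $\sigma$ under the bijection $(\delta, \kappa) \leftrightarrow \sigma$ invoked in the proof of Lemma \ref{prop:geometric-inversion}.

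Through this bijection, for fixed $\delta_2$ the inner sum over $\sigma \leftrightarrow \delta_2$ transforms into a sum over $\kappa \in \mathfrak{R}(G_{\delta_2}, G; F)$. The main obstacle I anticipate is the combinatorial identity asserting that the combined weight $\iota(\tilde{G}, G^\Endo) \cdot |\mathfrak{E}(G^\Endo_\sigma, G^\Endo; F)|^{-1}$, after accounting for the ratios of measure factors and Weyl-group orders transported across the isomorphism $G^\Endo_\sigma \simeq G_\delta$ of Lemma \ref{prop:diagram}, equals $|\mathfrak{R}(G_{\delta_2}, G; F)|^{-1}$ and is in particular \emph{independent} of $\kappa$. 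This is the familiar Kottwitz-type bookkeeping underpinning stabilization and is implicit in \cite[\S 5]{Li13b}; in our setting it can be checked directly from the explicit formula $\iota(\tilde{G}, G^\Endo) = |Z_{\widehat{G^\Endo}}|^{-1}$ together with the description of the endoscopic centralizers. Granting this identity, Fourier inversion on the finite abelian group $\mathfrak{R}(G_{\delta_2}, G; F)$ collapses the inner sum to $\bdelta_{\delta_1, \delta_2}$, and what remains reassembles as $\int_{\Gamma_\mathrm{reg,ell}(G)} a \overline{b}$, which equals $(a|b)$ for cuspidal $a, b$. This concludes the proof of isometry and hence the corollary.
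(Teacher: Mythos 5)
Your argument is correct in spirit but organized differently from the paper. The paper proves the adjointness identity $(\mathcal{T}^\EndoE(a)\mid b^\EndoE)=(a\mid\mathcal{T}_\EndoE(b^\EndoE))$ by unfolding only the left argument, invoking the Kottwitz-type identity \eqref{eqn:role-of-iota} and a single application of Lemma \ref{prop:change-variables}, and then reading off the adjoint factor $\Delta(\tilde\delta,\sigma)$; the isometry then follows immediately from the already-established $\mathcal{T}_\EndoE\circ\mathcal{T}^\EndoE=\identity$ (Proposition \ref{prop:transfer-injective}), so the cocycle/Fourier-inversion bookkeeping of Lemma \ref{prop:geometric-inversion} is reused rather than re-derived. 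You instead unfold both $a^\Endo$ and $b^\Endo$ and re-run the Fourier inversion directly inside the isometry proof; this works, and the ``combinatorial obstacle'' you flag is precisely \eqref{eqn:role-of-iota}, which the paper establishes at that point via $H^1(F,G)=\{1\}$, Kottwitz's surjectivity \cite[10.2 Lemma]{Ko86}, and the duality $H^1_{\text{ab}}(F,G^\Endo)\simeq Z_{\widehat{G^\Endo}}^\vee$. Two minor points: a single application of Lemma \ref{prop:change-variables} suffices --- after moving the integral from $\sigma$ to (say) $\delta_1$, the remaining sums over $\sigma$ and $\delta_2$ are finite and collapse by Fourier inversion on $\mathfrak{R}(G_{\delta_1},G;F)$, so ``applied twice'' is unneeded; and once $(a\mid b)=(a^\EndoE\mid b^\EndoE)$ is shown, injectivity follows from positive definiteness of $(\cdot\mid\cdot)$ on $\Iaspcusp(\tilde{G})$, making the separate appeal to Proposition \ref{prop:transfer-injective} superfluous. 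On balance, the paper's route is shorter because it factors through adjointness and reuses the inversion lemma; yours is more self-contained at the cost of re-deriving it.
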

\begin{proof}
  In view of Proposition \ref{prop:transfer-injective}, it suffices to show that
  $$ (\mathcal{T}^\EndoE(a) \big| b^\EndoE) = (a \big| \mathcal{T}_\EndoE(b^\EndoE)), \quad a \in \Iaspcusp(\tilde{G}), \quad b^\EndoE \in \Icusp^\EndoE(\tilde{G}). $$
  From the definition, $(\mathcal{T}^\EndoE(a) | b^\EndoE)$ equals
  \begin{gather*}
   \sum_{G^\Endo \in \EndoE_\text{ell}(\tilde{G})} \iota(\tilde{G}, G^\Endo) \int_{\sigma \in \Delta_{G-\text{reg,ell}}(G^\Endo)} \sum_{\substack{\delta \in \Gamma_\text{reg,ell}(G) \\ \sigma \leftrightarrow \delta}} |\mathfrak{E}(G^\Endo_\sigma, G^\Endo; F)|^{-1} \overline{b^\Endo(\sigma)} \Delta(\sigma, \tilde{\delta}) a(\tilde{\delta}).
  \end{gather*}

  Recall the definition \eqref{eqn:E-group} for $\mathfrak{E}(G^\Endo_\sigma, G^\Endo; F)$. We contend that for $\sigma \leftrightarrow \delta$ as above,
  \begin{gather}\label{eqn:role-of-iota}
    \iota(\tilde{G}, G^\Endo) |\mathfrak{E}(G_\delta, G; F)|^{-1} = |\mathfrak{D}(G^\Endo_\sigma, G^\Endo; F)|^{-1}.
  \end{gather}
  Indeed, write $T = G_\delta$ and identify $T$ with $T^\Endo := G^\Endo_\sigma$. We have $H^1(F, G)=\{1\}$, thus $\mathfrak{D}(T, G; F) = H^1(F, T)$ and
  \begin{align*}
    \mathfrak{D}(T, G; F)/\mathfrak{E}(T, G^\Endo ; F) & \simeq \Image[ H^1(F, T) \to H^1_\text{ab}(F, G^\Endo) ] \\
    & = \Image[ H^1(F, G^\Endo) \to H^1_\text{ab}(F, G^\Endo) ] \quad \text{(apply \cite[10.2 Lemma]{Ko86})} \\
    & = H^1_\text{ab}(F, G^\Endo) \quad \text{(apply \cite[Proposition 1.6.7]{Lab99})}.
  \end{align*}
  The last term is canonically isomorphic to the Pontryagin dual of $Z_{\widehat{G^\Endo}}$: this follows from the Propositions 1.10, 2.8 and \S 4 of \cite{Bor98}. Since $\iota(\tilde{G}, G^\Endo) = \left| Z_{\widehat{G^\Endo}} \right|^{-1}$, this establishes \eqref{eqn:role-of-iota}.
  
  Now we may write $(\mathcal{T}^\EndoE(a) | b^\EndoE)$ as
  $$ \int_{\sigma \in \Gamma^\EndoE_\text{reg,ell}(\tilde{G})} \sum_{\substack{\delta \in \Gamma_\text{reg,ell}(G) \\ \sigma \leftrightarrow \delta}} \overline{b(\sigma)} |\mathfrak{D}(G_\delta, G; F)|^{-1} \Delta(\sigma, \tilde{\delta}) a(\tilde{\delta}). $$

  Apply Lemma \ref{prop:change-variables} to transform it into
  $$ \int_{\delta \in \Gamma_\text{reg,ell}(G)} \sum_{\substack{\sigma \in \Gamma^\EndoE_\text{reg,ell}(\tilde{G}) \\ \sigma \leftrightarrow \delta}} \overline{b(\sigma)} |\mathfrak{D}(G_\delta, G; F)|^{-1} \Delta(\sigma, \tilde{\delta}) a(\tilde{\delta}). $$

  By Definition \ref{def:adjoint-factor}, this equals $(a \big| \mathcal{T}_\EndoE(b^\EndoE))$.
\end{proof}

\subsection{Image of non-archimedean cuspidal transfer}\label{sec:image-transfer-cusp}
Assume $F$ non-archimedean in this subsection.

\paragraph{Desiderata}
We have defined the collective transfer map
\begin{align*}
  \mathcal{T}^\EndoE: \Iasp(\tilde{G}) & \longrightarrow \Iasp^\EndoE(\tilde{G}) \\
  \Iaspcusp(\tilde{G}) & \longrightarrow \Icusp^\EndoE(\tilde{G}) = \bigoplus_{G^\Endo} S\Icusp(G^\Endo).
\end{align*}
Our aim is to show that $\mathcal{T}^\EndoE$ is an isomorphism. The general case will be done in Corollary \ref{prop:transfer-surj}. Here we consider the cuspidal part only; the injectivity follows from Proposition \ref{prop:transfer-injective}, and the isomorphy is implied by the following result.

\begin{theorem}\label{prop:transfer-cusp-surj}
  We have $\mathcal{T}^\EndoE(\Iaspcusp(\tilde{G})) = \Icusp^\EndoE(\tilde{G})$.
\end{theorem}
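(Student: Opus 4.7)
The injectivity part of Proposition \ref{prop:transfer-injective} and its $W^G(M)$-invariance say that the candidate preimage of $b^\EndoE \in \Icusp^\EndoE(\tilde{G})$ must be $a := \mathcal{T}_\EndoE(b^\EndoE)$, viewed a priori as an anti-genuine function on $\Gamma_\text{reg}(\tilde{G})$. By Definition \ref{def:adjoint-factor} and the assumption that $b^\EndoE$ is supported on the elliptic part $\Gamma^\EndoE_\text{reg,ell}(\tilde{G})$, the function $a$ is automatically supported on $\Gamma_\text{reg,ell}(\tilde{G})$, which is the support condition required of a cuspidal element. The plan is therefore to show that $a$ actually lies in $\Iaspcusp(\tilde{G})$, i.e.\ that it is the orbital integral of some $f \in C^\infty_{c,\asp}(\tilde{G})$.

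The characterization of $\orbI(\tilde{G})$ for non-archimedean $F$ is via Shalika germ expansions around each semisimple element (Vigneras \cite{Vi81}, extended to coverings in \cite[\S 4]{Li12b}). The first step is to fix a semisimple $\tilde{\delta}_0 \in \tilde{G}_\text{ss}$ and perform Harish-Chandra descent: $a$ near $\tilde{\delta}_0$ is controlled by a function on the Lie algebra $\mathfrak{g}_{\delta_0}(F)$ of the centralizer. The centralizer $G_{\delta_0}$ has a standard classical description as a product
\[
  G_{\delta_0} \;\simeq\; \prod_{i \in I_0} \GL(m_i) \;\times\; \Sp(2n^\flat),
\]
arising from the pairs of eigenvalues of $\delta_0$ together with its $\pm 1$-eigenspace. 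By the parabolic descent \eqref{eqn:Delta-paradescent} together with the $z[s]$-twist of Lemma \ref{prop:z[s]}, the image of $a$ under descent corresponds to the adjoint transfer, on the Lie algebra level, between $\mathfrak{g}_{\delta_0}$ and the Lie algebras of the $\delta_0$-descents of the endoscopic groups $G^\Endo$. Under this descent, the elliptic endoscopy for $\tilde{G}$ gets broken up into:
\begin{compactenum}[(i)]
  \item standard endoscopic transfer on the Lie algebra of $\Sp(2n^\flat)$, mapping to its classical elliptic endoscopic groups;
  \item standard endoscopic transfer on the Lie algebra of each $\GL(m_i)$ factor, which is tautological;
  \item a non-standard endoscopic comparison between the Lie algebras $\sp(2n^\flat)$ and $\so(2n^\flat+1)$ via the identification $\spin(2n^\flat+1) \simeq \sp(2n^\flat)$.
\end{compactenum}

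For (i) and (ii), the characterization of the image of cuspidal transfer at the Lie algebra level for linear classical groups is exactly what is established by Arthur in \cite{Ar96} (with only cosmetic adjustments), so we reuse it. For (iii), non-standard endoscopic transfer is bijective on Lie algebras by its very definition, so nothing is left to check. Assembling these three pieces and reversing the descent step (carefully tracking the factors $z[s]$ and their effect on transfer factors), we conclude that the Shalika germ data of $a$ around every non-elliptic semisimple element vanishes, and that around every elliptic semisimple $\tilde{\delta}_0$ it comes from a genuine smooth cuspidal function on $\tilde{G}_{\delta_0}$. This shows $a \in \Iaspcusp(\tilde{G})$, and $\mathcal{T}^\EndoE(a) = b^\EndoE$ follows from Proposition \ref{prop:transfer-injective}.

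The main obstacle is step (i)--(iii), namely organizing the descent so that the collective transfer on $\tilde{G}$ splits into a product of a classical (linear) cuspidal transfer problem and a non-standard cuspidal transfer problem, while keeping track of the sign twists $z[s]$, the $W^G(M)$-invariance, and the interaction of the metaplectic transfer factor with Harish-Chandra descent. Once this combinatorial bookkeeping is done, the hard analytic input is imported wholesale from \cite{Ar96} and from the Lie-algebra bijectivity of non-standard transfer.
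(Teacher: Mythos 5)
Your overall strategy matches the paper's proof: set $a := \mathcal{T}_\EndoE(b^\EndoE)$, observe it is supported on the elliptic locus, perform semisimple descent, split the descended comparison into a linear-reductive cuspidal transfer problem and a non-standard transfer problem, and import Arthur's Lie-algebra surjectivity (Theorem \ref{prop:GT-surj}) and the bijectivity of non-standard transfer. However there is a concrete error and a structural imprecision in your treatment of the descent.

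The error: you invoke ``the identification $\spin(2n^\flat+1) \simeq \sp(2n^\flat)$.'' No such isomorphism of Lie algebras exists for $n^\flat \geq 3$: the root systems are of type $B_{n^\flat}$ and $C_{n^\flat}$, which are distinct. Non-standard endoscopy for the pair $(\Sp(2a), \Spin(2a+1))$ is \emph{not} an isomorphism of groups or Lie algebras; it consists of a $\Gamma_F$-equivariant $\Q$-linear map $j_*: X_*(T_1)_\Q \rightiso X_*(T_2)_\Q$ making the (co)root systems proportional. This induces an isomorphism $\mathfrak{t}_1/W_1 \rightiso \mathfrak{t}_2/W_2$ of adjoint quotients, hence a correspondence of stable classes $X_1 \leftrightarrow X_2$, and Theorem \ref{prop:nonstd-endoscopy} supplies a transfer of stable orbital integrals $\mathcal{T}_{1,2}: S\orbI(\mathfrak{g}_1) \to S\orbI(\mathfrak{g}_2)$ characterized by $f^{G_1}(X_1) = f^{G_2}(X_2)$. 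That this transfer is bijective is what the paper uses; there is no Lie-algebra isomorphism underlying it.

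The imprecision: the description $G_{\delta_0} \simeq \prod_i \GL(m_i) \times \Sp(2n^\flat)$ omits the restriction-of-scalars $\GL$ and unitary factors that appear for generic semisimple $\delta_0$, and places the descent only on the $\tilde{G}$ side. In the paper the descent is organized on the $G^\Endo$-side: one chooses representatives $\gamma_1, \ldots, \gamma_k$ of stable classes in $G^\Endo(F)$ corresponding to $\delta$ (with $G^\Endo_{\gamma_i}$ quasisplit), decomposes $G^\Endo_i = G^\Endo_{\gamma_i} = R_i \times \prod_a \SO(2a+1)$, introduces the intermediaries $\overline{G^\Endo_i} := R_i \times \prod_a \Sp(2a)$, and compares along the chain $G^\Endo_i \leadsto \overline{G^\Endo_i} \leadsto G_\delta$ (non-standard endoscopy followed by standard elliptic endoscopy). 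The splitting into your pieces (i)--(iii) really occurs through this chain, not by decomposing $G_{\delta_0}$ directly. Finally, to apply Arthur's Theorem \ref{prop:GT-surj} the paper must first adjust the descended germs to lie in $S\Icusp(\mathfrak{g}^\Endo_i, G_\delta)$ using the $\mathrm{Out}_{G_\delta}(\overline{G^\Endo_i})$-equivariance of the adjoint transfer factor; this fix-up does not appear in your sketch.
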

\begin{corollary}
  The inverse of $\mathcal{T}^\EndoE: \Iaspcusp(\tilde{G}) \to \Icusp^\EndoE(\tilde{G})$ is the adjoint transfer $\mathcal{T}_\EndoE$.
\end{corollary}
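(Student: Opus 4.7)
The plan is to assemble the corollary as a formal consequence of Theorem~\ref{prop:transfer-cusp-surj} together with Proposition~\ref{prop:transfer-injective}, with no new computation required. Proposition~\ref{prop:transfer-injective} already provides $\mathcal{T}_\EndoE \circ \mathcal{T}^\EndoE = \identity$ on all of $\Iasp(\tilde{G})$, so in particular on the subspace $\Iaspcusp(\tilde{G})$; combined with Theorem~\ref{prop:transfer-cusp-surj} this shows that $\mathcal{T}^\EndoE$ restricts to a bijection $\Iaspcusp(\tilde{G}) \rightiso \Icusp^\EndoE(\tilde{G})$.

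The one thing that remains to check is that the adjoint transfer $\mathcal{T}_\EndoE$, which is a priori defined on the larger space of functions on $\Gamma^\EndoE_{\text{reg}}(\tilde{G})$, actually sends $\Icusp^\EndoE(\tilde{G})$ into $\Iaspcusp(\tilde{G})$, and that it is a two-sided inverse there. For this I would take $b^\EndoE \in \Icusp^\EndoE(\tilde{G})$ and invoke surjectivity from Theorem~\ref{prop:transfer-cusp-surj} to write $b^\EndoE = \mathcal{T}^\EndoE(a)$ for a unique $a \in \Iaspcusp(\tilde{G})$. Applying Proposition~\ref{prop:transfer-injective} gives $\mathcal{T}_\EndoE(b^\EndoE) = \mathcal{T}_\EndoE \mathcal{T}^\EndoE(a) = a$, which both shows $\mathcal{T}_\EndoE(b^\EndoE) \in \Iaspcusp(\tilde{G})$ and yields $\mathcal{T}^\EndoE \circ \mathcal{T}_\EndoE = \identity$ on $\Icusp^\EndoE(\tilde{G})$.

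There is no genuine obstacle: the corollary is purely a book-keeping consequence of the bijectivity established in Theorem~\ref{prop:transfer-cusp-surj} and the pointwise inversion formula of Lemma~\ref{prop:geometric-inversion}, already packaged in Proposition~\ref{prop:transfer-injective}. The only mild subtlety worth pointing out is that a priori $\mathcal{T}_\EndoE$ is defined on a much larger function space, so the argument above is what confirms that its restriction to $\Icusp^\EndoE(\tilde{G})$ indeed lands in $\Iaspcusp(\tilde{G})$ rather than merely in $\Iasp(\tilde{G})$.
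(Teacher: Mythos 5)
Your argument is correct and is essentially the paper's own proof, which simply cites Proposition~\ref{prop:transfer-injective} (the surjectivity from Theorem~\ref{prop:transfer-cusp-surj} being implicit in the corollary's placement). You have merely spelled out the bookkeeping that the paper leaves tacit, including the useful observation that surjectivity is what guarantees $\mathcal{T}_\EndoE$ carries $\Icusp^\EndoE(\tilde{G})$ into $\Iaspcusp(\tilde{G})$ rather than into a larger space of functions.
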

\begin{proof}
  Apply Proposition \ref{prop:transfer-injective}.
\end{proof}

The proof of Theorem \ref{prop:transfer-cusp-surj} occupies the rest of this section. Firstly, we have to review the geometric transfer on the level of Lie algebras.

\paragraph{Standard endoscopy on Lie algebras}
A succinct introduction to the endoscopy on Lie algebras can be found in \cite{C11}; for a comprehensive treatment, see \cite{Wa08}.

In the following discussions, $G$ denotes an arbitrary connected reductive $F$-group. Let $(G^\Endo, \mathcal{G}^\Endo, s, \hat{\xi})$ be an elliptic endoscopic datum for $G$, where $G^\Endo$ is the endoscopic group (see \cite[\S 1.3]{Wa08}); in particular $G^\Endo$ is quasisplit.

Denote by $\Delta_{G^\Endo, G}(\cdot, \cdot)$ a geometric transfer factor on \emph{Lie algebras}, deprived of the factor $\Delta_\mathrm{IV}$ of Langlands-Shelstad; it is canonical only up to a multiplicative constant of absolute value $1$. In this setting we still have:
\begin{compactitem}
  \item the normalized orbital integrals $X \mapsto f^\flat_G(X)$ for $X \in \Gamma_\text{reg}(\mathfrak{g})$, $f^\flat \in C^\infty_c(\mathfrak{g}(F))$, as well as the stable version $Y \mapsto (f^\flat)^G(Y)$ when $G$ is quasisplit;
  \item the spaces $\orbI(\mathfrak{g})$, its stable variant $S\orbI(\mathfrak{g}^\Endo)$, as well as their cuspidal subspaces $\Icusp(\mathfrak{g})$, $S\Icusp(\mathfrak{g}^\Endo)$ since the parabolic descent (see Proposition \ref{prop:descent-orbint}) can also be defined on Lie algebras;
  \item the correspondence of conjugacy classes on Lie algebras, written as $Y \leftrightarrow X$;
  \item the geometric transfer with respect to $\Delta_{G^\Endo, G}(\cdots)$: for each $a \in \orbI(\mathfrak{g})$ we set
    $$ \mathcal{T}(a): Y \longmapsto \sum_{\substack{X \in \Gamma_\text{reg}(\mathfrak{g}) \\ Y \leftrightarrow X}} \Delta_{G^\Endo, G}(Y, X) a(X), \quad Y \in \Delta_{G-\text{reg}}(\mathfrak{g}^\Endo). $$
    The Main Theorem of the endoscopy for Lie algebras, due primarily to B. C. Ngô and Waldspurger, asserts that $\mathcal{T}$ induces a linear map $\orbI(\mathfrak{g}) \to S\orbI(\mathfrak{g}^\Endo)$. It restricts to $\Icusp(\mathfrak{g}) \to S\Icusp(\mathfrak{g}^\Endo)$.
\end{compactitem}

We remark that for the Lie algebras, there is no need to introduce the $z$-pairs as in \cite[\S 2.2]{KS99}.

There is a finite group $\text{Out}_G(G^\Endo)$ of outer $F$-automorphisms of $G^\Endo$; see \cite{Ar06}. Thus it acts on $\Delta_\text{reg}(\mathfrak{g}^\Endo)$, the space of stable regular semisimple classes in $\mathfrak{g}^\Endo(F)$. Note that if $\Delta_{G^\Endo, G}$ is a transfer factor, then so is $\Delta_{G^\Endo, G}(\tau^{-1}(\cdot), \cdot)$. For non-quasisplit $G$, this notion of \emph{outer automorphisms} of an endoscopic datum can be quite subtle, as illustrated by the following result.

\begin{theorem}[Arthur, Hiraga-Saito]\label{prop:Delta-std-equivariance}
  There exists a canonical homomorphism $\chi: \mathrm{Out}_G(G^\Endo) \to \C^\times$, which is trivial for quasisplit $G$, such that for every choice of $\Delta_{G^\Endo, G}$,
  $$ \Delta_{G^\Endo, G}(\tau^{-1}(Y), X) = \chi(\tau)\Delta_{G^\Endo, G}(Y, X), \quad \tau \in \mathrm{Out}_G(G^\Endo), $$
  for all conjugacy classes $X$ (resp. stable conjugacy classes $Y$) in $\mathfrak{g}_\mathrm{reg}(F)$ (resp. in $\mathfrak{g}^\Endo_\mathrm{reg}(F)$) in correspondence.
\end{theorem}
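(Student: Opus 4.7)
The plan is to first show that for any $\tau \in \mathrm{Out}_G(G^\Endo)$, the function $(Y, X) \mapsto \Delta_{G^\Endo, G}(\tau^{-1}(Y), X)$ is again a transfer factor for the same endoscopic datum $(G^\Endo, \mathcal{G}^\Endo, s, \hat\xi)$. Here $\tau$ is to be interpreted as an outer automorphism of the endoscopic datum (not merely of the group $G^\Endo$), so its pullback sends the data defining $\Delta_{G^\Endo, G}$ to data defining an equivalent transfer factor; in particular it preserves the correspondence of stable classes $Y \leftrightarrow X$ up to relabeling on the endoscopic side. Once this is established, the uniqueness of transfer factors on Lie algebras up to a multiplicative scalar in $\C^\times$ (indeed in $\Sph^1$ after removing $\Delta_{\mathrm{IV}}$) forces
\[ \Delta_{G^\Endo, G}(\tau^{-1}(Y), X) = \chi(\tau) \Delta_{G^\Endo, G}(Y, X) \]
for some constant $\chi(\tau) \in \C^\times$ independent of $(Y, X)$. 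The homomorphism property $\chi(\tau_1 \tau_2) = \chi(\tau_1) \chi(\tau_2)$ is then immediate by applying the formula twice.

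Next I would verify triviality for quasisplit $G$. In this setting the Langlands-Shelstad factor admits a canonical normalization using an $F$-splitting $\mathrm{spl}_G$ of $G$: one writes $\Delta = \Delta_\mathrm{I} \Delta_\mathrm{II} \Delta_\mathrm{III}$ (on Lie algebras, without $\Delta_\mathrm{IV}$) and each piece is intrinsic to the endoscopic datum. One needs to check that each factor is insensitive to an outer automorphism of the datum. The key point is that $\Delta_\mathrm{III}$, which encodes the cohomological comparison between the tori, only depends on the datum up to the equivalence that defines $\mathrm{Out}_G(G^\Endo)$; for quasisplit $G$ the splitting $\mathrm{spl}_G$ provides a rigid reference that trivializes the resulting torsor, whence $\chi(\tau) = 1$.

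For non-quasisplit $G$, there is no such canonical splitting, and the resulting character $\chi$ can genuinely be nontrivial; this is where the Arthur / Hiraga-Saito analysis enters. One compares two inner forms of the same quasisplit group through the Kottwitz sign and the Whittaker normalization, and extracts $\chi$ from the discrepancy. The character should factor through the action of $\mathrm{Out}_G(G^\Endo)$ on the relevant $H^1$ groups that parametrize inner twists.

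The main obstacle is the first step: one must set up a precise notion of how $\mathrm{Out}_G(G^\Endo)$ acts on the ingredients entering $\Delta_{G^\Endo, G}$ (the $a$-data, $\chi$-data, $s$, $\hat\xi$, etc.) so that the pulled-back function is literally a transfer factor for the \emph{same} endoscopic datum, not merely an equivalent one. After that, the uniqueness-up-to-scalar argument is formal, and the triviality in the quasisplit case is a pinning computation; but keeping track of the cocycle-level identifications for a general $G$ requires care, and this is essentially what occupies the cited works of Arthur and Hiraga-Saito.
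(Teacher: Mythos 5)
Your proposal follows the same standard argument that underlies the cited references (Arthur \cite{Ar06} and Hiraga--Saito \cite{HS12}), which the paper itself does not reproduce: observe that $\tau$ is an automorphism of the endoscopic datum, so the pulled-back function is a transfer factor for the \emph{same} datum, invoke uniqueness up to a scalar to get $\chi(\tau)$, and verify triviality in the quasisplit case via the canonical (splitting/Whittaker) normalization. Note that the paper already records the key input of your ``main obstacle'' paragraph in the text immediately preceding the theorem (``if $\Delta_{G^\Endo,G}$ is a transfer factor, then so is $\Delta_{G^\Endo,G}(\tau^{-1}(\cdot),\cdot)$''), so your sketch and the paper's cited route are aligned.
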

\begin{proof}
  See \cite[(3.1)]{Ar06} or \cite[\S 6]{HS12}.
\end{proof}
We will not need the precise description of $\chi$. Implicit in the assertion above is the fact $(Y \leftrightarrow X) \iff (\tau^{-1}(Y) \leftrightarrow X)$. As an immediate consequence, the image of transfer lies in the subspace
$$ S\orbI(\mathfrak{g}^\Endo, G) := \left\{ b \in S\orbI(\mathfrak{g}^\Endo) : \forall \tau \in \text{Out}_G(G^\Endo), \; b(\tau^{-1}(\cdot)) = \chi(\tau) b(\cdot) \right\}. $$

Define $S\Icusp(\mathfrak{g}^\Endo, G) := S\orbI(\mathfrak{g}^\Endo, G) \cap S\Icusp(\mathfrak{g}^\Endo)$. We are actually interested in the \emph{germs} around $0$ of cuspidal normalized orbital integrals. To be precise, we shall consider the spaces
\begin{align*}
  \mathcal{G}_\text{cusp}(\mathfrak{g}) & := \varinjlim_{\mathcal{U}} \Icusp(\mathcal{U}), \\
  S\mathcal{G}_\text{cusp}(\mathfrak{g}^\Endo, G) & := \varinjlim_{\mathcal{V}} S\Icusp(\mathcal{V}, G),
\end{align*}
where $\mathcal{U}$ (resp. $\mathcal{V}$) ranges over the invariant open subsets of $\mathfrak{g}(F)$ (resp. stably invariant and $\text{Out}_G(G')$-invariant open subsets of $\mathfrak{g}^\Endo(F)$) containing $0$, and $\Icusp(\mathcal{U})$ (resp. $S\Icusp(\mathcal{V}, G)$) is defined as above except that we consider solely the orbital integrals along the classes inside $\mathcal{U}$ (resp. $\mathcal{V}$). All in all, the transfer map induces a transfer of cuspidal germs:
$$ \mathcal{GT}: \mathcal{G}_\text{cusp}(\mathfrak{g}) \to S\mathcal{G}_\text{cusp}(\mathfrak{g}^\Endo, G) $$
for our chosen $\Delta_{G^\Endo, G}$.

As in \S\ref{sec:adjoint-transfer}, the adjoint transfer factor can also be defined on Lie algebras, namely
\begin{gather}\label{eqn:adjoint-Delta-std}
  \Delta_{G, G^\Endo}(X, Y) := |\mathfrak{D}(G_X, G; F)|^{-1} \overline{\Delta_{G^\Endo, G}(Y, X)}
\end{gather}
for $Y \leftrightarrow X$. We use it to invert the transfer.

\begin{theorem}[Arthur]\label{prop:GT-surj}
  The map $\mathcal{GT}$ is surjective. In fact, if $b \in S\mathcal{G}_\mathrm{cusp}(\mathfrak{g}^\Endo, G)$ then the function
  $$ a : X \longmapsto \sum_{\substack{Y \in \Delta_{\mathrm{reg,ell}}(\mathfrak{g}^\Endo) \\ Y \leftrightarrow X}} \Delta_{G^\Endo, G}(X, Y) b(Y) $$
  on $\Gamma_\mathrm{reg,ell}(\mathfrak{g})$ belongs to $\mathcal{G}_\mathrm{cusp}(\mathfrak{g})$, and satisfies $\mathcal{GT}(a)=b$.
\end{theorem}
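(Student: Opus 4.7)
The plan is to take the candidate function $a$ produced by the explicit adjoint formula and verify two independent properties: (i) $\mathcal{GT}(a) = b$ at the level of regular elliptic orbital integrals, and (ii) $a$ actually extends to a cuspidal orbital integral, i.e.\ lies in $\mathcal{G}_\mathrm{cusp}(\mathfrak{g})$ and not merely in the space of anti-invariant functions on $\Gamma_\mathrm{reg,ell}(\mathfrak{g})$. Property (i) is formal, whereas (ii) is the real content.

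For (i), I would first note that the analogue of Lemma \ref{prop:geometric-inversion} holds verbatim on the Lie algebra side with $\Delta_{G^\Endo,G}(Y,X)$ and its adjoint $\Delta_{G,G^\Endo}(X,Y)$ from \eqref{eqn:adjoint-Delta-std}: the same Fourier inversion argument on $H^1(F, G_X) = \mathfrak{D}(G_X, G; F)$ (using $H^1(F,G)=\{1\}$ by elliptic endoscopy) proves the two Kronecker identities
\[
  \sum_{Y} \Delta_{G,G^\Endo}(X, Y)\Delta_{G^\Endo, G}(Y, X_1) = \bdelta_{X, X_1}, \quad \sum_{X} \Delta_{G^\Endo, G}(Y, X)\Delta_{G, G^\Endo}(X, Y_1) = \bdelta_{Y, Y_1}.
\]
Plugging the defining formula for $a$ into $\mathcal{GT}$ and exchanging the two finite sums then gives $\mathcal{GT}(a)(Y) = \sum_{Y_1 \leftrightarrow X \leftrightarrow Y} \bdelta_{Y, Y_1} b(Y_1) = b(Y)$. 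This already proves surjectivity once (ii) is known; it also shows that if $a \in \mathcal{G}_\mathrm{cusp}(\mathfrak{g})$ then it is automatically the unique preimage.

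For (ii), I would use the Shalika germ characterization of $\Icusp$: a function $\alpha$ on $\Gamma_\mathrm{reg,ell}(\mathfrak{g})$ (extended by zero outside the elliptic locus) comes from a cuspidal test function near $0$ if and only if its germ expansion around every semisimple $X_0 \in \mathfrak{g}(F)$ is compatible in the sense of Shalika, and cuspidality forces this condition to localize to $X_0 = 0$. On the stable side, $b \in S\Icusp(\mathfrak{g}^\Endo, G)$ has a completely analogous characterization via stable Shalika germs. The mechanism to transfer (ii) across is the endoscopic compatibility of Shalika germs, equivalently of nilpotent orbital integrals: Waldspurger's theorem (see \cite{Wa08} and the discussion in \cite{Ar96}) says that the transfer factor $\Delta_{G^\Endo, G}$ matches the germ expansions, so the adjoint transfer of a stable germ expansion at $0$ on $\mathfrak{g}^\Endo$ is a valid germ expansion at $0$ on $\mathfrak{g}$. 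The $\mathrm{Out}_G(G^\Endo)$-equivariance built into $S\orbI(\mathfrak{g}^\Endo, G)$ (Theorem \ref{prop:Delta-std-equivariance}) ensures the formula is well-defined on the stable classes appearing in $b$.

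The hard step is the germ compatibility used in the last paragraph. Concretely, one must know that the transpose of the transfer map, when restricted to the subspace spanned by Fourier transforms of nilpotent orbital integrals, lands in the corresponding subspace on $\mathfrak{g}$; this is a highly non-trivial ingredient that rests on Waldspurger's analysis of the endoscopic transfer and on the matching of unipotent orbital integrals. Once that input is granted, everything else is bookkeeping: combine (i) with (ii), and the surjectivity of $\mathcal{GT}$, together with the explicit inverse formula, both follow.
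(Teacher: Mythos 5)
Note that the paper does not actually prove this theorem; its proof consists of a pointer to the first part of the proof of Arthur's Lemma 3.4 in \cite{Ar96}, so what you have written is a reconstruction of an argument the text leaves to the reference. Your two-part architecture --- formal inversion to get $\mathcal{GT}(a)=b$, then germ compatibility to get $a\in\mathcal{G}_\mathrm{cusp}(\mathfrak{g})$ --- has the right shape. A correction in (i): you invoke ``$H^1(F,G)=\{1\}$ by elliptic endoscopy'', but in the subsection where $\mathcal{GT}$ is defined, $G$ is an arbitrary connected reductive $F$-group (later taken to be the quasisplit centralizer $G_\delta$), and $H^1(F,G)$ has no reason to vanish. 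What \eqref{eqn:adjoint-Delta-std} builds in is the factor $|\mathfrak{D}(G_X,G;F)|^{-1}$, and the Fourier inversion runs over $\mathfrak{D}(G_X,G;F)$; one must also know that the endoscopic characters $\kappa_Y$ attached to the finitely many $Y\leftrightarrow X$ within the \emph{single} datum $G^\Endo$ are pairwise distinct. With those corrections your first Kronecker identity stands, while the other one you display, $\sum_Y\Delta_{G,G^\Endo}(X,Y)\Delta_{G^\Endo,G}(Y,X_1)=\bdelta_{X,X_1}$, is actually false for a single $G^\Endo$ --- it needs the sum over all elliptic data --- though you fortunately never use it.

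The real gap is (ii). The whole force of the theorem is that the function $a$ given by the adjoint formula belongs to $\mathcal{G}_\mathrm{cusp}(\mathfrak{g})$; you restate this as ``the adjoint transfer of a stable germ expansion at $0$ on $\mathfrak{g}^\Endo$ is a valid germ expansion at $0$ on $\mathfrak{g}$'' and pass to it with ``so'' from the forward germ-matching theorem, but that implication is not automatic: germ matching says the transfer of a cuspidal germ is a stable cuspidal germ, whereas what is needed is surjectivity of that map at the level of germ coefficients --- which is precisely what Arthur's lemma establishes. You label this ``the hard step'' and defer to Waldspurger's analysis, which is honest, but it leaves the argument open at the one place the theorem has content. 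A further inaccuracy in the same paragraph: ``cuspidality forces this condition to localize to $X_0=0$'' is not right --- a cuspidal germ must satisfy Shalika compatibility at every elliptic semisimple point near $0$, not only at $0$; what holds is that those conditions descend to centralizers of smaller semisimple rank, permitting an inductive reduction to the germ at the origin, but the induction is not free.
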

\begin{proof}
  See the first part of the proof of \cite[Lemma 3.4]{Ar96}. Note that Arthur implicitly assumed in his proof that $\chi \equiv 1$, so he worked with the space $S\orbI(\mathfrak{g}^\Endo)^{\text{Out}_G(G')}$ instead of $S\orbI(\mathfrak{g}^\Endo, G)$. It is straightforward to adapt the cited arguments to the general case.
\end{proof}

\paragraph{Non-standard endoscopy on Lie algebras}
The non-standard endoscopy for Lie algebras is developed in \cite[\S 1.8]{Wa08}. We give a sketch here. A non-standard endoscopic datum is a triple $(G_1, G_2, j_*)$ where $G_1$, $G_2$ are quasisplit simply connected semisimple groups; choose a Borel pair $(B_i, T_i)$ for $G_i$ ($i=1,2$), the crucial datum $j_*$ is an isomorphism $j_*: X_*(T_{1, \bar{F}})_\Q \rightiso X_*(T_{2, \bar{F}})_\Q$ of $\Q$-vector spaces, such that there exist bijections
\begin{align*}
  \check{\tau}: \Sigma(G_1, T_1)^\vee_{\bar{F}} & \to \Sigma(G_2, T_2)^\vee_{\bar{F}}, \\
  \tau: \Sigma(G_2, T_2)_{\bar{F}} & \to \Sigma(G_1, T_1)_{\bar{F}},
\end{align*}
satisfying
\begin{compactenum}[(i)]
  \item $\tau$ and $\check{\tau}$ are mutually inverse up to $\alpha \leftrightarrow \alpha^\vee$;
  \item for roots $\alpha_i$ for $T_{i, \bar{F}}$ ($i=1, 2$), the elements $j_*(\alpha_1^\vee)$ and $\check{\tau}(\alpha_1^\vee)$ (resp. $j^*(\alpha_2)$ and $\tau(\alpha_2)$) are proportional by a factor in $\Q_{> 0}$, where $j^*$ denotes the dual of $j_*$;
  \item the maps $j_*, j^*$ are $\Gamma_F$-equivariant.
\end{compactenum}
In short, the root systems of $G_1$, $G_2$ become proportional under $j_*$. As $\mathfrak{t}_i(\bar{F}) = X_*(T_i)_\Q \otimes_\Q \bar{F}$, from $j_*$ we may construct an isomorphism $\mathfrak{t}_1/W_1 \rightiso \mathfrak{t}_2/W_2$ between $F$-varieties, where $W_i := N_{G_i}(T_i)/T_i$. This gives a bijection between $\Delta_\text{reg}(\mathfrak{g}_1)$ and $\Delta_\text{reg}(\mathfrak{g}_2)$ which preserves ellipticity. Write the bijection in the familiar way $X_1 \leftrightarrow X_2$. The non-standard transfer can now be enunciated.

\begin{theorem}[B. C. Ngô, Waldspurger]\label{prop:nonstd-endoscopy}
  There is a linear map $\mathcal{T}_{1,2}: S\orbI(\mathfrak{g}_1) \to S\orbI(\mathfrak{g}_2)$, written as $f^{G_1} \mapsto f^{G_2}$, which is characterized by $f^{G_1}(X_1) = f^{G_2}(X_2)$ whenever $X_1 \leftrightarrow X_2$.
\end{theorem}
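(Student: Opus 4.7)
The plan is essentially to cite the deep results of B.-C.\ Ngô and J.-L.\ Waldspurger, and to sketch the underlying strategy. The bijection $X_1 \leftrightarrow X_2$ on regular semisimple stable classes is formal: the datum $j_*$ together with the bijections $\tau$, $\check{\tau}$ descends (after $\Gamma_F$-invariance) to an $F$-isomorphism $\mathfrak{t}_1/W_1 \rightiso \mathfrak{t}_2/W_2$ of Chevalley quotients, and Kostant's section identifies $\Delta_\text{reg}(\mathfrak{g}_i)$ with the regular locus of $\mathfrak{t}_i/W_i$. What is substantive is that the function $X_2 \mapsto f^{G_1}(X_1)$ so produced out of $f^{G_1} \in S\orbI(\mathfrak{g}_1)$ belongs to $S\orbI(\mathfrak{g}_2)$, i.e.\ that it comes from a compactly supported smooth function on $\mathfrak{g}_2(F)$.

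First I would note the reduction to the fundamental lemma. By Waldspurger's arguments \cite{Wa08} (parallel to the standard-endoscopy case), the existence of non-standard transfer for all test functions can be reduced to the case of unramified data and the unit element of the Hecke algebra: one uses the local nature of orbital integrals together with a density/partition-of-unity argument, as well as parabolic descent to unipotent neighbourhoods, to propagate a single matching of unit orbital integrals to the full transfer. The descent step is especially clean here because non-standard endoscopic data restrict on Lie algebras of centralisers to products of non-standard and standard data, and the transfer factors are trivial ($\Delta \equiv 1$).

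Second, one proves the non-standard fundamental lemma by Ngô's method \cite{Ngo10}. The Hitchin fibrations for $G_1$ and $G_2$ over a global curve share, thanks to $j_*$, the same base (up to a $\Q_{>0}$-dilation on each root direction) and have canonically identified stable regular loci on each fibre. The support theorem, combined with a comparison of discriminants and the purity of the perverse cohomology sheaves on the elliptic part, forces equality of the stable orbital integrals in the unramified elliptic case. A global-to-local argument then extracts the local identity. The main obstacle, of course, is Ngô's support theorem itself: all the serious geometric work is there, and in particular the non-standard setting requires checking that the matching of affine Springer fibres and the $\delta$-regular locus go through without outer-automorphism or $z$-pair complications (which is precisely why the statement is restricted to simply connected semisimple $G_1$, $G_2$ and to the Lie algebra level).

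Finally, uniqueness of $\mathcal{T}_{1,2}$ is automatic from the density of stable orbital integrals in the space $S\orbI(\mathfrak{g}_i)$ (see \S\ref{sec:space-orbital-integral}), so the characterisation $f^{G_1}(X_1) = f^{G_2}(X_2)$ for $X_1 \leftrightarrow X_2$ determines $\mathcal{T}_{1,2}$ uniquely as a linear map. The linearity and the fact that the image lies in $S\orbI(\mathfrak{g}_2)$ (not merely in an abstract space of functions on $\Delta_\text{reg}(\mathfrak{g}_2)$) are the two non-trivial pieces, and both are packaged in the Ngô--Waldspurger theorem cited from \cite[\S 1.8]{Wa08}.
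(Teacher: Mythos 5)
The paper does not prove this theorem: it is stated as a citation of B.~C.~Ngô and Waldspurger, with the reference being \cite[\S 1.8]{Wa08}, and no argument is given. Your proposal is therefore not competing with an in-paper proof but is rather a reconstruction of the cited literature's strategy, and as such it is accurate: the formal descent of $j_*$ to an $F$-isomorphism $\mathfrak{t}_1/W_1 \rightiso \mathfrak{t}_2/W_2$, Waldspurger's reduction of the full transfer to the (non-standard) fundamental lemma via descent and partition-of-unity arguments, Ngô's proof of that fundamental lemma via the Hitchin fibration and the support theorem, and finally the uniqueness statement from density of regular semisimple stable orbital integrals. One small caveat worth flagging: when you say the statement is ``restricted to simply connected semisimple $G_1$, $G_2$'' you should be aware that Remark~\ref{rem:nonstd-endoscopy} in the paper explicitly relaxes this to isogenous groups; the simply-connectedness is part of the definition of the non-standard datum, but it is not an essential hypothesis for the transfer on Lie algebras, and in fact the application here uses the pair $(\Sp(2n), \SO(2n+1))$ rather than $(\Sp(2n), \Spin(2n+1))$. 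Otherwise, your account is a faithful sketch of the proof the paper delegates to its references.
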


\begin{remark}\label{rem:nonstd-endoscopy}
  Note the following properties.
  \begin{enumerate}
    \item As in the standard case, one readily shows that $\mathcal{T}_{1,2}$ preserves cuspidality.
    \item Since $(G_1, G_2, j_*)$ is a non-standard endoscopic datum if and only if $(G_2, G_1, j_*^{-1})$ is, and the characterization of non-standard transfer is clearly symmetric, we have
      $$ \mathcal{T}_{2,1} \mathcal{T}_{1,2} = \identity, \quad \mathcal{T}_{1,2} \mathcal{T}_{2,1} = \identity. $$
      In particular, non-standard transfer is an isomorphism. On the level of germs at $0$, it induces 
      \begin{gather*}
        \mathcal{GT}_{1,2}: S\mathcal{G}_\text{cusp}(\mathfrak{g}_1) \rightiso S\mathcal{G}_\text{cusp}(\mathfrak{g}_2)
      \end{gather*}
    \item The simply-connectedness condition can be dropped. More precisely, let $G_i \to \underline{G_i}$ ($i=1,2$) be isogenies. They do not affect the correspondence of stable conjugacy classes on Lie algebras and the non-standard transfer $f^{\underline{G_1}}(X_1) = f^{\underline{G_2}}(X_2)$ (for $X_1 \leftrightarrow X_2$) still holds in this context. This mechanism is explicated in detail in \cite[Lemme 8.4]{Li11}.
  \end{enumerate}
\end{remark}

What we will need is just the non-standard triplet $(\Sp(2n), \Spin(2n+1), j_*)$, where $j_*$ is described in \cite[p.15]{Wa08} (see also \cite[\S 8.2]{Li11}). If we replace $\Spin(2n+1)$ by its quotient $\SO(2n+1)$, as justified in the preceding remark, the correspondence of classes is given by matching eigenvalues. Namely, $X_1 \in \mathfrak{so}(2n)_\text{reg}$ corresponds to $X_2 \in \mathfrak{so}(2n+1)_\text{reg}$ if and only if
$$ X_1 \quad \text{has eigenvalues} \quad a_1, \ldots, a_n, -a_n, \ldots, -a_1 $$
and
$$ X_2 \quad \text{has eigenvalues} \quad a_1, \ldots, a_n, 0, -a_n, \ldots, -a_1 $$
for suitable $a_1, \ldots, a_n \in \bar{F}$. This may be compared with the correspondence for the endoscopic datum $(n, 0)$ of $\Mp(2n)$.

\paragraph{Semisimple Descent}
Return to the metaplectic setup. Let $\rev: \tilde{G} = \Mp(W) \to \Sp(W)$ be the metaplectic covering with $\dim_F W = 2n$. Fix an endoscopic datum $(n', n'') \in \EndoE_\text{ell}(\tilde{G})$ with endoscopic group $G^\Endo$. By Proposition \ref{prop:image-in-EndoE}, we have the transfer map $\mathcal{T}_{(n',n'')}: \Iaspcusp(\tilde{G}) \to S\Icusp(G^\Endo)$.

We will need the \emph{semisimple descent} for normalized orbital integrals. This is elementary as it is based solely on harmonic analysis. See \cite[\S 4.4.1]{Li15} or \cite[\S 4.2]{Li12b}; some notions below are borrowed from \cite{Bo94b}.

Let $\delta \in G(F)_\text{ss}$ and choose $\tilde{\delta} \in \rev^{-1}(\delta)$. We consider certain invariant open subsets of $G$ (called \emph{completely invariant} in \cite[\S 2.1]{Bo94b}) of the form
$$ \tilde{\mathcal{U}} = \left\{ x^{-1} \exp(X)\tilde{\delta} x : X \in \mathcal{U}^\flat, x \in G(F) \right\} \cdot \bmu_8 $$ 
where $\mathcal{U}^\flat \ni 0$ is a $G^\delta(F)$-invariant open subset of $\mathfrak{g}_\delta(F)$; we assume $\mathcal{U}^\flat$ to be \emph{$G$-admissible} in the sense of \textit{loc.\ cit.}, sufficiently small so that $\exp$ is a well-defined homeomorphism onto its image in $\tilde{G}$. Once the Haar measures on $G(F)$ and $G_\delta(F)$ are chosen, the semisimple descent around $\tilde{\delta}$ is given by a map
\begin{align*}
  \Iasp(\tilde{\mathcal{U}}) & \longrightarrow \orbI(\mathcal{U}^\flat)^{G^\delta(F)} \\
  f_{\tilde{G}} & \longmapsto f^\flat_{G_\delta},
\end{align*}
where $\Iasp(\tilde{\mathcal{U}})$ is the space of anti-genuine normalized orbital integrals along classes inside $\tilde{\mathcal{U}} \subset \tilde{G}$; similarly for $\orbI(\mathcal{U}^\flat)$ on the Lie algebra $\mathfrak{g}_\delta$, upon which $G^\delta(F)$ acts. The semisimple descent satisfies
$$ f_{\tilde{G}}(\exp(X)\tilde{\delta}) = f^\flat_{G_\delta}(X), \quad X \in \mathcal{U}^\flat_{\mathrm{reg}}. $$
Moreover, when $\delta$ is elliptic, the descent induces $\Iaspcusp(\tilde{\mathcal{U}}) \to \Icusp(\mathcal{U}^\flat)^{G^\delta(F)}$. Also note that for $X \in \mathcal{U}^\flat_\text{reg}$, we have $G_{\exp(X)\delta} = (G_\delta)_X$.

We will also need the semisimple descent for stable orbital integrals for $G^\Endo$ in \cite[\S 4.8]{Wa14-1}. It is much deeper as it requires the transfer between inner forms. Let $\gamma \in G^\Endo(F)_\text{ss}$ such that $G^\Endo_\gamma$ is quasisplit. Choose Haar measures as before. This time we fix a \emph{completely stably invariant} (see \cite[\S 6.1]{Bo94b}) open subset $\mathcal{V} \ni \gamma$, which arises from an open subset $\mathcal{V}^\flat \ni 0$ of $\mathfrak{g}^\Endo_\gamma(F)$ via the exponential map. Fix a system of representatives $\gamma_1, \ldots, \gamma_k$ of conjugacy classes in $G^\Endo(F)$ stably conjugate to $\gamma$. Define the finite $F$-group scheme $\Xi_\gamma := (G^\Endo)^\gamma/G^\Endo_\gamma$. The semisimple descent around $\gamma$ is a map
\begin{align*}
  S\orbI(\mathcal{V}) & \longrightarrow S\orbI(\mathcal{V}^\flat)^{\Xi_\gamma(F)} \\
  f^{G^\Endo} & \longmapsto f^{G^\Endo_\delta, \flat}
\end{align*}
with the same conventions as before. It preserves the subspaces $S\Icusp(\cdots)$ and satisfies
$$f^{G^\Endo}(\exp(Y)\gamma) = f^{G^\Endo_\delta, \flat}(Y), \quad Y \in \mathcal{V}^\flat_\text{reg}. $$

\begin{proof}[Proof of Theorem \ref{prop:transfer-cusp-surj}]
  In view of the definition of $\Icusp^\EndoE(\tilde{G})$, it suffices to show that when $a^\EndoE \in \Icusp^\EndoE(\tilde{G})$ lies in the image of
  $$ S\Icusp(G^\Endo) \hookrightarrow \Icusp^\EndoE(\tilde{G}), $$
  i.e.\ when the components of $a^\EndoE$ vanish except for the component $a^\Endo \in S\Icusp(G^\Endo)$ indexed by $(n', n'')$, there exists $a \in \Iaspcusp(\tilde{G})$ such that $\mathcal{T}^\EndoE(a) = a^\EndoE$.
  
  Take the adjoint transfer $a := \mathcal{T}_\EndoE(a^\EndoE)$. It is an anti-genuine function on $\Gamma_\text{reg}(\tilde{G})$. In fact, $a$ is supported on $\Gamma_\text{reg,ell}(\tilde{G})$, since the correspondence $\sigma \leftrightarrow \delta$ preserves ellipticity. Claim: there exists $a_\text{gen} \in \Iaspcusp(\tilde{G})$ such that $a=a_\text{gen}$ for elements in general position. Indeed, if this holds then $\mathcal{T}^\EndoE(a_\text{gen}) = \mathcal{T}^\EndoE(a) = a^\EndoE$ for elements in general position (recall Proposition \ref{prop:transfer-injective}). Hence $\mathcal{T}^\EndoE(a_\text{gen}) = a^\EndoE$ everywhere by the continuity of stable orbital integrals, thereby establishing our theorem.
  
  Choose a complete set of representatives $\gamma_1, \ldots, \gamma_k \in G^\Endo(F)_\text{ss}$ of the stable semisimple conjugacy classes in $G^\Endo(F)$ corresponding to $\delta$. By \cite[Lemma 3.3]{Ko82}, we may arrange that $G^\Endo_i := G^\Endo_{\gamma_i}$ is quasisplit for $i=1, \ldots, k$. Define $\overline{G^\Endo_i}$ as follows: $G^\Endo_i$ admits a canonical decomposition $R \times \prod_a \SO(2a+1)$, where $R$ contains no factor of the type of odd split $\SO$; we put $\overline{G^\Endo_i} := R \times \prod_a \Sp(2a)$. By \cite[Théorèmes 7.10, 7.23]{Li11}, the situation can be summarized below
  \begin{equation*} \begin{tikzcd}[column sep=6em, row sep=small]
    \text{endoscopic data}: & G^\Endo_i \arrow[-,dashed]{r}[above]{\text{non-standard}}[below]{\text{endoscopy}} & \overline{G^\Endo_i} \arrow[-,dashed]{r}[above]{\text{elliptic}}[below]{\text{endoscopy}} & G_\delta \\
    \text{Lie algebra level}: & Y_i \arrow[leftrightarrow]{r}[above]{\text{via eigenvalues}} & \overline{Y_i} \arrow[leftrightarrow]{r} & X \\
    \text{group level}: & \exp(Y_i)\gamma_i \arrow[leftrightarrow]{rr}[above]{\text{via } (n',n'') \in \EndoE_\text{ell}(\tilde{G})} & & \exp(X)\delta,
  \end{tikzcd} \end{equation*}
  \begin{equation}\label{eqn:descent-Delta}
    \Delta(\exp(Y_i)\gamma_i, \exp(X)\tilde{\delta}) = \Delta_{\overline{G^\Endo_i}, G_\delta}(\overline{Y_i}, X), \quad X, Y_i \text{ close to } 0,
  \end{equation}
  where $\Delta_{\overline{G^\Endo_i}, G_\delta}$ is some transfer factor for the endoscopic datum $(\overline{G^\Endo_i}, \ldots)$ for $G_\delta$. The non-standard endoscopy here is understood in an extended sense so that the pairs $(\Sp(2a), \SO(2a+1))$ are allowed; see Remark \ref{rem:nonstd-endoscopy}.
  
  Now pass to the adjoint transfer factors: recall that $\Delta_{G_\delta, \overline{G^\Endo_i}}$ is defined by \eqref{eqn:adjoint-Delta-std}. Then \eqref{eqn:descent-Delta} implies
  \begin{align*}
    \Delta(\exp(X)\tilde{\delta}, \exp(Y_i)\gamma_i) &= \Delta_{G_\delta, \overline{G^\Endo_i}}(X, \overline{Y_i}) \cdot \frac{|\mathfrak{D}((G_\delta)_X, G_\delta; F)|} {|\mathfrak{D}(G_{\exp(X)\delta}, G; F)|} \\
    & = \Delta_{G_\delta, \overline{G^\Endo_i}}(X, \overline{Y_i}) \cdot |H^1(F, G_\delta)|^{-1}
  \end{align*}
  for $X \leftrightarrow Y_i$ close to $0$. The first equality is just definition whereas the second stems from the following fact: for every connected reductive $F$-group $H$ and any elliptic maximal $F$-torus $S \subset H$, we have $|\mathfrak{D}(S, H; F)| = |H^1(F, S)| \cdot |H^1(F, H)|^{-1}$. Indeed, these $H^1$ are abelian groups and $H^1(F, S) \to H^1(F, H)$ is surjective by \cite[10.2 Lemma]{Ko86}.
  
  Set $\Xi_i := \Xi_{\gamma_i}$. Observe that for $X \in \mathfrak{g}_{\delta, \text{reg}}(F)$ sufficiently close to $0$,
  $$ \left\{ \sigma \in \Delta_{G-\text{reg}}(G^\Endo) : \sigma \leftrightarrow  \exp(X)\delta \right\} = \bigsqcup_{i=1}^k \left\{ \exp(Y_i) \gamma_i : Y_i \in \Delta_\text{reg}(\mathfrak{g}^\Endo_i), \; \overline{Y_i} \leftrightarrow X \right\} \big/ \Xi_i(F) $$
  where each $Y_i$ is also assumed to be close to $0$. To show this, just copy the arguments of \cite[p.583]{Li11}; note that the rôles of $\sigma$ and $\exp(X)\delta$ are reversed there.
  
  Summing up, for $X \in \mathfrak{g}_{\delta, \text{reg}}(F)$ sufficiently close to $0$ and in general position, we have
  \begin{align*}
    a(\exp(X)\tilde{\delta}) & = \sum_{\sigma \leftrightarrow \delta} \Delta(\exp(X)\tilde{\delta}, \sigma) a^\Endo(\sigma) \\
    & = \sum_{i=1}^k \sum_{Y_i \leftrightarrow \overline{Y_i} \leftrightarrow X} \Delta_{G_\delta, \overline{G^\Endo_i}}(X, \overline{Y_i}) a^\Endo_i(Y_i) \\
    & = \sum_{i=1}^k \sum_{\overline{Y_i} \leftrightarrow X} \Delta_{G_\delta, \overline{G^\Endo_i}}(X, \overline{Y_i}) b^\Endo_i(\overline{Y_i})
  \end{align*}
  for suitable $a^\Endo_i \in S\Icusp(\mathfrak{g}^\Endo_i)^{\Xi_i(F)}$. Here $b^\Endo_i \in S\Icusp(\overline{\mathfrak{g}^\Endo_i})$ is the non-standard transfer of $a^\Endo_i$ (Theorem \ref{prop:nonstd-endoscopy}) characterized by
  $$ b^\Endo_i(\overline{Y_i}) = a^\Endo_i(Y_i). $$
  By ``in general position'' we require that each stable class $Y_i$ (resp. $\overline{Y_i}$) corresponding to $X$ as above has trivial stabilizer under $\Xi_i(F)$ (resp. $\text{Out}_{G_\delta}(\overline{G^\Endo_i})$). Moreover, by the $\text{Out}_{G_\delta}(\overline{G^\Endo_i})$-equivariance property of $\Delta_{\overline{G^\Endo_i}, G_\delta}$ in Theorem \ref{prop:Delta-std-equivariance} (whence the opposite equivariance of the adjoint transfer factor $\Delta_{G_\delta, \overline{G^\Endo_i}}$), we may modify $a^\Endo_i$ so that $b^\Endo_i \in S\Icusp(\mathfrak{g}^\Endo_i, G_\delta)$.
  
  To conclude the proof, apply Theorem \ref{prop:GT-surj} to the germs
  $$ a^i_{\text{gen}, \tilde{\delta}}(X) := \sum_{\overline{Y_i} \leftrightarrow X} \Delta_{G_\delta, \overline{G^\Endo_i}}(X, \overline{Y_i}) b^\Endo_i(\overline{Y_i}), \quad i=1, \ldots, k $$
  to obtain $a_{\text{gen}, \tilde{\delta}} := \sum_{i=1}^k a^i_{\text{gen}, \tilde{\delta}} \in \mathcal{G}_\text{cusp}(\mathfrak{g}_\delta)$. We have $a(\exp(\cdot)\tilde{\delta}) = a_{\text{gen}, \tilde{\delta}}$ for elements in general position, hence $a_{\text{gen}, \tilde{\delta}}$ is $G^\delta(F)$-invariant by the continuity of orbital integrals. The local characterization \cite{Vi81} of $\Iasp(\tilde{G})$ asserts that these $a_{\text{gen}, \tilde{\delta}}$ patch together into $a_\text{gen} \in \Iaspcusp(\tilde{G})$ such that $a=a_\text{gen}$ for elements in general position: indeed, this results from the corresponding property for $a(\exp(\cdot)\tilde{\delta})$ (for various $\tilde{\delta} \in \tilde{G}_\text{ss}$) by the continuity of orbital integrals. This establishes our earlier claim.
\end{proof}

\section{Spectral transfer in the non-archimedean case}\label{sec:spectral-transfer}
Keep the assumptions in \S\ref{sec:geom-transfer}. In particular, we consider a fixed local metaplectic covering $\rev: \tilde{G} \to G(F)$, where $\tilde{G}=\Mp(W)$, $\dim_F W = 2n$. We also fix a minimal Levi subgroup $M_0$ of $G$. Although the main concern of this section is the non-archimedean case, some definitions are also useful for archimedean $F$; this will be mentioned explicitly in what follows.

\subsection{Paley-Wiener spaces}\label{sec:PW-spaces}
The exposition below for $\tilde{G}$ will be sketchy; details can be found in \cite[\S 5.4]{Li12b} or \cite{Ar93,Ar96}. A large portion of our discussions also makes sense for archimedean $F$, but one has to be careful about the choice of test functions (eg.\ $\tilde{K}$-finite functions, Schwartz-Harish-Chandra functions, etc.) We will try to indicate the necessary adaptations.

\paragraph{The unstable side}
For each $L \in \mathcal{L}(M_0)$ and $\pi \in \Pi_{2,-}(\tilde{L})$, the Knapp-Stein theory of intertwining operators furnishes a central extension
\begin{gather}\label{eqn:R-extension}
  1 \to \Sph^1 \to \tilde{R}_\pi \to R_\pi \to 1.
\end{gather} \index{$\tilde{R}_\pi$}
The finite group $R_\pi$ is the $R$-group attached to $\pi$, here viewed as a quotient of $W_\pi := \Stab_{W^G(M)}(\pi)$ by some normal subgroup $W^\circ_\pi$ described in terms of Harish-Chandra's $\mu$-functions. For $P \in \mathcal{P}(L)$, there is a homomorphism from $\tilde{R}_\pi$ to $\Aut_{G(F)}(I_P(\pi))$:
$$ r \longmapsto R_{\tilde{P}}(r, \pi): \quad \text{unitary operator}, $$
under which $\Sph^1 \ni z \mapsto z^{-1} \cdot \identity$. Here $R_{\tilde{P}}(r, \pi)$ is the \emph{normalized intertwining operator}, it depends on the choice of \emph{normalizing factors} for the standard intertwining operators, whose existence for covering groups is established in \cite[\S 3]{Li12b}. Note that in \textit{loc.\ cit.}, \eqref{eqn:R-extension} is reduced to an extension by some finite subgroup of $\Sph^1$.

Define $\tilde{T}_-(\tilde{G})$ to be the set of triples $(L, \pi, r)$ as above such that $zr$ is conjugate to $r$ if and only if $z=1$; such triples are called \emph{essential} in \textit{loc.\ cit.} Set $\tilde{T}_{\text{ell},-}(\tilde{G})$ to be the subset defined by requiring $\det(1-r|\mathfrak{a}^G_L) \neq 0$. For $\tau = (L, \pi, r) \in \tilde{T}_{\text{ell},-}(\tilde{G})$ we set
\begin{gather}\label{eqn:d(tau)}
  d(\tau) := \det(1 - r| \mathfrak{a}^G_L) \quad \in F.
\end{gather}

These definitions generalized to the Levi subgroups of $\tilde{G}$ as well, which are coverings of metaplectic type. There is a canonical map $\tilde{T}_-(\tilde{M}) \to \tilde{T}_-(\tilde{G})$ giving rise to the decomposition
$$ \tilde{T}_-(\tilde{G}) = \bigsqcup_{M \in \mathcal{L}(M_0)} \tilde{T}_{\text{ell},-}(\tilde{M}). $$

Note that $i\mathfrak{a}^*_M$ acts on $\tilde{T}_{\text{ell}, -}(\tilde{M})$ by sending $\tau = (L, \pi, r)$ to $\tau_\lambda := (L, \pi_\lambda, r)$. By decomposition into $i\mathfrak{a}^*_M$-orbits, each $\tilde{T}_{\text{ell},-}(\tilde{M})/\Sph^1$ becomes a disjoint union of compact tori (resp. Euclidean spaces in the archimedean case), thus so is $\tilde{T}_-(\tilde{G})/\Sph^1$. On the other hand, we have a $W^G_0$-action on $\tilde{T}_-(\tilde{G})$ by transport of structure, written as $(L, \pi, r) \mapsto (wLw^{-1}, w\pi, wrw^{-1})$, that commutes with $\Sph^1$. Set \index{$T_-(\tilde{G})$}
\begin{align*}
  T_{\text{ell},-}(\tilde{G}) & := \tilde{T}_{\text{ell},-}(\tilde{G})/W^G_0, \\
  T_-(\tilde{G}) & := \tilde{T}_-(\tilde{G})/W^G_0 \\
  & = \bigsqcup_{M \in \mathcal{L}(M_0)/\text{conj}} T_{\text{ell},-}(\tilde{M})/W^G(M),
\end{align*}
and so on; they still have natural structures of analytic $\R$-varieties. Also note that $\tilde{T}_-(\tilde{G}) \twoheadrightarrow \tilde{T}_-(\tilde{G})/\Sph^1$ and each $\tilde{T}_{\text{ell},-}(\tilde{M}) \twoheadrightarrow \tilde{T}_{\text{ell},-}(\tilde{M})/\Sph^1$ are $\Sph^1$-torsors.

\begin{remark}\label{rem:T-splitting}
  The $\Sph^1$-torsor $\tilde{T}_{\text{ell},-}(\tilde{M})$ over $\tilde{T}_{\text{ell},-}(\tilde{M})/\Sph^1$ is actually split, albeit non-canonically. Indeed, consider a connected component $\mathcal{C}$ of $\tilde{T}_{\text{ell},-}(\tilde{M})$ and fix some $(L, \pi, r) \in \mathcal{C}$. Write $r \mapsto \underline{r} \in R_\pi$. Then a trivialization of $\mathcal{C} \twoheadrightarrow \mathcal{C}/\Sph^1$ is given by
  $$ (L, \pi_\lambda, \underline{r}) \longmapsto (L, \pi_\lambda, r), \quad \lambda \in i\mathfrak{a}^*_M. $$
  Such trivializations of $\mathcal{C}$ are in bijection with $\{ r \in \tilde{R}_\pi: r \mapsto \underline{r}\}$.
\end{remark}

Let us complexify this construction. By allowing $\pi \in \Pi_-(\tilde{L})$ to be only essentially square-integrable modulo centre (i.e.\ $\pi_\lambda \in \Pi_{2,-}(\tilde{L})$ for some $\lambda \in \mathfrak{a}^*_{L,\C}$), and letting $\mathfrak{a}^*_{M, \C}$ act on the triples $(L, \pi, r) \in \tilde{T}_{\text{ell},-}(\tilde{M})_\C$ so obtained, we obtain $\tilde{T}_-(\tilde{G})_\C$ and $T_-(\tilde{G})_\C := \tilde{T}_-(\tilde{G})/W^G_0$. They are complex varieties. Moreover, the intertwining operators $R_{\tilde{P}}(r, \pi)$ extend meromorphically to the complexified setup.

For $\tau = (L, \pi, r) \in \tilde{T}_-(\tilde{G})$, we have the representation $\mathcal{R}_{\tilde{P}} := R_{\tilde{P}}(\cdot, \pi) I_{\tilde{P}}(\pi, \cdot)$ of $\tilde{R}_\pi \times \tilde{G}$ on the underlying vector space of $I_{\tilde{P}}(\pi)$. Let $\Pi_-(\tilde{R}_\pi)$ be the subset of $\Pi(\tilde{R}_\pi)$ consisting of representations on which $\Sph^1$ acts by $z \mapsto z\cdot\identity$. The upshot is the existence of a bijection $\rho \mapsto \xi_\rho \in \Pi_{\text{temp},-}(\tilde{G})$ between $\Pi_-(\tilde{R}_\pi)$ and the set of irreducible constituents of $I_{\tilde{P}}(\pi)$, characterized by
$$ \mathcal{R}_{\tilde{P}} \simeq \bigoplus_{\rho \in \Pi_-(\tilde{R}_\pi)} \rho^\vee \boxtimes \xi_\rho. $$
Define the genuine invariant distributions
\begin{align*}
  \Theta_\tau & = \Theta^{\tilde{G}}_\tau := \Tr \mathcal{R}_{\tilde{P}}(r, \cdot) = \Tr\left( R_{\tilde{P}}(r, \pi) I_{\tilde{P}}(\pi, \cdot) \right) \\
  & = \sum_{\rho \in \Pi_-(\tilde{R}_\pi)} \Tr\left(\rho^\vee(r)\right) \Theta_{\xi_\rho}, \\
  I_{\tilde{G}}(\tau, \cdot) & := |D^G(\cdot)|^{\frac{1}{2}} \Theta_\tau.
\end{align*}
They can be shown to be independent of the choice of $P \in \mathcal{P}(L)$. Moreover, $\Theta_\tau$ depends only on the image of $\tau$ in $T_-(\tilde{G})$. In this manner, $T_-(\tilde{G})$ furnishes a basis of the space of virtual genuine tempered characters of $\tilde{G}$, up to dilation by $\Sph^1$. The distributions $\Theta_\tau$ and $I_{\tilde{G}}(\tau, \cdot)$ also admit meromorphic continuation to $T_-(\tilde{G})_\C$. They satisfy
\begin{gather}\label{eqn:T-induction}
  I_{\tilde{P}} \left( \Theta^{\tilde{M}}_\tau \right) = \Theta^{\tilde{G}}_\tau
\end{gather}
for any $M \in \mathcal{L}(M_0)$, $P \in \mathcal{P}(M)$ and $\tau \in \tilde{T}_{\text{ell},-}(\tilde{M})_\C \hookrightarrow \tilde{T}_-(\tilde{G})_\C$; see \cite[Définition 3.1.1 (R5)]{Li12b}.

For $f_{\tilde{G}} \in \Iasp(\tilde{G})$, define
\begin{equation}\label{eqn:f-pi}\begin{aligned}
  f_{\tilde{G}}(\tau) & := \int_{\Gamma_\text{reg}(G)} I_{\tilde{G}}(\tau, \tilde{\delta}) f_{\tilde{G}}(\tilde{\delta}) \dd\delta \\
  & = \Theta_\tau(f_{\tilde{G}})
\end{aligned}\end{equation} \index{$f_{\tilde{G}}(\tau)$}
for every $\tau \in T_-(\tilde{G})$; the second equality follows from Weyl's integration formula. Denote the resulting function on $T_-(\tilde{G})$ as $f_{\tilde{G}, \gr}$.

\begin{lemma}\label{prop:gr-injectivity}
  The map $f_{\tilde{G}} \mapsto f_{\tilde{G}, \gr}$ is a linear injection into the space of functions $T_-(\tilde{G}) \to \C$.
\end{lemma}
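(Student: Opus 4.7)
The target is: if $f_{\tilde{G}} \in \Iasp(\tilde{G})$ satisfies $\Theta_\tau(f_{\tilde{G}}) = 0$ for every $\tau \in T_-(\tilde{G})$, then $f_{\tilde{G}} = 0$. The plan is to reduce this, in two essentially formal steps, to the statement that genuine tempered characters separate the elements of $\Iasp(\tilde{G})$.

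First I would fix a pair $(L, \pi)$ with $L \in \mathcal{L}(M_0)$ and $\pi \in \Pi_{2,-}(\tilde{L})$, and let $r$ range over the essential elements of $\tilde{R}_\pi$. Recalling the identity
$$ \Theta_{(L,\pi,r)} = \sum_{\rho \in \Pi_-(\tilde{R}_\pi)} \Tr(\rho^\vee(r)) \Theta_{\xi_\rho}, $$
I would apply projective Fourier inversion on the central extension $\tilde{R}_\pi \to R_\pi$. After replacing $\tilde{R}_\pi$ by a finite cover of $R_\pi$ as in \cite[\S 3]{Li12b}, the pairing $(\rho, r) \mapsto \Tr(\rho^\vee(r))$ between $\Pi_-(\tilde{R}_\pi)$ and the essential conjugacy classes of $\tilde{R}_\pi$ is invertible by Schur orthogonality. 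Hence the vanishing hypothesis propagates to $\Theta_{\xi_\rho}(f_{\tilde{G}}) = 0$ for every triple $(L, \pi, \rho)$. By the Knapp--Stein theory for coverings recalled before the lemma (see \cite[\S 5.4]{Li12b}), every $\pi' \in \Pi_{\text{temp},-}(\tilde{G})$ is isomorphic to some $\xi_\rho$ for a suitable triple; consequently $\Theta_{\pi'}(f_{\tilde{G}}) = 0$ for every $\pi' \in \Pi_{\text{temp},-}(\tilde{G})$.

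The third and decisive step is then to conclude from $\Theta_{\pi'}(f_{\tilde{G}}) = 0$ for all tempered $\pi'$ that $f_{\tilde{G}} = 0$ in $\Iasp(\tilde{G})$. This is Kazhdan's density theorem transposed to the metaplectic setting: the regular orbital integrals are spectrally determined by tempered characters. In the non-archimedean case it can be extracted from Harish-Chandra's regularity theorem for coverings \cite[Théorème 4.3.2]{Li12b} combined with the local character expansion and standard limiting arguments at regular semisimple elements; alternatively it follows from the (anti-)genuine trace Paley--Wiener theorem, whose covering version is contained in \cite[\S 5.4]{Li12b}. I expect this separation statement to be the only substantive obstacle, the first two steps amounting to bookkeeping on the finite-covered group $\tilde{R}_\pi$; for coverings of metaplectic type both formulations of the required density are, fortunately, already in the literature.
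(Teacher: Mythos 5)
Your proposal is correct, and it amounts to a sound unpacking of what lies behind the citations the paper uses: the paper simply defers to \cite[Théorème 5.8.10]{Li12b} for the non-archimedean case and \cite[Corollaire 3.3.2]{Bo94b} for the archimedean one, and your two-step reduction (Fourier inversion over the finite central extension $\tilde{R}_\pi \to R_\pi$ to pass from the $\Theta_\tau$ to the characters $\Theta_{\xi_\rho}$, then Kazhdan density of tempered characters, both available for coverings of metaplectic type) is precisely the standard content of those results. The only caution is that the last step is genuinely non-trivial and should be attributed to the density theorem itself rather than to the trace Paley--Wiener characterization of the image, since the latter presupposes the very injectivity being proved; your first formulation via the covering version of Kazhdan density is the right one.
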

\begin{proof}
  The injectivity stems from \cite[Théorème 5.8.10]{Li12b}. The archimedean case is contained in \cite[Corollaire 3.3.2]{Bo94b}.
\end{proof}

Denote the image of $\Iasp(\tilde{G})$ as $\text{PW}_{\asp}(\tilde{G})$. For non-archimedean $F$, it consists of functions $\alpha: \tilde{T}_-(\tilde{G}) \to \C$ satisfying
\begin{compactenum}[(i)]
  \item $\alpha$ factors through $T_-(\tilde{G})$;
  \item $\alpha(z \cdot) = z^{-1} \alpha(\cdot)$ for every $z \in \Sph^1$;
  \item $\alpha$ is supported on finitely many connected components of $\tilde{T}_-(\tilde{G})$;
  \item the restriction of $\alpha$ to each connected component $\mathcal{C}$, viewed as a function on $\mathcal{C}/\Sph^1$ by choosing any trivialization as in Remark \ref{rem:T-splitting}, is a Paley-Wiener function.
\end{compactenum}
This characterization of $\text{PW}_{\asp}(\tilde{G})$ is nothing but the trace Paley-Wiener theorem; its justification for coverings is given in \cite[\S 2]{Li12b} and \cite[\S 3.4]{Li14b}. For archimedean versions, see \cite{CD84,Bo94b}. Consequently there is a notion of \emph{pseudo-coefficients} for every $\tau \in T_{\text{ell},-}(\tilde{G})$. The same spaces may be defined for any $\tilde{M}$, where $M \in \mathcal{L}(M_0)$. There is then a natural action of $W^G(M)$ on $\text{PW}_{\asp}(\tilde{M})$.

\begin{proposition}\label{prop:PW-spectral}
  For each $M \in \mathcal{L}(M_0)$, the inverse image of
  \begin{align*}
    \mathrm{PW}_{\asp}(\tilde{G})_M & := \left\{ \alpha \in \mathrm{PW}_{\asp}(\tilde{G}) : \Supp(\alpha) \subset W^G_0 \cdot \tilde{T}_{\mathrm{ell},-}(\tilde{M}) \right\} \\
    & \simeq \mathrm{PW}_{\asp}(\tilde{M})_M^{W^G(M)} \quad \text{(by restriction to $\tilde{T}_-(\tilde{M})$)}
  \end{align*}
  is contained in $\mathcal{F}^M \Iasp(\tilde{G})$. Composition with
  $$ \begin{tikzcd}[row sep=tiny]
    \mathcal{F}^M \Iasp(\tilde{G}) \arrow[twoheadrightarrow]{r} & \gr^M \Iasp(\tilde{G}) \arrow{r}[above]{\sim} & \Iaspcusp(\tilde{M})^{W^G(M)} \\
    f_{\tilde{G}} \arrow[mapsto]{rr} & & f_{\tilde{M}}
  \end{tikzcd} $$
  yields $\mathrm{PW}_{\asp}(\tilde{G})_M \rightiso \Iaspcusp(\tilde{M})^{W^G(M)}$.
\end{proposition}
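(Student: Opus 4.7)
The plan rests on a single key identity: for $\tau \in \tilde{T}_-(\tilde{M})$ with canonical image $\tau' \in \tilde{T}_-(\tilde{G})$, and any $f_{\tilde{G}} \in \Iasp(\tilde{G})$ descending from some $f \in C^\infty_{c,\asp}(\tilde{G})$, one has $f_{\tilde{G}}(\tau') = f_{\tilde{M}}(\tau)$. I would derive this by combining \eqref{eqn:T-induction}, which asserts $\Theta^{\tilde{G}}_{\tau'} = I_{\tilde{P}}(\Theta^{\tilde{M}}_\tau)$, with the covering analogue of \eqref{eqn:para-descent-character}, so that both sides equal $\Theta^{\tilde{M}}_\tau(f_{\tilde{P}})$. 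A preliminary verification is needed here: the embedding $\tilde{T}_-(\tilde{M}) \hookrightarrow \tilde{T}_-(\tilde{G})$ preserves the ``elliptic Levi'' decomposition, because any $r \in W^M(L)$ (for $L \subset M$) acts trivially on $\mathfrak{a}^G_M$; consequently $\tilde{T}_{\text{ell},-}(\tilde{M})$ embeds precisely as a union of connected components of $\tilde{T}_-(\tilde{G})$.

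Given the key identity, I would first check the asserted isomorphism $\mathrm{PW}_{\asp}(\tilde{G})_M \simeq \mathrm{PW}_{\asp}(\tilde{M})_M^{W^G(M)}$ by restriction. The map is well-defined; the inverse is given by $W^G_0$-translation, using $\Stab_{W^G_0}(M) = W^G(M) \cdot W^M_0$, and the Paley-Wiener conditions transfer componentwise. Next, for $f_{\tilde{G}} \in \Iasp(\tilde{G})$ with $f_{\tilde{G},\gr}$ supported in $W^G_0 \cdot \tilde{T}_{\text{ell},-}(\tilde{M})$, I would prove $f_{\tilde{G}} \in \mathcal{F}^M \Iasp(\tilde{G})$ as follows: for each Levi $L$ not containing any $W^G_0$-conjugate of $M$, every $\tau_L \in \tilde{T}_-(\tilde{L})$ has image $\tau' \in \tilde{T}_{\text{ell},-}(\tilde{M}')$ for some $M' \subset L$, which therefore cannot be a $W^G_0$-conjugate of $M$. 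The support hypothesis forces $f_{\tilde{G}}(\tau') = 0$, and the key identity yields $f_{\tilde{L}}(\tau_L) = 0$ for every $\tau_L$; Lemma \ref{prop:gr-injectivity} applied to $\tilde{L}$ then gives $f_{\tilde{L}} = 0$.

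The composition $\mathrm{PW}_{\asp}(\tilde{G})_M \hookrightarrow \mathcal{F}^M \Iasp(\tilde{G}) \twoheadrightarrow \gr^M \Iasp(\tilde{G}) \rightiso \Iaspcusp(\tilde{M})^{W^G(M)}$ is visibly $f_{\tilde{G}} \mapsto f_{\tilde{M}}$; the cuspidality of $f_{\tilde{M}}$ is a direct consequence of the support condition combined with the key identity applied to the proper Levi subgroups of $M$. For surjectivity, given $g \in \Iaspcusp(\tilde{M})^{W^G(M)}$, the trace Paley-Wiener image $g_{\tilde{M},\gr}$ lies in $\mathrm{PW}_{\asp}(\tilde{M})_M^{W^G(M)}$; the restriction isomorphism extends it to some $\alpha \in \mathrm{PW}_{\asp}(\tilde{G})_M$, and the trace Paley-Wiener theorem for $\tilde{G}$ then lifts $\alpha$ to $f_{\tilde{G}} \in \Iasp(\tilde{G})$, so that $f_{\tilde{M}} = g$ by the key identity and Lemma \ref{prop:gr-injectivity} applied to $\tilde{M}$. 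Injectivity is analogous: vanishing of $f_{\tilde{M}}$ forces the restriction $f_{\tilde{G},\gr}|_{\tilde{T}_{\text{ell},-}(\tilde{M})}$ to vanish, hence $f_{\tilde{G},\gr} \equiv 0$ by the support condition and $W^G_0$-invariance.

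The main obstacle I foresee is not conceptual but careful bookkeeping: verifying that the normalizing factors of the intertwining operators are chosen compatibly between $\tilde{M}$ and $\tilde{G}$ so that the identity \eqref{eqn:T-induction} truly holds for $\tau \in \tilde{T}_{\text{ell},-}(\tilde{M})$ regarded as a triple in $\tilde{T}_-(\tilde{G})$, and double-checking the component-wise structure of the embedding so that restriction of Paley-Wiener functions actually realizes the isomorphism $\mathrm{PW}_{\asp}(\tilde{G})_M \simeq \mathrm{PW}_{\asp}(\tilde{M})_M^{W^G(M)}$. Once these matters are addressed, the rest is formal manipulation with trace Paley-Wiener spaces and the filtration $\mathcal{F}^\bullet$.
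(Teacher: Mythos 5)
Your proof is correct and takes essentially the same route as the paper: the paper's (much terser) proof simply records the commutative square relating $f_{\tilde{G}} \mapsto f_{\tilde{L}}$ to restriction of Paley--Wiener functions to $\tilde{T}_-(\tilde{L})$, which commutes by \eqref{eqn:T-induction} --- exactly your key identity $f_{\tilde{G}}(\tau') = f_{\tilde{M}}(\tau)$ derived from \eqref{eqn:T-induction} and \eqref{eqn:para-descent-character}. The paper leaves the remaining deductions (support implies membership in $\mathcal{F}^M\Iasp(\tilde{G})$, surjectivity onto $\Iaspcusp(\tilde{M})^{W^G(M)}$, etc.) to the reader, which you have spelled out.
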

\begin{proof}
  It suffices to note that for every $L \in \mathcal{L}(M_0)$, the diagram
  $$ \begin{tikzcd}
    \Iasp(\tilde{G}) \arrow{d}[left]{f_{\tilde{G}} \mapsto f_{\tilde{L}}} \arrow{r}[above]{\sim} & \text{PW}_{\asp}(\tilde{G}) \arrow{d}[right]{\text{restriction to } \tilde{T}_-(\tilde{L})} \\
    \Iasp(\tilde{L})^{W^G(L)} \arrow{r}[below]{\sim} & \text{PW}_{\asp}(\tilde{L})^{W^G(L)}
  \end{tikzcd} $$
  commutes by \eqref{eqn:T-induction}.
\end{proof}

In view of the definition of $\text{PW}_{\asp}(\tilde{G})$, there is an evident decomposition
\begin{equation}\label{eqn:PW-decomp}\begin{aligned}
  \text{PW}_{\asp}(\tilde{G}) &= \bigoplus_{M \in \mathcal{L}(M_0)/W^G_0} \text{PW}_{\asp}(\tilde{G})_M \\
  & \rightiso \bigoplus_{M \in \mathcal{L}(M_0)/W^G_0} \Iaspcusp(\tilde{M})^{W^G(M)}.
\end{aligned}\end{equation}

\begin{corollary}\label{prop:Iasp-grading}
  Composing $f_{\tilde{G}} \mapsto f_{\tilde{G},\gr}$ with the decomposition \eqref{eqn:PW-decomp} induces a filtration-preserving isomorphism
  $$ \Iasp(\tilde{G}) \rightiso \orbI_\gr(\tilde{G}) $$
  as claimed in Remark \ref{rem:grading}.
\end{corollary}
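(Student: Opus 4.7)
The plan is to assemble the statement directly from the pieces already in place. First I would invoke the trace Paley-Wiener theorem for coverings (cited just above Proposition \ref{prop:PW-spectral}, together with Lemma \ref{prop:gr-injectivity} for injectivity) to identify $f_{\tilde{G}} \mapsto f_{\tilde{G},\gr}$ with a linear isomorphism $\Iasp(\tilde{G}) \rightiso \mathrm{PW}_{\asp}(\tilde{G})$. Next, the support condition built into the definition of $\mathrm{PW}_{\asp}(\tilde{G})$, combined with the disjoint decomposition $\tilde{T}_-(\tilde{G}) = \bigsqcup_{M/W_0^G} \tilde{T}_{\mathrm{ell},-}(\tilde{M})$, yields the canonical direct-sum decomposition \eqref{eqn:PW-decomp}. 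Finally, composing with the isomorphisms $\mathrm{PW}_{\asp}(\tilde{G})_M \rightiso \Iaspcusp(\tilde{M})^{W^G(M)} = \gr^M \Iasp(\tilde{G})$ supplied by Proposition \ref{prop:PW-spectral} produces the desired bijection
$$ \Iasp(\tilde{G}) \rightiso \bigoplus_{M \in \mathcal{L}(M_0)/W^G_0} \gr^M \Iasp(\tilde{G}) = \orbI_\gr(\tilde{G}). $$

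For the filtration-preservation assertion, fix $L \in \mathcal{L}(M_0)$ and let $f_{\tilde{G}} \in \mathcal{F}^L \Iasp(\tilde{G})$. By Definition \ref{def:Iasp-filtration}, this means $f_{\tilde{M}} = 0$ whenever $M$ contains no conjugate of $L$. For such an $M$, Proposition \ref{prop:PW-spectral} factors the projection onto the $M$-summand as
$$ \Iasp(\tilde{G}) \twoheadrightarrow \mathrm{PW}_{\asp}(\tilde{G})_M \hookrightarrow \mathcal{F}^M \Iasp(\tilde{G}) \twoheadrightarrow \gr^M \Iasp(\tilde{G}) \rightiso \Iaspcusp(\tilde{M})^{W^G(M)}, $$
and the composed map sends $f_{\tilde{G}}$ to the image of $f_{\tilde{M}}$, which vanishes. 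Thus the image of $\mathcal{F}^L \Iasp(\tilde{G})$ lies in the natural subspace $\bigoplus_{M \supset \mathrm{conj}(L)} \gr^M \Iasp(\tilde{G})$, which is the filtration on $\orbI_\gr(\tilde{G})$ inherited from its grading. Equality on each filtered piece follows from the isomorphism just constructed, applied to the Levi $\tilde{M}$ in place of $\tilde{G}$ and combined with the transitivity of parabolic descent (Corollary \ref{prop:parabolic-descent-transitivity}).

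There is no genuine obstacle: the substantive content sits in Lemma \ref{prop:gr-injectivity}, Proposition \ref{prop:PW-spectral}, and the trace Paley-Wiener theorem for coverings, which collectively reduce the corollary to bookkeeping. The only mild subtlety to double-check is that the identifications of $\gr^M$ with $\Iaspcusp(\tilde{M})^{W^G(M)}$ on the two sides (geometric via parabolic descent of orbital integrals, spectral via restriction of Paley-Wiener data to $\tilde{T}_{\mathrm{ell},-}(\tilde{M})$) genuinely agree; this compatibility is precisely the content of \eqref{eqn:T-induction} and is already recorded in the commutative diagram at the end of the proof of Proposition \ref{prop:PW-spectral}.
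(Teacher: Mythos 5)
Your proposal is correct and follows the same route the paper intends — the corollary is stated in the paper without a separate proof, as an immediate consequence of Lemma \ref{prop:gr-injectivity}, Proposition \ref{prop:PW-spectral}, and the decomposition \eqref{eqn:PW-decomp}, and that is exactly what you assemble. The one point worth flagging is that your verification that the map carries $\mathcal{F}^L\Iasp(\tilde{G})$ \emph{onto} (not merely into) the corresponding filtered piece of $\orbI_\gr(\tilde{G})$ is quite compressed; the cleanest way to close it is to argue the reverse inclusion directly: if the $M$-components of $f_{\tilde{G},\gr}$ vanish for all $M$ not containing a conjugate of $L$, then for any such $M'$ every Levi $L' \subset M'$ also fails to contain a conjugate of $L$, so the restriction $f_{\tilde{M}',\gr}$ of $f_{\tilde{G},\gr}$ to $\tilde{T}_-(\tilde{M}')$ vanishes, and Lemma \ref{prop:gr-injectivity} applied to $\tilde{M}'$ gives $f_{\tilde{M}'}=0$, hence $f_{\tilde{G}} \in \mathcal{F}^L\Iasp(\tilde{G})$.
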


Summing up, elements of $\Iasp(\tilde{G})$ may be viewed either
\begin{inparaenum}[(i)]
  \item as functions on $\Gamma_\text{reg}(\tilde{G})$ (geometrically), or
  \item as functions on $\tilde{T}_-(\tilde{G})$, via $f_{\tilde{G}} \mapsto f_{\tilde{G}, \gr}$ (spectrally).
\end{inparaenum}
Parallel to \S\ref{sec:collective-geom-trans}, we set out to define a Radon measure on $T_-(\tilde{G})/\Sph^1$. For $M \in \mathcal{L}(M_0)$, $\tau \in \tilde{T}_{\text{ell},-}(\tilde{M})$, set
\begin{align*}
  \mathfrak{a}_{M,\tau}^\vee & := \Stab_{i\mathfrak{a}^*_M}(\tau), \\
  i\mathfrak{a}^*_{M, \tau} & := i\mathfrak{a}^*_M/\mathfrak{a}_{M,\tau}^\vee.
\end{align*}
The measures are defined by
\begin{align*}
  \int_{T_{\text{ell},-}(\tilde{M})/\Sph^1} \alpha & = \sum_{\substack{\tau \in \Sph^1 \backslash T_{\text{ell},-}(\tilde{M})/i\mathfrak{a}^*_M \\ \tau=(L, \pi, r)}} \; |Z_{R_\pi}(r)|^{-1} \int_{i\mathfrak{a}^*_{M, \tau}} \alpha(\tau_\lambda) \dd\lambda ,\\
  \int_{T_-(\tilde{G})/\Sph^1} \alpha & = \sum_{M \in \mathcal{L}(M_0)/W^G_0} |W^G(M)|^{-1} \int_{T_{\text{ell},-}(\tilde{M})/\Sph^1} \alpha
\end{align*}
for suitable test functions $\alpha$, where $Z_{R_\pi}(r) := \Stab_{R_\pi}(r)$ for $r \in \tilde{R}_\pi$; also observe that $Z_{\tilde{R}_\pi}(r)/\Sph^1 = Z_{R_\pi}(r)$ by our definition of essential triples.

Denote the quotient map $T_-(\tilde{G}) \to T_-(\tilde{G})/\Sph^1$ by $\bm{\pi}$, we may deduce a Radon measure on $T_-(\tilde{G})$ by requiring
$$ \mes(\bm{\pi}^{-1}(E)) = \mes(E) $$
for every measurable $E \subset T_-(\tilde{G})/\Sph^1$. This is redundant somehow, since we will only integrate over $T_-(\tilde{G})/\Sph^1$ in this article. Nevertheless, the same recipe defines an analogous measure in \cite[p.834]{Li12b} on a bundle over $T_-(\tilde{G})/\Sph^1$, the only difference being that $\Ker(\tilde{R}_\pi \twoheadrightarrow R_\pi)$ is taken to be a finite cyclic group in \textit{loc.\ cit.} Thus these two formalisms can be reconciled.

Define an hermitian pairing
$$ (a_1|a_2)_\text{ell} := \int_{T_{\text{ell},-}(\tilde{G})/\Sph^1} |d(\tau)|^{-1} a_{1,\gr}(\tau) \overline{a_{2,\gr}(\tau)} \dd\tau, \quad a_1, a_2 \in \Iaspcusp(\tilde{G}), $$
with the $d(\tau)$ in \eqref{eqn:d(tau)}.

\begin{lemma}\label{prop:ell-ip}
  For all $a_1, a_2 \in \Iaspcusp(\tilde{G})$, the hermitian pairing $(a_1|a_2)_\mathrm{ell}$ is convergent and equals the $(a_1|a_2)$ in Definition \ref{def:pairing-geom}.
\end{lemma}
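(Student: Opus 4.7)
The plan is to reduce the identity to an orthogonality/Parseval relation between cuspidal anti-genuine orbital integrals and genuine elliptic tempered characters on $\tilde{G}$.

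First I would dispose of the geometric side. Since $a_1, a_2 \in \Iaspcusp(\tilde{G})$, the characterization of cuspidality via Shalika germs (or more simply via the parabolic descent $f_{\tilde{G}} \mapsto f_{\tilde{M}}$ for proper Levi subgroups $M$) implies that $a_i$ vanishes on any strongly regular conjugacy class meeting a proper Levi subgroup. Using the measure on $\Gamma_{\text{reg}}(G)$ introduced in \S\ref{sec:collective-geom-trans}, the pairing in Definition \ref{def:pairing-geom} reduces to
\[
  (a_1 | a_2) = \int_{\Gamma_{\text{reg,ell}}(G)} a_1(\tilde{\delta}) \overline{a_2(\tilde{\delta})} \, d\delta,
\]
the integrand being well-defined and independent of the lift $\tilde{\delta}$ by anti-genuineness.

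Next I would unfold the spectral side. By \eqref{eqn:f-pi}, $a_{i,\gr}(\tau) = \int_{\Gamma_{\text{reg}}(G)} I_{\tilde{G}}(\tau, \tilde{\delta}) a_i(\tilde{\delta}) \,d\delta$, and cuspidality again restricts the integration to $\Gamma_{\text{reg,ell}}(G)$. Substituting this into the definition of $(a_1|a_2)_\text{ell}$ and exchanging the order of integration via Fubini (justified by the Paley-Wiener bounds on $a_{i,\gr}$ and the standard bounds \cite[Théorème 4.1.4]{Li12b} on orbital integrals, which simultaneously yield the convergence asserted in the lemma), one obtains
\[
  (a_1 | a_2)_\text{ell} = \iint_{\Gamma_{\text{reg,ell}}(G) \times \Gamma_{\text{reg,ell}}(G)} a_1(\tilde{\delta}) \overline{a_2(\tilde{\delta}')} \, K(\tilde{\delta}, \tilde{\delta}') \, d\delta \, d\delta',
\]
where
\[
  K(\tilde{\delta}, \tilde{\delta}') = \int_{T_{\text{ell},-}(\tilde{G})/\Sph^1} |d(\tau)|^{-1} I_{\tilde{G}}(\tau, \tilde{\delta}) \overline{I_{\tilde{G}}(\tau, \tilde{\delta}')} \, d\tau.
\]

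The main obstacle, and the real content of the lemma, is to identify $K(\tilde{\delta}, \tilde{\delta}')$ with the $\delta$-distribution on $\Gamma_{\text{reg,ell}}(\tilde{G})$ (relative to the prescribed measure on $\Gamma_{\text{reg,ell}}(G)$ and to the anti-genuine structure on the fiber). For reductive $F$-groups this is the celebrated elliptic orthogonality relation of Kazhdan and Arthur \cite{Ar93}; for coverings of metaplectic type, the requisite version has been established in \cite[\S 5.4]{Li12b}, as a consequence of the theory of $R$-groups with its normalized intertwining operators, the construction of pseudo-coefficients in each connected component of $\tilde{T}_{\text{ell},-}(\tilde{G})$, and the trace Paley-Wiener theorem recalled in Lemma \ref{prop:gr-injectivity}. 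Granting this orthogonality, inserting $K(\tilde{\delta}, \tilde{\delta}') = \bdelta_{\tilde{\delta}, \tilde{\delta}'}$ collapses the double integral to the elliptic geometric pairing computed in the first paragraph, whence $(a_1|a_2)_\text{ell} = (a_1|a_2)$.
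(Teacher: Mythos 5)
Your sketch takes essentially the same route as the paper: after restricting the geometric pairing to the elliptic set by cuspidality, both proofs reduce the claim to a Parseval-type identity for elliptic tempered characters on coverings, which is the content of \cite[Th\'eor\`eme 5.8.7]{Li12b} (cf.\ \cite[Corollary 3.2]{Ar93}), the very reference the paper invokes. The paper's proof is simply that citation plus a reconciliation of measure normalizations (the discussion just before the lemma, where $\Ker(\tilde{R}_\pi\twoheadrightarrow R_\pi)$ is a finite cyclic group in \textit{loc.\ cit.}\ rather than $\Sph^1$).

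One remark on your unfolding: writing the spectral pairing as $\iint a_1\overline{a_2}\,K\,\dd\delta\,\dd\delta'$ with
\[
K(\tilde{\delta},\tilde{\delta}')=\int_{T_{\mathrm{ell},-}(\tilde{G})/\Sph^1}|d(\tau)|^{-1} I_{\tilde{G}}(\tau,\tilde{\delta})\overline{I_{\tilde{G}}(\tau,\tilde{\delta}')}\,\dd\tau
\]
and then ``inserting $K=\bdelta$'' is formally attractive but logically circular: the kernel $K$ is a distribution on $\Gamma_{\mathrm{reg,ell}}(G)\times\Gamma_{\mathrm{reg,ell}}(G)$, not a function, so the Fubini interchange you invoke is not available a priori, and the identification of $K$ with the Dirac delta relative to the measure on $\Gamma_{\mathrm{reg,ell}}(G)$ is precisely equivalent to (not a lemma towards) the Parseval identity you are trying to prove. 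In \cite{Ar93} and \cite[\S 5.4]{Li12b} the argument runs the other way: one first establishes the orthogonality of the $I_{\tilde{G}}(\tau,\cdot)$ and the invertibility of $a\mapsto a_{\gr}|_{T_{\mathrm{ell},-}}$ on the cuspidal subspace via pseudo-coefficients, from which the Parseval formula follows at the level of honest integrals. Also, the $\bdelta_{\tilde{\delta},\tilde{\delta}'}$ notation in the paper denotes a $\bmu_8$-twisted Kronecker delta on discrete sets of classes; do not use it for the Dirac distribution on the continuous space $\Gamma_{\mathrm{reg,ell}}(G)$. Finally, \cite[Corollary 3.2]{Ar93} is the Parseval identity itself; the pointwise orthogonality of elliptic characters is Theorem 3.1 there, which you may have meant.
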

\begin{proof}
  In view of the compatibility of measures alluded to above, this is a special case of \cite[Théorème 5.8.7]{Li12b}; cf.\ \cite[Corollary 3.2]{Ar93}.
\end{proof}

\paragraph{The stable side}
All the results below are contained in \cite[\S 5]{Ar96}, in view of the construction of local $L$-packets in \cite[Chapter 6]{Ar13}. We shall be brief here.

Let $G^\Endo$ be an elliptic endoscopic group of $\tilde{G}$. We fix a minimal Levi subgroup $M^\Endo_0 \subset G^\Endo$. In \S\ref{sec:stable-character-SO}, we have defined the stable tempered character $S\Theta_\phi$ for $\phi \in \Phi_\text{bdd}(G^\Endo)$. They are stable distributions by Theorem \ref{prop:SO-stability} and have normalized version
$$ S^{G^\Endo}(\phi, \cdot) := |D^{G^\Endo}(\cdot)|^{\frac{1}{2}} S\Theta_\phi, $$
viewed as a smooth function on $G^\Endo_\text{reg}(F)$. The same definition works for any $M^\Endo \in \mathcal{L}(M^\Endo_0)$ in place of $G^\Endo$. For every $\lambda \in i\mathfrak{a}^*_{M^\Endo}$ we have $S^{M^\Endo}(\phi_\lambda, \cdot) = e^{\angles{\lambda, H_M(\cdot)}} S^{M^\Endo}(\phi, \cdot)$.


As in \eqref{eqn:f-pi}, given $\phi \in \Phi_\text{bdd}(G^\Endo)$ and $a \in S\orbI(G^\Endo)$ we set
\begin{equation}\label{eqn:a-phi}\begin{aligned}
  a(\phi) & := \int_{\Delta_\text{reg}(G^\Endo)} S^{G^\Endo}(\phi, \sigma) a(\sigma) \dd\sigma \\
  & = S\Theta_\phi(a).
\end{aligned}\end{equation}

\begin{remark}\label{rem:PW-st}
  As on the unstable side,
  \begin{itemize}
    \item the map $a \mapsto [\phi \mapsto a(\phi)]$ identifies $S\orbI(G^\Endo)$ as a space of functions $\Phi_\text{bdd}(G^\Endo) \to \C$;
    \item the image of $S\orbI(G^\Endo)$ under the identification above has a characterization à la Paley-Wiener, cf.\ Proposition \ref{prop:PW-spectral} and \cite{Wa13-4};
    \item consequently, we deduce a filtration-preserving isomorphism $S\orbI(G^\Endo) \rightiso S\orbI_\gr(G^\Endo)$ as claimed in Remark \ref{rem:grading}.
  \end{itemize}
  The precise formulation is completely analogous to the unstable side. 
\end{remark}

Recall the decomposition \eqref{eqn:Phi_bdd-decomp} by which $\Phi_\text{bdd}(G^\Endo)$ and the various $\Phi_{2,\text{bdd}}(M^\Endo)$ acquire $\R$-analytic structures. It makes sense to define a Radon measure on $\Phi_\text{bdd}(G^\Endo)$ by stipulating
\begin{align*}
  \int_{\Phi_{2,\text{bdd}}(M)} \alpha & = \sum_{\phi \in \Phi_{2,\text{bdd}}(M^\Endo)/i\mathfrak{a}^*_{M^\Endo} } \; \int_{i\mathfrak{a}^*_{M^\Endo, \phi}} \alpha(\phi_\lambda) \dd\lambda, \\
  \int_{\Phi_{\text{bdd}}(G)} \alpha & = \sum_{M^\Endo \in \mathcal{L}(M^\Endo_0)/W^{G^\Endo}_0} |W^{G^\Endo}(M^\Endo)|^{-1} \int_{\Phi_{2,\text{bdd}}(M)} \alpha
\end{align*}
for suitable test functions $\alpha$, where
\begin{align*}
  i\mathfrak{a}^*_{M^\Endo, \phi} & := i\mathfrak{a}^*_{M^\Endo}/\mathfrak{a}^\vee_{M^\Endo, \phi}, \\
  \mathfrak{a}^\vee_{M^\Endo, \phi} & := \Stab_{i\mathfrak{a}^*_{M^\Endo}}(\phi).
\end{align*}

\begin{lemma}\label{prop:st-ell-ip}
  Assume $F$ non-archimedean. For every $a_1, a_2 \in S\Icusp(G^\Endo)$, we have
  $$ (a_1|a_2) = \int_{\Phi_{\mathrm{bdd},2}(G^\Endo)} |\mathscr{S}_{\phi, \ad}|^{-1} a_1(\phi) \overline{a_2(\phi)} \dd\phi $$
  where the $\mathscr{S}$-groups $\mathscr{S}_{\phi,\ad} := \pi_0(S_{\phi,\mathrm{ad}})$ are defined in \S\ref{sec:L-parameters}.
\end{lemma}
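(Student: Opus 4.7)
The plan is to adapt the strategy of Arthur \cite[\S 5]{Ar96} to the reductive group $G^\Endo = \SO(2n'+1) \times \SO(2n''+1)$. Since $a_1, a_2$ are cuspidal they are supported on the elliptic stable classes. In the non-archimedean setting the abelianization map $H^1(F,-) \to H^1_{\mathrm{ab}}(F,-)$ is bijective, so $\mathfrak{D}(G^\Endo_\sigma, G^\Endo; F) = \mathfrak{E}(G^\Endo_\sigma, G^\Endo; F)$. Introduce the \emph{symmetric stable lift} $\widetilde{a}_i \in \Icusp(G^\Endo)$ defined by $\widetilde{a}_i(\gamma) := |\mathfrak{D}(G^\Endo_{\sigma_\gamma}, G^\Endo; F)|^{-1} a_i(\sigma_\gamma)$, so that $\widetilde{a}_i^{G^\Endo} = a_i$. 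By Remark \ref{rem:measure-Sigma}, the stable pairing $(a_1|a_2)$ equals the unstable elliptic geometric pairing $(\widetilde{a}_1|\widetilde{a}_2) = \int_{\Gamma_{\mathrm{reg,ell}}(G^\Endo)} \widetilde{a}_1 \overline{\widetilde{a}_2}$.

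Next, apply the classical analog of Lemma \ref{prop:ell-ip} for reductive groups (Arthur \cite[Corollary 3.2]{Ar93}) to express this as a spectral integral over $T_{\mathrm{ell}}(G^\Endo)/\Sph^1$. For a non-discrete essential triple $\tau = (L, \sigma_L, r)$ with $L \subsetneq G^\Endo$, the distribution $\Theta_\tau(\widetilde{a}_i) = \Tr(R_P(r, \sigma_L) I_P(\sigma_L, \widetilde{a}_i))$ factors through the parabolic descent $\widetilde{a}_{i, L}$, which vanishes by cuspidality; hence only the discrete-series part survives:
$$ (a_1|a_2) = \sum_{\pi \in \Pi_2(G^\Endo)} \Theta_\pi(\widetilde{a}_1) \overline{\Theta_\pi(\widetilde{a}_2)}. $$

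Finally, partition $\Pi_2(G^\Endo) = \bigsqcup_{\phi \in \Phi_{2,\mathrm{bdd}}(G^\Endo)} \Pi_\phi$ via \cite[Theorem 1.5.1]{Ar13}; for each $\phi$ the endoscopic Fourier inversion gives
$$ \Theta_{\pi_\rho}(\widetilde{a}_i) = |\mathscr{S}_{\phi,\ad}|^{-1} \sum_{s \in \mathscr{S}_{\phi,\ad}} \rho(s)^{-1} \Theta_\phi^s(\widetilde{a}_i), \quad \pi_\rho \in \Pi_\phi, $$
where $\Theta_\phi^s$ is the endoscopic character attached to $s \in \mathscr{S}_{\phi,\ad}$. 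The crucial observation is that the symmetric stable lift $\widetilde{a}_i$ transfers to zero on any non-trivial endoscopic datum of $G^\Endo$: since $\widetilde{a}_i$ is constant on each stable class, the endoscopic sum $\sum_\gamma \kappa(\inv(\gamma, \gamma_0)) \widetilde{a}_i(\gamma)$ vanishes for every non-trivial character $\kappa$ of $H^1(F, G^\Endo_\sigma)$ by Fourier inversion. Hence $\Theta_\phi^s(\widetilde{a}_i) = 0$ for $s \neq 1$ and $\Theta_{\pi_\rho}(\widetilde{a}_i) = |\mathscr{S}_{\phi,\ad}|^{-1} a_i(\phi)$ is independent of $\rho$. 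Using $|\Pi_\phi| = |\mathscr{S}_{\phi,\ad}|$ (since $\mathscr{S}_{\phi,\ad}$ is abelian for $\SO(2n+1)$-type $G^\Endo$) and the discreteness of $\Phi_{2,\mathrm{bdd}}(G^\Endo)$ (as $Z_{G^\Endo}$ is finite, so $\mathfrak{a}^*_{G^\Endo}=0$), the sum collapses to
$$ (a_1|a_2) = \sum_{\phi} \frac{|\Pi_\phi|}{|\mathscr{S}_{\phi,\ad}|^{2}} a_1(\phi) \overline{a_2(\phi)} = \sum_\phi |\mathscr{S}_{\phi,\ad}|^{-1} a_1(\phi) \overline{a_2(\phi)}, $$
which under the counting measure on $\Phi_{2,\mathrm{bdd}}(G^\Endo)$ matches the stated integral. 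The main obstacle is the rigorous justification of the endoscopic character decomposition and the vanishing of the non-trivial endoscopic transfers of the symmetric stable lift; both rest on the fine structure of the local Langlands correspondence for $\SO$-type groups established in \cite[Chapter 6]{Ar13}.
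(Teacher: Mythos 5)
Your route is genuinely different from the one the paper invokes. The paper simply cites Arthur \cite[p.\ 542]{Ar96}, where the argument stays entirely on the stable side: one identifies $S\Icusp(G^\Endo)$ with a Paley--Wiener space on $\Phi_{2,\mathrm{bdd}}(G^\Endo)$ (the stable analogue of Proposition \ref{prop:PW-spectral}) and uses the orthogonality relations among stable elliptic tempered characters, so that the geometric pairing is computed directly by a stable Plancherel-type formula. You instead pass back to the unstable side via the ``symmetric stable lift'' $\widetilde{a}_i$, apply the unstable inner product formula \cite[Cor.\ 3.2]{Ar93}, and then invoke the endoscopic character expansion from \cite[Chap.\ 6]{Ar13}. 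The trade-off: your route avoids having to state the stable Paley--Wiener theorem and stable orthogonality, but it imports two heavier pieces of machinery (the characterization of the cuspidal transfer image for $G^\Endo$, and the full endoscopic character identities).

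The real gap is the assertion, made without argument, that $\widetilde{a}_i(\gamma) := |\mathfrak{D}(G^\Endo_{\sigma_\gamma}, G^\Endo; F)|^{-1} a_i(\sigma_\gamma)$ lies in $\Icusp(G^\Endo)$. A compactly supported, stably invariant function on the elliptic set is \emph{not} automatically a normalized orbital integral of a test function --- it must satisfy the local Shalika-germ characterization of \cite{Vi81}, and that is exactly what fails generically for ad hoc symmetrizations. What saves your construction is that $\widetilde{a}_i$ is precisely the adjoint transfer $\mathcal{T}_\EndoE(a_i, 0, \dots, 0)$ of $a_i$ placed in the principal endoscopic slot: the adjoint factor $\Delta_{G^\Endo, G^\Endo}(\gamma, \sigma) = |\mathfrak{D}(G^\Endo_\gamma, G^\Endo; F)|^{-1}$ for the trivial datum reproduces your formula. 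Membership of the adjoint transfer in $\Icusp(G^\Endo)$ is then the surjectivity of the cuspidal collective transfer for the quasi-split group $G^\Endo$ onto $\bigoplus_{G'} S\Icusp(G', G^\Endo)$ --- the $\SO$-analogue of the paper's Theorem \ref{prop:transfer-cusp-surj}, i.e.\ Arthur's \cite[Lemma 3.4]{Ar96}. You need to cite this explicitly; without it the step ``$\widetilde{a}_i \in \Icusp(G^\Endo)$'' is unsupported and the application of \cite[Cor.\ 3.2]{Ar93} collapses.

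Two further remarks. First, the vanishing $\Theta_\phi^s(\widetilde{a}_i) = 0$ for $s \neq 1$ should be stated as vanishing of the geometric transfer $\mathcal{T}_{G'}(\widetilde{a}_i) \in S\orbI(G')$ rather than via the bare $\kappa$-sum: the cocycle property $\Delta(\gamma', \gamma_1) = \angles{\kappa, \inv(\gamma_1,\gamma_0)}\Delta(\gamma',\gamma_0)$ is what lets you pull the transfer factor out of the sum, and only then does orthogonality of characters of $\mathfrak{D}(G^\Endo_\sigma, G^\Endo; F)$ apply. Second, $|\Pi_\phi| = |\mathscr{S}_{\phi,\ad}|$ does hold here since $S_\phi$ is an elementary abelian $2$-group for discrete parameters into $\Sp(2m,\C)$-type dual groups and the map of \cite[Theorem 1.5.1]{Ar13} is a bijection in the non-archimedean case, so that step is fine --- but it deserves a word of justification since the equality fails for $\mathscr{S}_{\phi,\ad}$ non-abelian.
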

\begin{proof}
  See \cite[p.542]{Ar96}.
\end{proof}

Parallel to \eqref{eqn:a-phi}, we have the following inversion formula due to Arthur. The proof is based on the inversion formulas in \cite{Ar94} which are valid for archimedean $F$ as well.
\begin{lemma}[{\cite[Lemma 6.3]{Ar96}}]\label{prop:stable-orbint-Fourier}
  There is a smooth function $S^{G^\Endo}(\sigma, \phi)$ of $(\sigma, \phi) \in \Delta_\mathrm{reg}(G^\Endo) \times \Phi_\mathrm{bdd}(G^\Endo)$, such that
  $$ a(\sigma) = \int_{\Phi_\mathrm{bdd}(G^\Endo)} S^{G^\Endo}(\sigma, \phi) a(\phi) \dd\phi $$
  for any $a \in S\orbI(G^\Endo)$.
\end{lemma}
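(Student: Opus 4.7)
The plan is to invert, in the sense of Fourier analysis, the transform $a \mapsto [\phi \mapsto a(\phi)]$ defined by \eqref{eqn:a-phi}. The construction proceeds by first reducing to the cuspidal setting and then performing a term-by-term Fourier inversion on each compact torus of parameters.

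First I would reduce to cuspidal data. The space $\Delta_\mathrm{reg}(G^\Endo)$ decomposes as $\bigsqcup_{M^\Endo} \Delta_{G^\Endo-\mathrm{reg},\mathrm{ell}}(M^\Endo)/W^{G^\Endo}(M^\Endo)$, and parabolic descent (the stable analogue of Proposition \ref{prop:descent-orbint}, together with Corollary \ref{prop:parabolic-descent-transitivity}) gives $a(\sigma) = a^{M^\Endo}(\sigma)$ with $a^{M^\Endo} \in S\Icusp(M^\Endo)^{W^{G^\Endo}(M^\Endo)}$ whenever $\sigma \in \Delta_{G^\Endo-\mathrm{reg},\mathrm{ell}}(M^\Endo)$. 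On the spectral side, the decomposition \eqref{eqn:Phi_bdd-decomp} and Theorem \ref{prop:LLC-vs-induction} (compatibility of local Langlands with parabolic induction, which ensures $S\Theta^{G^\Endo}_\phi = I_P(S\Theta^{M^\Endo}_{\phi_{M^\Endo}})$ when $\phi$ is the image of $\phi_{M^\Endo} \in \Phi_{2,\mathrm{bdd}}(M^\Endo)$) yield the matching identity $a(\phi) = a^{M^\Endo}(\phi_{M^\Endo})$. Thus it suffices to construct, for every Levi $M^\Endo \in \mathcal{L}(M^\Endo_0)$, a smooth kernel $S^{M^\Endo}(\sigma, \phi_{M^\Endo})$ on $\Delta_{\mathrm{reg},\mathrm{ell}}(M^\Endo) \times \Phi_{2,\mathrm{bdd}}(M^\Endo)$ that inverts \eqref{eqn:a-phi} restricted to cuspidal data, and to glue them using the defining measures on $\Delta_{\mathrm{reg}}(G^\Endo)$ and $\Phi_{\mathrm{bdd}}(G^\Endo)$.

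For the cuspidal step, I fix such an $M^\Endo$ and decompose $\Phi_{2,\mathrm{bdd}}(M^\Endo)$ into $i\mathfrak{a}^*_{M^\Endo}$-orbits; each orbit, modulo its isotropy $\mathfrak{a}^\vee_{M^\Endo,\phi}$, is a compact torus over which one can do classical Fourier analysis. The twist identity $S^{M^\Endo}(\phi_\lambda, \sigma) = e^{\langle\lambda, H_{M^\Endo}(\sigma)\rangle} S^{M^\Endo}(\phi, \sigma)$ (from $\pi_\lambda = e^{\langle\lambda,H_{M^\Endo}(\cdot)\rangle}\otimes\pi$, cf.\ \eqref{eqn:pi-twist}) shows that $\lambda \mapsto a(\phi_\lambda)$ is, up to a finite sum over stable classes with given Harish-Chandra image, the classical Fourier transform of the discrete-variable function $\sigma \mapsto a(\sigma)$ along an appropriate lattice in $\mathfrak{a}_{M^\Endo}$. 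Applying ordinary Fourier inversion on each orbit, weighted by Lemma \ref{prop:st-ell-ip} (which identifies the spectral and geometric $L^2$-norms up to the explicit factors $|\mathscr{S}_{\phi,\ad}|^{-1}$ and $|\mathfrak{E}(M^\Endo_\sigma, M^\Endo; F)|^{-1}$), yields a formula of the form
\begin{equation*}
  a(\sigma) = \int_{\Phi_{2,\mathrm{bdd}}(M^\Endo)} \overline{S^{M^\Endo}(\phi, \sigma)}\,|\mathscr{S}_{\phi,\ad}|^{-1}|\mathfrak{E}(M^\Endo_\sigma,M^\Endo;F)|\, a(\phi)\, d\phi,
\end{equation*}
which identifies the desired kernel as a pointwise multiple of $\overline{S^{M^\Endo}(\phi,\sigma)}$. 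Smoothness of the kernel in both variables follows from the smoothness of $S^{M^\Endo}(\phi,\sigma)$ on $\Phi_{\mathrm{bdd}}(M^\Endo) \times M^\Endo_\mathrm{reg}(F)$ (a consequence of Theorem \ref{prop:SO-stability} and Harish-Chandra's regularity theorem) and from the locally finite nature of the $i\mathfrak{a}^*_{M^\Endo}$-orbit decomposition, together with the Paley-Wiener characterization of $S\orbI(M^\Endo)$ alluded to in Remark \ref{rem:PW-st} which controls the tail behavior.

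The main obstacle is the measure-theoretic bookkeeping, namely showing that the gluing of the kernels $S^{M^\Endo}(\sigma,\phi_{M^\Endo})$ across all Levi $M^\Endo$ produces a single smooth kernel $S^{G^\Endo}(\sigma,\phi)$ on $\Delta_\mathrm{reg}(G^\Endo) \times \Phi_\mathrm{bdd}(G^\Endo)$ that is well defined (independent of the Levi in whose cuspidal stratum a given $\sigma$ or $\phi$ sits) and produces the inversion formula with \emph{the same} measure on $\Phi_\mathrm{bdd}(G^\Endo)$ used in \eqref{eqn:a-phi}. The factors $|W^{G^\Endo}(M^\Endo)|^{-1}$, $|\mathscr{S}_{\phi,\ad}|^{-1}$, and $|\mathfrak{E}(M^\Endo_\sigma,M^\Endo;F)|^{-1}$ must combine correctly; this is the analogue for stable data of the discrete/continuous Plancherel formula argument used by Arthur in \cite{Ar94} to prove the non-stable prototype, so after reducing to the cuspidal case as above, one may cite or mimic that argument directly.
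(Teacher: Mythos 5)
The paper does not prove this lemma: it is cited verbatim as \cite[Lemma 6.3]{Ar96}, and the only remark added is that Arthur's proof relies on the inversion formulas of \cite{Ar94}, which hold for archimedean $F$ as well. So there is no proof in the paper to compare against; your proposal is a reconstruction of Arthur's argument, not a variant of the author's.

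Your reduction to the cuspidal/elliptic stratum on each Levi $M^\Endo$ is sound, and you correctly identify Lemma \ref{prop:st-ell-ip} as the crucial tool. The gap is in the middle paragraph, where the cuspidal inversion is presented as ``classical Fourier inversion on a lattice.'' That framing conflates the $i\mathfrak{a}^*_{M^\Endo}$-twist (which is indeed elementary) with the genuinely non-trivial input, namely the orthogonality \emph{and completeness} of the elliptic stable characters $S^{M^\Endo}(\phi,\cdot)$ as $\phi$ ranges over $\Phi_{2,\mathrm{bdd}}(M^\Endo)$. Those facts are exactly what \cite{Ar94} establishes (in the ordinary case) and \cite{Ar96} imports to the stable side; they do not follow from the twist identity alone, since that identity only controls variation within an $i\mathfrak{a}^*_{M^\Endo}$-orbit and not the discrete parameter. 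The displayed formula for $a(\sigma)$ is obtained by formally substituting a Dirac delta at $\sigma$ into the isometry of Lemma \ref{prop:st-ell-ip}, but a Dirac is not a Paley--Wiener element; to legitimize this one has to use that $S\Icusp(M^\Endo)$ is a direct limit of finite-dimensional subspaces (non-archimedean case), that the evaluation functional $a\mapsto a(\sigma)$ is continuous, and that the resulting integral over $\Phi_{2,\mathrm{bdd}}(M^\Endo)$ converges absolutely (standard bounds on stable orbital integrals). None of this is a one-liner, and your closing sentence --- that one should ``cite or mimic that argument directly'' --- is the honest resolution; but it means the middle paragraph is a plausibility sketch, not a proof. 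A minor further point: Lemma \ref{prop:st-ell-ip} is stated only for non-archimedean $F$, so this route, as written, does not cover the archimedean case asserted by the paper's accompanying remark.
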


\subsection{Spectral transfer factors}\label{sec:spectral-formalism}
In this subsection, $F$ can be any local field of characteristic zero except in the second part where we introduce the spectral transfer factors. We set
$$ T^\EndoE_\text{ell}(\tilde{G}) := \bigsqcup_{G^\Endo \in \EndoE_\text{ell}(\tilde{G})} \Phi_{2,\text{bdd}}(G^\Endo). $$
By the foregoing constructions, it has the structure of an analytic $\R$-variety and comes equipped with a Radon measure. As usual, this can be extended to Levi subgroups and we set \index{$T^\EndoE(\tilde{G})$}
$$ T^\EndoE(\tilde{G}) := \bigsqcup_{M \in \mathcal{L}(M_0)/W^G_0} T^\EndoE_\text{ell}(\tilde{M})/W^G(M). $$
Here, by writing $M = \prod_{i \in I} \GL(n_i) \times \Sp(W^\flat)$, the action of $W^G(M) = \mathfrak{S}(I)$ is permutation of the indexing set $I$ of the $\GL$-factors of each $M^\Endo \in \EndoE_\text{ell}(\tilde{M})$. As in \S\ref{sec:PW-spaces}, one also can define their complexified versions $T^\EndoE(\tilde{G})_\C$, etc.

Next, define \index{$\orbI^\EndoE_\gr(\tilde{G})$}
$$ \orbI^\EndoE_\gr(\tilde{G}) := \bigoplus_{M \in \mathcal{L}(M_0)/W^G_0} \Icusp^\EndoE(\tilde{M})^{W^G(M)}. $$
We shall regard each $\Icusp^\EndoE(\tilde{M})^{W^G(M)}$ on the right-hand side as a space of functions $T^\EndoE(\tilde{M})/W^G(M) \to \C$, using Remark \ref{rem:PW-st}.

\begin{lemma}\label{prop:IEndo-grading}
  The linear map
  \begin{align*}
    \orbI^\EndoE(\tilde{G}) & \longrightarrow \orbI^\EndoE_\gr(\tilde{G}) \\
    f^\EndoE = \left( f^{G^\Endo} \right)_{G^\Endo \in \EndoE_\mathrm{ell}(\tilde{G})} & \longmapsto \left( f^{M^\Endo}_\mathrm{ell} \right)_{\substack{M \in \mathcal{L}(M_0)/W^G_0 \\ M^\Endo \in \EndoE_\mathrm{ell}(\tilde{M})}}
  \end{align*}
  is an isomorphism. Here $f^{M^\Endo}_\mathrm{ell}$ denotes the restriction of $f^{M^\Endo} := \left(f^{G[s]}\right)^{s, M^\Endo} \in S\orbI(M^\Endo)$ to $\Phi_{2, \mathrm{bdd}}(M^\Endo)$, with arbitrary $s \in \EndoE_{M^\Endo}(\tilde{G})$ (recall Theorem \ref{prop:transfer-parabolic}); each family $\left( f^{M^\Endo}_\mathrm{ell} \right)_{M^\Endo \in \EndoE_\mathrm{ell}(\tilde{M})}$ is regarded as a function $T^\EndoE_{\mathrm{ell}}(\tilde{M})/W^G(M) \to \C$.
\end{lemma}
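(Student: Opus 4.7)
The plan is to assemble Lemma \ref{prop:IEndo-grading} from the per-endoscopic-group stable Paley-Wiener isomorphism recalled in Remark \ref{rem:PW-st}, reindexed via the combinatorial bijection of Lemma \ref{prop:M^Endo-to-s}. The overall shape parallels Corollary \ref{prop:Iasp-grading} on the unstable side; the genuinely new input is the bookkeeping between ``Levis of elliptic endoscopic groups'' and ``elliptic endoscopic groups of Levis''.

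First, applying Remark \ref{rem:PW-st} separately to each $G^\Endo \in \EndoE_\text{ell}(\tilde{G})$ yields a filtration-preserving isomorphism
\begin{equation*}
  S\orbI(G^\Endo) \;\rightiso\; \bigoplus_{M^\Endo \in \mathcal{L}(M^\Endo_0)/W^{G^\Endo}_0} S\Icusp(M^\Endo)^{W^{G^\Endo}(M^\Endo)},
\end{equation*}
under which $f^{G^\Endo}$ corresponds to its Paley-Wiener restrictions $(f^{G^\Endo})^{M^\Endo}_\text{ell}$ on the various $\Phi_{2,\text{bdd}}(M^\Endo)$. Summing over $G^\Endo$ produces an ambient isomorphism from $\bigoplus_{G^\Endo} S\orbI(G^\Endo)$ onto a direct sum indexed by pairs $(G^\Endo, [M^\Endo \subset G^\Endo])$, with Levis taken up to $W^{G^\Endo}_0$-conjugacy.

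Next, Lemma \ref{prop:M^Endo-to-s} provides a bijection between such pairs and triples $(M/\text{conj}, (m',m''), s)$ with $s \in \EndoE_{M^\Endo}(\tilde{G})$ and $G^\Endo = G[s]$; by Remark \ref{rem:EndoE-vs-embedding}, two $s$ giving the same $G[s]$ correspond to embeddings of $M^\Endo$ differing by a $W^G(M)$-element outside $W^{G[s]}(M^\Endo)$, so the bijection is one-to-one on the nose. Under this reindexing, the subspace $\orbI^\EndoE(\tilde{G}) \subset \bigoplus_{G^\Endo} S\orbI(G^\Endo)$ is cut out by precisely the two conditions of Definition \ref{def:Endo-I}: $s$-independence of the $s$-twisted descent $(f^{G[s]})^{s, M^\Endo}$, and $W^G(M)$-invariance of its common value $f^{M^\Endo}$. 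The first condition rigidly pins down the $M^\Endo$-component of $f^{G[s]}$ across all $s$ giving the same $(M, (m',m''))$, via the central translation $z[s] \in Z_{M^\Endo}(F)$ of Lemma \ref{prop:z[s]}. The second condition upgrades the intrinsic $W^{G[s]}(M^\Endo)$-invariance coming from the per-$G^\Endo$ isomorphism to the larger $W^G(M) \supset W^{G[s]}(M^\Endo)$.

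Collecting everything, what remains on the graded side is precisely
\begin{equation*}
  \bigoplus_{M \in \mathcal{L}(M_0)/W^G_0} \left( \bigoplus_{(m',m'') \in \EndoE_\text{ell}(\tilde{M})} S\Icusp(M^\Endo) \right)^{W^G(M)} \;=\; \orbI^\EndoE_\gr(\tilde{G}),
\end{equation*}
giving the asserted isomorphism. The main care is the compatibility of the $s$-twisted descent with the cuspidal Paley-Wiener restriction: after passage to $\Phi_{2,\text{bdd}}(M^\Endo)$, evaluating $(f^{G[s]})^{s, M^\Endo}$ differs from evaluating $(f^{G[s]})^{M^\Endo}$ by a scalar twist $\omega_\phi(z[s])^{\pm 1}$ coming from the central character of $\phi$ at $z[s]$. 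Since $z[s]$ is central in $M^\Endo$ of order two, this twist is a nonzero scalar, preserves cuspidality, and is therefore harmless for both the injectivity and the surjectivity check. This reduces the lemma to the formal matching of indexing sets described above.
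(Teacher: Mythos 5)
Your argument is correct and follows the same route as the paper's (terse) proof: both interpret $\orbI^\EndoE(\tilde{G})$ and $\orbI^\EndoE_\gr(\tilde{G})$ through the stable Paley--Wiener description of each $S\orbI(M^\Endo)$ and then match indices, the paper leaving the reindexing implicit while you spell it out via Lemma \ref{prop:M^Endo-to-s}. One small point worth making explicit, since you flag it as ``the main care'': the multiplier $\phi \mapsto \bomega''_\phi(-1)$ appearing in the comparison between $(f^{G[s]})^{s,M^\Endo}_\text{ell}$ and $(f^{G[s]})^{M^\Endo}_\text{ell}$ is not just a nonzero scalar at each $\phi$ but is \emph{locally constant} on $\Phi_{2,\mathrm{bdd}}(M^\Endo)$ (because $z[s]$ lies in the maximal compact of $Z_{M^\Endo}(F)$, so $H_{M^\Endo}(z[s])=0$ and the sign is invariant under the $i\mathfrak{a}^*_{M^\Endo}$-action), which is exactly what guarantees that multiplication by it preserves the Paley--Wiener and cuspidality conditions rather than merely preserving nonvanishing.
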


\begin{proof}
  The map is well-defined: $f^{M^\Endo}_\text{ell}$ is independent of $s$ and invariant under $W^G(M)$ by Definition \ref{def:Endo-I}. By interpreting $\orbI^\EndoE(\tilde{G})$ and $\orbI^\EndoE_\gr(\tilde{G})$ in terms of stable Paley-Wiener spaces for various $M^\Endo$, the isomorphy follows immediately from the definition of $\orbI^\EndoE(\tilde{G})$.  
\end{proof}

\begin{definition}\label{def:f^EndoE} \index{$f^\EndoE(\phi)$}
  Given $\phi \in T^\EndoE(\tilde{G})$ coming from $\phi_{M^\Endo} \in \Phi_{2,\text{bdd}}(M^\Endo)$ (up to $W^G(M)$), we set
  \begin{gather*}
    f^\EndoE(\phi) := f^{M^\Endo}_\mathrm{ell}(\phi_{M^\Endo}), \quad f^\EndoE \in \orbI^\EndoE(\tilde{G}),
  \end{gather*}
  in the previous notations; see \eqref{eqn:a-phi} for the meaning of evaluation at $\phi_{M^\Endo}$.
\end{definition}

\begin{remark}\label{rem:f^EndoE}
  To decipher $f^\EndoE(\phi)$, we choose $s \in \EndoE_{M^\Endo}(\tilde{G})$ to obtain a diagram as \eqref{eqn:endo-complete}. For $s=(I',I'')$, the corresponding endoscopic group $G[s]$ of $\tilde{G}$ is
  $$ \begin{tikzcd}
    G^\Endo = G[s] \arrow[-,double equal sign distance]{r} & \SO(2n'+1) & \times & \SO(2n''+1) \\
    M^\Endo \arrow[-, double equal sign distance]{r} \arrow[hookrightarrow]{u} & \prod_{i \in I'} \GL(n_i) \times \SO(2m'+1) \arrow[hookrightarrow]{u} & \times & \prod_{i \in I''} \GL(n_i) \times \SO(2m''+1) \arrow[hookrightarrow]{u}
  \end{tikzcd} $$
  and we let $\phi^\Endo \in \Phi_\text{bdd}(G^\Endo)$ be the image of $\phi_{M^\Endo}$. The representations in the $L$-packet $\Pi^{M^\Endo}_{\phi_{M^\Endo}}$ share a common $\prod_{i \in I''} \GL(n_i)$-component; denote its central character by $\bomega''$. Then we deduce that
  \begin{align*}
    f^\EndoE(\phi) & = \left( f^{G^\Endo} \right)^{s, M^\Endo}(\phi_{M^\Endo}) \\
    & = \bomega''(-1) \left( f^{G^\Endo} \right)^{M^\Endo}(\phi_{M^\Endo}) \\
    & = \bomega''(-1) (f^{G^\Endo})(\phi^\Endo)
  \end{align*}
  from Theorem \ref{prop:LLC-vs-induction} and the definition of $z[s]$.
\end{remark}

Assume hereafter that $F$ is non-archimedean.
\begin{definition}[Spectral transfer factors and their adjoint] \index{$\Delta(\phi,\tau)$}\index{$\Delta(\tau,\phi)$}
  For every $(\phi, \tau) \in T^\EndoE_\mathrm{ell}(\tilde{G}) \times T_{\mathrm{ell},-}(\tilde{G})$, define $\Delta(\phi, \tau)$ by requiring that
  \begin{align*}
    \mathcal{T}^\EndoE(f_{\tilde{G}})(\phi) & = \sum_{\tau \in T_{\mathrm{ell},-}(\tilde{G})/\Sph^1} \Delta(\phi, \tau) f_{\tilde{G}}(\tau), \\
    \Delta(\phi, z\tau) & = z\Delta(\phi, \tau), \quad z \in \Sph^1;
  \end{align*}
  where $f_{\tilde{G}} \in \Iasp(\tilde{G})$ and $\mathcal{T}^\EndoE(f_{\tilde{G}}) \in \Icusp^\EndoE(\tilde{G})$ is its collective transfer (Proposition \ref{prop:image-in-EndoE}). Its adjoint is defined by
  $$ \Delta(\tau, \phi) := \iota(\tilde{G}, G^\Endo) |\mathscr{S}_{\phi,\ad}|^{-1} |d(\tau)| \overline{\Delta(\phi, \tau)} $$
  whenever $\phi \in \Phi_{2,\text{bdd}}(G^\Endo)$. These definitions generalizes to any $M \in \mathcal{L}(M_0)$ in place of $G$, the corresponding factors are denoted by $\Delta_{\tilde{M}}(\cdots)$. Keep in mind that $\Delta_{\tilde{M}}(\phi_M, \tau_M)$ and $\Delta_{\tilde{M}}(\tau_M, \phi_M)$ vanish unless the $\GL$-components of $\phi_M$ and $\tau_M$ match under local Langlands correspondence.
  
  More generally, let $(\phi, \tau) \in T^\EndoE(\tilde{G}) \times T_-(\tilde{G})$. There exists a unique $M \in \mathcal{L}(M_0)/W^G_0$ such that $\phi$ (resp. $\tau$) comes from $\phi_M \in T^\EndoE_\text{ell}(\tilde{M})$ (resp. $\tau_M \in T_{\text{ell},-}(\tilde{M})$); both are unique up to $W^G(M)$. If $\phi$ and $\tau$ come from the same $M$ modulo $W^G_0$, set
  \begin{align*}
    \Delta(\phi, \tau) & := \sum_{\tau^\dagger_M \in W^G(M)\tau_M} \Delta_{\tilde{M}}(\phi_M, \tau^\dagger_M), \\
    \Delta(\tau, \phi) & := \sum_{\phi^\dagger_M \in W^G(M)\phi_M} \Delta_{\tilde{M}}(\tau_M, \phi^\dagger_M);
  \end{align*}
  otherwise set $\Delta(\phi,\tau)=0$. Note that there is at most one nonzero term in each sum above, namely the $\tau^\dagger_M$ (resp. $\phi^\dagger_M$) whose $\GL$-components match those of $\phi_M$ (resp. $\tau_M$) under the local Langlands correspondence.
\end{definition}

\begin{lemma}\label{prop:spectral-Delta-supp}
  For $\tau$ fixed, $\Delta(\cdot,\tau)$ and $\Delta(\tau, \cdot)$ are functions of finite support. So are $\Delta(\cdot, \phi)$ and $\Delta(\phi, \cdot)$ for $\phi$ fixed.
\end{lemma}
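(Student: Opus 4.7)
The plan is to reduce to the elliptic case by finiteness of the relative Weyl groups, then interpret $\Delta(\cdot,\tau)$ as the restriction of the collective transfer of a cuspidal pseudo-coefficient, and finally pin down continuous parameters using $\GL$-rigidity.

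First, I would handle the reduction: by construction, the non-elliptic values of $\Delta(\phi,\tau)$ vanish unless $\phi$ and $\tau$ both come from the same Levi $M \in \mathcal{L}(M_0)/W^G_0$, in which case each value is a finite sum over a $W^G(M)$-orbit of Levi-level elliptic factors $\Delta_{\tilde{M}}$. Since $W^G(M)$ is finite, it suffices to establish finite support for the elliptic factors on each covering of metaplectic type. Moreover, $\Delta(\tau,\phi)$ is defined as $\iota(\tilde{G},G^\Endo)|\mathscr{S}_{\phi,\ad}|^{-1}|d(\tau)|\overline{\Delta(\phi,\tau)}$, so finite support of $\Delta(\phi,\cdot)$ gives that of $\Delta(\tau,\cdot)$ and conversely.

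For fixed $\tau \in T_{\text{ell},-}(\tilde{G})$, the trace Paley-Wiener theorem together with Proposition \ref{prop:PW-spectral} yields a cuspidal pseudo-coefficient $f_\tau \in \Iaspcusp(\tilde{G})$ whose spectral image is concentrated on the connected component of $\tau$ in $T_{\text{ell},-}(\tilde{G})$. By Theorem \ref{prop:transfer-cusp-surj}, $\mathcal{T}^\EndoE(f_\tau)$ lies in $\Icusp^\EndoE(\tilde{G}) = \bigoplus_{G^\Endo} S\Icusp(G^\Endo)$, and by the stable Paley-Wiener theorem (Remark \ref{rem:PW-st}) its restriction to $T^\EndoE_\text{ell}(\tilde{G})$ is supported on finitely many connected components of $\bigsqcup_{G^\Endo} \Phi_{2,\text{bdd}}(G^\Endo)$. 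Comparing with the defining identity for $\Delta(\phi,\tau)$, this shows that $\Delta(\cdot,\tau)$ has support in finitely many $i\mathfrak{a}^*$-orbits on the endoscopic side. The parallel statement for fixed $\phi$ is obtained using a stable pseudo-coefficient on the endoscopic side together with the inverse transfer $\mathcal{T}_\EndoE$ of Proposition \ref{prop:transfer-injective}.

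The main obstacle, and the crucial step, will be upgrading ``finitely many $i\mathfrak{a}^*$-orbits'' to honest finite support. Writing $\tau = (L, \pi_L, r)$ with $L = \prod_{i \in I}\GL(n_i) \times \Mp(2n^\flat)$, the central characters of the $\GL$-components of $\pi_L$ are fixed. Via Remark \ref{rem:f^EndoE} (the matching of $\GL$-factors in $f^\EndoE(\phi)$) and the $\GL$-compatibility of the local Langlands correspondence built into Theorem \ref{prop:LLC-vs-induction}, any $\phi \in \Phi_{2,\text{bdd}}(G^\Endo)$ yielding a nonzero $\Delta(\phi,\tau)$ must carry matching $\GL$-central characters on its cuspidal support; this rigidly pins the continuous parameter of $\phi$ within each $i\mathfrak{a}^*_{G^\Endo}$-orbit to at most one point, converting the finite set of orbits into a finite set of parameters and completing the proof.
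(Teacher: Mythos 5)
Your approach matches the paper's: reduce to the elliptic case, take a cuspidal pseudo-coefficient of $\tau$, and feed it through $\mathcal{T}^\EndoE$ together with the Paley-Wiener characterization of $\Icusp^\EndoE(\tilde{G}) = \bigoplus_{G^\Endo} S\Icusp(G^\Endo)$; for fixed $\phi$, reverse the roles using Theorem \ref{prop:transfer-cusp-surj} (equivalently the adjoint $\mathcal{T}_\EndoE$). Your observation that $\Delta(\tau,\phi)$ and $\Delta(\phi,\tau)$ have the same support is also correct.

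However, your ``crucial step'' --- upgrading from finitely many $i\mathfrak{a}^*$-orbits to honest finite support via $\GL$-central-character rigidity --- rests on a misreading of the elliptic parameter spaces. Since $G=\Sp(W)$ and each $G^\Endo = \SO(2n'+1)\times\SO(2n''+1)$ are semisimple, one has $\mathfrak{a}^*_G = 0 = \mathfrak{a}^*_{G^\Endo}$, so the $i\mathfrak{a}^*$-orbits in $T_{\text{ell},-}(\tilde{G})/\Sph^1$ and in $\Phi_{2,\text{bdd}}(G^\Endo)$ are singletons; these spaces are discrete. The Paley-Wiener condition ``supported on finitely many connected components'' therefore already says finite support, and there is no continuous parameter left to pin down. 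The $\GL$-matching constraint you invoke is real, but it is already built into the definition of $\Delta_{\tilde{M}}$ at the Levi level (the factor vanishes unless the $\GL$-components of $\phi_M$ and $\tau_M$ match); it governs the reduction from general $(\phi,\tau)$ to elliptic ones over a Levi $\tilde{M}$, not the elliptic case on $\tilde{G}$ itself. So your proof reaches the right conclusion, but by a detour that the semisimplicity of $\Sp$ and $\SO$ makes unnecessary.
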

\begin{proof}
  One reduces immediately to the elliptic case. In the case of fixed $\tau$, take $f_{\tilde{G}} \in \Iaspcusp(\tilde{G})$ to be the pseudo-coefficient of $\tau$ in the definition of $\Delta(\cdot, \tau)$ and apply the Paley-Wiener theorems to $\Icusp^\EndoE(\tilde{G}) = \bigoplus_{G^\Endo} S\Icusp(G^\Endo)$. As to the case of fixed $\phi$, we may reverse the rôles of $\Iaspcusp(\tilde{G})$ and $\Icusp^\EndoE(\tilde{G})$ by using Theorem \ref{prop:transfer-cusp-surj}.
\end{proof}

For $\phi, \phi_1 \in T^\EndoE(\tilde{G})$, denote by $\bdelta_{\phi, \phi_1}$ the usual Kronecker's delta. For $\tau, \tau_1 \in T_-(\tilde{G})$, we set
$$
  \bdelta_{\tau, \tau_1} = \begin{cases}
    z, & \text{if } \tau_1 = z\tau, \; z \in \Sph^1, \\
    0, & \text{otherwise}.
  \end{cases}
$$

\begin{lemma}\label{prop:spectral-inversion}
  We have
  \begin{align*}
    \sum_{\phi \in T^\EndoE(\tilde{G})} \Delta(\tau, \phi) \Delta(\phi, \tau_1) & = \bdelta_{\tau, \tau_1}, \\
    \sum_{\tau \in T_-(\tilde{G})/\Sph^1} \Delta(\phi, \tau) \Delta(\tau, \phi_1) & = \bdelta_{\phi, \phi_1}.
  \end{align*}
\end{lemma}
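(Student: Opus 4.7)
The plan is to derive both spectral inversion formulas from the fact that $\mathcal{T}^\EndoE$ restricts to a bijective isometry on cuspidal parts, via a Hilbert-space adjoint computation. The proof falls into two stages: reduction to the elliptic case, followed by the adjoint identification.

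First I would reduce to the elliptic case. Both sums are finite by Lemma~\ref{prop:spectral-Delta-supp}. The Levi-subgroup definitions of $\Delta(\phi,\tau)$ and $\Delta(\tau,\phi)$ ensure that each factor vanishes unless all the parameters involved come from a common Levi class $M \in \mathcal{L}(M_0)/W^G_0$; if $\tau,\tau_1$ (resp. $\phi,\phi_1$) come from different Levi classes then both sides of the identity are manifestly zero. Assuming a common $M$, expanding the $W^G(M)$-sums in those definitions and collecting terms reduces each identity to the analogous elliptic statement for $\tilde{M}$, exactly as in the final paragraph of the proof of Lemma~\ref{prop:geometric-inversion}. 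It therefore suffices to consider $\tau, \tau_1 \in T_{\text{ell},-}(\tilde{G})$ and $\phi, \phi_1 \in T^\EndoE_\text{ell}(\tilde{G})$.

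For the elliptic case, combining Corollary~\ref{prop:isometry}, Proposition~\ref{prop:transfer-injective}, and Theorem~\ref{prop:transfer-cusp-surj} shows that $\mathcal{T}^\EndoE$ is a bijective isometry $\Iaspcusp(\tilde{G}) \rightiso \Icusp^\EndoE(\tilde{G})$. The Paley-Wiener isomorphisms of Corollary~\ref{prop:Iasp-grading} and Remark~\ref{rem:PW-st}, together with Lemmas~\ref{prop:ell-ip} and~\ref{prop:st-ell-ip}, translate the geometric inner products into spectral integrals with weights $|d(\tau)|^{-1}$ and $\iota(\tilde{G},G^\Endo)|\mathscr{S}_{\phi,\ad}|^{-1}$ respectively. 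The normalization $\Delta(\tau,\phi) = \iota(\tilde{G},G^\Endo)|\mathscr{S}_{\phi,\ad}|^{-1}|d(\tau)|\overline{\Delta(\phi,\tau)}$ is chosen precisely so that the operator
$$\mathcal{T}_\EndoE^{\mathrm{sp}}: \; b \longmapsto \left[\tau \mapsto \sum_\phi \Delta(\tau,\phi) b(\phi)\right]$$
is the Hilbert-space adjoint of $\mathcal{T}^\EndoE$ under these weighted pairings; this is verified by a short computation of $(\mathcal{T}^\EndoE(f) \mid b) = (f \mid \mathcal{T}_\EndoE^{\mathrm{sp}}(b))$ using the defining relation $\mathcal{T}^\EndoE(f)(\phi) = \sum_\tau \Delta(\phi,\tau) f(\tau)$ and swapping orders of summation. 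Unitarity of $\mathcal{T}^\EndoE$ then gives $\mathcal{T}_\EndoE^{\mathrm{sp}} \circ \mathcal{T}^\EndoE = \identity$ on $\Iaspcusp(\tilde{G})$ and $\mathcal{T}^\EndoE \circ \mathcal{T}_\EndoE^{\mathrm{sp}} = \identity$ on $\Icusp^\EndoE(\tilde{G})$. Expanding these in double-sum form and evaluating at individual spectral parameters extracts the two pointwise inversion formulas.

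The main technical obstacle is to reconcile the discrete Kronecker-delta conclusion with the continuous structure of $T_{\text{ell},-}(\tilde{G})/\Sph^1$ and $T^\EndoE_\text{ell}(\tilde{G})$, both of which are disjoint unions of tori under the $i\mathfrak{a}^*_M$-twists. The operator identities arising from unitarity are a priori measure-theoretic, while the inversion formulas are pointwise sums. The rigidity needed to pass from one to the other is provided by Lemma~\ref{prop:spectral-Delta-supp}: for fixed $\tau$ (respectively $\phi$), the set of $\phi'$ (respectively $\tau'$) with $\Delta(\tau,\phi') \neq 0$ (respectively $\Delta(\phi,\tau') \neq 0$) is discrete even modulo the continuous twists, so the orthogonality kernel obtained from the unitary adjoint is concentrated at matched points, yielding the stated identities. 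The linear independence of the distributions $\Theta_\tau$ (and dually $S\Theta_\phi$) used implicitly in Lemma~\ref{prop:gr-injectivity} and its stable analogue is what ultimately guarantees that the measure-theoretic equality forces the pointwise Kronecker identity.
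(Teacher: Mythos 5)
Your proposal matches the paper's own proof: reduce to the elliptic case (the non-elliptic case follows by the $W^G(M)$-bookkeeping exactly as in Lemma~\ref{prop:geometric-inversion}), then exploit that $\mathcal{T}^\EndoE\colon \Iaspcusp(\tilde{G}) \rightiso \Icusp^\EndoE(\tilde{G})$ is a bijective isometry whose hermitian pairings are given spectrally by Lemmas~\ref{prop:ell-ip} and~\ref{prop:st-ell-ip}, and observe that the normalization $\Delta(\tau,\phi) = \iota(\tilde{G},G^\Endo)|\mathscr{S}_{\phi,\ad}|^{-1}|d(\tau)|\overline{\Delta(\phi,\tau)}$ is exactly what makes the adjoint spectral transfer the Hilbert-space adjoint of $\mathcal{T}^\EndoE$, so unitarity gives both inversion identities. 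The paper compresses this into ``linear algebra''; your explicit adjoint computation and the appeal to Lemma~\ref{prop:spectral-Delta-supp} to pass from the operator identity to the pointwise Kronecker deltas simply unpack the same argument.
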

\begin{proof}
  To begin with, suppose that $(\phi, \tau) \in T^\EndoE_\mathrm{ell}(\tilde{G}) \times T_{\mathrm{ell},-}(\tilde{G})$. Recall that $\mathcal{T}^\EndoE: \Iaspcusp(\tilde{G}) \rightiso \Icusp^\EndoE(\tilde{G})$ is an isometry by Corollary \ref{prop:isometry}. The relevant hermitian pairings can be interpreted via the Lemmas \ref{prop:ell-ip} and \ref{prop:st-ell-ip}. Choose for each $\tau \in T_{\mathrm{ell},-}(\tilde{G})/\Sph^1$ a representative in $T_{\mathrm{ell},-}(\tilde{G})$. By the description of Paley-Wiener spaces, $T_{\mathrm{ell},-}(\tilde{G})/\Sph^1$ and $T^\EndoE_\mathrm{ell}(\tilde{G})$ provide orthogonal bases for the two sides, and $(\Delta(\phi, \tau))_{\phi, \tau}$ is the matrix of $\mathcal{T}^\EndoE$. The elliptic case then follows from linear algebra. We derive the non-elliptic cases in the standard fashion, cf.\ the proof of Lemma \ref{prop:geometric-inversion}.
\end{proof}

\begin{lemma}\label{prop:change-variables-spectral}
  For all $\alpha \in C_c(T_{\mathrm{ell},-}(\tilde{G}))$ and $\beta \in C_c(T^\EndoE_{\mathrm{ell}}(\tilde{G}))$ such that $\alpha(z\cdot)=z^{-1}\alpha(\cdot)$ for all $z \in \Sph^1$, we have
  \begin{gather*}
    \int_{T_{\mathrm{ell},-}(\tilde{G})/\Sph^1} \;\sum_{\phi \in T^\EndoE_{\mathrm{ell}}(\tilde{G})} \beta(\phi) \Delta(\phi, \tau) \alpha(\tau) \dd\tau
    = \int_{T^\EndoE_{\mathrm{ell}}(\tilde{G})} \;\sum_{\tau \in T_{\mathrm{ell},-}(\tilde{G})/\Sph^1} \beta(\phi) \Delta(\phi, \tau) \alpha(\tau) \dd\phi,
  \end{gather*}
  where we pick an arbitrary representative in $T_{\mathrm{ell},-}(\tilde{G})$ for every $\tau \in T_{\mathrm{ell},-}(\tilde{G})/\Sph^1$. Note that the inner sums are finite by Lemma \ref{prop:spectral-Delta-supp}.
\end{lemma}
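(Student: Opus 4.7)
The plan is to deduce the identity from the adjointness
$(\mathcal{T}^\EndoE a \,|\, b^\EndoE) = (a \,|\, \mathcal{T}_\EndoE b^\EndoE)$
implicit in Corollary \ref{prop:isometry}, by interpreting both sides of this adjointness relation spectrally via Lemmas \ref{prop:ell-ip} and \ref{prop:st-ell-ip}, and then to extend to arbitrary $C_c$ test functions by a density/pseudo-coefficient argument that exploits the finite-support property of Lemma \ref{prop:spectral-Delta-supp}.

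First I would establish the identity for a distinguished Paley-Wiener class of $(\alpha, \beta)$. Given $a \in \Iaspcusp(\tilde{G})$ and $b^\EndoE \in \Icusp^\EndoE(\tilde{G})$, set
$$ \alpha(\tau) := a_{\tilde{G},\gr}(\tau) \quad \text{on } T_{\text{ell},-}(\tilde{G}), \qquad \beta(\phi) := \iota(\tilde{G},G^\Endo)\,|\mathscr{S}_{\phi,\ad}|^{-1}\,\overline{b^\EndoE(\phi)} \quad \text{on } \Phi_{2,\text{bdd}}(G^\Endo). $$
Both are compactly supported and $\alpha$ is anti-genuine. Unwinding the stable pairing $(\mathcal{T}^\EndoE a\,|\,b^\EndoE)$ via the formula \eqref{eqn:collective-ell-ip} and Lemma \ref{prop:st-ell-ip}, and using the defining identity $\mathcal{T}^\EndoE(a)(\phi) = \sum_\tau \Delta(\phi,\tau) \alpha(\tau)$, yields
$$ \int_{T^\EndoE_\text{ell}(\tilde{G})} \beta(\phi) \sum_\tau \Delta(\phi,\tau)\,\alpha(\tau)\,d\phi, $$
i.e.\ the RHS of the lemma. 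Similarly, unwinding $(a\,|\,\mathcal{T}_\EndoE b^\EndoE)$ via Lemma \ref{prop:ell-ip}, using $(\mathcal{T}_\EndoE b^\EndoE)(\tau) = \sum_\phi \Delta(\tau,\phi)b^\EndoE(\phi)$ and the relation $\Delta(\tau,\phi) = \iota\,|\mathscr{S}_{\phi,\ad}|^{-1}\,|d(\tau)|\,\overline{\Delta(\phi,\tau)}$, the weight $|d(\tau)|^{-1}$ in the elliptic pairing cancels the $|d(\tau)|$ from the adjoint factor, producing the LHS
$$ \int_{T_{\text{ell},-}(\tilde{G})/\Sph^1} \alpha(\tau) \sum_\phi \beta(\phi)\,\Delta(\phi,\tau)\,d\tau. $$
The isometry yields the desired identity for these particular $(\alpha,\beta)$.

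Second, I would extend to arbitrary $(\alpha,\beta) \in C_c(T_{\text{ell},-}(\tilde{G}))_\asp \times C_c(T^\EndoE_\text{ell}(\tilde{G}))$. By Lemma \ref{prop:spectral-Delta-supp}, for each fixed $\beta$ with compact support the function $\tau \mapsto \sum_\phi \beta(\phi)\Delta(\phi,\tau)$ has finite support, and symmetrically for fixed $\alpha$; hence each side of the identity depends only on the values of $(\alpha,\beta)$ at finitely many points. The $(\alpha,\beta)$ arising in Step 1 range over Paley-Wiener subspaces which, combined with the bijection $\mathcal{T}^\EndoE : \Iaspcusp(\tilde{G}) \rightiso \Icusp^\EndoE(\tilde{G})$ of Theorem \ref{prop:transfer-cusp-surj} and the existence of pseudo-coefficients on both sides, can realize any prescribed values on any finite configuration of supporting points. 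Hence every general $(\alpha,\beta)$ may be replaced, without altering either side, by a pair of the Step 1 form, and the identity is established in full generality.

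The main obstacle is the bookkeeping in Step 1: one must carefully track the weights $\iota(\tilde{G},G^\Endo)$, $|\mathscr{S}_{\phi,\ad}|^{-1}$, and $|d(\tau)|^{\pm 1}$ coming from \eqref{eqn:collective-ell-ip} and the spectral pairing formulas, and verify that they cancel precisely so as to produce the unweighted integrands appearing in the lemma. A secondary subtlety lies in the density/pseudo-coefficient extension on the mixed discrete-continuous space $T_{\text{ell},-}(\tilde{G})/\Sph^1$, where one must use the finite-support conclusion of Lemma \ref{prop:spectral-Delta-supp} to confine the comparison to finitely many points and thereby bypass any delicate topological considerations.
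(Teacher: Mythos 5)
Your proposal is logically coherent, but it takes a fundamentally different and much heavier route than the paper's own proof, and it misses the key observation that makes this lemma essentially trivial.

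The paper's proof is a one-liner. Since $G = \Sp(W)$ and every $G^\Endo = \SO(2n'+1)\times\SO(2n''+1)$ are semisimple, one has $\mathfrak{a}_G = \mathfrak{a}_{G^\Endo} = 0$, so both $T_{\mathrm{ell},-}(\tilde{G})/\Sph^1$ and $T^\EndoE_{\mathrm{ell}}(\tilde{G})$ are \emph{discrete} countable sets and the Radon measures prescribed in \S\ref{sec:PW-spaces} degenerate to (weighted) counting measures --- the integrals over $i\mathfrak{a}^*_{G,\tau}$ and $i\mathfrak{a}^*_{G^\Endo,\phi}$ are over single points. Consequently every $C_c$ function is finitely supported, both ``integrals'' in the statement are finite sums, and the identity is nothing more than re-indexing a finite double sum over pairs $(\phi,\tau)$, exactly as the paper says in comparing it with Lemma~\ref{prop:change-variables}. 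No isometry, no Paley--Wiener theory, no pseudo-coefficients are needed.

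Your route derives the identity from the isometry of Corollary~\ref{prop:isometry} together with the spectral pairing formulas (Lemmas~\ref{prop:ell-ip} and~\ref{prop:st-ell-ip}) and a spectral description of $\mathcal{T}_\EndoE$. This can be made to work, but there are two concerns. First, the identity $(\mathcal{T}_\EndoE b^\EndoE)_\gr(\tau) = \sum_\phi \Delta(\tau,\phi)\,b^\EndoE(\phi)$ is not a definition: $\mathcal{T}_\EndoE$ is defined geometrically via the adjoint \emph{geometric} transfer factors, while $\Delta(\tau,\phi)$ is defined via a formula from the spectral factor $\Delta(\phi,\tau)$. Deducing that one spectrally describes the other requires Lemma~\ref{prop:spectral-inversion}, which you use implicitly without citing. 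Second, your closing remark about the ``mixed discrete-continuous space $T_{\mathrm{ell},-}(\tilde{G})/\Sph^1$'' indicates that you did not notice the discreteness: there is no continuous component here, precisely because $\mathfrak{a}_G = 0$. Once this is seen, your entire Step~2 --- the density and pseudo-coefficient extension --- becomes superfluous, and so does the isometry in Step~1. Moreover, conceptually the dependency runs the other way: the geometric Lemma~\ref{prop:change-variables} is an \emph{input} to the isometry (via Corollary~\ref{prop:isometry}), so invoking the isometry to prove the change-of-variables identity, while not strictly circular for the spectral variant, is backwards in spirit. The lesson is that this particular lemma, unlike its geometric analogue, needs no machinery at all.
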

\begin{proof}
  The argument is similar to that of Lemma \ref{prop:change-variables} but much easier, since the integrals are actually sums in ours case. It suffices to re-index those sums.
\end{proof}

Finally, we note that the spectral transfer factor $\Delta$ has a natural extension to $T^\EndoE(\tilde{G})_\C \times T_-(\tilde{G})_\C$: extend each $\Delta_{\tilde{M}}$ by requiring that
\begin{inparaenum}[(i)]
  \item $\Delta_{\tilde{M}}(\phi_M, \tau_M)=0$ unless the $\GL$-components of $\phi_M$ and $\tau_M$ match, and
  \item $\Delta_{\tilde{M}}(\phi_{M,\lambda}, \tau_{M,\lambda}) = \Delta_{\tilde{M}}(\phi_M, \tau_M)$ for all $\lambda \in \mathfrak{a}^*_{M,\C}$.
\end{inparaenum}
Same for the adjoint transfer factor.

\subsection{Statement of the character relations}\label{sec:statement}
Retain the previous notations and assume $F$ to be non-archimedean. The main result of this section may be stated in an abstract form as follows.

\begin{theorem}\label{prop:character-relation}
  Define the isomorphism $\orbI_\gr(\tilde{G}) \rightiso \orbI^\EndoE_\gr(\tilde{G})$ as the direct sum over $M \in \mathcal{L}(M_0)/W^G_0$ of the cuspidal transfer maps (see Theorem \ref{prop:transfer-cusp-surj})
  $$ \mathcal{T}^\EndoE_{\tilde{M}}: \Iaspcusp(\tilde{M})^{W^G(M)} \rightiso \Icusp^\EndoE(\tilde{M})^{W^G(M)}. $$
  Then the diagram
  \begin{equation}\label{eqn:character-relation-diagram} \begin{tikzcd}
    \Iasp(\tilde{G}) \arrow{d}[right]{\simeq}[left]{\text{Corollary } \ref{prop:Iasp-grading}} \arrow{r}[above]{\mathcal{T}^\EndoE} & \orbI^\EndoE(\tilde{G}) \arrow{d}[left]{\simeq}[right]{\text{Lemma } \ref{prop:IEndo-grading}} \\
    \orbI_\gr(\tilde{G}) \arrow{r}[above]{\simeq} & \orbI^\EndoE_\gr(\tilde{G})
  \end{tikzcd} \end{equation}
  commutes.
\end{theorem}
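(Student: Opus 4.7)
My plan is to prove the equivalent spectral character relation
\[
  f^\EndoE(\phi) = \sum_{\tau \in T_-(\tilde{G})/\Sph^1} \Delta(\phi, \tau)\, f_{\tilde{G}}(\tau), \quad \phi \in T^\EndoE(\tilde{G}),\; f_{\tilde{G}} \in \Iasp(\tilde{G}),
\]
which, by the Paley--Wiener descriptions underlying Corollary \ref{prop:Iasp-grading}, Lemma \ref{prop:IEndo-grading}, and Definition \ref{def:f^EndoE}, is tantamount to the commutativity of \eqref{eqn:character-relation-diagram}. The argument proceeds by induction on $n = \tfrac{1}{2}\dim_F W$, the base case $n=0$ being trivial.

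First I reduce to $\phi \in T^\EndoE_\mathrm{ell}(\tilde{G})$. If $\phi$ descends from $\phi_M \in T^\EndoE_\mathrm{ell}(\tilde{M})$ for some proper Levi $M \subsetneq G$, then combining Definition \ref{def:f^EndoE} with Theorem \ref{prop:transfer-parabolic} yields $f^\EndoE(\phi) = (f_{\tilde{M}})^\EndoE(\phi_M)$, up to the obvious $W^G(M)$-averaging. On the unstable side, the induction formula \eqref{eqn:T-induction} together with the definition of $\Delta(\phi,\tau)$ via parabolic ancestors in \S\ref{sec:spectral-formalism} rewrites $\sum_\tau \Delta(\phi,\tau)\, f_{\tilde{G}}(\tau)$ entirely in $\tilde{M}$-data. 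Since $\tilde{M}$ is a covering of metaplectic type whose $\Sp$-factor has strictly smaller dimension than $W$, the inductive hypothesis for $\tilde{M}$ closes this case.

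Next, for elliptic $\phi \in T^\EndoE_\mathrm{ell}(\tilde{G})$, Lemma \ref{prop:spectral-Delta-supp} forces $\Delta(\phi,\cdot)$ to be supported on $T_{\mathrm{ell},-}(\tilde{G})/\Sph^1$. When $f_{\tilde{G}} \in \Iaspcusp(\tilde{G})$, the identity is essentially tautological: Theorem \ref{prop:transfer-cusp-surj} with Corollary \ref{prop:isometry} provides the isometric isomorphism $\mathcal{T}^\EndoE: \Iaspcusp(\tilde{G}) \rightiso \Icusp^\EndoE(\tilde{G})$, and in the orthonormal spectral bases supplied by Lemmas \ref{prop:ell-ip} and \ref{prop:st-ell-ip}, its matrix coefficients are by construction the elliptic spectral transfer factors $\Delta(\phi,\tau)$.

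The main obstacle is upgrading the elliptic identity from cuspidal $f_{\tilde{G}}$ to arbitrary $f_{\tilde{G}} \in \Iasp(\tilde{G})$. A priori, for $f_{\tilde{G}}$ with trivial cuspidal projection the value $f^\EndoE(\phi) = S\Theta_\phi(f^{G^\Endo})$ at elliptic $\phi$ need not vanish, since the stable transfer $f^{G^\Endo}$ could carry a nonzero discrete-series component. To rule this out I follow the Arthur-style strategy of \cite{Ar94,Ar96}: globalize $F$ to a number field, embed $f_{\tilde{G}}$ as a local component of a suitable adélic test function on $\tildering{G} \twoheadrightarrow \mathring{G}(\A)$, and invoke the simple stable trace formula for metaplectic coverings of \cite{Li13b}. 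By engineering the auxiliary local components so that only a controlled part of the discrete spectrum contributes, and comparing the elliptic discrete spectral side against the elliptic geometric side (which is determined by the known cuspidal case), one forces the desired identity for the local component $f_{\tilde{G}}$. The delicate input, and where I expect the real difficulty to lie, is a Proposition \ref{prop:non-vanishing}-type result ensuring that rationally conjugate elements of $\mathring{G}(\mathring{F})$ lift to locally conjugate metaplectic elements; the character of the global Weil representation will serve as the rigidifying invariant, in the spirit of \cite{Li14a}. Granted this, the elliptic identity extends from $\Iaspcusp(\tilde{G})$ to all of $\Iasp(\tilde{G})$, completing the induction and hence the proof.
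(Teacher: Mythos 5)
Your skeleton matches the paper's: induction on $n$, reduction to elliptic $\phi$ via Theorem \ref{prop:transfer-parabolic} and \eqref{eqn:T-induction}, tautology for cuspidal $f_{\tilde{G}}$, then a local-global argument using the simple stable trace formula of \cite{Li13b} together with a Proposition \ref{prop:non-vanishing}-style rigidification of rational elements via the Weil character. (One small logical nit: the cuspidal case is a pure tautology from the definitions of the arrows in \eqref{eqn:character-relation-diagram} — the bottom map is \emph{declared} to be the cuspidal transfer, so no appeal to the isometry of Corollary \ref{prop:isometry} or to ``matrix coefficients'' is needed.)

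The genuine gap is in your final paragraph, where you wave at ``comparing the elliptic discrete spectral side against the elliptic geometric side.'' After compressing the stable trace formula identity $I_\nu(f_V) = I^{\EndoE}_\nu(f_V)$ at the place $u$ and feeding in what is already known (cuspidal case, non-elliptic $\phi$), one does \emph{not} directly obtain $I_\nu(f^u_V h) = 0$; one obtains only the relation of Lemma \ref{prop:ell-vs-para},
\begin{equation*}
  \sum_{\tau \in T_{\mathrm{ell},-}(\tilde{G})/\Sph^1} I_\nu(f^u_V, \tau)\, h_{\tilde{G}}(\tau)
  = \sum_{\tau \in T_{\mathrm{para},-}(\tilde{G})_\C/\Sph^1} \bigl( I_\nu(f^u_V, \tau) - I^{\EndoE}_\nu(f^u_V, \tau) \bigr) f_{\tilde{G}}(\tau),
\end{equation*}
with no a priori reason for either side to vanish. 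The technical heart of the proof is showing that the right-hand side is in fact identically zero, i.e.\ $I_\nu(f^u_V, \tau) = I^{\EndoE}_\nu(f^u_V, \tau)$ for all non-elliptic $\tau$ — the analogue of Arthur's Lemma 9.3 from \cite{Ar96}. Its proof is a separate and nontrivial analytic argument: one realizes the functional $\omega \mapsto I(f^\gr_{\omega,\tilde{G}} - f_{\omega,\tilde{G}})$ as integration against a \emph{smooth} density on each component $\Omega \subset T_{\mathrm{para},-}(\tilde{G})$ (via Howe's finiteness conjecture applied to the endoscopic groups and the stable inversion formula of Lemma \ref{prop:stable-orbint-Fourier}), while the trace formula simultaneously expresses it as a \emph{finite} sum of point masses; on the dual lattice of $\tilde{\Omega}_M/\Sph^1$ this forces both to vanish. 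Only after this step can one deduce $I(f^u_V h) = 0$ and combine it with Proposition \ref{prop:non-vanishing} to conclude $h_{\tilde{G}}(\tilde{\gamma}) = 0$. Your proposal skips this step entirely; as written, the argument cannot close.
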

The vertical isomorphisms interpret a test function as an element in the relevant Paley-Wiener space, i.e.\ as a function of spectral parameters. Thus the commutative diagram may be seen as an identification between the geometric and spectral transfers.

\begin{remark}\label{rem:character-relation-equiv}
  For $f_{\tilde{G}} \in \Iasp(\tilde{G}))$ we define \index{$f^\EndoE_\gr$}
  $$ f^\EndoE_\gr: \phi \longmapsto \sum_{\tau \in T_-(\tilde{G})/\Sph^1} \Delta(\phi, \tau) f_{\tilde{G}}(\tau) $$
  as a function on $T^\EndoE(\tilde{G})$. Here we pick an arbitrary representative in $T_-(\tilde{G})$ for each $\tau \in T_-(\tilde{G})/\Sph^1$. Take $f^\EndoE := \mathcal{T}^\EndoE(f_{\tilde{G}})$. We contend that Theorem \ref{prop:character-relation} is equivalent to the assertion that
  \begin{equation}\label{eqn:character-relation-equiv}
    f^\EndoE(\phi) = f^\EndoE_\gr(\phi)
  \end{equation}
  for all $f_{\tilde{G}} \in \Iasp(\tilde{G})$ and $\phi \in T^\EndoE(\tilde{G})$, where $f^\EndoE(\phi)$ is as in Definition \ref{def:f^EndoE}. In view of Remark \ref{rem:f^EndoE}, this is exactly \cite[Theorem 6.2]{Ar96} except for the twist by $\bomega''(-1)$, which is a metaplectic feature.
  
  Let us show the equivalence. In the commutative diagram of Theorem \ref{prop:character-relation}, going in the direction \begin{tikzpicture}[scale=0.5] \draw[->] (0,0.7) -- (0,0) -- (1,0); \end{tikzpicture} maps $f_{\tilde{G}}$ first to the function $\tau \mapsto f_{\tilde{G}}(\tau)$ on $T_-(\tilde{G})$ (say via $f_{\tilde{G}} \mapsto f_{\tilde{G}, \gr}$), then to the function $\phi \mapsto \sum_\tau \Delta(\phi, \tau) f_{\tilde{G}}(\tau)$ on $T^\EndoE(\tilde{G})$. We arrive at $f^\EndoE_\gr(\phi)$.

  On the other hand, by Lemma \ref{prop:IEndo-grading} and the notations therein, going in the direction \begin{tikzpicture}[scale=0.5] \draw[->] (0,0.7) -- (1,0.7) -- (1,0); \end{tikzpicture} maps $f_{\tilde{G}}$ to the function on $T^\EndoE(\tilde{G})$ that sends $\phi_{M^\Endo} \in \Phi_{2,\text{bdd}}(M^\Endo)$ to $f^{M^\Endo}(\phi_{M^\Endo})$, which is exactly $f^\EndoE(\phi)$. This concludes the equivalence since every $\phi \in T^\EndoE(\tilde{G})$ arises from some $\phi_{M^\Endo}$.
\end{remark}
 
The proof of Theorem \ref{prop:character-relation}, or its equivalent form \eqref{eqn:character-relation-equiv} will occupy \S\ref{sec:proof}. The upshot will be proving \eqref{eqn:character-relation-equiv} for $\phi$ elliptic and $f_{\tilde{G}}$ non-cuspidal. We record several consequences thereof.

\begin{corollary}\label{prop:transfer-surj}
  The collective transfer map $\mathcal{T}^\EndoE: \Iasp(\tilde{G}) \to \orbI^\EndoE(\tilde{G})$ is an isomorphism.
\end{corollary}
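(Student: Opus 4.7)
The plan is to deduce this corollary immediately from the Main Theorem \ref{prop:character-relation}. In the commutative square \eqref{eqn:character-relation-diagram}, the two vertical arrows are isomorphisms by Corollary \ref{prop:Iasp-grading} and Lemma \ref{prop:IEndo-grading}, respectively. If the bottom horizontal arrow is also an isomorphism, then by commutativity so is the top arrow $\mathcal{T}^\EndoE$, and we are done.

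It remains to verify that the bottom arrow is an isomorphism. By its definition in Theorem \ref{prop:character-relation}, it decomposes as the direct sum
$$ \bigoplus_{M \in \mathcal{L}(M_0)/W^G_0} \mathcal{T}^\EndoE_{\tilde{M}} \;:\; \bigoplus_M \Iaspcusp(\tilde{M})^{W^G(M)} \;\longrightarrow\; \bigoplus_M \Icusp^\EndoE(\tilde{M})^{W^G(M)} $$
of the cuspidal transfer maps attached to the various Levi subgroups $\tilde{M}$ of $\tilde{G}$. Each such $\tilde{M} = \prod_{i \in I} \GL(n_i, F) \times \Mp(W^\flat)$ is a covering of metaplectic type, and by Definition \ref{def:endoscopy-M} its elliptic endoscopy is governed entirely by the symplectic factor $\Mp(W^\flat)$, the $\GL$-components being carried along tautologically. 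Consequently, under the tensor product decomposition of the orbital integral spaces (Remark \ref{rem:completed-otimes}), the transfer $\mathcal{T}^\EndoE_{\tilde{M}}$ factors as the identity on the $\GL$-factor tensored with $\mathcal{T}^\EndoE_{\Mp(W^\flat)}$. Theorem \ref{prop:transfer-cusp-surj}, applied to the $\Mp(W^\flat)$-factor, shows the latter to be an isomorphism; tensoring with an identity and then passing to $W^G(M)$-invariants preserves bijectivity.

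Chasing the diagram, $\mathcal{T}^\EndoE$ is an isomorphism. The argument is purely formal once Theorem \ref{prop:character-relation} is in hand; the genuine content (the identification of geometric and spectral transfers, and the isomorphism in the cuspidal case via Theorem \ref{prop:transfer-cusp-surj}) has already been established, so the only task here is to observe that the direct sum on the bottom of \eqref{eqn:character-relation-diagram} is a sum of isomorphisms. The main obstacle is therefore not in proving the corollary but in establishing Theorem \ref{prop:character-relation} itself, which is deferred to \S\ref{sec:proof}.
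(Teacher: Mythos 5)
Your argument is correct and matches the paper's (unwritten) intent: the corollary is stated as an immediate consequence of Theorem \ref{prop:character-relation}, and your diagram chase — vertical arrows isomorphisms by Corollary \ref{prop:Iasp-grading} and Lemma \ref{prop:IEndo-grading}, bottom arrow an isomorphism because each $\mathcal{T}^\EndoE_{\tilde{M}}$ is bijective by Theorem \ref{prop:transfer-cusp-surj} together with Proposition \ref{prop:transfer-injective} — is exactly what is needed. Your observation that the $\GL$-factors are carried along tautologically via the tensor decomposition, so that Theorem \ref{prop:transfer-cusp-surj} for $\Mp(W^\flat)$ suffices for each Levi, correctly makes explicit a point the paper leaves implicit.
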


We will also need the space $\Iasp^1(\tilde{G})$ defined as follows. Its elements are functions on $\Gamma_\text{reg}(\tilde{G})$ of the form $f^{\tilde{G}}: \tilde{\delta} \mapsto \sum_{\tilde{\delta}_1} f_{\tilde{G}}(\tilde{\delta}_1)$, where $f_{\tilde{G}} \in \Iasp(\tilde{G})$ and $\tilde{\delta}_1 \in \Gamma_\text{reg}(\tilde{G})$ ranges over the classes stably conjugate to $\tilde{\delta}$ (see Definition \ref{def:stable-conj}). Thus we have a surjection $\Iasp(\tilde{G}) \twoheadrightarrow \Iasp^1(\tilde{G})$.\index{$\Iasp^1(\tilde{G})$}

For archimedean $F$, the space $\Iasp^1(\tilde{G})$ can be made into a nuclear LF space: see \cite[\S 5]{Re98}.

\section{The archimedean case}\label{sec:archimedean}
Throughout this section, we assume $F=\R$ except in \S\ref{sec:complex} where $F=\C$. Fix a non-trivial additive character $\psi: F \to \Sph^1$.

\subsection{Renard's formalism}\label{sec:Renard}
Let $(W, \angles{\cdot|\cdot})$ be a symplectic $\R$-vector space of dimension $2n$. Let $G := \Sp(W)$. The metaplectic covering $\rev: \tilde{G} = \Mp(W) \twoheadrightarrow G(\R)$ with $\Ker(\rev) = \bmu_8$ is defined with respect to $\psi$.

Let $(n', n'') \in \EndoE_\text{ell}(\tilde{G})$. Following Renard \cite{Re99}, we introduce the groups
\begin{align*}
  G^\diamond & := \Sp(W') \times \Sp(W''), \\
 \tilde{G}^\diamond & := \Mp(W') \times \Mp(W''),
\end{align*}
where $W'$, $W''$ are symplectic $F$-vector spaces such that
$$ \dim_F W' = 2n', \quad \dim_F W'' = 2n'', $$
and $W = W' \oplus W''$. Thus $\tilde{G}^\diamond$ comes equipped with a homomorphism $j: \tilde{G}^\diamond \to \tilde{G}$ with $\Ker(j) = \{(z, z^{-1}) : z \in \bmu_8 \}$. Note that $\tilde{G}^\diamond$ is not a covering group in our sense (the kernel of $\tilde{G}^\diamond \to G^\diamond(\R)$ is not cyclic), but the relevant properties carry over by working with each component separately. Renard actually considered the group $\tilde{G}^\diamond/\Ker(j)$ instead, cf. \cite[\S 4.6]{Li11}.

In Definition \ref{def:-1}, for any $m \in \Z_{\geq 0}$ we have defined a canonical element $-1 \in \Mp(2m)$ above $-1 \in \Sp(2m)$ satisfying $(-1)^2 = 1$. Write $-\tilde{x} = (-1) \cdot \tilde{x}$ for any $\tilde{x} \in \Mp(2m)$. Define the involution
\begin{align*}
  \tau: \tilde{G}^\diamond & \longrightarrow \tilde{G}^\diamond, \\
  (\tilde{x}', \tilde{x}'') & \longmapsto (\tilde{x}', -\tilde{x}'').
\end{align*}

\begin{definition}
  A distribution or function on $\tilde{G}^\diamond$ is called genuine if it is genuine for both components $\Mp(2n')$ and $\Mp(2n'')$. The same for \emph{anti-genuine} distributions or functions. Thus we shall continue to use the usual notations $C^\infty_{c, \asp}(\tilde{G}^\diamond)$, $\Pi_-(\tilde{G}^\diamond)$, etc.
\end{definition}

The normalized stable orbital integral of $f \in C^\infty_{c, \asp}(\tilde{G})$ along $\tilde{\delta} \in \tilde{G}_\text{reg}$ is defined at the end of \S\ref{sec:statement}. For the group $\tilde{G}^\diamond$, the normalized orbital integral $f_{\tilde{G}^\diamond}(\tilde{\delta})$ and its stable version $f^{\tilde{G}^\diamond}(\tilde{\delta})$ are still defined: it suffices to work component-wise. Following Harish-Chandra, Shelstad and Bouaziz, Renard defined in \cite[\S 3]{Re99} the space $\Iasp^1(\tilde{G}^\diamond)$ of stable anti-genuine orbital integrals on $\tilde{G}^\diamond$. It is an LF space, viewed as a space of anti-genuine functions $\tilde{G}^\diamond_\text{reg} \to \C$. As in \S\ref{sec:space-orbital-integral}, there is a continuous linear surjection
\begin{align*}
  C^\infty_{c, \asp}(\tilde{G}^\diamond) & \longrightarrow \mathcal{I}^1_{\asp}(\tilde{G}^\diamond) \\
  f & \longmapsto \left[ \tilde{\delta} \mapsto f^{\tilde{G}^\diamond}(\tilde{\delta}) \right].
\end{align*}

Define now
$$ \mathcal{I}^\kappa_{\asp}(\tilde{G}^\diamond) := \left\{ \phi^\diamond: \tilde{G}^\diamond_\text{reg} \to \C, \; \text{ such that } \tau^* \phi^\diamond := \phi^\diamond \circ \tau \in \mathcal{I}^1_{\asp}(\tilde{G}^\diamond) \right\}. $$
The superscript $\kappa$ might suggest the endoscopic character of \S\ref{sec:transfer-FL}, although the latter object only makes sense after fixing a maximal torus; see the discussion in \cite[p.1220]{Re99}.

The geometric transfer over $\R$ is decomposed into several stages.

\begin{enumerate}
  \item The factor $\Delta_0(\delta', \delta'')$ in \S\ref{sec:transfer-FL} defines the transfer map à la Renard \index{$\mathcal{T}_R$}
    \begin{align*}
      \mathcal{T}_R: \Iasp(\tilde{G}) & \longrightarrow \mathcal{I}^\kappa_{\asp}(\tilde{G}^\diamond) \\
      \phi & \longmapsto \left[ \phi^\diamond: (\tilde{\delta}', \tilde{\delta}'') \mapsto \Delta_0(\delta', \delta'') \sum_{\tilde{\delta}_1} \angles{\kappa, \inv(\delta, \delta_1)} \phi(\tilde{\delta}) \right]
    \end{align*}
    where
    \begin{compactitem}
      \item $\tilde{\delta} := j(\tilde{\delta}', \tilde{\delta}'')$ is assumed to lie in $\tilde{G}_\text{reg}$,
      \item $\tilde{\delta}_1$ ranges over the elements in $\Gamma_\text{reg}(\tilde{G})$ stably conjugate to $\tilde{\delta}$, which have representatives in $\tilde{G}^\diamond$ by the previous assumption,
      \item $\kappa = \kappa_{G_\delta}: H^1(\R, G_\delta) \to \bmu_2$ is the endoscopic character attached to $(n', n'')$ and $G_\delta$.
    \end{compactitem}
    This transfer is established in \cite[Theorem 4.7]{Re99} and rephrased in \cite[Théorème 6.3]{Li11}. When $n'=n$ it is merely the surjection $\Iasp(\tilde{G}) \twoheadrightarrow \mathcal{I}^1_{\asp}(\tilde{G})$.
  \item There is a transfer map of Adams-Renard
    \begin{align*}
      \mathcal{T}_{(n, 0)}: \Iasp(\tilde{G}) & \longrightarrow S\orbI(\SO(2n+1)) \\
      \phi & \longmapsto \left[ \gamma \mapsto \Delta_{(n,0)}(\gamma, \tilde{\delta}) \phi(\tilde{\delta}) \right]
    \end{align*}
    where $\delta \leftrightarrow \gamma$ and $\tilde{\delta} \in \rev^{-1}(\delta)$ is arbitrary; the $\Delta_{(n,0)}$ means the transfer factor for the endoscopic datum $(n,0)$ for $\tilde{G}$. It factors through $\Iasp(\tilde{G}) \twoheadrightarrow \mathcal{I}^1_{\asp}(\tilde{G})$. This transfer $\mathcal{T}_{(n,0)}$ is originally conjectured in \cite{Ad98} and proved in \cite{Re98}.
  \item The transfer defined in \S\ref{sec:transfer-FL} is
    $$ \mathcal{T}_{(n', n'')}: \Iasp(\tilde{G}) \longrightarrow S\orbI(G^\Endo). $$
    The ambiguity of $f^\Endo$ (Remark \ref{rem:ambiguity-transfer}) disappeared since we work with the spaces $\Iasp(\tilde{G})$, $\mathcal{I}^1(G^\Endo)$ of orbital integrals. When $n'=0$ or $n''=0$, we are reduced to the previous two cases.
\end{enumerate}

\begin{lemma}
  The following commutative diagram commutes.
  $$ \begin{tikzcd}
    \Iasp(\tilde{G}) \arrow{r}[above]{\mathcal{T}_R} \arrow{rdd}[left, inner sep=1em]{\mathcal{T}_{(n', n'')}} & \mathcal{I}^\kappa_{\asp}(\tilde{G}^\diamond) \arrow{d}[right]{\tau^*}[left]{\simeq} & \\
    & \mathcal{I}^1_{\asp}(\tilde{G}^\diamond) \arrow{d}[right]{ \mathcal{T}_{(n', 0)} \hat{\otimes} \mathcal{T}_{(n'', 0)} } \arrow[-,double equal sign distance]{r} &  \mathcal{I}^1_{\asp}(\Mp(2n')) \hat{\otimes} \mathcal{I}^1_{\asp}(\Mp(2n'')) \\
    & S\orbI(G^\Endo) \arrow[-,double equal sign distance]{r} & S\orbI(\SO(2n'+1)) \hat{\otimes} S\orbI(\SO(2n''+1))
  \end{tikzcd} $$
\end{lemma}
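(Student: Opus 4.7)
The plan is to verify commutativity of the diagram by unwinding all three transfer maps on the level of stable orbital integrals, and checking pointwise equality at regular $\gamma=(\gamma',\gamma'') \in G^{\Endo}_{\mathrm{reg}}(\R)$. The clean starting point is the factorization
$$\Delta_{(n',n'')}(\gamma, \tilde{\delta}) = \Delta_0(\delta',\delta'')\,\Delta'(\tilde{\delta}')\,\Delta''(\tilde{\delta}'')$$
from \S\ref{sec:transfer-FL}, together with the metaplectic identity of Definition \ref{def:-1}, which yields
$$\Delta''(\tilde{\delta}'') = \frac{\Theta^+_\psi+\Theta^-_\psi}{|\Theta^+_\psi+\Theta^-_\psi|}(\tilde{\delta}'') = \frac{\Theta^+_\psi-\Theta^-_\psi}{|\Theta^+_\psi-\Theta^-_\psi|}(-\tilde{\delta}'') = \Delta_{(n'',0)}(\gamma'', -\tilde{\delta}'')$$
relative to $W''$. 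Thus
$$\Delta_{(n',n'')}(\gamma,\tilde{\delta}) = \Delta_0(\delta',\delta'')\,\Delta_{(n',0)}(\gamma',\tilde{\delta}')\,\Delta_{(n'',0)}(\gamma'', -\tilde{\delta}''),$$
which is the algebraic shadow of the whole diagram and explains the role of $\tau$.

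First I would compute $\mathcal{T}_{(n',n'')}(\phi)(\gamma)$ directly. Using the cocycle property of $\Delta_{(n',n'')}$ and the fact that two conjugacy classes in the same stable class corresponding to $\gamma$ differ by an element with invariant in $H^1(\R, G_\delta)$, this becomes
$$\Delta_0(\delta',\delta'')\sum_{\delta_1\stackrel{\mathrm{st}}{\sim}\delta} \angles{\kappa,\inv(\delta,\delta_1)}\,\Delta'(\tilde{\delta}'_1)\Delta''(\tilde{\delta}''_1)\,\phi(\tilde{\delta}_1),$$
where a chosen $\tilde{\delta}_1 = j(\tilde{\delta}'_1,\tilde{\delta}''_1) \in \tilde{G}^\diamond$ parametrizes each class; the point is that each stable class in $G$ meeting $G^\diamond$ is detected there (cf.\ the discussion preceding \cite[Théorème 6.3]{Li11}), and $\Delta_0$ depends only on the stable class.

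Next I would trace $\phi$ through the other branch. By definition,
$$\mathcal{T}_R(\phi)(\tilde{\delta}',\tilde{\delta}'') = \Delta_0(\delta',\delta'')\sum_{\delta_1\stackrel{\mathrm{st}}{\sim}\delta}\angles{\kappa,\inv(\delta,\delta_1)}\,\phi(\tilde{\delta}_1),$$
so $\tau^*\mathcal{T}_R(\phi)$ at $(\tilde{\delta}',\tilde{\delta}'')$ is the same expression evaluated at $(\tilde{\delta}',-\tilde{\delta}'')$, which lives in $\mathcal{I}^1_{\asp}(\tilde{G}^\diamond)$. Applying $\mathcal{T}_{(n',0)}\hat{\otimes}\mathcal{T}_{(n'',0)}$ to this element produces
$$\sum_{\tilde{\delta}'_1,\tilde{\delta}''_1} \Delta_{(n',0)}(\gamma',\tilde{\delta}'_1)\,\Delta_{(n'',0)}(\gamma'',\tilde{\delta}''_1)\,\tau^*\mathcal{T}_R(\phi)(\tilde{\delta}'_1,\tilde{\delta}''_1),$$
where the inner variables correspond to $\gamma'$, $\gamma''$. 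I would then substitute $\Delta_{(n'',0)}(\gamma'',\tilde{\delta}''_1) = \Delta''(-\tilde{\delta}''_1)$ together with the anti-genuineness (absorbing the sign), at which point the expression lines up termwise with the direct computation above after a re-indexing that replaces $\tilde{\delta}''$ by $-\tilde{\delta}''$.

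The main obstacle will be bookkeeping: matching the sum over stably conjugate $\delta_1$ in $G(\R)$ (weighted by $\kappa$) with the double sum over stably conjugate $\delta'_1,\delta''_1$ in $G^\diamond(\R)$ which is implicit in the tensor product of stable orbital integrals. Here one uses that elliptic endoscopic stable conjugacy in $G$ restricted to $G^\diamond$ decomposes as the product of stable conjugacies on each factor (up to the $\kappa$-twist that is exactly what $\mathcal{T}_R$ was designed to encode), and that $\Delta_{(n',0)}$, $\Delta_{(n'',0)}$ are themselves stable in their respective groups since the endoscopic character $\kappa$ attached to $(n',0)$ and $(n'',0)$ is trivial. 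With this combinatorial matching in place, both branches yield the same $\gamma$-value, and the commutativity follows by the density of stable orbital integrals in the LF topology on $S\orbI(G^\Endo)$.
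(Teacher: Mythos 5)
The paper's own ``proof'' of this lemma is a citation to \cite[Théorème 6.8]{Li11}; it does not re-derive the statement. Your proposal instead attempts a direct verification by unwinding the transfer factors, and the guiding idea is exactly right: the factorization $\Delta_{(n',n'')}=\Delta_0\,\Delta'\Delta''$ together with the identity $\Delta''(\tilde\delta'')=\Delta'(-\tilde\delta'')$ coming from Definition \ref{def:-1} is precisely what makes $\tau$ appear and explains the commutativity. The re-indexing $\tilde\delta''\mapsto-\tilde\delta''$, reflecting the fact that $\gamma\leftrightarrow\delta$ for $(n',n'')$ decomposes as $\gamma'\leftrightarrow\delta'$ and $\gamma''\leftrightarrow-\delta''$ for the two $(\cdot,0)$ data, is the correct combinatorial point. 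This is a genuine unwrapping of what the citation hides, and it adds value.

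There are, however, two places you should tighten. First, the displayed expression for $(\mathcal{T}_{(n',0)}\hat\otimes\mathcal{T}_{(n'',0)})$ applied to $\tau^*\mathcal{T}_R(\phi)$ should not carry a double sum over $\tilde\delta'_1,\tilde\delta''_1$. You have already observed that $\tau^*\mathcal{T}_R(\phi)\in\mathcal{I}^1_{\asp}(\tilde G^\diamond)$, i.e.\ it is a \emph{stable} orbital integral, and you have also invoked the crucial fact that $\Delta_{(n,0)}$ is constant on stable classes since the endoscopic character for $(n,0)$ is trivial. Precisely for this reason $\mathcal{T}_{(n,0)}$ factors through the surjection $\Iasp\twoheadrightarrow\mathcal{I}^1_{\asp}$, and the induced map on $\mathcal{I}^1_{\asp}$ is simply multiplication by the transfer factor evaluated at any representative of the stable class, with no further summation. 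As written, your formula sums an already-stable quantity over the conjugacy classes in a stable class and thus overcounts by the size of that class. The fix is only notational, but it should be made to match the direct branch termwise. Second, the step $\Delta''(\tilde\delta'')=\Delta_{(n'',0)}(\gamma'',-\tilde\delta'')$ implicitly uses that the $\Delta_0$-factor for the degenerate decomposition $W=W\oplus\{0\}$ is trivial, so that $\Delta_{(n,0)}(\gamma,\tilde\delta)=\Delta'(\tilde\delta)$; this is the right normalization in \cite{Li11} but you should state it, since without it the $\Delta_0$-factors on the two sides of your factorization would not cancel. With these two points repaired, the pointwise equality at $G$-regular $\gamma$ holds, and the identity on $S\orbI(G^\Endo)$ follows by the continuity of stable orbital integrals.
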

\begin{proof}
  This is done in the proof of \cite[Théorème 6.8]{Li11}. For the horizontal equalities and the meaning of $\hat{\otimes}$, see Remark \ref{rem:completed-otimes}.
\end{proof}

Dualization yields the commutative diagram below.
\begin{equation}\label{eqn:spectral-trans-R} \begin{tikzcd}
  \Iasp(\tilde{G})^\vee & \mathcal{I}^\kappa_{\asp}(\tilde{G}^\diamond)^\vee \arrow{l}[above]{\mathcal{T}_R^\vee} \\
  & \mathcal{I}^1_{\asp}(\tilde{G}^\diamond)^\vee \arrow{u}[right]{\tau_*}[left]{\simeq} \\
  & S\orbI(G^\Endo)^\vee \arrow{u}[right]{ \mathcal{T}_{(n', 0)}^\vee \hat{\otimes} \mathcal{T}_{(n'', 0)}^\vee } \arrow{luu}[left, inner sep=1em]{\mathcal{T}_{(n', n'')}^\vee }
\end{tikzcd} \end{equation}

The same constructions also apply to any covering of metaplectic type $\rev: \tilde{M} \to M(\R)$ and its elliptic endoscopic data: the common $\GL$ factors of $\tilde{M}$ and $M^\Endo$ are unaffected.

\subsection{Stable Weyl groups}\label{sec:st-Weyl}
Our basic references for stable conjugacy are \cite{Sh79,Ad98}. As before, consider $\tilde{G} = \Mp(W)$ and fix a maximal $\R$-torus $T$ of $G$. Let $A$ be the split component of $T$ and set $M := Z_G(A)$. The \emph{stable Weyl group} of $T$ is defined to be
\begin{equation}\label{eqn:W_st}
  W_\text{st}(G, T) := W(G, T) W(M_\C, T_\C)
\end{equation}
as a subgroup of the absolute Weyl group $W(G_\C, T_\C)$. It fits into a natural short exact sequence
$$ 1 \to W(M, T) \to W(M_\C, T_\C) \rtimes W(G, T) \to W_\text{st}(G, T) \to 1. $$

\begin{lemma}\label{prop:W-st-vs-W-F}
  The subgroups $W_\mathrm{st}(G, T)$ and $W(G, T)(\R)$ of $W(G_\C, T_\C)$ coincide.
\end{lemma}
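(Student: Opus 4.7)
The plan is to prove the two inclusions separately. The easy direction is $W_\mathrm{st}(G,T) \subseteq W(G,T)(\R)$, and it suffices to verify this for each of the two generating subgroups. The containment $W(G,T) \subseteq W(G,T)(\R)$ is immediate: any representative in $N_G(T)(\R)$ yields an $\R$-rational automorphism of $T$ via conjugation. For $W(M_\C, T_\C) \subseteq W(G,T)(\R)$, it suffices to check that each reflection $s_\alpha$ for $\alpha \in \Sigma(M_\C, T_\C)$ acts $\R$-rationally on $T$. Since $A$ is central in $M = Z_G(A)$, such a root vanishes on $A$ and hence factors through $T/A$. By maximality of the split component, $T/A$ is an anisotropic $\R$-torus, so complex conjugation $c$ acts as $-1$ on $X^*((T/A)_\C)$. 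Consequently $c(\alpha) = -\alpha$ and, by the compatibility of the perfect pairing with $c$, also $c(\alpha^\vee) = -\alpha^\vee$. Substituting into $s_\alpha(x) = x - \langle x,\alpha^\vee\rangle \alpha$ shows that $s_\alpha$ commutes with $c$, so it is $\R$-rational.

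For the harder inclusion $W(G,T)(\R) \subseteq W_\mathrm{st}(G,T)$, let $w \in W(G,T)(\R)$ with representative $n \in N_G(T)(\C)$. Since $\mathrm{Ad}(n)$ is $\R$-rational on $T$, it must preserve the maximal $\R$-split subtorus $A$, hence induces an $\R$-rational automorphism of $A$. Consider the resulting homomorphism
\[
W(G,T)(\R) \longrightarrow \Aut_{\R\text{-grp}}(A).
\]
Its kernel consists of those $w$ whose representatives $n$ centralize $A$, i.e. $n \in Z_G(A)(\C) = M(\C)$; combined with $n \in N_G(T)(\C)$, this gives $n \in N_{M_\C}(T_\C)$, so the kernel is exactly $W(M_\C, T_\C)$. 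It therefore suffices to realize the image of $w$ in $\Aut(A)$ by some element of $W(G,T)$.

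To do so, observe that $A$ coincides with the split component $A_M$ of the center of $M$ (since $A \subseteq Z_M$ is split, and $A_M \subseteq T$ is split, forcing both inclusions), so $T$ is a fundamental maximal $\R$-torus of $M$. The plan is: (a) lift the action of $w$ on $A$ to an element $n' \in N_G(A)(\R)$; (b) conjugate $n'T(n')^{-1}$ back to $T$ by some $m \in M(\R)$, using that $n'T(n')^{-1}$ is another fundamental maximal $\R$-torus of $M$ containing $A$. Given these, $mn' \in N_G(T)(\R)$ represents an element $w' \in W(G,T)$ acting on $A$ the same way as $w$ (since $m$ centralizes $A$), whence $w(w')^{-1} \in W(M_\C, T_\C)$ and $w \in W(M_\C, T_\C) \cdot W(G,T) = W_\mathrm{st}(G,T)$.

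The main obstacle is the classical structural input in step (b): all fundamental maximal $\R$-tori of a real reductive group that contain the split center are $M(\R)$-conjugate. Together with (a), which is essentially the statement that the relative Weyl group $W(G,A) = N_G(A)(\R)/M(\R)$ realizes all $\R$-rational automorphisms of $A$ coming from $G$, both facts are standard consequences of Cartan's theory of real reductive groups; see \cite{Sh79}. Beyond these ingredients the argument is formal.
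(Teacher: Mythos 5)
Your proof is correct, and it unwinds the theorem rather than merely citing it: the paper's own proof is a two-line appeal to Shelstad's \cite[Theorem 2.1]{Sh79} (which identifies $W_\mathrm{st}(G,T)$ with the set of Weyl elements acting $\R$-rationally on $T_\C$, matched against \eqref{eqn:W-intermediate}), whereas you reconstruct the argument, isolating the two structural inputs (a) and (b) on which it rests. The easy inclusion is fine; one small remark is that the step $c(\alpha)=-\alpha \Rightarrow c(\alpha^\vee)=-\alpha^\vee$ is really the Galois-equivariance of the root--coroot bijection rather than of the pairing, but the effect is the same. For the hard direction, (b) --- the $M(\R)$-conjugacy of fundamental maximal $\R$-tori of $M$ --- is indeed the genuinely Cartan-theoretic ingredient. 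I would, however, rephrase (a): since $A$ is the split component of $T$ and not in general a maximal split torus of $G$, ``the relative Weyl group of $A$'' is not the sturdiest object to invoke as a black box. A cleaner route is cohomological: a representative $n \in N_G(T)(\C)$ of $w \in W(G,T)(\R)$ yields a cocycle $\tau \mapsto n^{-1}\tau(n)$ valued in $T(\C)$ whose class $[a] \in H^1(\R,T)$ dies in $H^1(\R,G)$; because $M$ is a Levi subgroup, $H^1(\R,M) \hookrightarrow H^1(\R,G)$ (as used in the proof of Proposition~\ref{prop:descent-orbint}), so $[a]$ already dies in $H^1(\R,M)=H^1(\R,Z_G(A))$, and the exact sequence attached to $1 \to Z_G(A) \to N_G(A) \to N_G(A)/Z_G(A) \to 1$ then produces $n' \in N_G(A)(\R)$ inducing the same automorphism of $A$ as $n$. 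Finally, note that your identification $\ker\bigl(W(G,T)(\R) \to \Aut(A)\bigr) = W(M_\C,T_\C)$ silently uses the easy inclusion $W(M_\C,T_\C) \subseteq W(G,T)(\R)$; that is fine since you established it first, but it is worth flagging.
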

\begin{proof}
  It follows from \cite[Theorem 2.1]{Sh79} that
  \begin{gather}\label{eqn:W-st-alt}
    W_\text{st}(G, T) = \left\{w \in W(G_\C, T_\C) : \Ad(w)|_{T_\C} \text{ is defined over } \R \right\}.
  \end{gather}
  This is exactly the characterization of $W(G, T)(\R)$ in \eqref{eqn:W-intermediate}.
\end{proof}

As a matter of notations, we prefer $W_\text{st}(G,T)$ over $W(G,T)(\R)$ in order to adhere to \cite{Ad98,Re98}. The group $W_\text{st}(G, T)$ acts on $T(\R)$ by the adjoint action. The elements in $T_\text{reg}(\R)/W(G, T)$ can be viewed as conjugacy classes in $G_\text{reg}(\R)$ intersecting $T(\R)$. Moreover, two elements $\delta$, $\delta_1 \in T_\text{reg}(\R)$ are stably conjugate in $G$ if and only if $\delta_1 = w\delta w^{-1}$ for some $w \in W_\text{st}(G, T)$. This is how Shelstad defined stable conjugacy for real groups. The same construction applies to every connected reductive $\R$-group; for $\GL(n)$ we have $W_\text{st}(\GL(n), T_\GL) = W(\GL(n), T_\GL)$ for any maximal torus $T_\GL$.

Note that any $T \subset G$ takes the form
$$ T \simeq (\C^\times)^m \times (\Sph^1)^r \times (\R^\times)^s $$
as $\R$-tori, for a unique triple $(m, r, s)$ satisfying $2m + r + s = n$.

\begin{lemma}\label{prop:Wst-identification}
  We have canonical identifications
  \begin{gather*}
    W(G, T) \backslash W_\mathrm{st}(G, T) = \bmu_2^r = H^1(\R, T).
  \end{gather*}
  Under this bijection, an element $(t_1, \ldots, t_r) \in \bmu_2^r$ corresponds to the coset containing the $w \in W_\mathrm{st}(G, T)$ which acts on $\Sph^r$ as $(z_i)_{i=1}^r \mapsto (z_i^{t_i})_{i=1}^r$, and $w$ acts trivially on $(\C^\times)^m$ and $(\R^\times)^s$ 
\end{lemma}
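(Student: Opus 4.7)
The plan is to establish each of the two identifications separately and then check compatibility via the explicit description.

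First, I would compute $H^1(\R, T)$ by applying Shapiro's lemma / Hilbert 90 to each factor of the decomposition $T \simeq (\C^\times)^m \times (\Sph^1)^r \times (\R^\times)^s$. We have $H^1(\R, \Res_{\C/\R}\Gm) = 0$ by Shapiro, $H^1(\R, \Gm) = 0$ by Hilbert 90, and for the norm-one torus $\Sph^1 = \Ker(N_{\C/\R}: \Res_{\C/\R}\Gm \to \Gm)$, the long exact sequence associated to $1 \to \Sph^1 \to \Res_{\C/\R}\Gm \to \Gm \to 1$ together with Hilbert 90 gives
\[ H^1(\R, \Sph^1) \simeq \R^\times / N_{\C/\R}(\C^\times) = \R^\times/\R_{>0} \simeq \bmu_2. \]
Hence $H^1(\R, T) \simeq \bmu_2^r$, and this identification is compatible with a chosen decomposition of $T$.

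Next, I would construct the map $W_\mathrm{st}(G, T) \to H^1(\R, T)$ as follows. By Lemma \ref{prop:W-st-vs-W-F} and \eqref{eqn:W-intermediate}, any $w \in W_\mathrm{st}(G,T) = W(G, T)(\R)$ can be represented by some $g \in N_G(T)(\C)$ satisfying $c_w(\tau) := g^{-1}\tau(g) \in T(\C)$ for every $\tau \in \Gamma_\R$. The cocycle $\tau \mapsto c_w(\tau)$ defines a class in $H^1(\R, T)$, independent of the choice of $g$ up to $B^1$. Its kernel is exactly $W(G, T)$ (representatives that can be chosen over $\R$). The image lies in $\mathfrak{D}(T, G; \R) = \Ker[H^1(\R, T) \to H^1(\R, G)]$; conversely, by the standard twisting argument, every class in $\mathfrak{D}(T, G; \R)$ is obtained this way. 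Since $G = \Sp(W)$ and symplectic groups have trivial Galois $H^1$ over $\R$, we have $\mathfrak{D}(T, G; \R) = H^1(\R, T)$, whence the induced bijection
\[ W(G, T) \backslash W_\mathrm{st}(G, T) \;\stackrel{\sim}{\longrightarrow}\; H^1(\R, T) \simeq \bmu_2^r. \]

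Finally, to check the explicit description, it suffices by compatibility of the constructions with direct products of tori to treat a single $\Sph^1$ factor $T_i$. Here one verifies that $\Sp(W)$ contains a copy of $\SL(2)$ in which $T_i$ sits as the compact torus $\SO(2)$, and the non-trivial element of the absolute Weyl group acts as $z \mapsto z^{-1}$. This inversion is defined over $\R$, hence belongs to $W_\mathrm{st}$; any lift $g \in N_{\SL(2)}(\C)(T_i)$ satisfies $g^{-1}\tau(g) = -1 \in T_i(\R)$, which represents the non-trivial class in $H^1(\R, \Sph^1) \simeq \bmu_2$. This pins down the stated correspondence.

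The only step requiring any care is the surjectivity assertion in the middle paragraph, that is, recovering the vanishing $H^1(\R, \Sp(W)) = 1$ and checking that the standard twisting construction does produce a representative in $N_G(T)(\C)$ whose cocycle realizes a prescribed class; both are classical, so I expect no serious obstacle.
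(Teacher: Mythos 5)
Your proposal follows essentially the same route as the paper: attach to a representative $g \in N_G(T)(\C)$ of $w \in W_\text{st}(G,T)$ the cocycle $\tau \mapsto g^{-1}\tau(g)$, identify its kernel with $W(G,T)$ and the target with $H^1(\R,T) \cong \bmu_2^r$ via the factorization of $T$, and reduce to an $\SL(2)$ computation to pin down the correspondence. The paper's proof is terse and leans on citations to \cite{Li11} and \cite{Re99}; the extra detail you supply (Shapiro/Hilbert 90 for $H^1(\R,T)$, the $\SO(2)\subset\SL(2)$ reduction) is entirely consistent with what those references contain.

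The one step to tighten is surjectivity of the cocycle map onto $\mathfrak{D}(T,G;\R)$. The ``standard twisting argument'' produces, for $[c]\in\mathfrak{D}(T,G;\R)$, some $g\in G(\C)$ with $c(\tau)=g^{-1}\tau(g)$, but not automatically $g \in N_G(T)(\C)$: one also needs that $gTg^{-1}$ is $G(\R)$-conjugate to $T$, i.e.\ that stably conjugate maximal tori in $\Sp(W)$ over $\R$ are conjugate (equivalently, that every conjugacy class in a stable class meets $T(\R)$). The paper glosses over the same point when it invokes the comparison of the two parametrizations of a stable class. Note, however, that your final $\SL(2)$ paragraph already produces, for each compact factor $T_i\cong\Sph^1$, an explicit $w\in W_\text{st}(G,T)$ whose cocycle generates $H^1(\R,T_i)\cong\bmu_2$; taking products, these hit all of $\bmu_2^r = H^1(\R,T)$. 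If you reorganize so that this explicit construction establishes surjectivity rather than merely ``checking'' a formula after the fact, the appeal to the twisting argument becomes unnecessary and the gap disappears.
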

\begin{proof}
  This is well-known and we reproduce the arguments below. Identify a given $w \in W_\text{st}(G, T)$ with some representative in $G(\C)$, and let $\bar{w}$ be its complex conjugate. By Lemma \ref{prop:W-st-vs-W-F} we have $w^{-1}\bar{w} \in T(\C)$. This defines an element in $Z^1(\R, T)$. Compare the descriptions of stable vs. ordinary conjugacy in $T_\text{reg}(\R)$ in two ways:
  \begin{compactenum}[(i)]
    \item via the action of stable Weyl groups as discussed above, and
    \item via the pointed set $\mathfrak{D}(T, G; F)$ discussed in \eqref{eqn:D}.
  \end{compactenum}
  We deduce a bijection of pointed sets
  $$ W(G, T) \backslash W_\text{st}(G, T) \xrightarrow{1:1} H^1(\R, T). $$
  A standard argument (see \cite[\S 5.3]{Li11}) gives $H^1(\R, T) = \bmu_2^r$; this can also be done on the Weyl group side as done in \cite[p.1218]{Re99}, together with the precise description as asserted.
\end{proof}

For stably conjugate classes $\delta, \delta_1 \in T_\text{reg}(F)/W(G,T)$, their ``relative position'' can be defined as the element in $W(G, T) \backslash W_\text{st}(G, T)$ with a representative $w$ satisfying $\delta_1 = w\delta w^{-1}$. This is compatible with our general recipe \eqref{eqn:inv} using the cohomological invariant $\inv(\delta_1, \delta) \in H^1(\R, T)$: modulo the identification of Lemma \ref{prop:Wst-identification},
$$ \inv(w\delta w^{-1}, \delta) = w \in H^1(\R, T). $$
Unwinding matters, this equality reduces to the definition of $\inv(w\delta, \delta)$.

\begin{theorem}[Adams {\cite[Lemma 3.3]{Ad98}}]
  The action of $W_\mathrm{st}(G, T)$ on $\mathfrak{t}(\R)$ lifts to an action on $\tilde{T}$ extending the adjoint action of $W(G,T)$. Two elements $\tilde{\delta}, \tilde{\delta}_1 \in \tilde{T}_\mathrm{reg}$ are stably conjugate if and only if $\tilde{\delta}_1 = w\tilde{\delta} w^{-1}$ for some $w \in W_\mathrm{st}(G, T)$.
\end{theorem}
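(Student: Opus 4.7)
The plan is to construct the $W_\mathrm{st}(G,T)$-action first on the regular locus $\tilde{T}_\mathrm{reg}$ by means of the canonical lift provided by Lemma \ref{prop:stable-conj-lifting}, then extend to all of $\tilde{T}$ by continuity, and finally deduce the characterization of stable conjugacy as a formality.

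First, for $w \in W_\mathrm{st}(G,T)$ and $\tilde{\delta} \in \tilde{T}_\mathrm{reg}$, pick any representative $\dot w \in N_G(T)(\C)$. By Lemma \ref{prop:W-st-vs-W-F} the element $\dot w \delta \dot w^{-1}$ lies in $T(\R)$ and is stably conjugate to $\delta = \rev(\tilde{\delta})$ in $G$. Lemma \ref{prop:stable-conj-lifting} then produces a unique $w \cdot \tilde{\delta} \in \rev^{-1}(\dot w \delta \dot w^{-1})$ which is stably conjugate to $\tilde{\delta}$ in $\tilde{G}$ (in the sense of Definition \ref{def:stable-conj}). The independence of the chosen representative $\dot w$ is immediate from this uniqueness statement. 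That we obtain an action follows by the same uniqueness: $(w_1 w_2) \cdot \tilde{\delta}$ and $w_1 \cdot (w_2 \cdot \tilde{\delta})$ both sit in the same fiber of $\rev$ and are stably conjugate to $\tilde{\delta}$ by transitivity, so they coincide. Compatibility with $W(G,T) \subset W_\mathrm{st}(G,T)$ is equally clean: if $w \in W(G,T)$ is represented by $n \in N_G(T)(\R)$ and $\tilde n \in \rev^{-1}(n)$ is any lift, then $\tilde n \tilde{\delta} \tilde n^{-1}$ lies in the correct fiber and is ordinarily (hence stably) conjugate to $\tilde{\delta}$, so uniqueness gives $w \cdot \tilde{\delta} = \tilde n \tilde{\delta} \tilde n^{-1}$ (and this is independent of the lift $\tilde n$ since $\bmu_8$ is central).

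To extend to all of $\tilde{T}$, I would fix $w$ and show that $\tilde{\delta} \mapsto w \cdot \tilde{\delta}$ is continuous on $\tilde{T}_\mathrm{reg}$. The point is that the canonical lift is cut out by two conditions: membership in the discrete fiber $\rev^{-1}(\dot w \delta \dot w^{-1})$, which varies continuously with $\tilde{\delta}$; and the scalar equality $(\Theta^+_\psi - \Theta^-_\psi)(w \cdot \tilde{\delta}) = (\Theta^+_\psi - \Theta^-_\psi)(\tilde{\delta})$, which is continuous on $\tilde{T}_\mathrm{reg}$ because the character is smooth on $\tilde{G}_\mathrm{reg}$ (and in fact on $\Mp(W)^\dagger$, recalled in \S\ref{sec:Weil-rep}). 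Since $\bmu_8$ is discrete, this pins down $w \cdot \tilde{\delta}$ locally and makes the map continuous. Using that $\tilde{T}_\mathrm{reg}$ is open and dense in $\tilde{T}$ and that $\tilde{T}$ is a finite covering of $T(\R)$ covering an action that already extends to $T(\R)$, I extend $w \cdot$ uniquely to $\tilde{T}$ (one treats each connected component of $T(\R)$ separately, choosing a reference point in $\tilde{T}_\mathrm{reg}$ in its preimage).

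The characterization of stable conjugacy in $\tilde{T}_\mathrm{reg}$ is then immediate. If $\tilde{\delta}_1 = w \cdot \tilde{\delta}$ with $w \in W_\mathrm{st}(G,T)$, they are stably conjugate by construction. Conversely, if $\tilde{\delta}_1 \stackrel{\mathrm{st}}{\sim} \tilde{\delta}$ in $\tilde{G}$, then $\delta_1 \stackrel{\mathrm{st}}{\sim} \delta$ in $G$, whence $\delta_1 = \dot w \delta \dot w^{-1}$ for some $w \in W_\mathrm{st}(G,T)$ by Shelstad's description (Lemma \ref{prop:W-st-vs-W-F}); the uniqueness clause of Lemma \ref{prop:stable-conj-lifting} forces $\tilde{\delta}_1 = w \cdot \tilde{\delta}$. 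The anticipated obstacle is the extension step: one must check that the canonical lifting, which a priori is only defined on the regular locus, assembles into a genuine continuous automorphism of $\tilde{T}$, and I would resolve this using the smoothness of $\Theta^\pm_\psi$ on the open set $\Mp(W)^\dagger$ of Maktouf, together with the fact that $T(\R) \subset \Mp(W)^\dagger \cup \{\pm 1\}$ intersected with $T$ is dense.
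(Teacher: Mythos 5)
The paper does not actually prove this statement: it is attributed to Adams \cite[Lemma~3.3]{Ad98} and used as a black box, so there is no in-paper argument to compare against. I will assess your proposal on its own merits and against what Adams' proof must accomplish.

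Your construction of the map on $\tilde{T}_\mathrm{reg}$ is correct and cleanly organized: the canonical lift of Lemma \ref{prop:stable-conj-lifting}, applied to $\delta \mapsto \dot w \delta \dot w^{-1}$ (which stays in $T(\R)$ by Lemma \ref{prop:W-st-vs-W-F} and is stably conjugate to $\delta$), is indeed well defined and yields a set-theoretic action; transitivity of stable conjugacy gives the cocycle identity, and the comparison with the adjoint action of $W(G,T)$ is as you say. The continuity argument is also sound, because $\Theta^+_\psi - \Theta^-_\psi$ is nowhere vanishing on $\tilde{T}_\mathrm{reg}$ (its quotient by its absolute value appears in the geometric transfer factor, and Maktouf's formula gives a Weil index divided by a positive Jacobian), so the scalar equality genuinely pins down $w\cdot\tilde{\delta}$ in the discrete fibre, locally uniformly. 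The final paragraph, deducing the stable-conjugacy characterization on $\tilde{T}_\mathrm{reg}$ from the uniqueness clause of Lemma \ref{prop:stable-conj-lifting} and Shelstad's description of stable conjugacy via $W_\mathrm{st}$, is also fine.

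The genuine gap is the extension from $\tilde{T}_\mathrm{reg}$ to $\tilde{T}$, which you flag but do not resolve, and which is the substantive content of Adams' lemma. Two issues. First, the covering-space extension you invoke (choose a reference point, propagate by the lifting theorem) gives a lift on each connected component $\tilde{C}$ of $\tilde{T}$, but you must then check that this lift agrees with the canonical-lift formula on \emph{every} connected component of $\tilde{C} \cap \tilde{T}_\mathrm{reg}$, not merely the one containing the reference point. This is a real consistency check: inside a component $C$ of $T(\R)$, the regular locus $C_\mathrm{reg}$ can be disconnected (e.g.\ for a split factor $\R^\times_{>0} \setminus \{1\}$), and even the enlarged locus $\{\delta \in C : \det(\delta+1) \neq 0\}$ on which $\Theta^+_\psi - \Theta^-_\psi$ continues to make sense is disconnected when a split factor sits in a component containing $-1$. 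So density alone does not finish the argument. Second, and more importantly, the statement is used in the paper at non-regular elements — in \eqref{eqn:kappa_0} one evaluates the action at the central element $-1 \in \tilde{T}$, and the definition of Renard's $\kappa_T$ evaluates it at $\tilde y$ — and for these constructions the lifted action must be an action by \emph{group automorphisms} of the covering group $\tilde T$. Your construction via canonical lifts produces a map on the regular locus, but nothing in the characterization (agreement of $\Theta^+_\psi - \Theta^-_\psi$, which is not a multiplicative character of $\tilde{T}$) forces the extended map to be a homomorphism. Verifying that one can lift $\Ad(\dot w)$ to an automorphism of $\tilde{T}$, and compatibly for all $w$ so as to get a $W_\mathrm{st}(G,T)$-action, amounts to a computation in $H^2(T(\R),\bmu_8)$ and is exactly what Adams does explicitly, using the concrete structure of $\tilde{T}$ for the metaplectic cover. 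Your approach would need an additional argument at precisely this point.
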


Note that Adams considered the twofold covering $\MMp{2}(W)$ instead.

As the element $-1$ in Definition \ref{def:-1} is central of order two, one obtains the map
\begin{gather}\label{eqn:kappa_0}
 \kappa_{T,0}: W(G, T) \backslash W_\text{st}(G, T) = H^1(\R, T) \longrightarrow \bmu_2
\end{gather}
characterized by $\kappa_{T,0}(w)(-1) = w(-1)w^{-1} \in \tilde{T}$.

\begin{lemma}\label{prop:kappa_0}
  The map $\kappa_{T,0}$ is given by
  \begin{align*}
    H^1(\R, T) = \bmu_2^r & \longrightarrow \bmu_2, \\
    (t_i)_{i=1}^r & \longmapsto \prod_{i=1}^r t_i
  \end{align*}
  modulo the identifications in Lemma \ref{prop:Wst-identification}.
\end{lemma}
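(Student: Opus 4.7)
The plan is to establish $\kappa_{T,0}$ as a group homomorphism $W(G,T)\backslash W_\mathrm{st}(G,T) = \bmu_2^r \to \bmu_2$, then reduce factor-by-factor to a single rank-one calculation inside $\Mp(2)$.

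First I would verify that $\kappa_{T,0}$ is well defined and indeed lands in $\bmu_2$. The element $w(-1)w^{-1}$ lies in the fibre $\rev^{-1}(-1)$, which is a $\bmu_8$-torsor, so dividing by $-1 \in \tilde{G}$ gives an \textit{a priori} element of $\bmu_8$. That this scalar actually lies in $\bmu_2$ follows because both $-1 \in \tilde{G}$ and $w(-1)w^{-1}$ are of order two (the latter stemming from $\omega_\psi^\pm(-1) = \pm\identity$ of Definition \ref{def:-1} and continuity of the $W_\mathrm{st}$-action). Triviality on $W(G,T)$ comes from the fact that such a $w$ admits a lift $\tilde{w} \in \tilde{G}$ coming from an element of $N_G(T)(\R)$; the resulting character $G(\R) \to \bmu_8$ that measures the action on $\rev^{-1}(-1)$ is trivial because $\Sp(2n,\R)$ is perfect. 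Multiplicativity is then formal, since the values $\kappa_{T,0}(w_i) \in \bmu_8 \subset Z(\tilde{G})$ are central and commute through.

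Having a homomorphism $\bmu_2^r \to \bmu_2$, it suffices to evaluate it on each generator $\epsilon_i = (1, \ldots, -1, \ldots, 1)$. By Lemma \ref{prop:Wst-identification}, $\epsilon_i$ acts by inversion on the $i$-th compact $\Sph^1$-factor of $T$ and trivially on all other factors. Embed $T$ in a Levi subgroup $M = \prod_j \GL(n_j) \times \Sp(W^\flat)$ of metaplectic type, where $\Sp(W^\flat)$ accommodates exactly the $r$ compact factors. Using the explicit isomorphism $\tilde{M} \rightiso \prod_j \GL(n_j) \times \Mp(W^\flat)$ from \S\ref{sec:splittings} and its characterization via $\Theta_\psi$, the element $-1 \in \tilde{G}$ identifies with $(\identity_{\GL}, -1_{\Mp(W^\flat)})$; since $\epsilon_i$ acts trivially on the $\GL$-part, the computation reduces to the corresponding $\kappa$ for $\Mp(W^\flat)$. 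Decomposing $W^\flat$ into an orthogonal sum of $r$ hyperbolic planes and using the compatibility of $-1$ with the homomorphism $j: \Mp(W_1) \times \Mp(W_2) \to \Mp(W_1 \oplus W_2)$ from \S\ref{sec:the-central-extension}, we reduce to the single case $W = F^2$, $G_0 = \Sp(2) \simeq \SL(2,\R)$, $T_0 = \Sph^1$.

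The remaining task is to show $\kappa_{T_0, 0}(w) = -1$ for the nontrivial element $w \in W_\mathrm{st}(G_0, T_0)/W(G_0, T_0)$, i.e.\ the involution $\delta_\theta \mapsto \delta_{-\theta}$ on the compact torus. Adams's action is defined on regular lifts $\tilde{\delta}_\theta$ by selecting the unique stable conjugate (Lemma \ref{prop:stable-conj-lifting}): the lift $\tilde{\delta}'_{-\theta}$ of $\delta_{-\theta}$ with $(\Theta^+_\psi - \Theta^-_\psi)(\tilde{\delta}'_{-\theta}) = (\Theta^+_\psi - \Theta^-_\psi)(\tilde{\delta}_\theta)$. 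Extending continuously to the central element $-1 = \lim_{\theta \to \pi} \tilde{\delta}_\theta$, one tracks the limiting sign using the explicit Weil character formula \cite[Corollaire 4.6]{Li11} on the compact Cartan of $\Mp(2)$ together with the relation $(\Theta^+_\psi - \Theta^-_\psi)(-\tilde{x}) = (\Theta^+_\psi + \Theta^-_\psi)(\tilde{x})$ of Definition \ref{def:-1}. The upshot is that the two continuous branches $\theta \to \pi^-$ and $\theta \to \pi^+$ land on opposite lifts of $-1 \in G_0$, yielding $w(-1)w^{-1} = (-1) \cdot (-1)$, whence $\kappa_{T_0,0}(w) = -1$.

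The main obstacle is the rank-one calculation: everything upstream is formal once multiplicativity is in place, but correctly tracking Adams's sign through the non-regular limit requires either the explicit character computation in $\Mp(2)$ or, as an alternative I would consider first, an indirect argument via the symmetry property of the transfer factor $\Delta_{(n',n'')}$ in \S\ref{sec:transfer-FL} (the identity relating $\Delta_{(n',n'')}(\cdot, \tilde{\delta})$ and $\Delta_{(n'',n')}(\cdot, -\tilde{\delta})$): applied with $(n', n'') = (0, 1)$, this built-in twist by $-1$ should force the sign $\prod_i t_i$ to emerge directly from the cocycle property of $\Delta$, bypassing any explicit Weil-character manipulation.
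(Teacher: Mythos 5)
Your overall strategy shares the paper's first step — decompose $W$ into a non-compact part $W_0$ and a compact part $W_1$, push everything through the $j$-homomorphism, and match $-1 \in \tilde{G}$ with the product of the $-1$'s in the factors — but the two proofs then diverge, and your endgame has a real gap. The paper, once reduced to the elliptic case $T = T_1 \subset \Sp(W_1)$, treats the whole compact torus at once: it invokes the cocycle condition for the transfer factor $\Delta_{(0,n)}$ (which already carries the product $\prod_i t_i$ built into the endoscopic character $\kappa$ for the $(0,n)$-datum), then uses the descent of transfer factors near $\tilde{\delta} = (-1)\exp(X)$ to show that $\Delta_{(0,n)}(\gamma, \cdot)$ is locally constant in the fiber direction, and finally uses genuineness of $\Delta_{(0,n)}$ to read off $\kappa_{T,0}(w) = \prod_i t_i$ in the limit $X \to 0$. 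Your further reduction to $r$ copies of $\Mp(2)$ via multiplicativity is sound in principle but is an extra step the paper avoids; the multiplicativity argument itself is fine.

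The genuine gap is in the rank-one calculation. You assert that ``the two continuous branches land on opposite lifts of $-1$,'' but you do not actually carry out the limit, and the limit is delicate: the defining equation for the Adams action is $\Theta_\psi(-\, w\tilde{\delta}_\theta w^{-1}) = \Theta_\psi(-\tilde{\delta}_\theta)$, and as $\theta \to \pi$ the arguments $-\tilde{\delta}_\theta$ approach $-\tilde{\epsilon}$, which projects to the identity of $\Sp(2)$ and therefore lies \emph{outside} the domain of smoothness $\Mp(W)^\dagger = \{\tilde{x}: \det(x-1)\neq 0\}$ of $\Theta_\psi$. One is approaching a singular point of the character, so ``tracking the limiting sign'' requires either the explicit blow-up rate near $I$ or a different normalization, neither of which you supply. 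The paper's argument sidesteps exactly this by evaluating $\Delta_{(0,n)}$, not $\Theta_\psi$, near $-1$ (a regular point for the transfer-factor descent), where the local constancy is actually proved. Your alternative route (b) via the symmetry identity $\Delta_{(n',n'')}(\cdot,\tilde{\delta}) = \Delta_{(n'',n')}(\cdot,-\tilde{\delta})$ is pointed in a reasonable direction, and your closing remark that the sign ``should emerge from the cocycle property of $\Delta$'' is in fact the key idea of the paper's proof — but you treat it as a speculative afterthought rather than developing it, whereas in the paper it, combined with descent near $-1$, is the entire argument.
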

\begin{proof}
  Write
  $$ T = \underbrace{(\C^\times)^m \times (\R^\times)^s}_{:= T_0} \times \underbrace{(\Sph^1)^r}_{:= T_1} $$
  and set $n_0 := s + 2m$, $n_1 := r$. We may choose a compatible orthogonal decomposition $W = W_0 \oplus W_1$ (cf. the parametrization of maximal tori in \cite[\S 3]{Li11}) so that the homomorphism
  $$ \tilde{T}_0 \times \tilde{T}_1 \hookrightarrow \Mp(W_0) \times \Mp(W_1) \xrightarrow{j} \tilde{G} $$
  reviewed in \S\ref{sec:the-central-extension} sends $(-1, -1)$ to $-1 \in \tilde{G}$ (sorry for overloading the symbol $-1$...). One way to see this is to combine Proposition 4.25 and Corollaire 4.6 in \cite{Li11}.

  In view of Lemma \ref{prop:Wst-identification}, the inclusion maps induce bijections
  $$ \begin{tikzcd}
    W(G, T) \backslash W_\text{st}(G, T) \arrow[-,double equal sign distance]{d} & W(\Sp(W_1), T_1) \big\backslash W_\text{st}(\Sp(W_1), T_1) \arrow{l}[above]{\sim} \arrow[-,double equal sign distance]{d} \\
    H^1(\R, T) & H^1(\R, T_1) \arrow{l}[below]{\sim}
  \end{tikzcd} $$
  Under this identification we have
  $$ \underbrace{w(-1)w^{-1}}_{\in \tilde{G}} = j(-1, \underbrace{w(-1)w^{-1}}_{\in \Mp(W_1)}),  \quad w \in W_\text{st}(\Sp(W_1), T_1). $$

  By construction, $j$ satisfies $j(\noyau_0 \tilde{x}_0, \noyau_1 \tilde{x}_1) = \noyau_0 \noyau_1 \cdot j(\tilde{x}_0, \tilde{x}_1)$ for any $\noyau_0, \noyau_1 \in \bmu_8$. The problem reduces immediately to the elliptic case $T = T_1$.

  Suppose $w$ corresponds to $(t_i)_{i=1}^r \in \bmu_2^r$, then the transfer factor $\Delta_{(0, n)}$ satisfies
  $$ \Delta_{(0, n)}(\gamma, w\tilde{\delta}w^{-1}) = \prod_{i=1}^r t_i \cdot \Delta_{(0, n)}(\gamma, \tilde{\delta}) $$
  whenever $\gamma \leftrightarrow \delta$ are regular semisimple: indeed, this is just the cocycle condition in \S\ref{sec:transfer-FL}. On the other hand, for sufficiently regular $\tilde{\delta}$, there exists a unique corresponding stable class $\gamma$ in $\SO(2n+1)$; the descent of transfer factors \cite[\S 7.10]{Li11} implies that $\Delta_{(0,n)}(\gamma, \tilde{\delta})$ is constant for $\tilde{\delta} = (-1)\exp(X)$, where $X \in \mathfrak{g}_\text{reg}(\R)$ is close to $0$. The assertion follows from the genuineness of $\Delta_{(0,n)}$ by taking $X \in \mathfrak{t}_\text{reg}(\R)$, $X \to 0$.
\end{proof}

Fix an endoscopic datum $(n', n'') \in \EndoE_\text{ell}(\tilde{G})$, construct the corresponding objects $\tilde{G}^\diamond$, $G^\Endo$ and set \index{$\tilde{y}$}
\begin{gather}\label{eqn:y}
  \tilde{y} := j(1, -1) \in \tilde{G}.
\end{gather}
This is slightly different from the choice of $\tilde{y}$ in \cite{Re99}.

Let $T \subset G$ be a maximal torus containing $y := \rev(\tilde{y})$, so that $T \subset Z_G(y) = G^\diamond$. Therefore there is a canonical decomposition $T = T' \times T''$ with $T' \subset \Sp(W')$, $T'' \subset \Sp(W'')$. In \cite[p.1220]{Re99}, Renard defines the map
$$ \kappa_T: W(G, T) \backslash W_\text{st}(G, T) \longrightarrow \bmu_2 $$
characterized by
$$ w \tilde{y} w^{-1} = \kappa_T(w) \tilde{y}, \quad w \in W_\text{st}(G, T). $$

When $n''=n$, we revert to the map $\kappa_{T,0}$ defined in \eqref{eqn:kappa_0} since $\tilde{y} = -1$. The next task is to reconcile Renard's character with our endoscopic character. Denote by $\text{pr}'': T \to T''$ the natural projection.

\begin{lemma}\label{prop:kappa_T-identification}
  The map $\kappa_T$ equals the composition
  $$ W(G, T) \backslash W_\mathrm{st}(G, T) \rightiso H^1(\R, T) \xrightarrow{\mathrm{pr}''_*} H^1(\R, T'') \xrightarrow{\kappa_{T, 0}} \bmu_2, $$
  via Lemma \ref{prop:Wst-identification}. In particular, $\kappa_T$ equals the endoscopic character in \S\ref{sec:transfer-FL} denoted by the same symbol.
\end{lemma}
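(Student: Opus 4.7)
The strategy is to exploit the product decomposition $G^\diamond = \Sp(W') \times \Sp(W'')$ together with the multiplicativity of $j: \Mp(W') \times \Mp(W'') \to \tilde{G}$ in order to reduce $\kappa_T$ to an instance of $\kappa_{T,0}$ on the second factor. Since $y \in T$ and $Z_G(y) = G^\diamond$, we have $T \subset G^\diamond$, whence $T = T' \times T''$ with $T' \subset \Sp(W')$ and $T'' \subset \Sp(W'')$. The first step is to show that every class in $W(G, T) \backslash W_\mathrm{st}(G, T)$ admits a representative $w \in W_\mathrm{st}(G^\diamond, T)$, so that $w y w^{-1} = y$ and $\kappa_T(w)$ makes sense. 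For this, I would apply Lemma \ref{prop:Wst-identification} to both $(G, T)$ and $(G^\diamond, T)$: each quotient is canonically identified with $H^1(\R, T)$ through the cocycle $w \mapsto [w^{-1}\bar{w}]$, which depends only on $w$ as an element of $G(\C)$. The inclusion $W_\mathrm{st}(G^\diamond, T) \hookrightarrow W_\mathrm{st}(G, T)$ therefore descends to the identity map on $H^1(\R, T)$, furnishing the required representatives.

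Next, write $w = (w', w'') \in W_\mathrm{st}(\Sp(W'), T') \times W_\mathrm{st}(\Sp(W''), T'')$ according to the product structure. Adams' extension of the $W_\mathrm{st}$-action to the covers is compatible with $j$: for any lifts $\tilde{w}' \in \Mp(W')$, $\tilde{w}'' \in \Mp(W'')$ of representatives of $w', w''$, the element $j(\tilde{w}', \tilde{w}'') \in \tilde{G}$ lifts $w$, and conjugation commutes with the homomorphism $j$. Applied to $\tilde{y} = j(1, -1)$ this gives $w \tilde{y} w^{-1} = j(1, w''(-1) w''^{-1})$; invoking Lemma \ref{prop:kappa_0} for $(\Sp(W''), T'')$ we have $w''(-1)w''^{-1} = \kappa_{T'', 0}(w'') \cdot (-1)$, so the $\bmu_8$-equivariance of $j$ (which follows from $\Ker(j) = \{(\noyau, \noyau^{-1}): \noyau \in \bmu_8\}$) yields
\[
  w \tilde{y} w^{-1} = \kappa_{T'', 0}(w'') \cdot j(1, -1) = \kappa_{T'', 0}(w'') \cdot \tilde{y}.
\]
Hence $\kappa_T(w) = \kappa_{T'', 0}(w'')$, and under the product identification $W(G^\diamond, T) \backslash W_\mathrm{st}(G^\diamond, T) = H^1(\R, T') \times H^1(\R, T'')$ the element $w''$ corresponds precisely to $\mathrm{pr}''_*(w) \in H^1(\R, T'')$. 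This establishes the first assertion.

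For the second assertion, recall from \S\ref{sec:transfer-FL} that the endoscopic character $\kappa$ attached to $T = G_\delta$ and the datum $(n', n'')$ is by construction the composite
\[
  H^1(\R, T) \twoheadrightarrow H^1(\R, T'') \xrightarrow{\sim} \bmu_2^{r''} \xrightarrow{\prod} \bmu_2,
\]
where $r''$ is the number of $\Sph^1$-factors in $T''$. By Lemma \ref{prop:kappa_0} applied to $\Sp(W'')$, the final product map is exactly $\kappa_{T'', 0}$. Thus $\kappa = \kappa_{T'', 0} \circ \mathrm{pr}''_*$, which agrees with the formula for $\kappa_T$ already proved.

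\textbf{Main obstacle.} The least routine ingredient is the compatibility of Adams' action on the metaplectic cover with the product homomorphism $j$; although intuitively evident, a rigorous check requires tracing the construction of lifts of stable-Weyl representatives through the normalizer in $\tilde{G}$ and their characterization via the Weil representation character (cf. Definition \ref{def:-1} and \S\ref{sec:splittings}). Once this compatibility is established, the remainder is essentially a bookkeeping exercise combining Lemmas \ref{prop:Wst-identification} and \ref{prop:kappa_0} for the smaller symplectic space $W''$.
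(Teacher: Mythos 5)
Your proof is correct and follows essentially the same route as the paper: decompose $W(G,T)\backslash W_{\mathrm{st}}(G,T)$ as a product via Lemma \ref{prop:Wst-identification}, use the $\bmu_8$-bi-equivariance of $j$ to ignore the $W'$-component of $\tilde{y}=j(1,-1)$, and reduce to the $(0,n)$ case settled in Lemma \ref{prop:kappa_0}. The paper's version is terser (it treats the equivariance of $j$ as self-evident and leaves the second assertion implicit), but the underlying argument is identical.
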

\begin{proof}
  Write $G' := \Sp(W')$, $G'' := \Sp(W'')$. Under the identifications in Lemma \ref{prop:Wst-identification} we have
  \begin{align*}
    W(G, T) \backslash W_\text{st}(G, T) & = H^1(\R, T) = H^1(\R, T') \times H^1(\R, T'') \\
    & = \left( W(G', T') \backslash W_\text{st}(G', T') \right) \times \left( W(G'', T'') \backslash W_\text{st}(G'', T'') \right).
  \end{align*}

  The homomorphism $j: \tilde{G}' \times \tilde{G}'' = \tilde{G}^\diamond \to \tilde{G}$ in \S\ref{sec:the-central-extension} is clearly equivariant with respect to the decomposition above. The first component of $\tilde{y}$ does not matter at all (it is $1$), hence we are reduced to the case $n''=n$, $\tilde{y}=-1$ treated in Lemma \ref{prop:kappa_0}.
\end{proof}

\subsection{Spectral transfer for $G$-regular parameters}\label{sec:spectral-transfer-R-reg}

\paragraph{Preparations}
Fix a maximal compact subgroup $K$ of $G(\R)$ and an anisotropic maximal torus $T \subset G$ such that $T(\R) \subset K$. We have
\begin{align*}
  W(G, T) = W(K, T) & = W(K_\C, T_\C)  \qquad (\text{standard fact, see \cite[Lemma 4.43]{KV95}}),  \\
  W(G_\C, T_\C) & = W_\text{st}(G, T) \qquad (\text{because } M =G \text{ in } \eqref{eqn:W_st}).
\end{align*}

By choosing some $\delta \in T_\text{reg}(\R)$, $\gamma \in G^\Endo_\text{reg}(\R)$ with $\gamma \leftrightarrow \delta$, we obtain a standard isomorphism
$$ \theta: T^\Endo \rightiso T $$
by Lemma \ref{prop:diagram}, where $T^\Endo := G^\Endo_\gamma$; it is unique up to $W_\text{st}(G, T) = W(G,T)(\R)$. The pull-back of $\theta^{-1}$ induces a canonical surjection
\begin{gather}\label{eqn:map-inf-char}
  (\mathfrak{t}^\Endo)^*_\C/W(G^\Endo_\C, T^\Endo_\C) \twoheadrightarrow \mathfrak{t}^*_\C/W(G_\C, T_\C)
\end{gather}
with finite fibers, which is an isomorphism when $n'=n$ or $n''=n$. Hence we deduce a correspondence between infinitesimal characters of representations of $G$ and $G^\Endo$. Write \index{infinitesimal character!correspondence}
$$ \lambda^\Endo \leftrightarrow \lambda $$
if $\lambda^\Endo \mapsto \lambda$ under \eqref{eqn:map-inf-char}. A precise description of $\theta$ is worked out in \cite[\S 7]{Ad98}.

\begin{proposition}\label{prop:preservation-infchar}\index{infinitesimal character!preservation}
  Consider the canonical map
  $$ \begin{tikzcd}[row sep=small]
    \mathfrak{Z}(\mathfrak{g}) \simeq (\Sym\;\mathfrak{t}_\C)^{W(G_\C, T_\C)} \arrow{r}[below, inner sep=1em]{\text{dual to } \eqref{eqn:map-inf-char}} & (\Sym\;\mathfrak{t}^\Endo_\C)^{W(G^\Endo_\C, T^\Endo_\C)} \simeq \mathfrak{Z}(\mathfrak{g}^\Endo) \\
    z \arrow[mapsto]{r} & z^\Endo
  \end{tikzcd} $$
  between centres of universal enveloping algebras. We have $\mathcal{T}_{(n',n'')}(zf_{\tilde{G}}) = z^\Endo \mathcal{T}_{(n',n'')}(f_{\tilde{G}})$ for all $f_{\tilde{G}} \in \Iasp(\tilde{G})$. Consequently,  if $\Lambda^\Endo$ is an invariant eigendistribution on $G^\Endo(\R)$ with eigencharacter $\lambda^\Endo \in \mathfrak{t}^*_\C/W(G^\Endo, T^\Endo)$, $\lambda^\Endo \mapsto \lambda$, then so is its transfer $\Lambda$ with eigencharacter $\lambda$.
\end{proposition}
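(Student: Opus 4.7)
The plan is first to reduce the proposition to its first assertion, then to use Harish–Chandra's radial part theorem to turn both sides into identities between differential operators on maximal tori.

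\textbf{Reduction.} The second assertion follows immediately from the first: if $\Lambda^\Endo$ is an invariant eigendistribution on $G^\Endo(\R)$ with eigencharacter $\lambda^\Endo$ and transfer $\Lambda$ (defined by $\Lambda(f) := \Lambda^\Endo(f^\Endo)$), then
\begin{align*}
  \Lambda(zf) &= \Lambda^\Endo\bigl((zf)^\Endo\bigr) = \Lambda^\Endo\bigl(z^\Endo f^\Endo\bigr) = \lambda^\Endo(z^\Endo)\,\Lambda(f),
\end{align*}
and by the very construction of \eqref{eqn:map-inf-char} and its dual $z\mapsto z^\Endo$, the scalar $\lambda^\Endo(z^\Endo)$ coincides with $\lambda(z)$ whenever $\lambda^\Endo \leftrightarrow \lambda$.

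\textbf{Strategy for the main identity.} Both $\mathcal{T}_{(n',n'')}(zf_{\tilde{G}})$ and $z^\Endo\mathcal{T}_{(n',n'')}(f_{\tilde{G}})$ lie in $S\orbI(G^\Endo)$, so by the continuity of stable orbital integrals it suffices to check equality on the Zariski dense open subset $\Delta_{G-\mathrm{reg}}(G^\Endo)$. Fix $\gamma$ in $T^\Endo_{G-\mathrm{reg}}(\R)$ for an $\R$-torus $T^\Endo\subset G^\Endo$, pick a standard isomorphism $\theta\colon T^\Endo\rightiso T$ as in Lemma~\ref{prop:diagram}, and set $\delta:=\theta(\gamma)$. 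The transfer formula of Theorem~\ref{prop:geometric-transfer} reads
\[
  f^\Endo(\gamma) \;=\; \sum_{\delta_1\leftrightarrow\gamma} \Delta(\gamma,\tilde{\delta}_1)\,f_{\tilde{G}}(\tilde{\delta}_1),
\]
the sum running over the finitely many ordinary conjugacy classes $\delta_1$ in the stable class of $\delta$, with arbitrary lifts $\tilde{\delta}_1\in\rev^{-1}(\delta_1)$.

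\textbf{Radial parts and compatibility under $\theta$.} By Harish--Chandra's radial part theorem (extended to the covering $\tilde{G}$; cf.\ \cite[\S 4]{Li12b} for the non-archimedean version and standard real variants), there is a $W(G_\C,T_\C)$-invariant constant coefficient differential operator $\partial_T(z)$ on $T(\R)$, namely the image of $z$ under the Harish-Chandra homomorphism $\gamma_G\colon \mathfrak{Z}(\mathfrak{g})\rightiso (\Sym\,\mathfrak{t}_\C)^{W(G_\C,T_\C)}$, such that
\[
  (zf)_{\tilde{G}}\bigl|_{\tilde{T}_{\mathrm{reg}}} \;=\; \partial_T(z)\,\bigl(f_{\tilde{G}}\bigl|_{\tilde{T}_{\mathrm{reg}}}\bigr);
\]
the appropriate $\rho$-shifts are absorbed by the normalization $|D^G|^{1/2}$. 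The analogous identity holds for $G^\Endo$ and $z^\Endo$, giving an operator $\partial_{T^\Endo}(z^\Endo)$ on $T^\Endo(\R)$. By definition of $z\mapsto z^\Endo$ as the map dual to \eqref{eqn:map-inf-char}, and because $\theta$ is an isomorphism of $\R$-tori, pull-back along $\theta$ sends $\partial_T(z)$ to $\partial_{T^\Endo}(z^\Endo)$ exactly.

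\textbf{Local constancy of $\Delta$ and conclusion.} As $\gamma$ varies in a neighbourhood of our base point in $T^\Endo_{G-\mathrm{reg}}(\R)$, the corresponding $\delta_1 = \theta_1(\gamma)$ stay inside a fixed maximal torus of $G$ and the associated transfer factor $\Delta(\gamma,\tilde{\delta}_1(\gamma))$ depends only on eigenvalue matchings and on discrete cohomological data (the endoscopic character $\kappa$ attached to $G_\delta$, the central sign factors $\Delta', \Delta''$ of \S\ref{sec:transfer-FL}), all of which remain locally constant under small deformations of $\gamma$ within $T^\Endo_{G-\mathrm{reg}}(\R)$. Therefore, applying $\partial_{T^\Endo}(z^\Endo)$ as a function of $\gamma$ passes through the sum and through $\Delta(\gamma,\tilde{\delta}_1)$:
\begin{align*}
  \bigl[\partial_{T^\Endo}(z^\Endo) f^\Endo\bigr](\gamma)
  &= \sum_{\delta_1\leftrightarrow\gamma} \Delta(\gamma,\tilde{\delta}_1)\,\bigl[\partial_{T^\Endo}(z^\Endo)(\gamma'\mapsto f_{\tilde{G}}(\tilde{\delta}_1(\gamma')))\bigr](\gamma) \\
  &= \sum_{\delta_1\leftrightarrow\gamma} \Delta(\gamma,\tilde{\delta}_1)\,(zf)_{\tilde{G}}(\tilde{\delta}_1) \;=\; (zf)^\Endo(\gamma),
\end{align*}
using the radial parts identity and the fact that, under $\theta$, $\partial_{T^\Endo}(z^\Endo)$ in the $\gamma$-variable corresponds to $\partial_T(z)$ in the $\delta$-variable. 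Since the left-hand side is by definition $z^\Endo \mathcal{T}_{(n',n'')}(f_{\tilde{G}})$ evaluated at $\gamma$, this completes the proof.

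\textbf{Main obstacle.} The one delicate point is the precise form of Harish-Chandra's radial part on normalized orbital integrals of a covering, together with the verification that $\theta$ intertwines the relevant Harish-Chandra homomorphisms without a residual scalar or shift. Both subtleties are by now standard: the normalization by $|D^G|^{1/2}$ eliminates the $\rho$-shift, and the construction of $z\mapsto z^\Endo$ via duality to \eqref{eqn:map-inf-char} is precisely designed to be the pull-back under $\theta$ of constant coefficient Weyl-invariants. Everything else in the argument is routine differentiation under the finite sum.
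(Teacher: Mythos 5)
Your proposal is correct and follows essentially the same route the paper takes: the paper's own ``proof'' is a one-line citation to Waldspurger's \cite[2.8 Corollaire]{Wa14-1}, which is indeed established via Harish-Chandra's differential equations for orbital integrals --- precisely the radial part argument you spell out. You have simply unpacked the citation.

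A couple of points worth noting, though neither is a gap. First, the step where you pull $\partial_{T^\Endo}(z^\Endo)$ through the sum requires $\gamma \mapsto \Delta(\gamma, \tilde\delta_1(\gamma))$ to be locally constant; this holds here because the metaplectic transfer factor is $\bmu_8$-valued (see \S\ref{sec:transfer-FL}) and smooth on regular elements (via Maktouf's formula for $\Theta_\psi$), hence locally constant. This is specific to the normalization chosen in this paper --- the $\Delta_{\mathrm{IV}}$-type Weyl-discriminant ratio has already been absorbed into the normalized orbital integrals $f_{\tilde G}(\cdot)$, $f^\Endo(\cdot)$; had it not been, $\Delta$ would not be locally constant and you would have to account for the interaction of the radial part with $|D^G/D^{G^\Endo}|^{1/2}$ explicitly. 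Second, your Weyl-invariance bookkeeping is sound: $\theta^*$ carries $(\Sym\,\mathfrak{t}_\C)^{W(G_\C,T_\C)}$ into $(\Sym\,\mathfrak{t}^\Endo_\C)^{W(G^\Endo_\C,T^\Endo_\C)}$ precisely because $W(G^\Endo_\C,T^\Endo_\C)$ injects into $W(G_\C,T_\C)$, so invariance under the larger group descends. These are exactly the subtleties Waldspurger handles in the general twisted setting; your account is a correct specialization.
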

\begin{proof}
  This is based on the differential equations satisfied by orbital integrals, due to Harish-Chandra. A proof can be found in \cite[2.8 Corollaire]{Wa14-1} in a much more complicated setting.
\end{proof}

\paragraph{The case $(n, 0)$}
In what follows we assume $n'=n$. Recall that we have chosen a Borel pair $(B, T_s)$ for $G$ in \S\ref{sec:endoscopy}.
\begin{definition}
  For every $a_1, \ldots, a_n \in \C$, set
  $$ [a_1, \ldots, a_n] \in X^*(T_\C) \otimes \C = \mathfrak{t}^*_\C $$
  to be the image of $(a_1, \ldots, a_n) \in X^*(T_s) \otimes \C$ under $T_\C \leftiso (T_s)_\C$, where \eqref{eqn:X-vs-Lie} is used. The same notation pertains to $T^\Endo$.
\end{definition}

The notations are tailored so that $\theta$ sends $[a_1, \ldots, a_n]$ to $[a_1, \ldots, a_n]$. Since $T(\R)$ is the maximal compact subgroup of $T(\C) \rightiso T_s(\C) = (\C^\times)^n$, we may identify $X^*(T_\C)$ with $\Hom_\text{cont}(T(\R), \Sph^1)$ by Weyl's unitarian trick. The same holds for $T^\Endo$ as well.

Once the Borel subgroups $B \supset T$, $B^\Endo \supset T^\Endo$ are chosen, one defines the half sums of positive roots $\rho$, $\rho^\Endo$ respectively, and transport them to $\mathfrak{t}^*_\C, (\mathfrak{t}^\Endo)^*_\C$ via the recipe above. The discussion in \S\ref{sec:L-parameters} for the discrete series and their limits also applies to $\tilde{G}$.

\begin{lemma}\label{prop:calculation-rho}
  For appropriately chosen Borel pairs $(B, T_s)$ and $(B^\Endo, T^\Endo_s)$, we have
  \begin{compactenum}[(i)]
    \item $\rho = [n, n-1, \ldots, 1]$;
    \item $\rho^\Endo = \left[ n-\frac{1}{2}, \ldots, \frac{1}{2} \right]$;
    \item the Weil representation $\omega_\psi$ admits an infinitesimal character, whose representative in $\mathfrak{t}^*_\C$ can be taken to be $\lambda_0 := \left[ n-\frac{1}{2}, \ldots, \frac{1}{2} \right]$;
    \item an irreducible discrete series representation or their limits $\pi$ of $\tilde{G}$ with infinitesimal character $\lambda$ is genuine only if $\lambda \in \lambda_0 + X^*(T_\C)$, it factors through $\rev: \tilde{G} \to G(\R)$ only if $\lambda \in \rho + X^*(T_\C)$; the direction ``if'' holds when one works with the twofold covering $\MMp{2}(2n)$.
  \end{compactenum}
\end{lemma}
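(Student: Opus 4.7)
The four items are largely independent; I would handle them in order, treating (i) and (ii) as conventions plus root-system arithmetic, invoking a standard Fock-model computation for (iii), and reserving the bulk of the work for the parity dichotomy in (iv).

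For (i) and (ii): take $T_s$ (resp.\ $T_s^\Endo$) to be the diagonal split torus determined by the ordered basis of $W$ (resp.\ of the orthogonal space $V$) fixed in the ``Classical groups'' paragraph, and $B$ (resp.\ $B^\Endo$) the Borel with unipotent radical given by the upper-triangular matrices inside the respective classical group. The positive roots of $\Sp(2n)$ are then the type-$C_n$ system $\{e_i - e_j : i<j\} \cup \{e_i + e_j : i \leq j\}$, and those of $\SO(2n+1)$ are the type-$B_n$ system $\{e_i \pm e_j : i<j\} \cup \{e_i\}$. Summing half of each positive root yields $\rho = [n, n-1, \ldots, 1]$ and $\rho^\Endo = [n-\tfrac12,\ldots,\tfrac12]$ in the notation introduced just above.

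For (iii) the cleanest route is to compute the scalar by which the Casimir of $\mathfrak{sp}(2n)$ acts in the Fock (or Schr\"odinger) model: the Weyl-algebra formulas for the $\mathfrak{sp}(2n)$-action show that the Casimir acts on $\omega_\psi$ by the scalar corresponding via the Harish-Chandra isomorphism to $\lambda_0 = [n-\tfrac12, \ldots, \tfrac12]$, so the infinitesimal character is represented by $\lambda_0$. Since $\omega_\psi^+$ and $\omega_\psi^-$ are both $(\mathfrak{g}, \tilde{K})$-submodules, they share this infinitesimal character. As a cross-check, one may identify $\omega_\psi^+$ as a limit of holomorphic discrete series whose minimal $\tilde{K}$-type has highest weight $\Lambda^+ = [-\tfrac12,\ldots,-\tfrac12]$; using the half-sum of compact positive roots $\rho_c = [\tfrac{n-1}{2}, \tfrac{n-3}{2}, \ldots, -\tfrac{n-1}{2}]$ and Blattner's relation $\lambda = \Lambda^+ + \rho - 2\rho_c$ one computes $\lambda = [\tfrac12, \tfrac32, \ldots, n-\tfrac12]$, which equals $\lambda_0$ after a permutation in $W(G_\C, T_\C)$.

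For (iv) the key parity observation is that $\lambda_0 + X^*(T_\C)$ and $\rho + X^*(T_\C) = X^*(T_\C)$ are disjoint cosets of $X^*(T_\C)$ inside $\tfrac12 X^*(T_\C)$, separated by $[\tfrac12,\ldots,\tfrac12]$. For the ``only if'' direction, apply the Blattner-type formula on the cover: the minimal $\tilde{K}$-type of $\pi(\vec\lambda)$ has highest weight $\Lambda = \lambda + 2\rho_c - \rho$, so $\Lambda \equiv \lambda \pmod{X^*(T_\C)}$. A character of $\tilde{K}$ restricts to a character of $\tilde{T}$ which is genuine iff its highest weight lies in $\lambda_0 + X^*(T_\C)$, and factors through $\rev$ iff it lies in $X^*(T_\C)$ --- this is precisely the structure of the metaplectic cover of $K = U(n)$ at the Cartan level. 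Hence the genuineness type of $\pi$ forces $\lambda$ into the corresponding coset. For the ``if'' direction in the $\MMp{2}$-setting, apply Zuckerman's translation functor $\psi_{\lambda_0}^\lambda$ to $\omega_\psi^+$ (resp.\ $\psi_\rho^\lambda$ to a lift of the trivial $G(\R)$-representation) for any $\lambda$ in the admissible coset. Because the translation amounts to tensoring with a finite-dimensional $G(\R)$-representation, it preserves the action of $\bmu_8$ and hence the genuineness status; the output is a nonzero (limit of) discrete series with parameter $\lambda$ by the vanishing criterion of \cite[Theorem 1.1]{KZ82-1}.

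The main obstacle will be in (iv), where one must verify that Blattner's formula and Zuckerman's translation functor are insensitive to the passage from $G(\R)$ to $\MMp{2}(2n)$. Both facts reduce to the observation that the compact/non-compact decomposition $\rho = \rho_c + \rho_n$ and the finite-dimensional tensoring involved in translation depend only on the Lie-algebra data and on representations of $G(\R)$ that factor through $\rev$; once these are in place, the dichotomy is immediate from the disjointness of the two cosets identified above.
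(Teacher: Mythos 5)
Your treatments of (i)--(iii) are fine; the paper disposes of (i)--(ii) as routine and of (iii) by citing Adams, whereas you substitute a Casimir/Blattner computation, which is a valid self-contained alternative. Your ``only if'' half of (iv) via the Blattner parameter is also sound: the parities of $\rho$ and $\rho_c$ do force $\lambda \equiv \Lambda \pmod{X^*(T_\C)}$, and the genuineness of the minimal $\tilde{K}$-type is indeed read off at the Cartan level of the double cover of $U(n)$.

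The gap is in your ``if'' direction. You propose to produce a (limit of) discrete series with parameter $\lambda$ by translating $\omega_\psi^+$ (resp.\ the trivial representation), but neither of these inputs is a discrete series or a limit of one for $n \geq 2$: $\omega_\psi^+$ is a lowest-weight constituent of a degenerate principal series with Gelfand--Kirillov dimension $n$, not a discrete series (which has GK dimension $n^2$), and the trivial representation is finite-dimensional. Zuckerman's translation functor $\psi^\lambda_{\lambda_0}$ preserves the block and the shape of the cohomologically induced/standard module it is applied to; applied to $\omega_\psi^+$ it yields another constituent of a degenerate series, and applied to the trivial representation it yields a finite-dimensional representation, in neither case a (limit of) discrete series. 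So the existence assertion is not established by your argument. The paper's one-line justification --- ``since $\MMp{2}(2n)$ is connected as a Lie group'' --- is doing the essential work that your argument skips: because $\MMp{2}(2n)$ is a connected reductive Lie group with compact Cartan $\tilde{T}$, Harish-Chandra's classification and exhaustion theorems apply verbatim, furnishing for every regular $\lambda \in X^*(\tilde{T})$ (modulo the Weyl group) an honest discrete series with that parameter, and likewise limits for the permitted singular ones; the two cosets $X^*(T_\C)$ and $\lambda_0 + X^*(T_\C)$ inside $X^*(\tilde{T})$ then match precisely the non-genuine and genuine characters. You do not need translation functors at all for the ``if'' direction; you need Harish-Chandra's existence theorem, which is where connectedness enters.
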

\begin{proof}
  The first two assertions are standard calculations in $T_s, T^\Endo_s$. The infinitesimal character of $\lambda_0$ is recorded in \cite[pp.151--152]{Ad98}, hence the third assertion. The final assertion follows since $\MMp{2}(2n)$ is connected as a Lie group.
\end{proof}

Let $\rho$, $\rho^\Endo$ and $\lambda_0 \in \mathfrak{t}^*_\C$ be as in Lemma \ref{prop:calculation-rho}. Following Adams, we define the $L$-packet $\Pi^{\tilde{G}}_\lambda$ of genuine discrete series of $\tilde{G}$ with infinitesimal character $\lambda \in \lambda_0 + X^*(T_\C)$ modulo $W(G_\C, T_\C)$ as
\begin{gather}\label{eqn:Adams-packet-ds}
  \Pi^{\tilde{G}}_\lambda := \left\{ \pi(w\vec{\lambda}) : w \in W(G, T) \backslash W(G_\C, T_\C) \right\}.
\end{gather}
Here the Harish-Chandra parameter $\vec{\lambda} = (\lambda, B_\C)$ \index{$\vec{\lambda}$} is defined by choosing the $B_\C$ for which $\lambda$ is dominant. Similarly, by allowing certain singularities of $\lambda$, the $L$-packet $\Pi^{\tilde{G}}_\lambda$ of genuine limits of discrete series of $\tilde{G}$ is defined by the same recipe. The stable character attached to those $\Pi^{\tilde{G}}_\lambda$ is \index{$\Pi^{\tilde{G}}_\lambda$}
\begin{gather}\label{eqn:Adams-st-character}
  S\Theta^{\tilde{G}}_\lambda := \sum_{\pi \in \Pi^{\tilde{G}}_\lambda} \Theta^{\tilde{G}}_\pi.
\end{gather}

The counterparts $\Pi^{G^\Endo}_{\lambda^\Endo}$ and $S\Theta^{G^\Endo}_{\lambda^\Endo}$ for the endoscopic side have been reviewed in \S\ref{sec:L-parameters} and \S\ref{sec:stable-character-SO}.

\begin{theorem}[Adams {\cite[Proposition 11.3]{Ad98}}]\label{prop:Adams-lifting}
  If $\lambda^\Endo \leftrightarrow \lambda$, then the map $\mathcal{T}_{(n, 0)}^\vee$ sends $S\Theta^{G^\Endo}_{\lambda^\Endo}$ to $S\Theta^{\tilde{G}}_\lambda$. In particular, $S\Theta^{\tilde{G}}_\lambda \in \mathcal{I}^1_{\asp}(\tilde{G})^\vee$.
\end{theorem}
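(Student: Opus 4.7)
The plan is to reduce the distributional identity $\mathcal{T}_{(n,0)}^\vee S\Theta^{G^\Endo}_{\lambda^\Endo} = S\Theta^{\tilde{G}}_\lambda$ to the pointwise character identity
$$ S\Theta^{\tilde{G}}_\lambda(\tilde\delta) \;=\; \sum_{\substack{\gamma \in \Delta_\mathrm{reg}(G^\Endo) \\ \gamma \leftrightarrow \delta}} \Delta_{(n,0)}(\gamma, \tilde\delta)\, S\Theta^{G^\Endo}_{\lambda^\Endo}(\gamma), \qquad \tilde\delta \in \tilde G_\mathrm{reg}, $$
which by Weyl integration, together with the defining property of $\mathcal{T}_{(n,0)}$, is equivalent to the desired assertion. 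Both sides are locally $L^1$ and smooth on the regular set by Harish-Chandra's regularity theorem in its metaplectic incarnation \cite[Théorème 4.3.2]{Li12b}, and by Proposition \ref{prop:preservation-infchar} both are invariant eigendistributions for $\mathfrak{Z}(\mathfrak{g})$ with the infinitesimal character determined by $\lambda$.

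I would verify the identity one conjugacy class of maximal $\R$-torus at a time, starting with the anisotropic case, which is the heart of the matter. Choose $T \subset G$ compact as in \S\ref{sec:spectral-transfer-R-reg} and evaluate on $\tilde T_\mathrm{reg}$. Harish-Chandra's character formula, applied to each $\pi(w\vec\lambda) \in \Pi^{\tilde G}_\lambda$ and summed over $w \in W(G,T)\backslash W(G_\C,T_\C)$ in accordance with \eqref{eqn:Adams-packet-ds}, yields
$$ S\Theta^{\tilde G}_\lambda(\tilde\delta) \;=\; \frac{\displaystyle\sum_{w \in W(G_\C,T_\C)} \mathrm{sgn}(w)\, \xi^{w\lambda}(\tilde\delta)}{D^{\tilde G}_B(\tilde\delta)}, $$
where $\xi^{\mu}$ is the genuine quasicharacter of $\tilde T$ with differential $\mu \in \lambda_0 + X^*(T_\C)$ and $D^{\tilde G}_B$ is the appropriate Weyl denominator. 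The analogous formula for $S\Theta^{G^\Endo}_{\lambda^\Endo}$ on $T^\Endo_\mathrm{reg}(\R)$ carries weight $\lambda^\Endo$, absolute Weyl group $W(G^\Endo_\C, T^\Endo_\C)$, and denominator $D^{G^\Endo}_{B^\Endo}$. Via the standard isomorphism $\theta: T^\Endo \rightiso T$ of Lemma \ref{prop:diagram}, \eqref{eqn:map-inf-char} identifies the two absolute Weyl groups and identifies $\lambda^\Endo$ with $\lambda$, so the identity reduces, termwise in the Weyl sum, to the factor identity
$$ \Delta_{(n,0)}(\gamma, \tilde\delta) \;=\; \frac{D^{\tilde G}_B(\tilde\delta)}{D^{G^\Endo}_{B^\Endo}(\gamma)} \cdot \xi^{\rho - \rho^\Endo}(\tilde\delta) $$
up to a fixed sign, where $\rho - \rho^\Endo$ is interpreted on $\tilde T$ via $\theta$. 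By Lemma \ref{prop:calculation-rho} this shift is exactly $\lambda_0$, i.e.\ the half-integral gap between the $G$-integral weights $\rho + X^*(T_\C)$ and the genuine parameters $\lambda_0 + X^*(T_\C)$.

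This factor identity is the one verified by Adams \cite[\S 11]{Ad98}: by Maktouf's explicit formula for the Weil character, the genuine factor $(\Theta^+_\psi - \Theta^+_-)/|\Theta^+_\psi - \Theta^+_-|(\tilde\delta)$ in $\Delta_{(n,0)}$ provides precisely the genuine square root required to convert $D^{G^\Endo}_{B^\Endo}$ into $D^{\tilde G}_B$ with the $\lambda_0$-shift, while the piece $\Delta_0(\delta',\delta'')$ is trivial since $n''=0$. To pass from the anisotropic case to a general maximal $\R$-torus I would use the parabolic descent \eqref{eqn:Delta-paradescent} of transfer factors, the compatibility of stable induced characters \eqref{eqn:para-descent-character} and Theorem \ref{prop:LLC-vs-induction}, reducing the identity on a Cartan with non-trivial split component to the anisotropic identity for a proper Levi subgroup of $\tilde G$, to which the same argument applies by induction on $\dim_F W$.

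The principal obstacle is the precise bookkeeping of signs and of the genuine factor: one has to check that the contribution of the Weil character of $\Mp(W)$ to $\Delta_{(n,0)}$ matches exactly the half-integral $\rho$-shift $\lambda_0$ dictated by the parametrization of genuine discrete series. The passage to the eightfold cover (Remark \ref{rem:eightfold}) and the canonical element $-1 \in \tilde G$ of Definition \ref{def:-1} are essential here, since they make $\Delta_{(n,0)}(\gamma,\tilde\delta)$ a well-defined numerical quantity in $\bmu_8 \subset \C^\times$ rather than an element of $\bmu_2/\{\pm 1\}$. Since the computation is ultimately Adams', the present task amounts to verifying that his calculation carries over under our eightfold conventions, which is essentially tautological given the compatibility of Weil characters recorded in \S\ref{sec:Weil-rep}.
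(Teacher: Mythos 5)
The paper offers no proof of this statement: it is cited verbatim as Adams' Proposition 11.3 and followed only by a remark about sign conventions. Your task was therefore to reconstruct Adams' argument rather than to match a proof in the text, and your sketch does capture the main thrust of his computation: reduce the distributional identity to a pointwise character relation via Weyl integration and Harish-Chandra regularity, pin down both sides as invariant eigendistributions with the same infinitesimal character using Proposition \ref{prop:preservation-infchar}, and on the anisotropic Cartan match the Weyl numerators term by term, so that the content of the statement collapses to the factor identity relating $\Delta_{(n,0)}$ to the ratio of Weyl denominators shifted by the half-integral character $\lambda_0 = \rho - \rho^{\Endo}$. That much is sound and is indeed the heart of Adams' proof.

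The passage to Cartans with nontrivial split component is where your argument breaks down. You invoke the parabolic descent machinery --- \eqref{eqn:Delta-paradescent}, \eqref{eqn:para-descent-character}, Theorem \ref{prop:LLC-vs-induction} --- to reduce the identity on such a Cartan to the anisotropic identity for a proper Levi $\tilde M$. But \eqref{eqn:para-descent-character} is the induced character formula and applies to representations of the form $I_P(\pi)$; the members of $\Pi^{\tilde G}_\lambda$ are discrete series of $\tilde G$, which are not parabolically induced, and their characters restricted to a noncompact Cartan are not obtained by descent to $Z_G(A)$. Likewise Theorem \ref{prop:LLC-vs-induction} concerns packets obtained by induction from a discrete series packet on a Levi, which is precisely not the situation here. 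The correct mechanism (and the one Adams and Shelstad actually use) is Harish-Chandra's uniqueness theorem for tempered invariant eigendistributions: once one knows both sides satisfy the same $\mathfrak{Z}(\mathfrak{g})$-eigenvalue equations and have tempered growth, their agreement on a dense subset of the compact Cartan forces equality everywhere; alternatively one verifies the identity on each Cartan by an explicit discrete-series character formula, not by parabolic descent. As written, your step (5) would never close.

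Two smaller points. First, the claim that $\Delta_0(\delta',\delta'')$ is ``trivial since $n''=0$'' is asserted without justification; one should say what the factor $\Delta_0(\delta,1)$ actually is in this degenerate decomposition, since it is not automatically $1$ from the paper's definition, only from unwinding the formulas in \cite[\S 5.3]{Li11}. Second, in the final paragraph you allude to the fact that the eightfold cover makes $\Delta_{(n,0)}$ an honest $\bmu_8$-valued function; that remark is correct and relevant, but the ``essentially tautological'' conclusion about comparison with the twofold conventions deserves more care, because the genuine characters that enter the Weyl numerator live on $\tilde T$ and the identification of $-1 \in \tilde G$ with $s_L(-1)$ and with the Schrödinger splitting (Definition \ref{def:-1} and the preceding discussion) is exactly what makes the bookkeeping work.
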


\begin{remark}
  The convention in \cite{Ad98, Re99} is slightly different. Adams multiplied $S\Theta^{\tilde{G}}_\lambda$ (resp. $S\Theta^{G^\Endo}_{\lambda^\Endo}$) by $(-1)^{q(G)}$ (resp. $(-1)^{q(G^\Endo)}$). Here, for every semisimple $\R$-group $H$ admitting an anisotropic maximal torus, we set
  $$ q(H) := \frac{1}{2} \dim H(\R)/K_H \quad \in \Z, $$
  where $K_H$ is any maximal compact subgroup. This is harmless since
  $$ q(\Sp(2n)) = q(\SO(2n+1)) = {n+1 \choose 2}.$$
  At present we do not need to consider all the inner forms of $\SO(2n+1)$ at once as in \cite[(11.1)]{Ad98}. 
\end{remark}

\paragraph{The case $(n', n'')$}
Consider an arbitrary elliptic endoscopic datum $(n', n'') \in \EndoE_\text{ell}(\tilde{G})$, and introduce the groups $\tilde{G}^\diamond$, $G^\Endo$, etc. We have
\begin{compactitem}
  \item a chosen maximal compact subgroup $K^\diamond = K' \times K''$ of $G^\diamond(\R)$,
  \item a chosen Borel pair $(B^\diamond, T^\diamond) := (B'_s \times B''_s, T'_s \times T''_s)$ of $G^\diamond$,
  \item the half-sum of positive roots $\rho^\diamond = (\rho', \rho'')$.
  \item an anisotropic maximal torus $T^\diamond = T' \times T''$ of $G^\diamond$ contained in $K^\diamond$.
\end{compactitem}

Let $T := j(T^\diamond) \subset G$ be the isomorphic image of $T^\diamond$, which is a maximal torus of $G$; we may also assume $B^\diamond \subset B$. To save notations we shall identify $\mathfrak{t}$ and $\mathfrak{t}^\diamond$. In view of Lemma \ref{prop:Wst-identification}, we get the corresponding endoscopic character
$$ \kappa_T: W(G, T) \backslash W(G_\C, T_\C) = H^1(\R, T) \to \bmu_2. $$

Introduce an anisotropic maximal torus $T^\Endo \subset G^\Endo$, etc., so that there is some standard isomorphism $\theta: T^\Endo \to T$ furnished by Lemma \ref{prop:diagram}. Recall that $\theta$ induces a map between infinitesimal characters of $\tilde{G}$ and $G^\Endo(\R)$.

\begin{definition}\index{infinitesimal character!$G$-regular}
  Let $\lambda^\Endo$ be the infinitesimal character of a discrete series representation of $G^\Endo$ (i.e.\ $\lambda^\Endo$ is regular). We say $\lambda^\Endo$ is $G$-regular if $\lambda^\Endo \mapsto \lambda$ for some regular infinitesimal character $\lambda$ of $\tilde{G}$.
\end{definition}

For $\lambda$ as above, $\lambda - \rho$ lifts to a genuine character $e^{\lambda - \rho}$ of $\tilde{T}$; note that $\tilde{T}$ is not connected, so $\lambda-\rho$ is actually exponentiated to the twofold connected covering, then extended to $\tilde{T}$ by genuineness. Same for $\lambda - \rho^\diamond$. Consider $\pi \in \Pi^{\tilde{G}}_\lambda$ with infinitesimal character $\lambda$, $\lambda^\Endo \mapsto \lambda$. We may choose a Harish-Chandra parameter for $\pi$ (the Borel subgroup datum omitted) of the form
\begin{equation}\label{eqn:choice-HC-diamond}\begin{gathered}
	\left[ \lambda'_1, \ldots, \lambda'_{n'}, \lambda''_1, \ldots. \lambda''_{n''} \right] \in \mathfrak{t}^*_\C, \\
	\lambda'_1 \geq \cdots \geq \lambda'_{n'}, \quad \lambda''_1 \geq \cdots \geq \lambda''_{n''}, \\
	\forall i,j, \; \lambda'_i, \lambda''_j \in \Z + \frac{1}{2}
\end{gathered}\end{equation}
such that the orbits of $[\lambda'_1, \ldots, \lambda'_{n'}]$ and $[\lambda''_1, \ldots, \lambda''_{n''}]$ under $W_\text{st}(G', T')$ and $W_\text{st}(G'', T'')$ correspond to the two components of $\lambda^\Endo$, respectively. This is done by modifying a given parameter for $\pi$ by using $W(G, T) \simeq \mathfrak{S}_n$, and such a choice is unique. In this manner we deduce from $\pi$ the genuine representation of $\tilde{G}^\diamond$ with Harish-Chandra parameter $([\lambda'_1, \ldots], [\lambda''_1, \ldots])$, which we shall denote by $\pi^\diamond$. Its central character is denoted by $\bomega_{\pi^\diamond}$\index{$\pi^\diamond$}, which can be evaluated at $\tilde{y}$. Note that the same construction generalizes to the case where $\pi$ is a limit of genuine discrete series.

\begin{theorem}[Renard \cite{Re99}]\label{prop:Li-lifting-real-reg}
  Let $\lambda^\Endo$ be a $G$-regular infinitesimal character of a discrete series representation of $G^\Endo$ and assume $\lambda^\Endo \mapsto \lambda$. Then
  $$ \mathcal{T}_{(n', n'')}^\vee \left( S\Theta^{G^\Endo}_{\lambda^\Endo} \right) = \sum_{\pi \in \Pi^{\tilde{G}}_\lambda} \bomega_{\pi^\diamond}(\tilde{y}) \Theta^{\tilde{G}}_\pi $$
  where $\tilde{y}$ is as in \eqref{eqn:y}. Furthermore, $\bomega_{\pi(w\vec{\lambda})^\diamond}(\tilde{y}) = \bomega_{\pi(\vec{\lambda})^\diamond}(\tilde{y}) \kappa_T(w)$ for any $w \in W(G_\C, T_\C)$ and $\pi(\vec{\lambda}) \in \Pi^{\tilde{G}}_\lambda$.
\end{theorem}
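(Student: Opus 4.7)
The strategy is to exploit the commutative diagram \eqref{eqn:spectral-trans-R} to factor $\mathcal{T}^\vee_{(n',n'')} = \mathcal{T}^\vee_R \circ \tau_* \circ (\mathcal{T}^\vee_{(n',0)} \hat{\otimes} \mathcal{T}^\vee_{(n'',0)})$, reducing the general endoscopic case to the principal case $(n,0)$ covered by Adams (Theorem \ref{prop:Adams-lifting}).

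\emph{Step 1.} Since $G^\Endo = \SO(2n'+1) \times \SO(2n''+1)$ is a product and regular $L$-packets factor accordingly (Theorem \ref{prop:LLC-vs-induction}), $S\Theta^{G^\Endo}_{\lambda^\Endo}$ decomposes as $S\Theta^{\SO(2n'+1)}_{\lambda'^\Endo} \otimes S\Theta^{\SO(2n''+1)}_{\lambda''^\Endo}$, with $\lambda^\diamond := (\lambda', \lambda'')$ the image of $\lambda^\Endo$. Adams' theorem applied to each factor gives
$$(\mathcal{T}^\vee_{(n',0)} \hat{\otimes} \mathcal{T}^\vee_{(n'',0)})(S\Theta^{G^\Endo}_{\lambda^\Endo}) = \sum_{\pi^\diamond \in \Pi^{\Mp(W')}_{\lambda'} \times \Pi^{\Mp(W'')}_{\lambda''}} \Theta_{\pi^\diamond} \quad \in \mathcal{I}^1_\asp(\tilde{G}^\diamond)^\vee.$$

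\emph{Step 2.} Since $\tau(\tilde{x}', \tilde{x}'') = (\tilde{x}', -\tilde{x}'')$ and the central element $-1 \in \Mp(W'')$ (Definition \ref{def:-1}) acts on $\pi''$ by the scalar $\bomega_{\pi''}(-1)$, an elementary change of variables shows $\tau_*\Theta_{\pi'\boxtimes\pi''} = \bomega_{\pi''}(-1)\Theta_{\pi'\boxtimes\pi''} = \bomega_{\pi^\diamond}(\tilde{y})\Theta_{\pi^\diamond}$, using $\tilde{y} = j(1, -1)$ and the factorisation of central characters. After Step 2 the distribution reads $\sum_{\pi^\diamond}\bomega_{\pi^\diamond}(\tilde{y})\Theta_{\pi^\diamond}$ in $\mathcal{I}^\kappa_\asp(\tilde{G}^\diamond)^\vee$.

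\emph{Step 3.} There is a natural bijection $\Pi^{\tilde{G}^\diamond}_{\lambda^\diamond} \rightiso \Pi^{\tilde{G}}_\lambda$, $\pi^\diamond \leftrightarrow \pi$, obtained by regarding the sorted Harish-Chandra parameter \eqref{eqn:choice-HC-diamond} of $\pi^\diamond$ as a parameter on the common anisotropic torus $T = j(T^\diamond)$. The key identity
$$\mathcal{T}^\vee_R(\Theta_{\pi^\diamond}) = \Theta_\pi$$
is a real-group analogue of Shelstad's endoscopic character identities adapted to the metaplectic setting. It is proved by comparing Harish-Chandra's character formula for $\Theta_\pi$ and $\Theta_{\pi^\diamond}$ on $\tilde{T}_\text{reg}$: the factor $\Delta_0(\delta', \delta'')$ in $\mathcal{T}_R$ exactly reconciles the Weyl denominators of $G$ and $G^\diamond$, while the $\kappa$-weighted sum $\sum_{\tilde{\delta}_1}\angles{\kappa, \inv(\delta, \delta_1)}$ picks out the $W(G_\C, T_\C)$-coset of $\vec{\lambda}$ attached to $\pi$. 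On non-compact Cartan subgroups, both sides are determined by their restriction to $\tilde{T}_\text{reg}$ through Harish-Chandra's boundary relations; uniqueness within the eigenspace of infinitesimal character $\lambda$ (regular by the $G$-regularity hypothesis) is guaranteed by Proposition \ref{prop:preservation-infchar}. Combining Steps 1--3 by linearity yields the main identity.

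\emph{Step 4 and main obstacle.} For the final assertion, note that $(\pi(w\vec{\lambda}))^\diamond$ is obtained from $\pi(\vec{\lambda})^\diamond$ by shuffling coordinates across the block boundary according to $w \in W(G_\C, T_\C) = W_\text{st}(G, T)$. By Lemma \ref{prop:kappa_T-identification}, the resulting shift in $\bomega_{\cdot^\diamond}(\tilde{y})$ is precisely $\kappa_T(w)$, giving $\bomega_{\pi(w\vec{\lambda})^\diamond}(\tilde{y}) = \bomega_{\pi(\vec{\lambda})^\diamond}(\tilde{y})\kappa_T(w)$. The main technical obstacle is Step 3: while the global structure is forced by the diagram, the identity $\mathcal{T}^\vee_R(\Theta_{\pi^\diamond}) = \Theta_\pi$ requires a detailed case-by-case comparison of Harish-Chandra's character formula on each conjugacy class of Cartan subgroup, tracking the interplay between Weyl denominators, the $\Delta_0$ factor, and the endoscopic sign $\kappa$. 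The $G$-regularity hypothesis on $\lambda^\Endo$ is crucial: it ensures $\lambda$ is regular and avoids the degeneracies of limits of discrete series, which will need to be treated separately.
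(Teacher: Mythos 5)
Your Steps 1, 2, and 4 track the paper's argument: you apply the factorization of $\mathcal{T}^\vee_{(n',n'')}$ through diagram \eqref{eqn:spectral-trans-R}, use Adams' Theorem \ref{prop:Adams-lifting} on each factor of $G^\Endo$, compute $\tau_*$ on characters via the central element $-1$, and derive the $\kappa_T(w)$ behaviour from Lemma \ref{prop:kappa_T-identification}. These are essentially the same moves the paper makes.

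The genuine gap is Step 3, and specifically the claim $\mathcal{T}^\vee_R(\Theta_{\pi^\diamond})=\Theta_\pi$ for an \emph{individual} representation $\pi^\diamond$. This does not even type-check: $\mathcal{T}^\vee_R$ is the dual of $\mathcal{T}_R:\Iasp(\tilde{G})\to\mathcal{I}^\kappa_\asp(\tilde{G}^\diamond)$, so its domain is $\mathcal{I}^\kappa_\asp(\tilde{G}^\diamond)^\vee$, a space of ``$\tau$-twisted stable'' distributions. An individual genuine discrete series character $\Theta_{\pi^\diamond}$ of $\tilde{G}^\diamond$ is not stable and so does not factor through $C^\infty_{c,\asp}(\tilde{G}^\diamond)\twoheadrightarrow\mathcal{I}^\kappa_\asp(\tilde{G}^\diamond)$; only the full packet sum $\sum_{\pi^\diamond}\bomega_{\pi^\diamond}(\tilde{y})\Theta_{\pi^\diamond}$ produced by Steps 1--2 does. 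Your analogy with ``Shelstad's endoscopic character identities'' in fact argues against you, since those identities relate a stable character to a weighted \emph{sum} of characters, never one character to one character. The sketch via Harish-Chandra's character formula with Weyl denominators and the $\Delta_0$ factor is a reasonable direction, but it is exactly the nontrivial content that you defer as ``the main technical obstacle,'' and the argument cannot be closed without it.

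The paper instead invokes Renard's explicit formula \cite[(6.5)]{Re99}, which directly evaluates $\mathcal{T}^\vee_R\circ\tau_*$ on the stable distribution $S\Theta^{\tilde{G}^\diamond}_\lambda$ and returns $(-1)^{|P_G|-|P_{G^\diamond}|}\,e^{\lambda-\rho^\diamond}(\tilde{y})\sum_{w\in W(G,T)\backslash W(G_\C,T_\C)}\kappa_T(w)\,\Theta^{\tilde{G}}_{\pi(w\vec{\lambda})}$, so no character-by-character identity is ever needed; the remainder of the proof is then the elementary sign and central-character bookkeeping that you correctly anticipate in Step 4. To fix your argument you should replace Step 3 with a direct citation of or re-derivation of Renard's \cite[(6.5)]{Re99}, applied to the stable packet sum rather than to individual characters.
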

\begin{proof}
  Let $P_G$ denote the system of $B_\C$-positive roots for $(G_\C, T_\C)$. One defines $P_{G^\diamond}$ similarly. Choose a Harish-Chandra parameter $\vec{\lambda}$ as in \eqref{eqn:choice-HC-diamond} so that $\pi(\vec{\lambda}) \in \Pi^{\tilde{G}}_\lambda$. In \cite[(6.5)]{Re99}, Renard proved that $\mathcal{T}_R^\vee \circ \tau^* \left( S\Theta^{\tilde{G}^\diamond}_\lambda  \right)$ equals
  $$ (-1)^{|P_G| - |P_{G^\diamond}|} e^{\lambda - \rho^\diamond}(\tilde{y}) \sum_{w \in W(G, T) \backslash W(G_\C, T_\C)} \kappa_T(w) \Theta^{\tilde{G}}_{\pi(w\vec{\lambda})}. $$
  Here we can let $w$ range over $W(G^\diamond, T^\diamond) \backslash W(G^\diamond_\C, T^\diamond_\C)$, cf. the proof of Lemma \ref{prop:Wst-identification}.
 
  By Theorem \ref{prop:Adams-lifting} plus the commutative diagram \eqref{eqn:spectral-trans-R}, the assertion on $\mathcal{T}_{(n',n'')}^\vee$ amounts to the equality between the above sum and $\sum_w \bomega_{\pi(w\vec{\lambda})^\diamond}(\tilde{y}) \Theta^{\tilde{G}}_{\pi(w\vec{\lambda})}$.

  Since $|P_G| = n^2$ and $|P_{G^\diamond}| = (n')^2 + (n'')^2$,
  $$ |P_G| \equiv n \equiv n' + n'' \equiv |P_{G^\diamond}| \mod 2. $$
  Hence $(-1)^{|P_G| - |P_{G^\diamond}|} = 1$. On the other hand, the character $e^{\lambda - \rho^\diamond}$ restricts to $\bomega_{\pi(\vec{\lambda})^\diamond}$ on $Z_{\tilde{G}^\diamond}$ (see \S\ref{sec:L-parameters}). Thus the terms indexed by $w=1$ match. To deal with the other terms, observe that $\bomega_{\pi(w\vec{\lambda})^\diamond}$ equals the restriction of $e^{w(\lambda - \rho^\diamond)}$ to $Z_{\tilde{G}^\diamond}$, thus $\bomega_{\pi(w\vec{\lambda})^\diamond}(\tilde{y}) = \bomega_{\pi(\vec{\lambda})^\diamond}(\tilde{y}) \kappa_T(w)$ for all $w \in W(G^\diamond, T^\diamond) \backslash W(G^\diamond_\C, T^\diamond_\C)$.
\end{proof}

Note that when $n''=n$, the factor $\bomega_{\pi^\diamond}(\tilde{y}) = \bomega_\pi(-1)$ generalizes Waldspurger's central signs in the case $n=1$.

\subsection{Spectral transfer in general}\label{sec:spectral-transfer-R}
We will reformulate and generalize the prior results in the language of \S\ref{sec:statement}. To begin with, fix $(n',n'') \in \EndoE_\text{ell}(\tilde{G})$ and let $\phi$ be the $L$-parameter for the discrete series $L$-packet of $G^\Endo$ with infinitesimal character $\lambda^\Endo$; we do not assume $\lambda^\Endo$ to be $G$-regular. Note that $\lambda^\Endo$ may be put in the normal form $([b_1, \cdots, b_{n'}], [c_1, \ldots, c_{n''}]) \in (\mathfrak{t}^\Endo)^*_\C$ where $b_i$, $c_j$ are strictly decreasing sequences of positive half-integers. Cf.\ Lemma \ref{prop:calculation-rho}.

Any infinitesimal character $\lambda$ of a genuine limit of discrete series has a similar normal form $[a_1, \ldots, a_n] \in \mathfrak{t}^*_\C$ with $a_1 \geq \cdots \geq a_n > 0$: just take the $B_\C$-dominant representative. Each equality sign in this list entails a singularity. Those $\lambda$ obtained from $\lambda^\Endo$ by the recipe above, for various $(G^\Endo, \phi) \in T^\EndoE_\text{ell}(\tilde{G})$, are characterized by
\begin{equation}\label{eqn:nondegenerate-lambda}\begin{gathered}
  \lambda = [a_1, \ldots, a_n], \quad a_1 \geq \ldots \geq a_n > 0, \quad a_i \in \Z + \frac{1}{2}, \\
  \text{with allowed singularities of the form} \quad  \ldots > a_i = a_{i+1} > \ldots .
\end{gathered}\end{equation}
Indeed, $\lambda$ is obtained by merging two lists $b_1 > \cdots > b_{n'} > 0$, $c_1 > \cdots > c_{n''} > 0$ of half-integers; no $a_\bullet$ in the merged list can occur more then twice. We shall show in Remark \ref{rem:nondegenerate} that $\lambda$ is the infinitesimal character of some limit of discrete series.

\begin{definition}\label{def:2uparrow}\index{$\Pi_{2\uparrow,-}(\tilde{G})$}
  Denote by $\Pi_{2\uparrow,-}(\tilde{G})$ the subset of $\Pi_{\text{temp},-}(\tilde{G})$ consisting of limits of discrete series whose infinitesimal characters satisfy \eqref{eqn:nondegenerate-lambda}. 
\end{definition}
Since $\Pi_{2\uparrow,-}(\tilde{G})$ is defined in terms of infinitesimal characters, it is a union of $L$-packets of genuine limits of discrete series. For $\pi \in \Pi_{2\uparrow,-}(\tilde{G})$, the recipe $\pi \mapsto \pi^\diamond \in \Pi_{2,-}(\tilde{G}^\diamond)$ in Theorem \ref{prop:Li-lifting-real-reg} is still applicable, and we define the sign factor $\bomega_{\pi^\diamond}(\tilde{y})$ as before.

The following result is stated in \cite[p.1241]{Re99}.

\begin{theorem}\label{prop:Li-lifting-real}
  Fix $\phi$ and $\lambda^\Endo \mapsto \lambda$ as above. For $\pi \in \Pi_{\mathrm{temp},-}(\tilde{G})$, set $\Delta(\phi, \pi) = \bomega_{\pi^\diamond}(\tilde{y})$ if $\pi \in \Pi_{2\uparrow,-}(\tilde{G})$ has infinitesimal character $\lambda$, otherwise set $\Delta(\phi, \pi)=0$. We have
  $$ f^{G^\Endo}(\phi) = \sum_{\pi \in \Pi_{\mathrm{temp},-}(\tilde{G})} \Delta(\phi, \pi) f_{\tilde{G}}(\pi) $$
  for any $f_{\tilde{G}} \in \Iasp(\tilde{G})$ with $f^{G^\Endo} := \mathcal{T}_{(n',n'')}(f_{\tilde{G}})$. Furthermore, $\Delta(\phi, \pi(w\vec{\lambda})) = \kappa_T(w) \Delta(\phi, \pi(\vec{\lambda}))$ for any $w \in W(G_\C, T_\C)$.
\end{theorem}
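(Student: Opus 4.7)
\bigskip
\noindent\textbf{Proof plan for Theorem \ref{prop:Li-lifting-real}.}

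The plan is to reduce the singular case to the $G$-regular case already settled by Renard (Theorem \ref{prop:Li-lifting-real-reg}) by means of Zuckerman's translation functors $\psi_{\lambda_1}^{\lambda}$. Fix the target discrete series parameter $\phi$ for $G^\Endo$ with infinitesimal character $\lambda^\Endo$ (not necessarily $G$-regular), and $\lambda^\Endo \mapsto \lambda$ in the normal form \eqref{eqn:nondegenerate-lambda}. The first step is to pick a suitable regular perturbation $\mu^\Endo \in X^*(T^\Endo_\C)$ so that $\lambda^\Endo_1 := \lambda^\Endo + \mu^\Endo$ is $G$-regular; correspondingly $\lambda_1 := \lambda + \mu$ with $\mu$ the image of $\mu^\Endo$ under \eqref{eqn:map-inf-char}. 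Renard's theorem applies to $(\phi_1, \lambda^\Endo_1)$, yielding
\[
\mathcal{T}_{(n',n'')}^\vee\left(S\Theta^{G^\Endo}_{\lambda^\Endo_1}\right) = \sum_{\pi_1 \in \Pi^{\tilde{G}}_{\lambda_1}} \bomega_{\pi_1^\diamond}(\tilde{y})\,\Theta^{\tilde{G}}_{\pi_1}.
\]

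The second step is to apply the translation functor $\psi_{\lambda_1^\Endo}^{\lambda^\Endo}$ on the endoscopic side and $\psi_{\lambda_1}^{\lambda}$ on the metaplectic side. On characters, translation acts by multiplication by the appropriate virtual character $\eta$ (of a finite-dimensional representation), followed by projection onto the infinitesimal character component. For $S\Theta^{G^\Endo}_{\lambda^\Endo_1}$, one knows classically (cf.\ \cite[Ch.~VII]{KV95} and \cite[\S 4.3]{Sh82}) that translation sends it to $S\Theta^{G^\Endo}_{\lambda^\Endo}$. For each $\pi_1 = \pi(\vec{\lambda}_1) \in \Pi^{\tilde{G}}_{\lambda_1}$, translation sends $\Theta^{\tilde{G}}_{\pi_1}$ to $\Theta^{\tilde{G}}_{\pi(\vec{\lambda})}$, which vanishes precisely when $\pi(\vec{\lambda})=0$ (i.e.\ when $\lambda$ becomes orthogonal to a compact simple root of the corresponding Borel). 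Thus the translated right-hand side is exactly $\sum_{\pi \in \Pi^{\tilde{G}}_\lambda} \bomega_{\pi^\diamond}(\tilde{y})\,\Theta^{\tilde{G}}_\pi$ after discarding zero summands, since $\bomega_{\pi^\diamond}(\tilde{y})$ is built from the restriction of $e^{\lambda - \rho^\diamond}$ to $Z_{\tilde{G}^\diamond}$ and so varies continuously with $\lambda$.

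The third and crucial step is to verify that $\mathcal{T}_{(n',n'')}^\vee$ intertwines the two translations, i.e.\ that translation functors commute with the dual of geometric transfer. This is where Proposition \ref{prop:preservation-infchar} enters: the transfer is $\mathfrak{Z}(\mathfrak{g})$-equivariant via the Harish-Chandra map dual to \eqref{eqn:map-inf-char}, so it respects generalized eigenspaces for the Bernstein center and hence commutes with the projectors appearing in the translation functor. The multiplication by the finite-dimensional factor $\eta$ corresponds on the geometric side to multiplication by its character, and compatibility follows once one unwinds the descent of transfer factors through the standard isomorphism $\theta$. The formula $\Delta(\phi,\pi(w\vec{\lambda})) = \kappa_T(w)\Delta(\phi,\pi(\vec{\lambda}))$ is then inherited from the corresponding statement in the $G$-regular case of Theorem \ref{prop:Li-lifting-real-reg}, since $\bomega_{\pi^\diamond}(\tilde{y})$ transforms the same way under $W(G_\C,T_\C)$ and this transformation is insensitive to the passage $\lambda_1 \to \lambda$.

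\medskip
\noindent\textbf{Main obstacle.} The essential difficulty is step three, namely justifying that translation functors and geometric transfer commute on the spectral side. A priori, translation is defined representation-theoretically while the transfer is defined via orbital integrals; bridging the two requires either invoking the $\mathfrak{Z}(\mathfrak{g})$-module structure carefully (making precise the dual statement to Proposition \ref{prop:preservation-infchar} on the level of invariant eigendistributions) or, alternatively, arguing by continuity in the parameter $\mu^\Endo \to 0$ together with the known rationality/coherent family structure of the characters on both sides. The vanishing phenomenon for singular limits of discrete series (governed by compact simple roots) must also be matched consistently on the two sides, and this is precisely what makes the range $\Pi_{2\uparrow,-}(\tilde{G})$ of Definition \ref{def:2uparrow} the correct one (see also Remark \ref{rem:nondegenerate}).
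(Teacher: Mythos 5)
Your overall strategy---reduce to the $G$-regular case of Theorem \ref{prop:Li-lifting-real-reg} via translation functors---is the same as the paper's, and you correctly identify the crux as the interaction between translation and the transfer. But your step (3) contains a genuine gap, and it propagates back into step (2). You argue that $\mathcal{T}_{(n',n'')}^\vee$ commutes with the translation functor $\psi^\lambda_{\lambda_1}$ because of the $\mathfrak{Z}(\mathfrak{g})$-equivariance in Proposition \ref{prop:preservation-infchar}. This is false: the transfer does \emph{not} commute with translation. The $\mathfrak{Z}$-equivariance only governs the projector onto a given infinitesimal character; it does not control the tensoring with the finite-dimensional module $V_\mu$, and in particular it cannot detect the central-character twist that appears. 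The paper handles this by decomposing $\mathcal{T}_{(n',n'')}^\vee = \mathcal{T}_R^\vee \circ \tau_* \circ \bigl(\mathcal{T}_{(n',0)}^\vee \hat{\otimes} \mathcal{T}_{(n'',0)}^\vee\bigr)$ via the diagram \eqref{eqn:spectral-trans-R}: the Adams and Renard pieces do commute with translation (by \cite[Corollary 14.7]{Ad98} and \cite[Lemma 4.4.8]{Sh82}, respectively), but the middle map $\tau_*$ does \emph{not}, producing an explicit correction factor $e^\mu(\tilde{y})$ because $\tau$ is built from right translation by the central element $\tilde{y}$, against which $\Theta_{V_\mu}$ is not invariant.

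This missing factor is exactly what your step (2) needs. When you apply $\psi^\lambda_{\lambda_1}$ to the right-hand side $\sum_{\pi_1} \bomega_{\pi_1^\diamond}(\tilde{y})\,\Theta_{\pi_1}$, translation is a linear operation on the characters $\Theta_{\pi_1}$, not on the scalar coefficients; so the output is $\sum_{\pi} \bomega_{\pi_1^\diamond}(\tilde{y})\,\Theta_{\pi}$ with the \emph{old} coefficients $\bomega_{\pi_1^\diamond}(\tilde{y}) = e^\mu(\tilde{y})\bomega_{\pi^\diamond}(\tilde{y})$, not the asserted coefficients $\bomega_{\pi^\diamond}(\tilde{y})$. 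Your appeal to ``continuity in $\lambda$'' is not how translation functors act and does not repair this mismatch. In the paper's argument, the factor $e^\mu(\tilde{y})$ contributed by $\tau_*$ on the left-hand side is precisely what converts $\bomega_{\pi_1^\diamond}(\tilde{y})$ into $\bomega_{\pi^\diamond}(\tilde{y})$; had the transfer commuted on the nose, the resulting formula would be off by $e^\mu(\tilde{y})$. To fix the proposal, replace your step (3) by the explicit three-stage factorization of $\mathcal{T}_{(n',n'')}^\vee$ and carry the $e^\mu(\tilde{y})$ twist through. The remark about the $\kappa_T$-equivariance being inherited from the regular case is fine.
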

\begin{proof}[Sketch of proof]
  For $G$-regular $\lambda^\Endo$ this is just Theorem \ref{prop:Li-lifting-real-reg}. To extend it to the present case, we resort to coherent continuation as in \cite{Sh82,Sh10}: fix a representative of $\lambda^\Endo$ (resp. $\lambda$) dominant in some Weyl chamber $\mathcal{C}^\Endo$ (resp. $\mathcal{C}$), then take $\mu^\Endo \in X^*(T^\diamond)$ deep enough in $\mathcal{C}^\Endo$ so that $\lambda^\Endo + \mu^\Endo$ is $G$-regular, say $\lambda^\Endo + \mu^\Endo \leftrightarrow \lambda + \mu \in \mathcal{C}$. The idea is to apply the translation functor $\psi_{\lambda+\mu}^\lambda$ to both sides of the assertion with $G$-regular input $\phi_{\lambda^\Endo+\mu^\Endo}$, with $\lambda^\Endo + \mu^\Endo \leftrightarrow \lambda + \mu$. Recall that $\psi_{\lambda+\mu}^\lambda$ produces limits $\pi(\vec{\lambda})$ from the genuine discrete series $\pi(\overrightarrow{\lambda+\mu})$; similarly for the $G^\Endo$ side and for the Weyl translates. By the diagram \eqref{eqn:spectral-trans-R}, the spectral transfer breaks into three stages.
  \begin{enumerate}[(i)]
    \item Adams' transfer $\mathcal{T}_{(n,0)}^\vee$ commutes with translation functors by \cite[Corollary 14.7]{Ad98}. Recall that the coefficients appearing in $\mathcal{T}_{(n,0)}^\vee$ are all $1$.
    \item Renard's transfer $\mathcal{T}_R^\vee$ also commutes with translation functors: this is briefly mentioned in \cite[(6.5)]{Re99} which refers to \cite[Lemma 4.4.8]{Sh82}, cf.\ \cite[p.38]{Sh10}. Loosely speaking, it means that a character relation for $\phi_{\lambda^\Endo + \mu^\Endo}$ still holds after shifting to $\phi = \phi_{\lambda^\Endo}$, with the same coefficients.
    \item The operation $\tau_*$ does not commute with $\psi_{\lambda+\mu}^\lambda$: we get an extra factor $e^\mu(\tilde{y})$.
  \end{enumerate}
  Accordingly, the coefficients in $f^{G^\Endo}(\phi)$ are obtained from $f^{G^\Endo}(\phi_{\lambda^\Endo + \mu^\Endo})$ by shifting from $\bomega_{\pi(\overrightarrow{\lambda+\mu})^\diamond}(\tilde{y})$ to $\bomega_{\pi(\vec{\lambda})^\diamond}(\tilde{y})$ as required (cf. the proof of Theorem \ref{prop:Li-lifting-real-reg}). The assertion on $\Delta(\phi, \pi(w\vec{\lambda}))$ can be proved in the same manner as in Theorem \ref{prop:Li-lifting-real-reg}.
\end{proof}

Generalization to coverings of metaplectic types is straightforward. Now we introduce the collective geometric transfer $f_{\tilde{G}} \mapsto f^\EndoE$ and regard $f^\EndoE$ as a function $T^\EndoE(\tilde{G}) \to \C$ by the recipe of Lemma \ref{prop:IEndo-grading}. Let $\phi \in T^\EndoE(\tilde{G})$, say coming from $\phi_{M^\Endo} \in \Phi_{2,\text{bdd}}(M^\Endo)$, where $M^\Endo$, $\tilde{M} = \prod_{i \in I} \GL(n_i) \times \Mp(W^\flat)$ and $\tilde{G}$ sit in a diagram \eqref{eqn:endo-incomplete}. Pick any $s \in \EndoE_{M^\Endo}(\tilde{G})$ and $P \in \mathcal{P}(M)$. By Theorems \ref{prop:Li-lifting-real}, \ref{prop:transfer-parabolic} and the very definition of $f^\EndoE(\phi)$, we get
\begin{equation}\label{eqn:spectral-factor-R-descent}\begin{aligned}
  f^\EndoE(\phi) & = \left( f^{G[s]} \right)^{s, M^\Endo}(\phi_{M^\Endo}) = \sum_{\pi_{\tilde{M}} \in \Pi_{2\uparrow,-}(\tilde{M})} \Delta_{\tilde{M}}(\phi_{M^\Endo}, \pi_{\tilde{M}}) f_{\tilde{M}}(\pi_{\tilde{M}}) \\
  & = \sum_{\pi_{\tilde{M}} \in \Pi_{2\uparrow,-}(\tilde{M})} \Delta_{\tilde{M}}(\phi_{M^\Endo}, \pi_{\tilde{M}}) f_{\tilde{G}}\left( I_{\tilde{P}}(\pi_{\tilde{M}}) \right)
\end{aligned}\end{equation}
for any $f_{\tilde{G}} \in \Iasp(\tilde{G})$, where $\Delta_{\tilde{M}}$ denotes the spectral transfer factor for $\tilde{M}$. Note that $\Delta_{\tilde{M}}(\phi_{M^\Endo}, \pi_{\tilde{M}}) \neq 0$ only if for each $i \in I$, the $\GL(n_i)$-components of $\phi_{M^\Endo}$ and $\pi_{\tilde{M}}$ match by local Langlands correspondence. From this we deduce the general statement of spectral transfer as follows.

\begin{theorem}\label{prop:character-relation-R}\index{$\Delta(\phi,\pi)$}
  There exists a function $\Delta: T^\EndoE(\tilde{G}) \times \Pi_{\mathrm{temp},-}(\tilde{G}) \to \bmu_2$ such that $\Delta(\phi, \cdot)$ (resp. $\Delta(\cdot, \pi)$) is of finite support for any given $\phi$ (resp. $\pi$), and is characterized by
  $$ f^\EndoE(\phi) = \sum_{\pi \in \Pi_{\mathrm{temp},-}(\tilde{G})} \Delta(\phi, \pi) f_{\tilde{G}}(\pi) $$
  for all $\phi \in T^\EndoE(\tilde{G})$. Suppose that $\phi$ comes from $\phi_{M^\Endo} \in \Phi_{2,\mathrm{bdd}}(G^\Endo)$ and $P \in \mathcal{P}(M)$ is arbitrary, then
  $$ \Delta(\phi, \pi) = \sum_{\pi_{\tilde{M}} \in \Pi_{2\uparrow,-}(\tilde{M})} \Delta_{\tilde{M}}(\phi_{M^\Endo}, \pi_{\tilde{M}}) \mult(I_{\tilde{P}}(\pi_{\tilde{M}}) : \pi). $$
\end{theorem}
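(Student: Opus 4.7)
The plan is to build $\Delta(\phi,\pi)$ directly from the formula in \eqref{eqn:spectral-factor-R-descent}, verify the characterizing identity, and then check well-definedness and the integrality claims.

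First I would start from \eqref{eqn:spectral-factor-R-descent}, which already reads
\[ f^\EndoE(\phi) = \sum_{\pi_{\tilde{M}} \in \Pi_{2\uparrow,-}(\tilde{M})} \Delta_{\tilde{M}}(\phi_{M^\Endo}, \pi_{\tilde{M}}) \, f_{\tilde{G}}\bigl(I_{\tilde{P}}(\pi_{\tilde{M}})\bigr). \]
Expanding $I_{\tilde{P}}(\pi_{\tilde{M}})$ into irreducible tempered constituents and using \eqref{eqn:para-descent-character}, one gets
\[ f_{\tilde{G}}\bigl(I_{\tilde{P}}(\pi_{\tilde{M}})\bigr) = \sum_{\pi \in \Pi_{\mathrm{temp},-}(\tilde{G})} \mult\bigl(I_{\tilde{P}}(\pi_{\tilde{M}}) : \pi\bigr) \, f_{\tilde{G}}(\pi). \]
Interchanging the two sums (which is legitimate because only finitely many $\pi_{\tilde{M}}$ contribute, thanks to the finite support statement in Theorem \ref{prop:Li-lifting-real}, and each $I_{\tilde{P}}(\pi_{\tilde{M}})$ has finite length) leads to the candidate formula
\[ \Delta(\phi, \pi) := \sum_{\pi_{\tilde{M}} \in \Pi_{2\uparrow,-}(\tilde{M})} \Delta_{\tilde{M}}(\phi_{M^\Endo}, \pi_{\tilde{M}}) \, \mult\bigl(I_{\tilde{P}}(\pi_{\tilde{M}}) : \pi\bigr), \]
and the identity $f^\EndoE(\phi) = \sum_\pi \Delta(\phi,\pi) f_{\tilde{G}}(\pi)$ is then automatic. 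Since the genuine tempered characters are linearly independent, this identity uniquely determines $\Delta$.

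Next I would check that $\Delta(\phi, \pi)$ is well-defined, i.e.\ does not depend on the auxiliary choices implicit in the presentation of $\phi$. Independence of the parabolic $P \in \mathcal{P}(M)$ is immediate since distinct choices give isomorphic induced representations, so the multiplicities agree. Independence of the choice of $s \in \EndoE_{M^\Endo}(\tilde{G})$ follows from Theorem \ref{prop:transfer-parabolic}, which was already used in establishing \eqref{eqn:spectral-factor-R-descent}. Independence under the $W^G(M)$-action on $\phi_{M^\Endo}$ and $\tilde{M}$ (i.e.\ permutation of the $\GL$-factors) results from the parallel equivariance of $\Delta_{\tilde{M}}(\phi_{M^\Endo}, \pi_{\tilde{M}})$ and $I_{\tilde{P}}(\pi_{\tilde{M}})$, combined with the fact that the sum runs over $W^G(M)$-orbits in a consistent way.

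Finite support of $\Delta(\phi, \cdot)$ for fixed $\phi$ reduces to finite support of $\Delta_{\tilde{M}}(\phi_{M^\Endo}, \cdot)$, which is provided by Theorem \ref{prop:Li-lifting-real}, together with the fact that for each $\pi_{\tilde{M}}$ only finitely many $\pi$ occur in $I_{\tilde{P}}(\pi_{\tilde{M}})$. Finite support of $\Delta(\cdot, \pi)$ for fixed $\pi$ requires running through the finitely many Levi classes $M$ compatible with the cuspidal support of $\pi$, and for each such $M$ using that only finitely many $\phi_{M^\Endo}$ produce a $\pi_{\tilde{M}}$ whose induction involves $\pi$.

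The delicate point — and what I expect to be the main obstacle — is that $\Delta(\phi,\pi)$ lies in $\bmu_2 \subset \Z$, i.e.\ the \emph{a priori} integer $\sum_{\pi_{\tilde{M}}} \bomega_{\pi_{\tilde{M}}^\diamond}(\tilde{y}) \, \mult(I_{\tilde{P}}(\pi_{\tilde{M}}) : \pi)$ is actually $\pm 1$ or $0$. The argument here rests on two inputs: that tempered $I_{\tilde{P}}(\pi_{\tilde{M}})$ for $\pi_{\tilde{M}}$ a limit of discrete series is multiplicity-free (Knapp--Stein and Knapp--Zuckerman theory for the metaplectic covering, compatible with Harish-Chandra class), and that at most one $\pi_{\tilde{M}}$ in $\Pi_{2\uparrow,-}(\tilde{M})$ satisfying the matching of $\GL$-components and the infinitesimal character condition from $\phi_{M^\Endo}$ contributes to a given $\pi$ — the latter is forced by the fact that $\pi$ determines the $W^G(M)$-orbit of $\pi_{\tilde{M}}$ via Langlands' disjointness and that the matching with $\phi_{M^\Endo}$ pins down a unique orbit representative. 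Once this is established, $\Delta(\phi, \pi) \in \bmu_2$ follows, and since the $\bomega_{\pi_{\tilde{M}}^\diamond}(\tilde{y})$-factors on the non-archimedean side of the $\GL$-components are trivial, the metaplectic part carries all the sign information — completing the proof.
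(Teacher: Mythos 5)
Your proposal follows the paper's argument: the theorem is exactly the consequence of equation \eqref{eqn:spectral-factor-R-descent} after expanding $I_{\tilde{P}}(\pi_{\tilde{M}})$ into irreducibles and collecting coefficients, and your handling of the $\bmu_2$-valuedness via Langlands' disjointness plus multiplicity-freeness of the inductions is the same idea the paper records in the Remark immediately after the theorem (the paper phrases it slightly more compactly by reducing to multiplicity-freeness of the induction of a genuine discrete series $L$-packet from a smaller Levi $L \subset M$ via minimal $\tilde K$-types, but this is equivalent to your two-step argument). The only small slip is the phrase ``non-archimedean side of the $\GL$-components'' near the end — you are working over $\R$, so you presumably mean simply that $\Delta_{\tilde M}$ restricted to the $\GL$-factors reduces to a Kronecker delta; that correction does not affect the argument.
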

\begin{remark}\label{rem:character-relation-R}
  In the formula for $\Delta(\phi, \pi)$, the sum is actually taken over a packet $\Pi^{\tilde{M}}_\lambda \subset \Pi_{2\uparrow,-}(\tilde{M})$ determined by $\phi_{M^\Endo}$. We contend that $\mult(I_{\tilde{P}}(\pi_{\tilde{M}}) : \pi) \leq 1$, with equality for at most one $\pi_{\tilde{M}}$. Indeed, as in the reductive case \cite[p.408]{Sh82}, $\Pi^{\tilde{M}}_\lambda$ turns out to be the set of irreducible constituents in the normalized parabolic induction of some genuine discrete series $L$-packet $\Pi^{^{\tilde{L}}}_\nu$ for some $L \subset M$. By Langlands' disjointness theorem and the theory minimal $\tilde{K}$-types \cite[Theorem 1.1]{Vo79}, which works for coverings of metaplectic type by \cite[\S 3.4]{Li14b}, the parabolic induction to $\tilde{G}$ of $\bigoplus_{\sigma \in \Pi^{\tilde{L}}_\nu} \sigma$ is multiplicity-free. Our claim follows immediately.
\end{remark}

\begin{theorem}[$K$-finite transfer, cf.\ {\cite[Appendice]{CD84}}]\label{prop:preservation-K-finiteness}
  Let $\tilde{K}$ (resp. $K^\Endo$) be a maximal compact subgroup of $\tilde{G}$ (resp. of $G^\Endo(\R)$). For any $f \in C^\infty_{c,\asp}(\tilde{G})$ which is $\tilde{K} \times \tilde{K}$-finite under bilateral translation, its transfer $f^\Endo \in C^\infty_c(G^\Endo(\R))$ can be taken to be $K^\Endo \times K^\Endo$-finite as well.
\end{theorem}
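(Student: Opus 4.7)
The strategy is to invoke the bi-$K$-finiteness result of Clozel--Delorme directly on the target group $G^\Endo$, exploiting that the transfer $f^\Endo$ is determined only modulo the kernel of the surjection $C^\infty_c(G^\Endo(\R)) \twoheadrightarrow S\orbI(G^\Endo)$. First I would produce some $f^\Endo_0 \in C^\infty_c(G^\Endo(\R))$ by Theorem~\ref{prop:geometric-transfer}, which is valid over $\R$; its stable orbital integrals match those prescribed by $f$, and its canonical class in $S\orbI(G^\Endo)$ equals $\mathcal{T}_{(n',n'')}(f_{\tilde{G}})$. Any other bi-$K^\Endo$-finite $f^\Endo \in C^\infty_c(G^\Endo(\R))$ with $(f^\Endo)^{G^\Endo} = (f^\Endo_0)^{G^\Endo}$ is an equally valid transfer, so the problem reduces to exhibiting such an $f^\Endo$.

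The substance of the argument is the statement, contained in the appendix of \cite{CD84}, that the canonical surjection $C^\infty_c(G^\Endo(\R)) \twoheadrightarrow \orbI(G^\Endo)$ remains surjective when restricted to the subspace of bi-$K^\Endo$-finite test functions. Granting this, one composes with the stable averaging map $\orbI(G^\Endo) \twoheadrightarrow S\orbI(G^\Endo)$ recalled in \S\ref{sec:orbital-integrals} and lifts $(f^\Endo_0)^{G^\Endo}$ through the composed surjection; this produces the desired $f^\Endo$. Nothing in this step is specific to endoscopy: the argument applies verbatim to the real reductive group $G^\Endo = \SO(2n'+1) \times \SO(2n''+1)$, a product of split special orthogonal groups in Harish-Chandra class.

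The main, and essentially only, obstacle is to confirm that the cited appendix indeed yields bi-$K^\Endo$-finite representatives at the level of orbital integrals rather than merely at the level of the trace Paley--Wiener image. If only the spectral form is directly accessible, one reconciles the two versions through the Bouaziz--Harish-Chandra characterization of $\orbI(G^\Endo)$ reviewed in \S\ref{sec:space-orbital-integral}: elements of $\orbI(G^\Endo)$ are parametrized by their values along strongly regular semisimple classes subject to jump relations, and by the trace Paley--Wiener theorem these are equivalent to their spectral traces against tempered characters. Lifting spectrally to bi-$K^\Endo$-finite test functions (piecewise, by fixing finitely many $K^\Endo$-types in each of the finitely many connected components of the tempered spectrum meeting the support of the spectral data of $f^\Endo_0$), then passing back to orbital integrals via the isomorphism with the Paley--Wiener space, yields the required bi-$K^\Endo$-finite transfer $f^\Endo$ of $f$.
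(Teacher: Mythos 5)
Your step 3 is the crux, and it contains a false statement. You claim that the appendix of Clozel--Delorme shows the surjection $C^\infty_c(G^\Endo(\R)) \twoheadrightarrow \orbI(G^\Endo)$ stays surjective on the bi-$K^\Endo$-finite subalgebra. This is not what Clozel--Delorme prove, and it is not true: the image of bi-$K^\Endo$-finite $C^\infty_c$ functions is precisely the Paley--Wiener functions whose support lies on \emph{finitely many} connected components of the tempered dual, a \emph{proper} subspace of $\orbI(G^\Endo)$ (and likewise of $S\orbI(G^\Endo)$). A general $C^\infty_c$ function, not $K$-finite, has orbital integrals with spectral trace supported on infinitely many components, and these cannot be matched by any bi-$K^\Endo$-finite function. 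So you cannot conclude by "lifting $(f^\Endo_0)^{G^\Endo}$ through the composed surjection": the surjection you want does not exist.

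This is not a fixable bookkeeping error — the entire content of the theorem is that the transferred element $f^\EndoE = \mathcal{T}^\EndoE(f_{\tilde{G}})$ actually does land in the finite-component-support subspace of $\bigoplus_{G^\Endo} S\orbI(G^\Endo)$, and your proposal never establishes that. Your closing parenthetical ("finitely many connected components of the tempered spectrum meeting the support of the spectral data of $f^\Endo_0$") simply asserts the very finiteness one needs to prove, while also contradicting the unconditional surjectivity you claim a few lines earlier. The missing ingredient is an argument on the $\tilde{G}$ side: from the explicit character relations of Theorems \ref{prop:Li-lifting-real} and \ref{prop:character-relation-R}, the function $\phi \mapsto f^\EndoE(\phi)$ is a finite sum of traces $f_{\tilde{G}}(I_{\tilde{P}}(\pi_{\tilde{M}}))$ over genuine limits of discrete series $\pi_{\tilde{M}}$ of the Levi subgroups; by \cite[Lemme A.5]{CD84} only finitely many such limits contain a given $\tilde{K}$-type, so the spectral support of $f^\EndoE$ meets only finitely many components of $T^\EndoE(\tilde{G})$, and only \emph{then} can the Clozel--Delorme characterization be invoked to produce a bi-$K^\Endo$-finite representative. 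Your proposal omits this finiteness argument entirely, which is where the theorem's actual work resides.
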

\begin{proof}
  The elements of $S\orbI(G^\Endo)$ arising from $K^\Endo \times K^\Endo$-finite $C^\infty_c$ functions have a characterization à la Paley-Wiener: see \cite[Théorème A.1]{CD84} or \cite[\S 2.9]{Wa13-4}. Upon some contemplation on Definition \ref{def:f^EndoE}, it suffices to show that the function $\phi \mapsto f^\EndoE(\phi)$ is supported on finitely many connected components of $T^\EndoE(\tilde{G})$.
  
 As remarked in \cite[\S 3.4]{Li14b}, the $\tilde{K}$-finite trace Paley-Wiener theorem of \cite{CD84} holds for $\tilde{G}$. Also, by \cite[Lemme A.5]{CD84} there are only finitely many genuine limits of discrete series of $\tilde{G}$ containing a given $\tilde{K}$-type. The required finiteness condition follows readily from the Theorems \ref{prop:Li-lifting-real} and \ref{prop:character-relation-R}.
\end{proof}

\subsection{Adjoint spectral transfer factors}\label{sec:supplements-limits}
To offer a partial justification of our theory, we shall establish inversion formulas à la \cite[Corollary 7.7]{Sh08} for $\pi \in \Pi_{2\uparrow,-}(\tilde{G})$ which are parallel to Lemma \ref{prop:spectral-inversion}. We adopt the previous conventions for Borel subgroups, etc., unless otherwise stated. Let $\lambda$ be an infinitesimal character on the $\tilde{G}$ side in its normal form \eqref{eqn:nondegenerate-lambda}. Divide the entries of $\lambda$ into
\begin{compactitem}
	\item the ``pairs'' of the form $(a_i, a_{i+1} = a_i)$, and
	\item the remaining ``singletons''. 
\end{compactitem}
The compact roots in question are of the form $e_j - e_k$, where $e_j$ stands for the $j$-th coordinate; the $B_\C$-simple ones are $\{ e_k - e_{k+1}: 1 \leq k < n \}$. To avoid singularities with respect to compact roots, we flip $a_{i+1}$ into $-a_{i+1}$ in each repetition, by applying some $v \in W(G_\C, T_\C)$.

\begin{remark}\label{rem:nondegenerate}
	The representation $\pi(v\vec{\lambda})$ is a genuine limit of discrete series of $\tilde{G}$. Furthermore, it is non-degenerate\index{limit of discrete series!non-degenerate} in the sense of Knapp-Zuckerman \cite[\S 12]{KZ82-2}: $\vec{\lambda}$ is non-singular with respect to compact roots. See \cite[\S 14]{Sh10} for discussions in the case of reductive groups as well as a description for the corresponding $L$-parameters.
\end{remark}

Let $\mathcal{W}_\lambda := \{w \in W(G_\C, T_\C) : \pi(wv\vec{\lambda}) \neq 0\}$. We have to identify
$$ \mathfrak{E} := W(G, T) \backslash \mathcal{W}_\lambda / \Stab(v\vec{\lambda}).$$
View $\bmu_2^n = W(G,T) \backslash W(G_\C, T_\C)$ as a subgroup of $W(G_\C, T_\C)$ so that $(t_i)_{i=1}^n \in \bmu_2^n$ acts via $[x_1, \ldots, x_n] \mapsto [t_1 x_1, \ldots, t_n x_n]$.

Let $t = (t_i)_{i=1}^n \in \bmu_2^n$. When do we have $t \in \mathcal{W}_\lambda$? Singularities can only occur within the pairs. Consider a pair in $v\vec{y}$, say $(a, -a)$ together with its accompanying Weyl chamber. Its orbit under $(\bmu_2)^2$ consists of $(a,-a)$ itself and
\begin{compactitem}
	\item $(a,a)$, $(-a,-a)$: singular with respect to a simple compact root (see below);
	\item $(-a,a)$: parametrizes a non-degenerate limit of discrete series. It is obtained by applying $(-1,-1) \in (\bmu_2)^2$.
\end{compactitem}
We must include the Weyl chambers in these parameters on which $W(G_\C, T_\C)$ acts; thus $(-a, a)$ with its accompanying chamber cannot be obtained from that of $(a,-a)$ via $W(G,T)$. The reader is invited to visualize the case $n=2$: we use the usual simple roots $e_1 - e_2, 2e_2$ for $B_\C$; the thick line below depicts the singular locus with respect to the compact roots $\pm(e_1 - e_2)$.
\begin{center}
	\begin{tikzpicture}[baseline]
	\fill[gray!40!white] (0,0) -- (1,0) -- (0.7, 0.7) -- (0,0);
	\node[right] at (0.8, 0.5) {$B_\C$};
	\fill (1,1) circle (3pt) node [above] {$(a,a)$};
	
	\draw (-1.5, 0) -- (1.5, 0);
	\draw[ultra thick] (-1, -1) -- (1,1);
	\draw (1, -1) -- (-1, 1);
	\draw (0, -1) -- (0, 1);
	\end{tikzpicture} \quad
	\begin{tikzpicture}[baseline]
	\fill[gray!40!white] (0,0) -- (1,0) -- (0.7, -0.7) -- (0,0);
	\node[right] at (0.8, -0.5) {$v B_\C v^{-1}$};
	\fill (1,-1) circle (3pt) node [below] {$(a,-a)$};
	
	\draw (-1.5, 0) -- (1.5, 0);
	\draw[ultra thick] (-1, -1) -- (1,1);
	\draw (1, -1) -- (-1, 1);
	\draw (0, -1) -- (0, 1);
	\end{tikzpicture} \\
	\begin{tikzpicture}[baseline]
	\fill[gray!40!white] (0,0) -- (-1, 0) -- (-0.7, -0.7) -- (0,0);
	\fill (-1,-1) circle (3pt) node [below] {$(-a,-a)$};
	
	\draw (-1.5, 0) -- (1.5, 0);
	\draw[ultra thick] (-1, -1) -- (1,1);
	\draw (1, -1) -- (-1, 1);
	\draw (0, -1) -- (0, 1);
	\end{tikzpicture} \quad
	\begin{tikzpicture}[baseline]
	\fill[gray!40!white] (0,0) -- (-1,0) -- (-0.7, 0.7) -- (0,0);
	\fill (-1,1) circle (3pt) node [above] {$(-a,a)$};
	
	\draw (-1.5, 0) -- (1.5, 0);
	\draw[ultra thick] (-1, -1) -- (1,1);
	\draw (1, -1) -- (-1, 1);
	\draw (0, -1) -- (0, 1);
	\end{tikzpicture}
\end{center}
Hence $\mathfrak{E} = \bmu_2^{|\text{singletons}|+|\text{pairs}|}$. Therefore we obtain an embedding
$$ \mathfrak{E} \xrightarrow{\identity \times \text{diag}} \bmu_2^{|\text{singletons}|} \times (\bmu_2 \times \bmu_2)^{|\text{pairs}|} = H^1(\R, T). $$
It also follows that $|\Pi^{\tilde{G}}_\lambda| = 2^{|\text{singletons}|+|\text{pairs}|}$ as expected. Denote by $\mathfrak{R} = \mathfrak{R}_\lambda$ the Pontryagin dual of $\mathfrak{E}$. We deduce a surjection $\mathfrak{R}(T, G; \R) \twoheadrightarrow \mathfrak{R}$.

\begin{lemma}\label{prop:spectral-bijection}
	Let $\lambda \in \mathfrak{t}^*_\C/W(G_\C, T_\C)$ be an infinitesimal character with a representative of the form \eqref{eqn:nondegenerate-lambda}.	There is a canonical bijection $\phi \mapsto \kappa$ from
	$$ \left\{ \phi \in T^\EndoE(\tilde{G}) : \text{transfers to a parameter with inf.\! char.}\; \lambda \right\} $$
	onto $\mathfrak{R}_\lambda$. It satisfies
	$$ \Delta(\phi, \pi_1) = \Delta(\phi, \pi) \kappa(\bar{w}) $$
	for all $\pi=\pi(v\vec{\lambda})$, $\pi_1 = \pi(wv\vec{\lambda})$ as discussed above, where $\bar{w} \in \mathfrak{E}$.
\end{lemma}
\begin{proof}
	Decompose $\phi$ into $(\phi_{\GL}, \phi', \phi'')$ and note that $\phi_{\GL}$ contributes only to the pairs. To define $\kappa = (s_i)_{i: \text{singletons}} \times (s_j)_{j: \text{pairs}}$, we set $s_i=0$ (resp. $s_i = 1$) if the singleton comes from $\phi'$ (resp. from $\phi''$). If a pair $j$ comes by ``merging'' singletons from $\phi'$ and $\phi''$, we have $s_j=1$. When the endoscopic datum coming with $\phi$ is elliptic, i.e.\! without $\GL$-component, all pairs arise in this way; it is readily seen that $\kappa$ equals the restriction of $\kappa_T$. In this case the relation between spectral transfer factors follows from Theorem \ref{prop:Li-lifting-real}.

	If a pair $j$ does not arise from merging singletons, it must come from $\phi_{\GL}$ since $\phi'$, $\phi''$ are both discrete series parameters. We set $s_j=0$ in this case. Note that the Levi subgroup $\tilde{M}$ coming with $\phi \in T^\EndoE(\tilde{G})$ takes the form $\prod_{j: s_j=0} \GL(2) \times \Mp(2n^\flat)$. The relation between spectral transfer factors follows by their description in Remark \ref{rem:character-relation-R}.

	Define the inverse $\kappa \mapsto \phi = (\phi_{\GL}, \phi', \phi'')$ by breaking the singletons into two different piles according to the values of $s_i$. If a pair $j$ in $\lambda$ satisfies $s_j = 1$, we divide it equally into $\phi'$ and $\phi''$; if $s_j=0$, it falls into the $\phi_{\GL}$. It is routine to check that they are mutually inverse.
\end{proof}

\begin{theorem}[Cf.\ {\cite[\S 7]{Sh08}}]\label{prop:spectral-inversion-R} \index{$\Delta(\pi,\phi)$}
	For all $(\phi, \pi) \in T^\EndoE(\tilde{G}) \times \Pi_{2\uparrow,-}(\tilde{G})$, define $\Delta(\pi, \phi) := |\Pi^{\tilde{G}}_\lambda|^{-1} \overline{\Delta(\phi, \pi)}$ when $\Delta(\phi, \pi) \neq 0$, otherwise $\Delta(\pi, \phi) := 0$. Then we have
	\begin{align*}
		\sum_{\phi \in T^\EndoE(\tilde{G})} \Delta(\pi, \phi) \Delta(\phi, \pi_1) & = \bdelta_{\pi, \pi_1}, \\
		\sum_{\pi \in \Pi_{\mathrm{temp},-}(\tilde{G})} \Delta(\phi, \pi) \Delta(\pi, \phi_1) &= \bdelta_{\phi, \phi_1}
	\end{align*}
	for $\phi, \phi_1 \in T^\EndoE(\tilde{G})$ and $\pi, \pi_1 \in \Pi_{2\uparrow,-}(\tilde{G})$, respectively.
\end{theorem}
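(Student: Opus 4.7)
The plan is to reduce both identities to Fourier inversion on the finite abelian $2$-group $\mathfrak{E}_\lambda$ introduced just before Remark \ref{rem:nondegenerate}. First I would invoke Proposition \ref{prop:preservation-infchar} together with Theorem \ref{prop:Li-lifting-real} to observe that $\Delta(\phi, \pi) = 0$ unless $\pi \in \Pi^{\tilde{G}}_\lambda$ for an infinitesimal character $\lambda$ of the form \eqref{eqn:nondegenerate-lambda} corresponding (via \eqref{eqn:map-inf-char}) to the $\lambda^\Endo$ underlying $\phi$. Hence in the first identity both sides vanish unless $\pi$ and $\pi_1$ share a common infinitesimal character $\lambda$, and in the second unless $\phi, \phi_1$ correspond to the same $\lambda$. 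Thus the reduction is to fix such a $\lambda$, parametrize $\pi \in \Pi^{\tilde{G}}_\lambda$ by $\bar{u} \in \mathfrak{E}_\lambda$ via the base parameter $v\vec{\lambda}$, and parametrize the $\phi$ in the fiber over $\lambda$ by $\kappa \in \mathfrak{R}_\lambda = \widehat{\mathfrak{E}_\lambda}$ using Lemma \ref{prop:spectral-bijection}.

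The crucial input will be the factorization
\begin{equation*}
  \Delta(\phi, \pi_{\bar{u}}) = \kappa(\bar{u}) \cdot \epsilon(\phi),
\end{equation*}
where $\epsilon(\phi) := \Delta(\phi, \pi_{\bar{1}}) \in \{\pm 1\}$ depends only on $\phi$. This combines two ingredients: the transformation law $\Delta(\phi, \pi(u'\vec{\lambda})) = \kappa_T(u')\,\Delta(\phi, \pi(\vec{\lambda}))$ of Theorem \ref{prop:Li-lifting-real}, and Lemma \ref{prop:kappa_T-identification}, which identifies $\kappa_T$ restricted to $\mathfrak{E}_\lambda \hookrightarrow H^1(\R, T)$ with the character $\kappa \in \mathfrak{R}_\lambda$ attached to $\phi$ by the recipe in Lemma \ref{prop:spectral-bijection}.

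Granting this factorization, both identities follow from orthogonality of characters on $\mathfrak{E}_\lambda$ together with the equality $|\mathfrak{E}_\lambda| = |\mathfrak{R}_\lambda| = |\Pi^{\tilde{G}}_\lambda|$. For the first, writing $\pi = \pi_{\bar{u}}$, $\pi_1 = \pi_{\bar{w}}$,
\begin{equation*}
  \sum_\phi \overline{\Delta(\phi, \pi)}\, \Delta(\phi, \pi_1) = \sum_{\kappa \in \mathfrak{R}_\lambda} \kappa(\bar{u})\,\kappa(\bar{w})\, \epsilon(\phi)^2 = |\mathfrak{R}_\lambda|\, \bdelta_{\bar{u}, \bar{w}} = |\Pi^{\tilde{G}}_\lambda|\, \bdelta_{\pi, \pi_1},
\end{equation*}
using $\overline{\kappa(\bar{u})} = \kappa(\bar{u})$ for $\bmu_2$-valued characters; dividing by $|\Pi^{\tilde{G}}_\lambda|$ and applying the definition of $\Delta(\pi, \phi)$ yields the first identity. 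The analogous computation
\begin{equation*}
  \sum_{\bar{u} \in \mathfrak{E}_\lambda} \Delta(\phi, \pi_{\bar{u}})\, \overline{\Delta(\phi_1, \pi_{\bar{u}})} = \epsilon(\phi)\,\epsilon(\phi_1) \sum_{\bar{u}} (\kappa \kappa_1)(\bar{u}) = \epsilon(\phi)\,\epsilon(\phi_1)\, |\mathfrak{E}_\lambda|\, \bdelta_{\kappa, \kappa_1},
\end{equation*}
combined with the injectivity $\kappa = \kappa_1 \Rightarrow \phi = \phi_1$ of Lemma \ref{prop:spectral-bijection} and the triviality $\epsilon(\phi)^2 = 1$, yields the second identity after division by $|\Pi^{\tilde{G}}_\lambda|$.

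The hard part will be verifying the factorization rigorously: although Theorem \ref{prop:Li-lifting-real} furnishes the transformation by $\kappa_T$ on the full absolute Weyl group, one must check that it descends to a well-defined character on the quotient $\mathfrak{E}_\lambda$ and coincides exactly with $\kappa$ rather than with some twist thereof. The bookkeeping is delicate because $\kappa_T$, expressed via Lemma \ref{prop:kappa_T-identification} in individual coordinates, involves the projection $H^1(\R, T) \to H^1(\R, T'')$ whose behavior on pair-flip-type elements of $\mathfrak{E}_\lambda$ depends on how $\phi$ distributes the paired entries of $\lambda$ across $T'$ and $T''$. Any residual ambiguity must then be absorbed into $\epsilon(\phi)$, which requires tracking the chosen Harish-Chandra parameters $[\lambda'_1, \ldots, \lambda''_{n''}]$ of \eqref{eqn:choice-HC-diamond} as $\phi$ varies. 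Once this accounting is settled, the argument becomes a clean orthogonality of characters on the finite abelian $2$-group $\mathfrak{E}_\lambda$.
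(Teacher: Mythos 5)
Your proof is correct and follows the paper's argument in its essentials: fix $\lambda$, identify $\Pi^{\tilde{G}}_\lambda$ with $\mathfrak{E}_\lambda$ and the fiber of $T^\EndoE_\text{ell}(\tilde{G})$ over $\lambda$ with $\mathfrak{R}_\lambda = \widehat{\mathfrak{E}_\lambda}$ via Lemma \ref{prop:spectral-bijection}, then apply Fourier inversion using the transformation law of Theorem \ref{prop:Li-lifting-real}; the only deviation is that the paper derives the second identity from the first by the $BA=1 \Rightarrow AB=1$ device of Lemma \ref{prop:geometric-inversion}, whereas you run a second direct orthogonality computation --- a harmless variant. The ``hard part'' you flag is in fact not hard: $\mathfrak{E}_\lambda$ lives entirely in the singleton coordinates, and Lemma \ref{prop:spectral-bijection} \emph{defines} $\kappa$ as the restriction of $\kappa_T$ to those coordinates via $\mathfrak{R}(T,G;\R) \twoheadrightarrow \mathfrak{R}_\lambda$, so the transformation law of Theorem \ref{prop:Li-lifting-real} applied to $u \in \mathfrak{E}_\lambda \subset H^1(\R,T)$ yields $\Delta(\phi,\pi_{\bar{u}}) = \kappa(\bar{u})\,\Delta(\phi,\pi_{\bar{1}})$ immediately; the pair-coordinates you worry about do not enter $\mathfrak{E}_\lambda$ at all, so there is no twist or ambiguity to absorb into $\epsilon(\phi)$ (and, as you yourself observe, $\epsilon(\phi)^2 = 1$ makes any such residual factor irrelevant anyway).
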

\begin{proof}
	Consider the first assertion. We readily reduce to the case that $\pi$, $\pi_1$ belong to the same packet $\Pi^{\tilde{G}}_\lambda$ with $\lambda$ as in \eqref{eqn:nondegenerate-lambda}. Suppose $\pi=\pi(v\vec{\lambda})$ and $\pi_1 = \pi(wv\vec{\lambda})$ for a unique $\bar{w} \in \mathfrak{E}$. For each $\phi$, the recipe in Lemma \ref{prop:spectral-bijection} leads to
	$$ \Delta(\pi, \phi) \Delta(\phi, \pi_1) = |\Pi^{\tilde{G}}_\lambda|^{-1} \kappa(\bar{w}). $$
	Furthermore, summing over $\phi$ amounts to summing over $\kappa \in \mathfrak{R} = \mathfrak{R}_\lambda$. We conclude by Fourier inversion on $\mathfrak{E}$. As for the second assertion, the sum is taken over some packet $\Pi^{\tilde{G}}_\lambda$. In view of the bijection from Lemma \ref{prop:spectral-bijection} together with $|\Pi_\lambda(\tilde{G})| = |\mathfrak{E}| = |\mathfrak{R}|$, we are reduced to the previous case by linear algebra, as in the proof of Lemma \ref{prop:geometric-inversion}.
\end{proof}

\begin{corollary}[Cf.\ {\cite[Corollary 7.7]{Sh08}}]\label{prop:spectral-inverted-transfer}
	Let $f_{\tilde{G}} \in \Iasp(\tilde{G})$ with $f^\EndoE = \mathcal{T}^\EndoE(f_{\tilde{G}})$. Then
	$$ f_{\tilde{G}}(\pi) = \sum_{\phi \in T^\EndoE(\tilde{G})} \Delta(\pi, \phi) f^\EndoE(\phi) $$
	for every $\pi \in \Pi_{2\uparrow,-}(\tilde{G})$.
\end{corollary}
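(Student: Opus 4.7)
The plan is to derive this corollary as a direct consequence of the character relation of Theorem \ref{prop:character-relation-R} combined with the first inversion formula of Theorem \ref{prop:spectral-inversion-R}. First I would fix $\pi \in \Pi_{2\uparrow,-}(\tilde{G})$ and rewrite the right-hand side by substituting, for each elliptic $\phi$, the expression
$$ f^\EndoE(\phi) = \sum_{\pi_1 \in \Pi_{\mathrm{temp},-}(\tilde{G})} \Delta(\phi, \pi_1) f_{\tilde{G}}(\pi_1) $$
given by Theorem \ref{prop:character-relation-R}. Since $\phi \in T^\EndoE_\mathrm{ell}(\tilde{G})$ corresponds to a discrete series parameter, Theorem \ref{prop:Li-lifting-real} forces $\Delta(\phi, \pi_1)$ to vanish unless $\pi_1 \in \Pi_{2\uparrow,-}(\tilde{G})$ has the infinitesimal character determined by $\phi$; so the inner sum is finite and supported on a single $L$-packet.

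Next I would check finiteness and interchange the two summations. For the outer sum, $\Delta(\pi, \phi) = |\Pi^{\tilde{G}}_\lambda|^{-1}\overline{\Delta(\phi,\pi)}$ (with $\lambda$ the infinitesimal character of $\pi$) is nonzero only for those $\phi = (G^\Endo, \phi_{\lambda^\Endo})$ with $\lambda^\Endo \mapsto \lambda$; by Lemma \ref{prop:spectral-bijection} these form a finite set parametrized by $\mathfrak{R}_\lambda$. Hence the double sum
$$ \sum_{\phi \in T^\EndoE_\mathrm{ell}(\tilde{G})} \Delta(\pi, \phi) \sum_{\pi_1 \in \Pi_{2\uparrow,-}(\tilde{G})} \Delta(\phi, \pi_1) f_{\tilde{G}}(\pi_1) $$
has only finitely many nonzero terms, which justifies Fubini, reorganising it as
$$ \sum_{\pi_1 \in \Pi_{2\uparrow,-}(\tilde{G})} f_{\tilde{G}}(\pi_1) \sum_{\phi \in T^\EndoE_\mathrm{ell}(\tilde{G})} \Delta(\pi, \phi) \Delta(\phi, \pi_1). $$

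Finally, the first formula of Theorem \ref{prop:spectral-inversion-R} collapses the inner sum to $\bdelta_{\pi, \pi_1}$, leaving $f_{\tilde{G}}(\pi)$, as required. No serious obstacle is expected: the argument is pure linear algebra once both Theorem \ref{prop:Li-lifting-real} (vanishing of $\Delta(\phi,\cdot)$ outside $\Pi_{2\uparrow,-}(\tilde{G})$ for elliptic $\phi$) and Theorem \ref{prop:spectral-inversion-R} are in place. The only subtlety worth checking is that the restriction of the outer sum to elliptic $\phi$ is consistent with both the definition of $\Delta(\pi,\phi)$ (which is only defined in the elliptic case) and the domain of the inversion formula; but this is built into the statement because $\pi \in \Pi_{2\uparrow,-}(\tilde{G})$ forces any contributing $\phi$ to be elliptic via matching infinitesimal characters.
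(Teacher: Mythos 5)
Your proof is correct and takes exactly the derivation the paper leaves implicit: substitute the character relation (Theorem \ref{prop:character-relation-R}, which for elliptic $\phi$ is Theorem \ref{prop:Li-lifting-real}), note that $\Delta(\phi,\cdot)$ is supported on the single $L$-packet $\Pi^{\tilde G}_\lambda\subset\Pi_{2\uparrow,-}(\tilde G)$ and $\Delta(\pi,\cdot)$ is supported on the finite set parametrized by $\mathfrak{R}_\lambda$ (Lemma \ref{prop:spectral-bijection}), interchange the two finite sums, and collapse with the first identity of Theorem \ref{prop:spectral-inversion-R}. The only point worth making explicit is that for elliptic $\phi$ one has $f^\EndoE(\phi)=f^{G^\Endo}(\phi)$ because in Remark \ref{rem:f^EndoE} the twist $\bomega''(-1)$ is trivial when $M=G$, which you use implicitly when invoking Theorem \ref{prop:Li-lifting-real} in the form $f^\EndoE(\phi)=\sum_{\pi_1}\Delta(\phi,\pi_1)f_{\tilde G}(\pi_1)$; this is fine and matches the paper's conventions.
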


\subsection{The case $F=\C$}\label{sec:complex}
Let $F = \C$. Hereafter, the $\C$-groups are identified with their groups of $\C$-points. Note the following facts.
\begin{itemize}
  \item The covering $\rev: \tilde{G} \to G$ splits canonically: $\tilde{G} = \bmu_8 \times G$. Henceforth we view $G$ as $\{1\} \times G \subset \tilde{G}$. 
  \item The element $-1 \in \tilde{G}$ in Definition \ref{def:-1} equals $-1 \in G$. Indeed, this can be seen by combining \cite[Remarque 4.3]{Li11} with \cite[Corollaire 4.6]{Li11}.
\end{itemize}
In particular, the genuine representation theory of $\tilde{G}$ is no different from $G$. Theorem \ref{prop:cplx-group} will be applied to $G$ and $G^\Endo$. Choose Borel pairs $(B, T)$ and $(B^\Endo, T^\Endo)$ as above. Choose any standard isomorphism $\theta: T^\Endo \rightiso T$ constructed in Lemma \ref{prop:diagram}. In view of the recollections in \S\ref{sec:L-parameters}, the set $T^\EndoE(\tilde{G})$ may be identified with the set of $W(G, T)$-orbits of continuous unitary characters of $T$, whose elements we represent as $[\chi]$. The bijection
\begin{equation}\begin{aligned}\label{eqn:cplx-bijection}
  T^\EndoE(\tilde{G}) & \longrightarrow \Pi_\text{temp}(G) = \Pi_{\text{temp},-}(\tilde{G}) \\
  \phi=[\chi] & \longmapsto I_B(\chi).
\end{aligned}\end{equation}
is independent of the choice of $\theta$.

\begin{theorem}
  For every $f_{\tilde{G}} \in \Iasp(\tilde{G})$ with $f^\EndoE = \mathcal{T}^\EndoE(f_{\tilde{G}})$ and $\phi \in T^\EndoE(\tilde{G})$, we have
  $$ f^\EndoE(\phi) = \sum_{\pi \in \Pi_{\mathrm{temp},-}(\tilde{G})} \Delta(\phi, \pi) f_{\tilde{G}}(\pi), $$
  where $\Delta(\phi, \pi)=1$ when $\phi \mapsto \pi$ via \eqref{eqn:cplx-bijection}, otherwise $\Delta(\phi,\pi)=0$. Moreover, by setting $\Delta(\pi, \phi) := \Delta(\phi, \pi)$ we have the inversion formula
  $$ f_{\tilde{G}}(\pi) = \sum_{\phi \in T^\EndoE(\tilde{G})} \Delta(\pi, \phi) f^\EndoE(\phi). $$
\end{theorem}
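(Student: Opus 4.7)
The plan is to reduce both sides of the proposed character identity to a Fourier-type integral against $f_{\tilde T}$, where $T = M_0$ is the maximal split torus; the splitting $\tilde G = \bmu_8 \times G$ and the complete reducibility of tempered representations as characters of $T$ via Theorem \ref{prop:cplx-group} flatten every metaplectic feature over $\C$.

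First, I would identify both sides of the bijection \eqref{eqn:cplx-bijection} as $W^G_0$-orbits of unitary characters of $T$. On the right this is Theorem \ref{prop:cplx-group}. On the left, the key observation is that for a connected reductive complex group $H$, any $L$-parameter $\phi: \C^\times \to \hat H$ has abelian image, so $Z_{\hat H}(\mathrm{Im}\,\phi)^0$ contains a maximal torus of $\hat H$; hence $\Phi_{2,\mathrm{bdd}}(H)$ is empty unless $H$ itself is a torus. This forces the decomposition $T^\EndoE(\tilde G) = \bigsqcup_{M/W^G_0} T^\EndoE_\mathrm{ell}(\tilde M)/W^G(M)$ to collapse to the single contribution $M = M_0$ with its trivial elliptic endoscopic datum $(0,0)$, whose endoscopic group is again $T$. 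The resulting identification of $T^\EndoE(\tilde G)$ with $W^G_0$-orbits of unitary characters of $T$ matches \eqref{eqn:cplx-bijection} independently of the choice of $\theta$.

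For the character identity itself, I would compute both sides by parabolic descent to $T$. Given $\phi \leftrightarrow [\chi]$, Definition \ref{def:f^EndoE} combined with Theorem \ref{prop:transfer-parabolic} rewrites $f^\EndoE(\phi)$ as $(f_{\tilde T})^\Endo(\phi_{M_0^\Endo})$; choosing $s \in \EndoE_{M_0^\Endo}(\tilde G)$ with $I'' = \varnothing$ kills the $z[s]$-twist, and since standard endoscopic transfer between tori (with trivial $\SO(1)$ factors) is the identity on stable orbital integrals, this evaluates to $f_{\tilde T}(\chi) = \int_T \chi(t) f_{\tilde T}(t) \, dt$. On the spectral side, the identity \eqref{eqn:para-descent-character} applied to the Borel $B = TU$ yields $f_{\tilde G}(I_B(\chi)) = \Theta_\chi(f_{\tilde B}) = f_{\tilde T}(\chi)$. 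The two expressions agree, so the claimed identity holds with $\Delta(\phi, \pi) = \mathbbm{1}_{\phi \mapsto \pi}$; the sum on the right-hand side has exactly one non-zero summand.

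The inversion formula is then immediate: because the matrix $(\Delta(\phi, \pi))$ becomes a permutation matrix under \eqref{eqn:cplx-bijection}, setting $\Delta(\pi, \phi) := \Delta(\phi, \pi)$ makes the composite in either direction a Kronecker delta, and the stated formula $f_{\tilde G}(\pi) = \sum_\phi \Delta(\pi, \phi) f^\EndoE(\phi)$ has again exactly one non-vanishing summand. No substantive obstacle arises; the only care needed is to track the compatibility of the various identifications (the splitting $\tilde G \cong \bmu_8 \times G$ and the trivialization of $\tilde T$ it induces, the identification of $\Phi_\mathrm{bdd}(T)$ with continuous unitary characters of $T$, and the triviality of endoscopic transfer on tori) with the bijection \eqref{eqn:cplx-bijection}.
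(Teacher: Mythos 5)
Your argument is correct and takes essentially the same route as the paper's own (very terse) proof, which also reduces to the minimal Levi $M=T$, observes that the spectral transfer factor $\Delta_{\tilde T}(\cdot,\cdot)$ degenerates to a Kronecker delta, and invokes the irreducibility of $I_B(\chi)$ from Theorem \ref{prop:cplx-group}. You have filled in the details the paper elides: why every discrete $L$-parameter of a complex reductive group factors through a torus (so $T^\EndoE(\tilde G)$ collapses to the $M=M_0=T$ stratum), why the trivial endoscopy on a torus gives $f^\EndoE(\phi) = f_{\tilde T}(\chi)$ with no $z[s]$-twist (choosing $I''=\varnothing$, legitimately since $f^\EndoE(\phi)$ is independent of $s$), and that \eqref{eqn:para-descent-character} closes the loop on the spectral side; the inversion formula is then the permutation-matrix observation, just as in the paper.
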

\begin{proof}
  To prove the first assertion, we argue as in Theorem \ref{prop:character-relation-R}. In the complex setting we reduce to the case $M = T$. Obviously $\Delta_{\tilde{T}}(\cdot, \cdot)$ reduces to Kronecker's delta. It remains to recall from Theorem \ref{prop:cplx-group} the irreducibility of $I_B(\chi)$. The inversion formula follows immediately.
\end{proof}

Obvious analogues of Proposition \ref{prop:preservation-infchar} and Theorem \ref{prop:preservation-K-finiteness} hold in the complex case. We omit the details.

\section{Proof of the non-archimedean character relations}\label{sec:proof}
The arguments are largely based on \cite{Ar96}. Some non-trivial fine-tunings are needed, however.

\subsection{A stable simple trace formula}\label{sec:stable-simple-trace-formula}
In this subsection, $\mathring{F}$ will denote a number field. Write $\A = \A_{\mathring{F}}$ for its ring of adèles and fix a non-trivial additive character $\mathring{\psi} = \prod_v \psi_v: \A/\mathring{F} \to \Sph^1$. We consider an adélic metaplectic covering
$$ \rev: \tildering{G} \twoheadrightarrow \mathring{G}(\A), \quad \mathring{G} = \Sp(\mathring{W}) $$
attached to a symplectic $\mathring{F}$-vector space $(\mathring{W}, \angles{\cdot|\cdot})$ of dimension $2n$ and $\mathring{\psi}$.

\paragraph{Simple trace formula}
We shall formulate the Arthur-Selberg trace formula for $\tildering{G}$; the basic reference is \cite{Li14b}. Following the prescription of \textit{loc.\ cit.}, we use the Tamagawa measures on the adélic groups and their quotients.

Fix a large set $V$ of places of $\mathring{F}$ such that
$$ V \supset V_\text{ram} \supsetneq V_\infty := \{v : v | \infty \}, $$
where $V_\text{ram}$ is the set of places over which $\rev: \tildering{G} \twoheadrightarrow \mathring{G}(\A)$ ``ramifies'' in the sense of \cite[\S 3]{Li14a}. Here we may simply choose $V$ and endow $(\mathring{W}, \angles{\cdot|\cdot})$ with a model over $\mathfrak{o}_V$, the ring of $V$-integers in $\mathring{F}$, so that every $v \notin V$ satisfies
\begin{itemize}
  \item $v$ is non-archimedean of residual characteristic $\neq 2$;
  \item the $\mathfrak{o}_V$-model of $(\mathring{W}, \angles{\cdot|\cdot})$ has good reduction at $v$; in particular, $W_v := \mathring{W} \otimes_{\mathring{F}} \mathring{F}_v$ admits the self-dual lattice $\mathring{W}(\mathfrak{o}_v)$ with respect to $\angles{\cdot|\cdot}$;
  \item $\psi_v|_{\mathfrak{o}_v} \equiv 1$ but $\psi_v|_{\mathfrak{p}_v^{-1}} \not\equiv 1$;
  \item the lattice model for the Weil representation $\omega_{\psi_v}$ furnishes a splitting of $\rev$ over $K_v := \Sp(W_v, \mathfrak{o}_v)$.
\end{itemize}

As in \cite{Li14a}, we write $\tilde{G}_V := \rev^{-1}(G(\mathring{F}_V))$; similarly for $\tilde{G}^V$. Set $K^V := \prod_{v \notin V} K_v$ so that $K^V \hookrightarrow \tilde{G}^V$ is a continuous splitting. Hence we may define the \emph{spherical Hecke algebra} $\mathcal{H}(\tilde{G}^V \sslash K^V)$ outside $V$: its unit is the genuine function $f_{K^V} := \prod_{v \notin V} f_{K_v}$ on $\tilde{G}^V$. Also fix a maximal compact subgroup $K_V = \prod_{v \in V} K_v$ of $G(F_V)$. In the foregoing construction, we may even arrange that each $K_v$ is in good position relative to some chosen minimal Levi subgroup $\mathring{M}_0$ of $\mathring{G}$. Put $K := \prod_v K_v$, $K_\infty := \prod_{v \in V_\infty} K_v$.

For any finite subset $\Gamma$ of $\Pi(\tilde{K}_\infty)$, define $\mathcal{H}_{\asp}(\tilde{G}_V)_\Gamma$ to be the subspace functions in $C^\infty_{c,\asp}(\tilde{G}_V)$ which generate a space isomorphic to a sum of representations from $\Gamma \times \Gamma$ under bilateral translation by $\tilde{K}_\infty$. Define $\mathcal{H}_{\asp}(\tilde{G}_V) := \bigcup_{\Gamma} \mathcal{H}_{\asp}(\tilde{G}_V)_\Gamma$.

\begin{definition}\index{$\mathcal{H}_{\text{simp}}(\tilde{G}_V)$}
  Define the space of simple test functions $\mathcal{H}_{\text{simp}}(\tilde{G}_V)$ for $\tildering{G}$ to be the subspace of $\mathcal{H}_{\asp}(\tilde{G}_V)$ generated by $f_V = \prod_{v \in V} f_v$ satisfying the local conditions
  \begin{enumerate}[(i)]
    \item there exists $v_1, v_2 \in V \smallsetminus V_\infty$, $v_1 \neq v_2$, at which $f$ is cuspidal (Definition \ref{def:Iasp-filtration});
    \item there exists $w \in V \smallsetminus V_\infty$ such that $f_w$ is supported on the semisimple, strongly regular elliptic locus of $\tilde{G}_w$.
  \end{enumerate}
  Define the subspace $\mathcal{H}_{\text{simp,adm}}(\tilde{G}_V)$ by imposing the following \emph{admissibility}\index{admissibility} condition (cf.\ \cite[(20)]{Li14b} and \cite[\S 5.6]{Li14a}):
  $$ f_V \in \mathcal{H}_{\text{adm},\asp}(\tilde{G}_V). $$

  For $\Gamma$ as above, define $\mathcal{H}_{\text{simp}}(\tilde{G}_V)_\Gamma$ to be $\mathcal{H}_{\text{simp}}(\tilde{G}_V) \cap \mathcal{H}_{\asp}(\tilde{G}_V)_\Gamma$.
\end{definition}

\begin{theorem}[Simple trace formula {\cite[Théorème 6.7]{Li14b}}]\label{prop:simple-trace-formula}
  The distribution
  $$ I: \mathcal{H}_{\asp}(\tilde{G}_V) \to \C $$
  in the invariant trace formula takes the following form when applied to $f_V \in \mathcal{H}_{\mathrm{simp}}(\tilde{G}_V)$. Put $\mathring{f} := f_V f^{K^V}$, then
  \begin{align*}
    I(f_V) & = \sum_{\gamma \in G(\mathring{F})_\mathrm{ell,ss}/\mathrm{conj}} a_{\tildering{G}}(\gamma) I^{\tildering{G}}(\gamma, \mathring{f}), \\
    & = \sum_{t \geq 0} I_t(f),
  \end{align*}
  called the geometric and spectral expansions of $I$, respectively, with
  \begin{gather*}
    I_t(f_V) := \sum_{\mathring{\pi} \in \Pi_{\mathrm{disc},t,-}(\tildering{G})} a_{\tildering{G}}(\mathring{\pi}) I^{\tildering{G}}(\mathring{\pi}, \mathring{f}).
  \end{gather*}
  Here
  \begin{itemize}
    \item $I^{\tildering{G}}(\gamma, \cdot)$ is the orbital integral along the orbit of $\gamma$;
    \item $I^{\tildering{G}}(\mathring{\pi}, \cdot)$ is the character of the representation $\mathring{\pi}$;
    \item $a_{\tildering{G}}(\gamma) = \mes(G_\gamma(\mathring{F}) \backslash G_\gamma(\A))$, the Tamagawa number of $G_\gamma$;
    \item $\Pi_{\mathrm{disc},t,-}(\tildering{G})$ is a set of irreducible genuine representations of $\tildering{G}$ whose archimedean infinitesimal character $\nu$ has height $\|\Im(\nu)\| = t$; it contains the representations in the genuine discrete spectrum of $G(\mathring{F}) \backslash \tildering{G}$, together with certain ``phantoms''.
  \end{itemize}
  We refer to \cite[\S 7]{Li13} for the precise definition for the spectral objects.
\end{theorem}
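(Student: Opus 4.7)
The plan is to derive this simple trace formula as a specialization of the general invariant trace formula for $\tildering{G}$ already established in \cite{Li14b}. That formula gives two expansions of $I(f_V)$, both indexed by Levi subgroups $M \in \mathcal{L}(\mathring{M}_0)/W^G_0$: a geometric side
\[ I(f_V) = \sum_M |W^M_0|\, |W^G_0|^{-1} \sum_\gamma a^M(V,\gamma)\, I_M(\gamma, \mathring{f}) \]
built from Arthur's weighted orbital integrals $I_M(\gamma, \cdot)$, and a parallel spectral side built from weighted characters $I_M(\pi, \cdot)$ summed over $\pi$ in suitable automorphic spectra. The task is to show that the three local conditions defining $\mathcal{H}_{\mathrm{simp}}(\tilde{G}_V)$ collapse both sides to the stated simple shape.

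First I would eliminate every proper Levi subgroup $M \subsetneq G$ by exploiting cuspidality at the two places $v_1, v_2 \in V \smallsetminus V_\infty$. Arthur's splitting formula expresses $I_M(\gamma, \mathring{f})$ as a sum over pairs $(L_1, L_2) \in \mathcal{L}(M)^2$ of products of local weighted integrals at $v_1, v_2$; cuspidality of $f_{v_1}$ forces $L_1 = G$ and cuspidality of $f_{v_2}$ forces $L_2 = G$, which in turn is compatible with the splitting combinatorics only when $M = G$. The same mechanism applied to the spectral side kills all weighted characters attached to proper Levi subgroups, leaving only ordinary traces of representations in the genuine discrete spectrum. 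This is the classical Deligne--Kazhdan simple-trace-formula trick transported to coverings; the nontrivial input is that Arthur's splitting and cuspidality-vanishing identities transfer from the linear to the covering setting by the genuine/anti-genuine formalism of \cite{Li14a, Li14b}, which is exactly what the admissibility hypothesis is designed to accommodate.

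Second, the support condition at $w$ restricts the surviving geometric sum to elliptic classes. After the Levi reduction, only ordinary orbital integrals $I^{\tildering{G}}(\gamma, \mathring{f})$ remain; these factor through the local orbital integral at $w$, which vanishes unless $\gamma_w$ is semisimple strongly regular elliptic in $\tilde{G}_w$, so that $\gamma \in G(\mathring{F})_\mathrm{ell,ss}$. The global coefficient $a^G(V, \gamma)$ then reduces to the Tamagawa volume $a_{\tildering{G}}(\gamma) = \mes(G_\gamma(\mathring{F}) \backslash G_\gamma(\A))$ of the centralizer, using Harish-Chandra's regularity theorem for coverings (\cite[Théorème 4.3.2]{Li12b}) to ensure that only the main, unipotent-free term of the local expansion survives. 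The decomposition $I = \sum_{t \geq 0} I_t$ on the spectral side is then the standard fine decomposition along archimedean infinitesimal-character norms of \cite[\S 7]{Li13}, which requires no additional input beyond the convergence granted by admissibility.

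The hard part will be the bookkeeping of Haar measures, Weyl-group factors and of the splittings $s, \sigma_\ell, s_L$ of \S\ref{sec:splittings} needed to transport Arthur's splitting and vanishing statements to $\tildering{G}$ without sign or normalization errors; in particular the requirement of \emph{two} non-archimedean cuspidal places $v_1 \neq v_2$, rather than one, arises precisely because the splitting formula must reach $L_1 = L_2 = G$ on both local factors simultaneously. Once these technical matters are in place, the simple trace formula follows by assembling the geometric and spectral simplifications into the form stated.
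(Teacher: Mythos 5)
This theorem is not proved in the paper at all: it is imported wholesale from \cite[Théorème~6.7]{Li14b}, and the present article contains no argument for it. So there is nothing to compare your proposal against here; the correct ``proof'' at the level of this paper is simply the citation.

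That said, your sketch is a fair reconstruction of the kind of argument that sits behind such a simple trace formula, and it is probably close in spirit to what \cite{Li14b} actually does: use cuspidality at two non-archimedean places to kill all weighted ($M \subsetneq G$) contributions on both sides via Arthur's splitting and descent formulas (as you say, the descent constants force $L_1 = L_2 = G$ and hence $\mathfrak{a}^G_M = 0$, i.e.\ $M = G$), then use the regular elliptic support condition at $w$ to discard non-elliptic and non-semisimple rational classes, and finally invoke the fine spectral decomposition of \cite[\S 7]{Li13} along archimedean infinitesimal characters. Two small imprecisions worth flagging. First, Harish-Chandra's regularity theorem (\cite[Théorème~4.3.2]{Li12b}) is not what eliminates the unipotent/non-elliptic terms on the geometric side; the support hypothesis on $f_w$ does that directly, because the global orbital integral factors through the local one at $w$. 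Second, the admissibility condition is not needed for the statement of the theorem as given (which requires only $f_V \in \mathcal{H}_{\mathrm{simp}}(\tilde{G}_V)$); it enters only when one wants to identify the geometric coefficients with those attached to $V$-admissible representatives, a point the paper handles in Remark~\ref{rem:harmless} by enlarging $V$, which is harmless precisely because $\mathring{f}$ is unchanged.
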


Note that by Theorem \ref{prop:Mp-commute}, all elements in $\tildering{G}$ are \emph{good} in the sense of \cite[Définition 2.6.1]{Li14a}, thereby simplifying the trace formula for coverings.

\begin{remark}\label{rem:harmless}
  In \cite{Li14b} it is required that $f_V \in \mathcal{H}_{\text{simp,adm}}(\tilde{G}_V)$ in deducing the simplified geometric expansion of $I(f_V)$. The admissibility of $f_V$ is a global property depending on $\Supp(f_V)$, but also a harmless one. In fact, given $f_V \in \mathcal{H}_{\text{simp}}(\tilde{G}_V)$, we may always enlarge $V$ to some $S \supset V$, replacing $f_V$ by $f_S := f_V f_{K^V_S}$ (here $K^V_S := \prod_{v \in S \smallsetminus V} K_v$) simultaneously, so that $f_S \in \mathcal{H}_{\text{simp,adm}}(\tilde{G}_S)$. Since $\mathring{f}$ remains unaltered, none of the expansions above are affected by this procedure.
\end{remark}

Fix a finite set $\Gamma$ of $\tilde{K}_\infty$-types and assume $f_V \in \mathcal{H}_\text{simp}(\tilde{G}_V)_\Gamma$. In Arthur's original works, the spectral expansion $I(f_V) = \sum_{t \geq 0} \sum_{\mathring{\pi} \in \Pi_{\mathrm{disc},t,-}(\tildering{G})} \cdots$ is considered as a convergent \emph{iterated sum}. The individual sums $I_t(f_V)$ are actually finite sums by \cite[Lemme 7.2]{Li13}. It may be further refined as another convergent iterated sum
$$ I(f_V) = \sum_\nu I_\nu(f_V), $$
where \index{$I_\nu$}
$$ I_\nu(f_V) := \sum_{\substack{\mathring{\pi} \in \Pi_{\text{disc},-}(\tildering{G}) \\ \text{inf. char.}=\nu}} a_{\tildering{G}}(\mathring{\pi}) I_{\tildering{G}}(\mathring{\pi}, \mathring{f}). $$
Moreover, the sum $I = \sum_\nu I_\nu$ satisfies the \emph{multiplier convergence estimate} in \cite[(3.3)]{Ar02}\index{multiplier convergence estimate}.

The results by Finis-Lapid-Müller \cite{FLM11}, once generalized to the metaplectic covering $\tildering{G}$, will solve all these convergence issues in our simple trace formula; cf.\ \cite[Remarque 7.5]{Li13}. As in \cite{Ar96}, we opt to use the version $I = \sum_\nu I_\nu$ in this article. 

\paragraph{Stabilization}
In \cite{Li11, Li14b} we have defined the set $\EndoE_\text{ell}(\tildering{G})$ of elliptic endoscopic data for $\tildering{G}$: its members are always given by pairs $(n',n'') \in \Z^2_{\geq 0}$ satisfying $n'+n''=n$; consequently we can pass to local elliptic endoscopic data of $\tildering{G}_v$ at each place $v$ of $F$. Theorems \ref{prop:geometric-transfer} and \ref{prop:FL} together give adélic transfer of test functions
\begin{align*}
  f_V & \longmapsto f^\Endo_V \quad \in C^\infty_c(G^\Endo(\mathring{F}_V)), \\
  \mathring{f} = f_V f_{K^V} & \longmapsto \mathring{f}^\Endo = f^\Endo_V \mathbf{1}_{K^{V,\Endo}} \quad \in C^\infty_c(G^\Endo(\A)),
\end{align*}
where $G^\Endo := \SO(2n'+1) \times \SO(2n''+1)$ is the endoscopic group corresponding to $(n',n'')$, and $K^{V,\Endo} \subset G^\Endo(F^V)$ is any product of hyperspecial subgroups off $V$. In what follows we take $f_V \in \mathcal{H}_\text{simp}(\tildering{G})$. Its transfer to $G^\Endo$ may be taken to be $K^\Endo_\infty \times K^\Endo_\infty$-finite by Theorem \ref{prop:preservation-K-finiteness} and its complex analogue, where $K^\Endo_\infty$ is any maximal compact subgroup of $\prod_{v | \infty} G^\Endo(F_v)$.

To the quasisplit $\mathring{F}$-group $G^\Endo$, Arthur defined the stable distribution $S^\Endo$ in his stable trace formula \cite[\S 10]{Ar02}. The transfer $f^\Endo_V$ turns out to be a \emph{simple test function} in the sense of \cite[p.556]{Ar96}. Indeed:
\begin{compactenum}[(i)]
  \item $f^\Endo_V$ is cuspidal at two distinct places $v_1, v_2 \in V \smallsetminus V_\infty$ since the transfer is compatible with parabolic descent by Theorem \ref{prop:transfer-parabolic};
  \item $f^\Endo_w$ is supported in the elliptic $G$-regular semisimple locus of $G^\Endo$ at the place $w \in V \smallsetminus V_\infty$, by the very definition of geometric transfer (Theorem \ref{prop:geometric-transfer});
\end{compactenum}

Under this circumstance, $S^\Endo(f^\Endo_V) = S^\Endo(\mathring{f}^\Endo)$ is a sum of stable orbital integrals. More precisely, when applied to simple test functions,  $S^\Endo$ coincides with the regular part of the distribution $ST^{G^\Endo}_{\text{équi,ell}}$ in \cite[Définition 5.2.3]{Li15}, which equals
$$ S^\Endo_{G-\text{reg}, \text{ell}}(\mathring{f}^\Endo) := \tau(G^\Endo) \sum_{\sigma \in G^\Endo(\mathring{F})_\text{reg,ell}/\text{st.conj.}} S^{G^\Endo}(\sigma, \mathring{f}^\Endo) $$
where
\begin{compactitem}
  \item $\tau(G^\Endo)$ is the Tamagawa number of $G^\Endo$,
  \item $S^{G^\Endo}(\sigma, \mathring{f}^\Endo)$ is the stable orbital integral of $\mathring{f}^\Endo$ along $\sigma$, defined relative to Tamagawa measures.
\end{compactitem}

On the other hand, $S^\Endo(\mathring{f}^\Endo)$ also admits a spectral description $\sum_t S^\Endo_t(\mathring{f}^\Endo)$, each $S^\Endo_t$ being an infinite sum of adélic stable characters. Its precise form is contained in Arthur's \emph{stable multiplicity formula} \cite[Theorem 4.1.2]{Ar13}, applied to the discrete parts $S^{\SO(2n'+1)}_\text{disc}$ and $S^{\SO(2n''+1)}_\text{disc}$ separately.  

For the $G^\Endo \in \EndoE_\mathrm{ell}(\tildering{G})$ (abusing notations...) above, we set
$$ \iota(\tildering{G}, G^\Endo) := \begin{cases}
  \frac{1}{4}, & n', n'' \geq 1, \\
  \frac{1}{2}, & n \geq 1, \; n'=0 \text{ or } n''=0, \\
  1, & n=0.
\end{cases} $$

\begin{theorem}[{\cite[Théorème 5.2.6]{Li15}}]\label{prop:stable-trace-formula}
  For $f \in \mathcal{H}_\mathrm{simp}(\tilde{G}_V)$, we have
  $$ I(f_V) = I^\EndoE(f_V) :=\sum_{G^\Endo \in \EndoE_\mathrm{ell}(\tildering{G})} \iota(\tildering{G}, G^\Endo) S^\Endo(\mathring{f}^\Endo). $$
\end{theorem}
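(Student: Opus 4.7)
The strategy is to stabilize the geometric expansion of $I(f_V)$ supplied by Theorem \ref{prop:simple-trace-formula}, and match it term by term against the geometric side of $I^\EndoE(f_V)$. Since $f_V$ is simple, both $I(f_V)$ and $S^\Endo(\mathring{f}^\Endo)$ reduce to sums of (stable) orbital integrals along strongly regular elliptic semisimple classes, so no weighted orbital integrals or other trace-formula apparatus will intervene.

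First I would rewrite
\[
  I(f_V) \;=\; \sum_{\gamma \in \mathring{G}(\mathring{F})_{\text{ell,reg}}/\text{conj}} a_{\tildering{G}}(\gamma)\, I^{\tildering{G}}(\gamma, \mathring{f})
\]
by grouping conjugacy classes inside each stable class. Using Tamagawa measures and the relation $a_{\tildering{G}}(\gamma)=\tau(G_\gamma)$, and invoking Fourier inversion on the finite abelian group $\mathfrak{E}(G_\delta,\mathring{G};\mathring{F})$ (the adélic version of the pointed set $\mathfrak{D}$), the inner sum over conjugacy classes in a given stable class decomposes as a sum over characters $\kappa$ of $\mathfrak{E}(G_\delta,\mathring{G};\mathring{F})$. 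Grouping pairs $(\delta,\kappa)$ according to the elliptic endoscopic datum they define (via the bijection already used in the proof of Lemma \ref{prop:geometric-inversion}, globalised as in \cite[Lemme 5.2.1]{Li13b}) converts the sum into an expansion indexed by $G^\Endo \in \EndoE_\text{ell}(\tildering{G})$ and by stable classes $\sigma$ in $G^\Endo(\mathring{F})$ that are $\mathring{G}$-regular, with the Tamagawa ratio reorganising itself into the factor $\iota(\tildering{G}, G^\Endo)\tau(G^\Endo)$.

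Next I would insert the adélic transfer factor $\Delta(\sigma,\tilde{\delta})$, whose existence and cocycle property let the sum $\sum_{\delta\leftrightarrow\sigma}\Delta(\sigma,\tilde{\delta})I^{\tildering{G}}(\tilde{\delta},\mathring{f})$ be recognised, place by place, as a product of local unstable orbital integrals. The geometric transfer (Theorem \ref{prop:geometric-transfer}) at places $v \in V$ and the fundamental lemma (Theorem \ref{prop:FL}) at places $v \notin V$ then identify this product with the stable orbital integral $S^{G^\Endo}(\sigma,\mathring{f}^\Endo)$. Summing yields exactly
\[
  \sum_{G^\Endo} \iota(\tildering{G},G^\Endo)\, \tau(G^\Endo) \sum_{\sigma \in G^\Endo(\mathring{F})_{\text{reg,ell}}/\text{st.conj.}} S^{G^\Endo}(\sigma,\mathring{f}^\Endo),
\]
which, by the discussion preceding the theorem, equals $\sum_{G^\Endo}\iota(\tildering{G},G^\Endo)S^\Endo(\mathring{f}^\Endo) = I^\EndoE(f_V)$.

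The main obstacle is bookkeeping for the metaplectic twists. Unlike the reductive case, the correspondence of conjugacy classes in $\tildering{G}$ is twisted and involves the sign $z[s]$ of Lemma \ref{prop:z[s]}, and the character $\kappa$ attached to $(\delta,\sigma)$ has to be exactly Renard's endoscopic character (Lemma \ref{prop:kappa_T-identification}) so that Fourier inversion on $\mathfrak{E}$ and the cocycle property of $\Delta$ are mutually compatible. Moreover, one must verify that the rational class $\gamma\in \mathring{G}(\mathring{F})$ actually possesses well-defined local components $\tilde{\gamma}_v$ in $\tildering{G}_v$, and that two stably conjugate rational classes admit locally stably conjugate lifts; this is exactly the delicate point addressed via the Weil representation in \cite{Li14a} and alluded to in the Introduction. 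Once these compatibilities are checked, the manipulation is a bookkeeping exercise, and the admissibility point raised in Remark \ref{rem:harmless} allows us to assume throughout that all sums converge absolutely.
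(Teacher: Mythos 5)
The paper does not actually prove this theorem: it is a direct citation of \cite[Théorème 5.2.6]{Li13b}, an entire paper devoted to stabilizing the elliptic part of the trace formula for $\Mp(2n)$. So there is no internal proof to compare your sketch against; what you have written is an outline of what happens in \cite{Li13b}.

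As an outline, it does capture the standard Langlands--Kottwitz strategy adapted to the metaplectic covering and identifies the correct metaplectic-specific pitfalls. Two points deserve more than the implicit treatment you give them, and would have to be made precise for the argument to close. First, the passage ``let the sum $\sum_{\delta\leftrightarrow\sigma}\Delta(\sigma,\tilde{\delta})I^{\tildering{G}}(\tilde{\delta},\mathring{f})$ be recognised, place by place, as a product of local unstable orbital integrals'' requires the \emph{global product formula} for the metaplectic transfer factor: for a rational $\delta\in\mathring{G}(\mathring{F})$ and the canonical section $\mathring{G}(\mathring{F})\hookrightarrow\tildering{G}$, one needs $\prod_v \Delta_v(\sigma,\tilde{\delta}_v)=1$. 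This is a genuine theorem (it mixes the linear-algebraic part $\Delta_0$, whose product formula is reciprocity for Hilbert symbols, with the genuine part built from $\Theta^\pm_{\psi_v}$, whose product formula is the global triviality of the Weil representation's character on rational points). It is not automatic and is exactly the kind of metaplectic compatibility that separates this stabilization from the reductive case. Second, your description of the Fourier-inversion step names $\mathfrak{E}(G_\delta,\mathring{G};\mathring{F})$, but what one actually inverts over is the cokernel of $H^1(\mathring{F},G_\delta)\to\bigoplus_v H^1(\mathring{F}_v,G_\delta)$, whose Pontryagin dual parametrizes the $\kappa$'s; since $\Sp$ is simply connected the $H^1(\cdot,G)$ vanish and the bookkeeping simplifies, and it is this simplification that makes Kottwitz's formula give exactly $\iota(\tildering{G},G^\Endo)=|Z_{\widehat{G^\Endo}}|^{-1}$. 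With those two points spelled out, the plan matches the argument of \cite{Li13b}.
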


Parallel to the unstable side, each distribution $S^\Endo$ has an expansion
$$ S^\Endo = \sum_{t \geq 0} S^\Endo_t = \sum_{\nu^\Endo: \text{inf. char.}} S^\Endo_{\nu^\Endo} $$
which also satisfies the multiplier convergence estimate by \cite[Proposition 10.5 (b)]{Ar02}. In \S\ref{sec:spectral-transfer-R} we have defined a canonical finite-to-one map $\nu^\Endo \mapsto \nu$ of infinitesimal characters, for every $G^\Endo$. Therefore we can consider $S^\Endo = \sum_\nu S^\Endo_\nu$ with $S^\Endo_\nu := \sum_{\nu^\Endo \mapsto \nu} S^\Endo_{\nu^\Endo}$ as well. With the help of multiplier convergence estimates, in \cite[\S 7]{Ar96} Arthur derived a version of stable trace formula that would yield the result below in the metaplectic setup.

\begin{corollary}\label{prop:stable-trace-formula-nu}\index{simple trace formula!stabilization}
  For any chosen archimedean infinitesimal character $\nu$ of $\tildering{G}$, we have \index{$I^\EndoE_\nu$}
  $$ I_\nu(f_V) = I^\EndoE_\nu(f_V) := \sum_{G^\Endo} \iota(\tildering{G}, G^\Endo) S^\Endo_\nu(\mathring{f}^\Endo). $$
\end{corollary}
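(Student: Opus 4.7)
The strategy is to descend from the global identity $I(f_V)=I^\EndoE(f_V)$ of Theorem \ref{prop:stable-trace-formula} to its $\nu$-refinement by invoking Arthur's multiplier technique, exactly in the spirit of \cite[\S 7]{Ar96}. First, I would recall the setup: both spectral expansions
\[
  I(f_V) = \sum_{\nu} I_\nu(f_V), \qquad I^\EndoE(f_V) = \sum_{\nu} I^\EndoE_\nu(f_V)
\]
are known to be absolutely convergent, and each summand satisfies the multiplier convergence estimate of \cite[(3.3)]{Ar02} (for the unstable side this is the metaplectic analogue already cited in \S\ref{sec:stable-simple-trace-formula}; for the stable side it follows from \cite[Proposition 10.5(b)]{Ar02} applied to the two factors $\SO(2n'+1)$ of each $G^\Endo$).

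Next, I would introduce the archimedean multiplier action. Choose a maximal $\mathring{F}_\infty$-split Cartan $\mathfrak{h}_\infty$ (suitably embedded in the relevant Lie algebra). For every $W$-invariant $\alpha \in C^\infty_c(\mathfrak{h}_\infty)$ Arthur's multiplier theorem attaches a new test function $f_{V,\alpha} \in \mathcal{H}_{\asp}(\tilde{G}_V)$ satisfying
\[
  I_\nu(f_{V,\alpha}) = \hat{\alpha}(\nu)\, I_\nu(f_V), \qquad \forall \nu.
\]
Crucially, this operation only modifies $f_V$ at the archimedean places, so the cuspidality at $v_1,v_2 \in V\smallsetminus V_\infty$ and the support condition at $w \in V\smallsetminus V_\infty$ are preserved; hence $f_{V,\alpha} \in \mathcal{H}_{\mathrm{simp}}(\tilde{G}_V)$ whenever $f_V$ is. I would then verify the corresponding identity on the endoscopic side: for each $G^\Endo \in \EndoE_{\mathrm{ell}}(\tildering{G})$ there is a parallel multiplier action such that
\[
  S^\Endo_\nu\!\bigl((\mathring{f}^\Endo)_\alpha\bigr) = \hat{\alpha}(\nu)\, S^\Endo_\nu(\mathring{f}^\Endo),
\]
and moreover $(\mathring{f}_\alpha)^\Endo = (\mathring{f}^\Endo)_\alpha$ as elements of $S\orbI(G^\Endo(\A))$. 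At the archimedean places this compatibility is exactly Proposition \ref{prop:preservation-infchar} (extended to coverings of metaplectic type and to $F=\C$), combined with the fact that $\hat\alpha$ factors through the finite-to-one map $\nu^\Endo \mapsto \nu$; at non-archimedean places nothing is done.

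Applying these multipliers to Theorem \ref{prop:stable-trace-formula} yields
\[
  \sum_{\nu} \hat{\alpha}(\nu)\, I_\nu(f_V) \;=\; \sum_{\nu} \hat{\alpha}(\nu)\, I^\EndoE_\nu(f_V)
\]
for every admissible $\alpha$, with absolute convergence ensured by the multiplier estimates. Since the functions $\hat{\alpha}$ obtained from $W$-invariant compactly supported $\alpha$ are a family of Paley--Wiener functions on $\mathfrak{h}_{\infty,\C}^*/W$ that separates any countable set of $\nu$ satisfying Arthur's height bound, a standard Fourier-theoretic density argument (as in \cite[proof of Theorem 7.1]{Ar96}) lets me extract the desired term-by-term equality $I_\nu(f_V) = I^\EndoE_\nu(f_V)$.

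The main technical point I foresee is the endoscopic compatibility of the archimedean multiplier, i.e.\ the identity $(\mathring{f}_\alpha)^\Endo = (\mathring{f}^\Endo)_\alpha$ at places $v\mid\infty$. Everything else is a routine transcription of \cite[\S 7]{Ar96} and reduces, via Proposition \ref{prop:preservation-infchar} and the results of \S\ref{sec:spectral-transfer-R}, to the matching of archimedean infinitesimal characters under geometric transfer; that matching was already established while setting up the archimedean character relations.
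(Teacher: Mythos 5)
Your proposal is correct and follows essentially the same route as the paper's own sketch: apply Arthur's archimedean multipliers to both sides of Theorem \ref{prop:stable-trace-formula}, use Proposition \ref{prop:preservation-infchar} (and its complex analogue) to obtain the compatibility $(f_{V,\alpha})^\Endo = (f^\Endo_V)_\alpha$, and then separate the $\nu$-terms by varying $\hat{\alpha}$, with convergence controlled by the multiplier estimates of \cite[(3.3)]{Ar02}. The paper also makes the same identification $\hat{\alpha}(\nu')=\hat{\alpha}(\nu)$ for $\nu'\mapsto\nu$ and references \cite[\S 7]{Ar96} for the separation step, so there is no substantive divergence.
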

\begin{proof}[Sketch of the proof]
  Arthur's arguments in \textit{op.\ cit.}\ can be easily adapted to our case as follows.
  \begin{compactenum}[(i)]
    \item Modify the test function $f_V$ by suitable \emph{multipliers} $\hat{\alpha}$ (certain functions of $\nu$) in order to obtain $f_{V,\alpha} \in \mathcal{H}_\text{simp}(\tilde{G}_V)$ with
      $$ \forall \nu, \; I_\nu(\mathring{f}_\alpha) = \hat{\alpha}(\nu) I_\nu(\mathring{f}). $$
      The theory of multipliers for coverings is recapitulated in \cite[Théorème 4.4]{Li13}.
    \item Similar constructions apply to the stable side: $\forall \nu^\Endo, \; S^\Endo_{\nu'}(\mathring{f}^\Endo_\beta) = \hat{\beta}(\nu') S^\Endo_{\nu'}(\mathring{f}^\Endo)$.
    \item Plug $f_{V,\alpha}$ into Theorem \ref{prop:stable-trace-formula}, for various $\alpha$.
    \item Set $\hat{\alpha}(\nu') = \hat{\alpha}(\nu)$ if $\nu' \mapsto \nu$. Use the preservation of infinitesimal characters under archimedean transfer (Proposition \ref{prop:preservation-infchar} and its complex analogue) to see $(f_{V,\alpha})^\Endo = (f^\Endo_V)_\alpha$.
  \end{compactenum}
  By varying the multipliers, this trick will isolate the $\nu$-parts in the equality of Theorem \ref{prop:stable-trace-formula}; the analytic subtleties are taken care of by the multiplier convergence estimates.
\end{proof}

\subsection{Compression of coefficients}\label{sec:compression}
Keep the previous notations. Fix a non-archimedean place $u \in V$ and write $\tilde{G} := \tilde{G}_u$, $F := \mathring{F}_u$, $\psi := \psi_u$. Fix a finite set of $\tilde{K}_\infty$-types $\Gamma$. As before, the test functions are of the form $f_V = \prod_{v \in V} f_v \in \mathcal{H}_{\asp}(\tilde{G}_V)_\Gamma$. Write $f := f_u \in C^\infty_{c,\asp}(\tilde{G})$, so that $f_V = f_u f^u_V$.

\textbf{Assumption}: $f^u_V$ satisfies all the conditions defining of $\mathcal{H}_\text{simp}(\tilde{G}_V)_\Gamma$. Therefore $f_V \in \mathcal{H}_\text{simp}(\tilde{G}_V)_\Gamma$ for any choice of $f$.


We set out to isolate the $u$-components in the simple stable trace formula (Theorem \ref{prop:stable-trace-formula}) and encapsulate the contribution from the remaining places. This compression procedure is similar to \cite[\S 5.2, \S 5.5]{Li14b}, in principle.

\paragraph{Unstable geometric side}
For the geometric side only, we impose the extra condition that
\begin{gather}\label{eqn:adm-condition}
  f_V \in \mathcal{H}_\text{simp,adm}(\tilde{G}_V).
\end{gather}
It is largely a harmless assumption according to Remark \ref{rem:harmless}. The coefficients $a^{\tildering{G}}_\text{ell}(\cdot)$ in \cite[(23)]{Li14b} will be used.

\begin{definition}
  Assume \eqref{eqn:adm-condition}. For any $\gamma \in \Gamma_\mathrm{reg}(G)$ and $\tilde{\gamma} \in \rev^{-1}(\gamma)$, set
  $$ I(f^u_V, \tilde{\gamma}) := 8 \sum_{\tilde{\gamma}^u} a^{\tildering{G}}_\mathrm{ell}(\tilde{\gamma}^u \tilde{\gamma}) (f^u_V)_{\tilde{G}^u_V}(\tilde{\gamma}^u), $$
  where $(f^u_V)_{\tilde{G}^u_V}(\tilde{\gamma}^u)$ is a product of normalized orbital integrals over $V \smallsetminus \{u\}$, and $\tilde{\gamma}^u$ ranges over the regular semisimple classes in $\tilde{G}^u_V$. The sum is finite for given $\Supp(f_V)$.
\end{definition}
By \cite[Lemme 5.4]{Li14b}, $I(f^u_V, \noyau\tilde{\gamma}) = \noyau I(f^u_V, \tilde{\gamma})$ for every $\noyau \in \bmu_8$.

\begin{lemma}\label{prop:compressed-geom}
  Assume \eqref{eqn:adm-condition}. We have
  $$ I(f_V) = \sum_{\gamma \in \Gamma_\mathrm{ell,reg}(G)} I(f^u_V, \tilde{\gamma}) f_{\tilde{G}}(\tilde{\gamma}) $$
  where $\tilde{\gamma} \in \rev^{-1}(\gamma)$ is arbitrary.
\end{lemma}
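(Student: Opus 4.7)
The plan is to start from the geometric expansion of Theorem \ref{prop:simple-trace-formula},
$$ I(f_V) = \sum_{\mathring{\gamma} \in G(\mathring{F})_{\text{ell,ss}}/\text{conj}} a_{\tildering{G}}(\mathring{\gamma}) \, I^{\tildering{G}}(\mathring{\gamma}, \mathring{f}), $$
and reorganize it by fixing the image of $\mathring{\gamma}$ in $\Gamma_{\text{ell,reg}}(\tilde{G})$ at the place $u$. Under the admissibility hypothesis \eqref{eqn:adm-condition}, every rational class relevant to this sum admits canonical local lifts $\tilde{\mathring{\gamma}}_v \in \tilde{G}_v$ for $v \in V$, and stably conjugate rational classes give rise to compatible local lifts; this is the local-global matching from \cite{Li14a} applied in the metaplectic case, which works by Theorem \ref{prop:Mp-commute} and the goodness of elements in $\tildering{G}$. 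The cuspidality of $f^u_V$ at two distinct places of $V \smallsetminus V_\infty$ together with the elliptic-regular support at $w$ force $\mathring{\gamma}$ to be elliptic and strongly regular throughout $V$.

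Next I would factor the adélic orbital integral along the decomposition $\mathring{f} = f \cdot f^u_V \cdot f_{K^V}$:
$$ I^{\tildering{G}}(\mathring{\gamma}, \mathring{f}) = f_{\tilde{G}}(\tilde{\mathring{\gamma}}_u) \cdot (f^u_V)_{\tilde{G}^u_V}(\tilde{\mathring{\gamma}}^u) \cdot \prod_{v \notin V} (f_{K_v})_{\tilde{G}_v}(\tilde{\mathring{\gamma}}_v), $$
each local factor being normalized by the local measure compatible with the Tamagawa choice. The off-$V$ contribution is nonzero only when $\tilde{\mathring{\gamma}}^V$ meets $K^V$ up to $\bmu_8$; combined with the Tamagawa weight $a_{\tildering{G}}(\mathring{\gamma})$ and the cardinality of rational classes producing a prescribed pair $(\tilde{\gamma}, \tilde{\gamma}^u) \in \Gamma_{\text{ell,reg}}(\tilde{G}) \times \Gamma_{\text{ell,reg}}(\tilde{G}^u_V)$, this is exactly what the coefficient $a^{\tildering{G}}_{\text{ell}}(\tilde{\gamma}^u \tilde{\gamma})$ from \cite[(23)]{Li14b} packages. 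Grouping the resulting sum first by $\tilde{\gamma}$ and then by $\tilde{\gamma}^u$ yields an expression of the shape
$$ I(f_V) = \sum_{\tilde{\gamma} \in \Gamma_{\text{ell,reg}}(\tilde{G})} \left[ \sum_{\tilde{\gamma}^u} c \cdot a^{\tildering{G}}_{\text{ell}}(\tilde{\gamma}^u \tilde{\gamma}) \, (f^u_V)_{\tilde{G}^u_V}(\tilde{\gamma}^u) \right] f_{\tilde{G}}(\tilde{\gamma}) $$
with some constant $c$. The factor $c = 8 = |\bmu_8|$ arises because a rational conjugacy class has a single rational lift into $\tildering{G}$ modulo $\bmu_8$, whereas the definition of $I(f^u_V, \tilde{\gamma})$ lets the local lifts range freely; the anti-genuineness recorded in \cite[Lemme 5.4]{Li14b} shows that summing over the $\bmu_8$-fiber upstairs at $u$ reproduces the missing factor without overcounting, the anti-genuineness of $(f^u_V)_{\tilde{G}^u_V}$ cancelling against the genuineness of $a^{\tildering{G}}_{\text{ell}}$.

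The main obstacle is the bookkeeping of the $\bmu_8$-ambiguity in the local-global comparison, specifically verifying that the multiplicities of rational classes with prescribed local components, weighted by Tamagawa numbers, reassemble into $a^{\tildering{G}}_{\text{ell}}$ with the correct constant. This is fundamentally a repackaging of already-established identities in \cite{Li14a, Li14b}; the substantive work already lies in those references, so the proof here should reduce to unwinding definitions and matching normalizations.
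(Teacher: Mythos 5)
Your plan is essentially correct in spirit but you are re-deriving from scratch an intermediate identity that the paper simply quotes. The paper's own proof begins not with the rational-class expansion $I(f_V)=\sum_{\mathring{\gamma}} a_{\tildering{G}}(\mathring{\gamma}) I^{\tildering{G}}(\mathring{\gamma},\mathring{f})$, but with the prior fact (proved in the cited part 4 of \cite[Th\'eor\`eme 6.5]{Li14b}) that for a test function in $\mathcal{H}_{\text{simp,adm}}(\tilde{G}_V)$ the geometric side collapses to
$$ I(f_V)=I_{\text{ell}}(f_V)=\sum_{\tilde{\gamma}_V} a^{\tildering{G}}_{\text{ell}}(\tilde{\gamma}_V)\,(f_V)_{\tilde{G}_V}(\tilde{\gamma}_V), $$
which is already organized in terms of local conjugacy classes and the coefficients $a^{\tildering{G}}_{\text{ell}}$. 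After that, the proof is a single term-collection step: factor each $\tilde{\gamma}_V$ as $\tilde{\gamma}_u\tilde{\gamma}^u$ and group by $\tilde{\gamma}_u$. In contrast, you start from the rational-class expansion and attempt to reconstruct the passage to $a^{\tildering{G}}_{\text{ell}}$ by factoring the ad\'elic orbital integral and bookkeeping the $\bmu_8$-ambiguity of local lifts. This passage (extracting compatible local components of a rational class in $\tildering{G}$, counting the rational classes with prescribed local data, and matching Tamagawa numbers against $a^{\tildering{G}}_{\text{ell}}$) is precisely the content of \cite{Li14a,Li14b} and is nontrivial; you essentially re-narrate it rather than prove it, which you yourself flag at the end.

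The concrete gap is in the bookkeeping of the constant $8$ and the choice of summation index. Your displayed formula sums over $\tilde{\gamma}\in\Gamma_{\text{ell,reg}}(\tilde{G})$, whereas the lemma sums over $\gamma\in\Gamma_{\text{ell,reg}}(G)$ with an arbitrary lift; since the summand is $\bmu_8$-invariant (genuine times anti-genuine), these differ by a factor of $8$, so you cannot simultaneously sum over $\tilde{\gamma}$ and have $c=8$ without double-counting. Your heuristic for where the $8$ comes from (``summing over the $\bmu_8$-fiber upstairs at $u$'') also does not match the definition of $I(f^u_V,\tilde{\gamma})$, in which $\tilde{\gamma}$ at $u$ is held fixed; the $8$ is a normalization that must be matched against the precise conventions of \cite[(22),(23)]{Li14b}, which your sketch does not pin down. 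The cleaner route is the one in the paper: invoke the $I_{\text{ell}}$ expansion verbatim, use the anti-genuineness of $(f_V)_{\tilde{G}_V}$ and genuineness of $a^{\tildering{G}}_{\text{ell}}$ to see the grouping is well defined over $\gamma$ rather than $\tilde{\gamma}$, and leave the normalization of the constant entirely to \cite{Li14b}.
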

\begin{proof}
  Recall that in the proof of part 4 of \cite[Théorème 6.5]{Li14b}, it is shown that the geometric expansion of $I(f_V)$ reduces to the $I_\text{ell}(f_V)$ defined by \cite[(22)]{Li14b}; the latter has an expansion
  $$ I_\text{ell}(f_V) = \sum_{\tilde{\gamma}_V} a^{\tildering{G}}_\text{ell}(\tilde{\gamma}_V) (f_V)_{\tilde{G}_V}(\tilde{\gamma}_V) $$
  where $\tilde{\gamma}_V = \tilde{\gamma}_u \tilde{\gamma}^u$ ranges over the regular semisimple classes in $\tilde{G}_V$. Collecting terms according to $\tilde{\gamma}_u$ yields the result.
\end{proof}


\paragraph{Unstable spectral side}
Fix $\nu$. First, recall the objects
\begin{compactitem}
  \item $\Pi_{\text{disc},-,\nu}(\tildering{G}, V)$: a set of unitary genuine irreducible representations of $\tilde{G}_V$, whose archimedean infinitesimal character is $\nu$;
  \item $\mathcal{C}^V_{\text{disc},-}(\tildering{G})$: a set of characters of $\mathcal{H}(\tilde{G}^V \sslash K^V)$, i.e.\ Satake parameters outside $V$;
  \item $a^{\tildering{G}}_\text{disc}(\mathring{\pi})$: the discrete spectral coefficient of a genuine representation $\mathring{\pi}$ of $\tildering{G}$ in the Arthur-Selberg trace formula,
\end{compactitem}
which are defined in \cite[\S 5.5]{Li14b} and \cite[\S 7]{Li13}. The only difference is that we pin down the infinitesimal character $\nu$ here.

\begin{definition}
  For any $\pi \in \Pi_-(\tilde{G})$, set
  $$ I_\nu(f^u_V, \pi) := \sum_{\substack{\pi^u_V \in \Pi_-(\tilde{G}^u) \\ \pi_V := \pi^u_V \boxtimes \pi \in \Pi_{\text{disc}, -, \nu}(\tildering{G}, V) }} \; \sum_{c \in \mathcal{C}^V_{\text{disc},-}(\tildering{G}) } a^{\tildering{G}}_\text{disc}(\pi_V \boxtimes c) \cdot (f^u_V)_{\tilde{G}^u_V}(\pi^u_V), $$
  where $(f^u_V)_{\tilde{G}^u_V}(\pi^u_V)$ is a product of characters.
\end{definition}

\begin{lemma}
  We have
  $$ I_\nu(f_V) = \sum_\pi I_\nu(f^u_V, \pi) f_{\tilde{G}}(\pi). $$
\end{lemma}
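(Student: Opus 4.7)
The strategy is a direct disentanglement of the spectral expansion of $I_\nu(f_V)$ by isolating the component at the place $u$, then recognizing the sum over the remaining data as the definition of $I_\nu(f^u_V,\pi)$.

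First, I would start from the definition
$$ I_\nu(f_V) = \sum_{\substack{\mathring{\pi} \in \Pi_{\mathrm{disc},-}(\tildering{G}) \\ \text{inf. char.} = \nu}} a_{\tildering{G}}(\mathring{\pi})\, I^{\tildering{G}}(\mathring{\pi}, \mathring{f}), \qquad \mathring{f} = f_V f_{K^V}, $$
and decompose each $\mathring{\pi}$ across the places of $\mathring{F}$. Writing $\mathring{\pi} = \pi \boxtimes \pi^u_V \boxtimes \pi^V$ with $\pi$ the component at $u$, $\pi^u_V$ the component at the remaining places of $V$, and $\pi^V$ the component outside $V$, the character factorizes as
$$ I^{\tildering{G}}(\mathring{\pi}, \mathring{f}) = f_{\tilde{G}}(\pi)\cdot (f^u_V)_{\tilde{G}^u_V}(\pi^u_V)\cdot \operatorname{tr}\bigl(\pi^V(f_{K^V})\bigr), $$
and only those $\mathring{\pi}$ with $\pi^V$ unramified with respect to $K^V$ contribute, in which case $\pi^V$ is captured (up to twist) by its Satake parameter $c \in \mathcal{C}^V_{\mathrm{disc},-}(\tildering{G})$.

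Second, the decomposition of the discrete coefficient as recalled from \cite[\S 5.5]{Li14b} and \cite[\S 7]{Li13} packages the dependence on $\pi^V$ through the Satake parameter, i.e.\ $a_{\tildering{G}}(\mathring{\pi})$ corresponds to the coefficient $a^{\tildering{G}}_{\mathrm{disc}}(\pi_V \boxtimes c)$ where $\pi_V = \pi \boxtimes \pi^u_V$. Substituting this factorization into the spectral expansion gives
$$ I_\nu(f_V) = \sum_{\pi \in \Pi_-(\tilde{G})} f_{\tilde{G}}(\pi) \sum_{\substack{\pi^u_V \\ \pi_V \in \Pi_{\mathrm{disc},-,\nu}(\tildering{G},V)}} (f^u_V)_{\tilde{G}^u_V}(\pi^u_V) \sum_{c \in \mathcal{C}^V_{\mathrm{disc},-}(\tildering{G})} a^{\tildering{G}}_{\mathrm{disc}}(\pi_V \boxtimes c), $$
which is exactly $\sum_{\pi} I_\nu(f^u_V,\pi) f_{\tilde{G}}(\pi)$ by the definition of $I_\nu(f^u_V,\pi)$.

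The main obstacle is justifying the rearrangement: as stated in \S\ref{sec:stable-simple-trace-formula}, the expansion $I = \sum_\nu I_\nu$ is convergent as an iterated sum subject to the multiplier convergence estimates, and I must argue that within a fixed $\nu$ together with the $\tilde{K}_\infty$-type constraint $\Gamma$ and the support of $f_V$, the inner rearrangement by the $u$-component $\pi$ is absolutely convergent (or, equivalently, reduces to a finite sum of well-defined finite sums). This follows from the fact that $\Pi_{\mathrm{disc},-,\nu}(\tildering{G},V)_\Gamma$ is constrained at the archimedean places by $\nu$ and $\Gamma$, and at the non-archimedean places of $V \smallsetminus \{u\}$ by $\Supp(f^u_V)$, so that only finitely many $\pi^u_V$ contribute for each $\pi$; the inner sum over Satake parameters $c$ collects the trace Paley--Wiener coefficients, and is handled exactly as in \cite[\S 7]{Li13}. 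Once this bookkeeping is in place, collecting terms by $\pi$ yields the claimed identity.
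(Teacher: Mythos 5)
Your proposal follows essentially the same route as the paper, which cites the reformulated spectral expansion $I(f_V) = \sum_{\mathring{\pi}} a^{\tildering{G}}_{\mathrm{disc}}(\mathring{\pi}) I^{\tildering{G}}(\mathring{\pi}, \mathring{f})$ from \cite[Théorème 6.5]{Li14b} and then isolates the $\nu$-part, collects terms by $\mathring{\pi}_u$, and unfolds the definitions of $\Pi_{\mathrm{disc},-}(\tildering{G},V)$ and $\mathcal{C}^V_{\mathrm{disc},-}(\tildering{G})$. The only point to sharpen is that the relation between $a_{\tildering{G}}(\mathring{\pi})$ and $a^{\tildering{G}}_{\mathrm{disc}}(\pi_V \boxtimes c)$ is not a one-to-one ``correspondence'' but an aggregation over all automorphic constituents sharing the given $V$-component and Satake parameter outside $V$ --- this is precisely what the cited compression in \cite{Li14b} supplies, so your rearrangement goes through.
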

\begin{proof}
  In the proof of part 1 of \cite[Théorème 6.5]{Li14b}, we derived an expansion
  $$ I(f_V) = \sum_{\mathring{\pi}} a^{\tildering{G}}_\text{disc}(\mathring{\pi}) I^{\tildering{G}}(\mathring{\pi}, \mathring{f}). $$
  It remains to
  \begin{inparaenum}[(i)]
    \item isolate the $\nu$-parts,
    \item collect terms according to $\mathring{\pi}_u$, and
    \item unfold the definitions of $\Pi_{\text{disc},-}(\tildering{G}, V)$ and $\mathcal{C}^V_{\text{disc},-}(\tildering{G})$.
  \end{inparaenum}
\end{proof}

By the Langlands classification for $\tilde{G}$, the character of $\pi$ can be expressed in terms of the character of genuine standard modules. Therefore we may rewrite the sum over $\pi$ as a sum over $T_-(\tilde{G})_\C/\Sph^1$: each element $\tau$ therein (take a representative in $T_-(\tilde{G})_\C$) defines a character $f \mapsto f_{\tilde{G}}(\tau)$ satisfying $f_{\tilde{G}}(z\tau) = z^{-1} f_{\tilde{G}}(\tau)$, for $z \in \Sph^1$. We recapitulate the discussion as follows.

\begin{lemma}\label{prop:compression-spectral}
  We may define distributions $f^u_V \mapsto I_\nu(f^u_V, \tau)$ for $\tau \in T_-(\tilde{G})_\C$, such that
  \begin{itemize}
    \item each $I_\nu(\cdot, \tau)$ is a finite linear combination of the $I_\nu(\cdot, \pi)$, and vice versa;
    \item $I_\nu(\cdot, z\tau) = z I_\nu(\cdot, \tau)$ for each $z \in \Sph^1$;
    \item we have
      $$ I_\nu(f_V) = \sum_{\tau \in T_-(\tilde{G})_\C/\Sph^1} I_\nu(f^u_V, \tau) f_{\tilde{G}}(\tau) $$
      for all $f_V$ satisfying our assumptions.
  \end{itemize}
\end{lemma}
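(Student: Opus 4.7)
The plan is to convert the expansion of the preceding lemma, which is indexed by irreducible genuine representations $\pi$, into one indexed by the virtual-character parameters $\tau \in T_-(\tilde{G})_\C/\Sph^1$, by means of the (invertible, locally finite) change of basis between the two families of tempered/standard characters on $\tilde{G}$. This is a metaplectic analogue of the manipulation carried out by Arthur in \cite{Ar96}.

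First I would invoke the Langlands classification for $\tilde{G}$ together with the Knapp--Stein theory recalled in \S\ref{sec:PW-spaces}: every $\pi \in \Pi_-(\tilde{G})$ is the unique irreducible quotient of a genuine standard module, whence $\Theta_\pi$ is a finite $\Z$-linear combination
$$ \Theta_\pi = \sum_{\tau \in T_-(\tilde{G})_\C/\Sph^1} m(\pi,\tau)\,\Theta_\tau, $$
and conversely, by the formula $\mathcal{R}_{\tilde{P}}\simeq \bigoplus_\rho \rho^\vee \boxtimes \xi_\rho$ of \S\ref{sec:PW-spaces}, each $\Theta_\tau$ is a finite $\Z$-linear combination of characters $\Theta_{\xi_\rho}$ of the irreducible constituents of the finite-length induced representation $I_{\tilde{P}}(\sigma)$ encoded by $\tau=(L,\sigma,r)$. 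The normalisation $R_{\tilde{P}}(z,\sigma) = z^{-1}\identity$ for $z\in \Sph^1$ gives $\Theta_{z\tau} = z^{-1}\Theta_\tau$, and this forces the transformation rule $m(\pi,z\tau) = z\cdot m(\pi,\tau)$.

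Next I would substitute $f_{\tilde{G}}(\pi) = \sum_\tau m(\pi,\tau) f_{\tilde{G}}(\tau)$ into the previous lemma and interchange the two sums to obtain
$$ I_\nu(f_V) = \sum_{\tau\in T_-(\tilde{G})_\C/\Sph^1} \Bigl(\,\sum_\pi m(\pi,\tau)\, I_\nu(f^u_V,\pi)\,\Bigr)\, f_{\tilde{G}}(\tau). $$
I would then define $I_\nu(f^u_V,\tau)$ to be the inner parenthesised expression; the second finiteness property makes this a finite sum over $\pi$ for each $\tau$, and the scaling property $I_\nu(\cdot,z\tau)=z\,I_\nu(\cdot,\tau)$ is immediate from that of $m(\pi,\cdot)$. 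Inverting the change of basis on the (countable) subspace of virtual characters spanned by the $\Theta_\pi$'s effectively occurring writes each $I_\nu(\cdot,\pi)$ as a finite $\C$-linear combination of the $I_\nu(\cdot,\tau)$'s, as required by the first assertion.

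The main technical point, and the step I expect to require the most care, is the justification of the interchange of summation, since the $\pi$-expansion of the preceding lemma holds only in the sense of Arthur's multiplier convergence estimate rather than as an absolutely convergent sum. However, the double-indexed family $(\pi,\tau)\mapsto m(\pi,\tau)\,I_\nu(f^u_V,\pi)\,f_{\tilde{G}}(\tau)$ has finite fibres in \emph{both} variables, and the Paley--Wiener support condition forces $f_{\tilde{G}}(\tau)$ to be supported on finitely many connected components of $T_-(\tilde{G})_\C/\Sph^1$ once $f$ is fixed, while the $\tilde{K}_\infty$-type set $\Gamma$ together with the archimedean infinitesimal character $\nu$ jointly control the range of admissible $\pi$. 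These support and finiteness properties together reduce the interchange to an elementary Fubini, in direct parallel with \cite[\S 3]{Ar96}.
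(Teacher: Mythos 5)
Your proposal is correct and takes essentially the same approach as the paper: the paper does not give a formal proof of this lemma, but rather precedes it with a one-paragraph remark invoking the Langlands classification to express irreducible genuine characters in terms of the $\Theta_\tau$'s and then ``rewrite the sum over $\pi$ as a sum over $T_-(\tilde{G})_\C/\Sph^1$,'' which is exactly the change of basis you spell out. One minor imprecision: the coefficients relating $\Theta_\tau$ to the $\Theta_{\xi_\rho}$ are $\Tr(\rho^\vee(r))$, which are not integers but arbitrary traces on the finite central extension $\tilde{R}_\pi$, so ``finite $\Z$-linear'' should be ``finite $\C$-linear''; this does not affect the argument. Your explicit attention to the interchange of sums (via finite fibres in both indices, the Paley--Wiener support of $f_{\tilde{G}}$, and the constraints from $\nu$ and $\Gamma$) addresses a point the paper leaves implicit, and is consistent with how the same manipulation is carried out in Arthur's \S 8 of \cite{Ar96}.
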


\paragraph{Stable spectral side}
Always fix the archimedean infinitesimal character $\nu$ for $\tildering{G}$. Let $G^\Endo$ be an elliptic endoscopic group of $\tildering{G}$. Write $f^\Endo_V = \prod_v f^\Endo_v \in C^\infty_c(G^\Endo(F_V))$, we have the corresponding global test function $\mathring{f}^\Endo := f^\Endo_V \mathbf{1}_{K^{V,\Endo}}$ as usual. Set $f^\Endo := f^\Endo_u \in C^\infty_c(G^\Endo(F))$ so that
$$ f^\Endo_V = (f^\Endo)^u_V f^\Endo_u. $$
Moreover, $(f^\Endo)^u_V$ is assumed to be a simple test function \cite[p.556]{Ar96}.

Denote the distribution in the stable trace formula for $G^\Endo$ as $S^\Endo = S^{G^\Endo}$. We will actually work with $S^\Endo_\nu = \sum_{\nu^\Endo \mapsto \nu} S^\Endo_{\nu^\Endo}$. In this case, the compression of coefficients has been done in \cite[(8.6)]{Ar96}; we summarize as follows.

\begin{lemma}\label{prop:compression-stable}
  For every place $v$, denote by $\Phi_\mathrm{bdd}(G^\Endo_v)_\C$ the space of bounded $L$-parameters of $G^\Endo \times_{\mathring{F}} \mathring{F}_v$. Abbreviate
  $$ \Phi_\mathrm{bdd}(G^\Endo)_\C := \Phi_\mathrm{bdd}(G^\Endo_u)_\C .$$
  One can define distributions $(f^\Endo)^u_V \mapsto S^\Endo_\nu((f^\Endo)^u_V, \phi^\Endo)$ for every $\phi^\Endo \in \Phi_\mathrm{bdd}(G^\Endo)_\C$, such that
  \begin{compactitem}
    \item each $S^\Endo_\nu(\cdot, \phi^\Endo)$ is a linear combination of stable characters coming from $\prod_{\substack{v \in V \\ v \neq u}} \Phi_\mathrm{bdd}(G^\Endo_v)_\C$;
    \item for every $f^\Endo_V$ as above, we have
      $$ S^\Endo_\nu(f^\Endo_V) = \sum_{\phi^\Endo \in \Phi_\mathrm{bdd}(G^\Endo)_\C} S^\Endo_\nu((f^\Endo)^u_V, \phi^\Endo) (f^\Endo)^{G^\Endo}(\phi^\Endo). $$
  \end{compactitem}
\end{lemma}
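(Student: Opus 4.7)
The plan is to invoke Arthur's stable multiplicity formula for $G^\Endo = \SO(2n'+1) \times \SO(2n''+1)$ (applied to each factor separately, via \cite[Theorem 4.1.2]{Ar13}), which expands $S^\Endo$ as a convergent sum
\[
S^\Endo(\mathring{f}^\Endo) = \sum_{\psi^\Endo} \iota(\psi^\Endo)\, S^\Endo_{\psi^\Endo}(\mathring{f}^\Endo),
\]
indexed by global discrete parameters $\psi^\Endo$ of $G^\Endo$, together with multiplier convergence estimates. First I would cut this sum by the archimedean infinitesimal character: keep only those $\psi^\Endo$ whose archimedean infinitesimal character $\nu^\Endo$ maps to $\nu$ under the finite-to-one correspondence of \S\ref{sec:spectral-transfer-R} (via the map \eqref{eqn:map-inf-char} at each archimedean place). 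This isolates $S^\Endo_\nu$.

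Next, each contribution $S^\Endo_{\psi^\Endo}(\mathring{f}^\Endo)$ factors across places: the unramified data at $v \notin V$ are absorbed by evaluation against $\mathbbm{1}_{K^{V,\Endo}}$ (producing a Satake-type factor independent of $f^\Endo_V$), while at places in $V$ it becomes a product of local stable characters $\prod_{v \in V} (f^\Endo_v)^{G^\Endo_v}(\phi^\Endo_v)$, where $\phi^\Endo_v \in \Phi_\text{bdd}(G^\Endo_v)_\C$ is the localization of $\psi^\Endo$ at $v$. The simple test function hypothesis on $(f^\Endo)^u_V$ forces the localizations $\phi^\Endo_{v_1}, \phi^\Endo_{v_2}$ to lie in $\Phi_{2,\text{bdd}}(G^\Endo_{v_i})$ (cuspidality at two places), so only countably many parameters contribute; combined with the multiplier estimates, this yields absolute convergence of what remains after factoring out $(f^\Endo_u)^{G^\Endo}(\phi^\Endo)$.

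Then I would collect terms according to the $u$-component $\phi^\Endo := \phi^\Endo_u$: set
\[
S^\Endo_\nu((f^\Endo)^u_V, \phi^\Endo) := \sum_{\substack{\psi^\Endo: \nu^\Endo \mapsto \nu \\ \psi^\Endo \text{ localizes to } \phi^\Endo \text{ at } u}} \iota(\psi^\Endo)\, (\text{unramified factor}) \cdot \prod_{\substack{v \in V \\ v \neq u}} (f^\Endo_v)^{G^\Endo_v}(\phi^\Endo_v).
\]
By construction this is a (possibly infinite, but absolutely convergent) linear combination of stable characters drawn from $\prod_{v \in V,\, v \neq u} \Phi_\text{bdd}(G^\Endo_v)_\C$, and the factorization yields the required identity
\[
S^\Endo_\nu(f^\Endo_V) = \sum_{\phi^\Endo \in \Phi_\text{bdd}(G^\Endo)_\C} S^\Endo_\nu((f^\Endo)^u_V, \phi^\Endo) \cdot (f^\Endo)^{G^\Endo}(\phi^\Endo).
\]

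The main obstacle will be justifying the reordering of the double sum (over global $\psi^\Endo$ and over local $\phi^\Endo$ at $u$) so that the outer sum over $\phi^\Endo \in \Phi_\text{bdd}(G^\Endo)_\C$ makes sense termwise. This is where the simple test function hypothesis and the multiplier convergence estimates \cite[Proposition 10.5(b)]{Ar02} are crucial: cuspidality at $v_1, v_2$ restricts the parameters to a manageable family, and support of $f^\Endo_w$ on the $G$-regular elliptic locus kills most contributions at $w$, leaving the rearrangement legitimate. This is precisely the metaplectic analogue of the compression carried out in \cite[(8.6)]{Ar96}, and I expect no new analytic difficulty beyond what the reference to Arthur's construction already covers.
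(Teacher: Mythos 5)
Your proposal reconstructs essentially the argument the paper is pointing to: it defers to Arthur's compression in \cite[(8.6)]{Ar96}, whose ingredients are exactly the ones you list (the stable multiplicity formula from \cite{Ar13}, factorization across places, collecting by the $u$-component, and simple test functions plus multiplier convergence estimates to control the reordering). So the approach matches.

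One imprecision worth flagging: you write that the localization of a global parameter $\psi^\Endo$ at $v$ lies in $\Phi_\text{bdd}(G^\Endo_v)_\C$. The localization is a local \emph{Arthur} parameter $\psi^\Endo_v$, and its stable character $S^\Endo_{\psi^\Endo_v}$ is not literally a stable tempered $L$-packet character. To arrive at a sum indexed by $\Phi_\text{bdd}(G^\Endo_v)_\C$ one still needs the Langlands-classification step: expand each local stable Arthur-packet character as a (finite, signed) linear combination of stable characters of standard modules, which are the objects parametrized by $\Phi_\text{bdd}(G^\Endo_v)_\C$. The paper does exactly this on the unstable side in Lemma \ref{prop:compression-spectral} (``By the Langlands classification for $\tilde{G}$, the character of $\pi$ can be expressed in terms of the character of genuine standard modules...''), and it is this reindexing that makes the coefficient $S^\Endo_\nu((f^\Endo)^u_V, \phi^\Endo)$ well-defined termwise. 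With that step spelled out, your argument and the paper's coincide.
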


Plug this into the expression $I^\EndoE_\nu(f_V) = \sum_{G^\Endo \in \EndoE_\text{ell}(\tildering{G})} \iota(\tildering{G}, G^\Endo) S^\Endo_\nu(\mathring{f}^\Endo)$ of Corollary \ref{prop:stable-trace-formula-nu}. Take $f_V \in \mathcal{H}_\text{simp}(\tilde{G}_V)_\Gamma$ as in the beginning of this subsection. We shall use the map $\phi \mapsto f^\EndoE(\phi)$ of Definition \ref{def:f^EndoE} in what follows.

\begin{lemma}\label{prop:compression-EndoE}
  One can define distributions $f^u_V \mapsto S^\Endo_\nu(f^u_V, \phi)$ for every $\phi \in T^\EndoE(\tilde{G})_\C$ such that for every $f_V$ as above, we have
  $$ I^\EndoE_\nu(f_V) = \sum_{\phi \in T^\EndoE(\tilde{G})_\C} I^\EndoE_\nu(f^u_V, \phi) f^\EndoE(\phi). $$
\end{lemma}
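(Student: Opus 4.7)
The plan is to unfold the right-hand side of Corollary \ref{prop:stable-trace-formula-nu}, apply Lemma \ref{prop:compression-stable} to each summand, and then reindex the resulting double sum via the Levi decomposition \eqref{eqn:Phi_bdd-decomp} in order to match it with the indexing set $T^\EndoE(\tilde{G})_\C$. Concretely, I would first write
$$ I^\EndoE_\nu(f_V) = \sum_{G^\Endo \in \EndoE_\mathrm{ell}(\tildering{G})} \iota(\tildering{G}, G^\Endo) \sum_{\phi^\Endo \in \Phi_\mathrm{bdd}(G^\Endo)_\C} S^\Endo_\nu((f^\Endo)^u_V, \phi^\Endo)\, (f^\Endo)^{G^\Endo}(\phi^\Endo), $$
and then invoke the decomposition $\Phi_\mathrm{bdd}(G^\Endo)_\C = \bigsqcup_{M^\Endo \in \mathcal{L}(M^\Endo_0)/W^{G^\Endo}_0} \Phi_{2,\mathrm{bdd}}(M^\Endo)_\C/W^{G^\Endo}(M^\Endo)$ to replace the inner sum by a sum over pairs $(M^\Endo, \phi^\Endo_M)$.

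The next step is to convert from pairs $(G^\Endo, M^\Endo)$ with $M^\Endo$ a Levi of $G^\Endo$ to triples $(M, M^\Endo, s)$ with $M \in \mathcal{L}(M_0)/W^G_0$, $M^\Endo \in \EndoE_\mathrm{ell}(\tilde{M})$ and $s \in \EndoE_{M^\Endo}(\tilde{G})$ such that $G^\Endo = G[s]$. Existence and uniqueness of such $(M, M^\Endo, s)$ is furnished by Lemma \ref{prop:M^Endo-to-s}; conversely, varying $s$ over $\EndoE_{M^\Endo}(\tilde{G})$ parametrizes the possible embeddings of $M^\Endo$ into some $G[s]$, and different $s$ giving the same elliptic endoscopic datum $G^\Endo$ correspond to embeddings differing by $W^G(M)$ (Remark \ref{rem:EndoE-vs-embedding}). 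Thus, after a careful change of variables, the double sum over $(G^\Endo, M^\Endo)$ is rewritten as a sum over $(M, M^\Endo, s)$ together with an appropriate combinatorial correction factor absorbed into the coefficients.

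Once the reindexing is done, I would apply Remark \ref{rem:f^EndoE}, which says that for $\phi \in T^\EndoE(\tilde{G})_\C$ coming from $\phi_{M^\Endo} \in \Phi_{2,\mathrm{bdd}}(M^\Endo)_\C$ and any $s \in \EndoE_{M^\Endo}(\tilde{G})$ giving rise to $\phi^\Endo \in \Phi_\mathrm{bdd}(G[s])_\C$, one has
$$ f^\EndoE(\phi) = \bomega''(-1)\, (f^{G[s]})(\phi^\Endo), $$
where the sign $\bomega''(-1)$ accounts for the $z[s]$-twist (Lemma \ref{prop:z[s]}). Substituting this into the reindexed expression groups all terms with the same image $\phi \in T^\EndoE(\tilde{G})_\C$; the coefficient of $f^\EndoE(\phi)$ is then, by definition, $I^\EndoE_\nu(f^u_V, \phi)$, a finite linear combination of the distributions $\iota(\tildering{G}, G[s])\,\bomega''(-1)^{-1}\, S^\Endo_\nu((f^\Endo)^u_V, \phi^\Endo)$ over the various $s$ and Weyl translates contributing to $\phi$.

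The main obstacle is combinatorial bookkeeping: making sure that the transition between the indexing sets $\{(G^\Endo, M^\Endo)\}$ and $\{(M, M^\Endo, s)\}$ is carried out with the correct multiplicities, so that the normalization factors $\iota(\tildering{G}, G^\Endo)$, the quotients by $W^{G^\Endo}(M^\Endo)$ and $W^G(M)$, and the sign twists $\bomega''(-1)$ all cancel consistently. Once this is verified place by place, the required equality drops out and the distributions $I^\EndoE_\nu(f^u_V, \phi)$ are simply read off from the coefficient of $f^\EndoE(\phi)$; the convergence and finiteness properties are inherited from Lemma \ref{prop:compression-stable}.
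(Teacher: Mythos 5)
Your proposal is correct and follows essentially the same route as the paper: unfold via Corollary \ref{prop:stable-trace-formula-nu} and Lemma \ref{prop:compression-stable}, pass from pairs $(G^\Endo, \phi^\Endo)$ to triples $(M, M^\Endo, s)$ via Lemma \ref{prop:M^Endo-to-s}, and apply Remark \ref{rem:f^EndoE} to trade $(f^{G[s]})(\phi^\Endo)$ for $\bomega''(-1)\, f^\EndoE(\phi)$, then define $I^\EndoE_\nu(f^u_V, \phi)$ as the resulting coefficient. One small clarification: the worry about "combinatorial correction factors" is not needed, since Lemma \ref{prop:M^Endo-to-s} produces a genuine bijection between pairs $(G^\Endo, M^\Endo/\mathrm{conj.})$ and triples $(M/\mathrm{conj.}, (m',m''), s)$, so the reindexing is simply a relabelling with no multiplicities to track; the only point you omit is that Remark \ref{rem:f^EndoE} must first be extended from $T^\EndoE(\tilde{G})$ to $T^\EndoE(\tilde{G})_\C$ by meromorphic continuation.
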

\begin{proof}
  By Lemma \ref{prop:compression-stable},
  \begin{align*}
    I^\EndoE_\nu(f_V) & = \sum_{G^\Endo} \iota(\tildering{G}, G^\Endo) S^\Endo_\nu(\mathring{f}^\Endo) \\
    & = \sum_{(G^\Endo, \phi^\Endo)} \iota(\tildering{G}, G^\Endo) S^\Endo_\nu((f^\Endo)^u_V, \phi^\Endo) (f^\Endo)^{G^\Endo}(\phi^\Endo),
  \end{align*}
  where $f^\Endo$ is a transfer of $f$, for each $G^\Endo \in \EndoE_\text{ell}(\tildering{G})$.

  For every pair $(G^\Endo, \phi^\Endo)$ above, there exists a unique $M^\Endo \in \mathcal{L}(M^\Endo_0)/W^{G^\Endo}(M^\Endo_0)$ and $\phi_{M^\Endo} \in \Phi_{2,\text{bdd}}(M^\Endo)_\C /W^{G^\Endo}(M^\Endo)$ such that $\phi_{M^\Endo} \mapsto \phi^\Endo$. By Lemma \ref{prop:M^Endo-to-s}, there exists a unique pair $(M, s)$ such that we may complete $(G^\Endo, M^\Endo)$ into a diagram
  $$ \begin{tikzcd}
  \overbrace{G^\Endo = G[s]}^{\text{as endoscopic data}} \arrow[-,dashed]{r}[above]{\text{ell.}}[below]{\text{endo.}} & \tilde{G} \\
  M^\Endo \arrow[-,dashed]{r}[above]{\text{ell.}}[below]{\text{endo.}} \arrow[hookrightarrow]{u}[left]{\text{Levi}} & \tilde{M} \arrow[hookrightarrow]{u}[right]{\text{Levi}}
  \end{tikzcd} $$
  Therefore $\phi_{M^\Endo}$ determines an element $\phi_M \in T^\EndoE_\text{ell}(\tilde{M})_\C$, which in turn maps to $\phi \in T^\EndoE(\tilde{G})_\C$. On the other hand, to $(G^\Endo, \phi^\Endo)$ is associated the sign factor $\bomega''(-1)$ by Remark \ref{rem:f^EndoE}, satisfying
  $$ (f^\Endo)(\phi^\Endo) = \bomega''(-1) \cdot \underbrace{f^\EndoE(\phi)}_{\text{independent of } G^\Endo}. $$
  This is originally established for $\phi \in T^\EndoE(\tilde{G})$, but extends easily to the present case by meromorphic continuation. Collecting terms according to $\phi \in T^\EndoE(\tilde{G})_\C$, we see that $I^\EndoE(f_V)$ equals
  $$ \sum_\phi \left( \sum_{\substack{s \\ G^\Endo := G[s]}} \iota(\tildering{G}, G^\Endo) S^\Endo_\nu((f^\Endo)^u_V, \phi^\Endo) \bomega''(-1) \right) f^\EndoE(\phi), $$
  the $\phi^\Endo$ and $\bomega''$ in the inner sum are both determined by $(\phi, s)$ by the foregoing discussion. Now set $I^\EndoE_\nu(f^u_V, \phi)$ to be the inner sum.
\end{proof}

\subsection{Proof of the main theorem: preparations}\label{sec:proof-preparation}
Now we can undertake the proof of Theorem \ref{prop:character-relation}. Let us begin with the local setup in \S\ref{sec:spectral-transfer}.
\begin{itemize}
  \item Fix $f_{\tilde{G}}$. By the meromorphic extension of stable tempered characters of each $G^\Endo$, the function $\phi \mapsto f^\EndoE(\phi)$ has obvious meromorphic extension to $T^\EndoE(\tilde{G})_\C$. In a similar vein, $\phi \mapsto f^\EndoE_\gr(\phi)$ extends to $T^\EndoE(\tilde{G})_\C$ as well: this time we use the meromorphic extensions of $\tau \mapsto f_{\tilde{G}}(\tau)$ and of the spectral transfer factors (cf.\ the discussion at the end of \S\ref{sec:spectral-formalism}).
  \item All the definitions in \S\ref{sec:statement} extend to coverings of metaplectic type, say $\tilde{L} = \prod_{i \in I} \GL(n_i) \times \Mp(W^\flat)$, by treating the $\GL$-factors and $\Mp(W^\flat)$ separately; the former case is covered by \cite{Ar96}. For instance, $\Iasp(\tilde{L}) \to \orbI^\EndoE(\tilde{L})$ is simply the $\otimes$-product of the identity map on the $\GL$-factors with $\Iasp(\Mp(W^\flat)) \to \orbI^\EndoE(\Mp(W^\flat))$.
  
  This allows an inductive structure in our proof of Theorem \ref{prop:character-relation}. The following hypothesis will be in force throughout this section.
\end{itemize}

\begin{hypothesis}
  We assume the validity of Theorem \ref{prop:character-relation} for all $\Mp(W^\flat)$ with $\dim_F W^\flat < \dim_F W = 2n$.
\end{hypothesis}

Thus Theorem \ref{prop:character-relation} also holds for coverings of metaplectic type of the form $\tilde{L} = \prod_{i \in I} \GL(n_i, F) \times \Mp(W^\flat)$. Note that the case $W^\flat = \{0\}$ is trivially true. Our global arguments will be based on the two simple facts below.

\begin{lemma}\label{prop:reduction-cuspidal-f}
  If $f_{\tilde{G}} \in \Iaspcusp(\tilde{G})$, then \eqref{eqn:character-relation-equiv} holds for all $\phi$. Same for $\tilde{L}$ in place of $\tilde{G}$, where $L \in \mathcal{L}(M_0)$.
\end{lemma}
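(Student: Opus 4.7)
The plan is to treat the elliptic and non-elliptic $\phi$ separately, exploiting the defining property of the spectral transfer factor together with the vanishing properties of cuspidal orbital integrals.

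First I would dispose of the elliptic case. Suppose $\phi \in T^\EndoE_{\text{ell}}(\tilde{G})$, coming from $\phi_{G^\Endo} \in \Phi_{2,\text{bdd}}(G^\Endo)$ for some $G^\Endo \in \EndoE_{\text{ell}}(\tilde{G})$. Definition \ref{def:f^EndoE} specializes (with $M = G$, so $s$ is trivial and the $z[s]$-twist is absent) to
\[
f^\EndoE(\phi) = f^{G^\Endo}(\phi_{G^\Endo}) = \left(\mathcal{T}^\EndoE(f_{\tilde{G}})\right)_{G^\Endo}(\phi_{G^\Endo}),
\]
which for cuspidal $f_{\tilde{G}}$ lies in $\Icusp^\EndoE(\tilde{G})$ (Proposition \ref{prop:image-in-EndoE}). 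By the very definition of the spectral transfer factor in \S\ref{sec:spectral-formalism}, this equals $\sum_{\tau \in T_{\text{ell},-}(\tilde{G})/\Sph^1} \Delta(\phi, \tau) f_{\tilde{G}}(\tau)$. Extending the sum to all of $T_-(\tilde{G})/\Sph^1$ introduces no new nonzero terms, since by construction $\Delta(\phi, \tau) = 0$ unless $\phi$ and $\tau$ originate from the same Levi modulo $W^G_0$. Thus $f^\EndoE(\phi) = f^\EndoE_\gr(\phi)$ in this case.

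Next I would handle non-elliptic $\phi$, say coming from $\phi_{M^\Endo} \in \Phi_{2,\text{bdd}}(M^\Endo)$ with $M \in \mathcal{L}(M_0)$ a proper Levi. Choose any $s \in \EndoE_{M^\Endo}(\tilde{G})$. Theorem \ref{prop:transfer-parabolic} identifies
\[
f^\EndoE(\phi) = \bigl(f^{G[s]}\bigr)^{s, M^\Endo}(\phi_{M^\Endo}) = (f_{\tilde{M}})^\Endo(\phi_{M^\Endo}).
\]
Cuspidality of $f_{\tilde{G}}$ means $f_{\tilde{L}} = 0$ for every proper Levi $L$, so in particular $f_{\tilde{M}} = 0$ and the right-hand side vanishes. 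For the spectral side, $\Delta(\phi, \tau)$ vanishes unless $\tau$ comes from the same $M$, say from $\tau_M \in T_{\text{ell},-}(\tilde{M})$. For such $\tau$, combining the induction relation \eqref{eqn:T-induction} with the character adjunction \eqref{eqn:para-descent-character} gives
\[
f_{\tilde{G}}(\tau) = \Theta^{\tilde{G}}_\tau(f) = I_{\tilde{P}}\!\bigl(\Theta^{\tilde{M}}_{\tau_M}\bigr)(f) = \Theta^{\tilde{M}}_{\tau_M}(f_{\tilde{M}}) = 0.
\]
Hence $f^\EndoE_\gr(\phi) = 0$ as well, and the equality holds trivially.

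The statement for an arbitrary Levi $\tilde{L} = \prod_{i \in I} \GL(n_i) \times \Mp(W^\flat)$ in place of $\tilde{G}$ follows by exactly the same argument: the formalism of $\Iaspcusp$, of $f^\EndoE$, and of the spectral transfer factor carries over verbatim to coverings of metaplectic type, since the $\GL$-factors play an inert role (their spectral transfer is a tautology under the local Langlands correspondence) and the $\Mp(W^\flat)$-factor is handled as above. Alternatively, when $W^\flat \neq W$ one may appeal directly to the standing inductive hypothesis on $\Mp(W^\flat)$.

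There is no genuine obstacle in this lemma: once one unwinds the definition of $\Delta(\phi, \tau)$ in the elliptic case and invokes the cuspidality-driven vanishing of parabolic descents, both sides of \eqref{eqn:character-relation-equiv} reduce to the same thing by construction. The real work — establishing \eqref{eqn:character-relation-equiv} for non-cuspidal $f_{\tilde{G}}$ at elliptic $\phi$ — is where the global trace-formula machinery of \S\ref{sec:stable-simple-trace-formula}--\S\ref{sec:compression} enters.
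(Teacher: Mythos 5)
Your proof is correct and amounts to an explicit unpacking of the paper's terse proof, which simply observes that restricting the diagram \eqref{eqn:character-relation-diagram} to cuspidal elements collapses the gradings and reduces commutativity to a tautology. Your case analysis — elliptic $\phi$ is the very definition of $\Delta(\phi,\tau)$ applied to cuspidal $f_{\tilde{G}}$, non-elliptic $\phi$ gives vanishing on both sides because $f_{\tilde{M}}=0$ forces $(f_{\tilde{M}})^\Endo=0$ and $f_{\tilde{G}}(\tau)=\Theta^{\tilde{M}}_{\tau_M}(f_{\tilde{M}})=0$ — is exactly the content of that tautology spelled out.
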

\begin{proof}
  It amounts to show the commutativity of \eqref{eqn:character-relation-diagram}, in which all $\orbI_{\cdots}(\cdots)$ are replaced by their cuspidal avatars so that the subscripts $\gr$ become superfluous. We are reduced to tautology. 
\end{proof}

\begin{lemma}\label{prop:reduction-para-phi}
  If $\phi \in T^\EndoE(\tilde{G})_\C \smallsetminus T^\EndoE_\mathrm{ell}(\tilde{G})_\C$, then \eqref{eqn:character-relation-equiv} holds for $\phi$ and all $f_{\tilde{G}}$.
\end{lemma}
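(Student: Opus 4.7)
The plan is to reduce \eqref{eqn:character-relation-equiv} for non-elliptic $\phi$ to the same identity on a proper Levi, invoking the inductive hypothesis. Since $\phi \in T^\EndoE(\tilde{G})_\C \smallsetminus T^\EndoE_\text{ell}(\tilde{G})_\C$, there exists a proper Levi $M \in \mathcal{L}(M_0) \smallsetminus \{G\}$, unique modulo $W^G_0$, together with $\phi_M \in T^\EndoE_\text{ell}(\tilde{M})_\C$ mapping to $\phi$, unique modulo $W^G(M)$; write $\phi_M$ as coming from $\phi_{M^\Endo} \in \Phi_{2,\text{bdd}}(M^\Endo)_\C$ for some $M^\Endo \in \EndoE_\text{ell}(\tilde{M})$. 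Combining Definition \ref{def:f^EndoE} with Theorem \ref{prop:transfer-parabolic} gives
$$ f^\EndoE(\phi) = \bigl( f^{G[s]} \bigr)^{s, M^\Endo}(\phi_{M^\Endo}) = (f_{\tilde{M}})^\Endo(\phi_{M^\Endo}) = (f_{\tilde{M}})^{\tilde{M}\text{-}\EndoE}(\phi_M), $$
where the last expression is the $\tilde{M}$-avatar of $\phi_M \mapsto g^\EndoE(\phi_M)$ applied to $g := f_{\tilde{M}} \in \Iasp(\tilde{M})$ (using that $\phi_M$ is elliptic for $\tilde{M}$).

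Since $\tilde{M} = \prod_{i \in I} \GL(n_i, F) \times \Mp(W^\flat)$ with $\dim_F W^\flat < 2n$, the inductive hypothesis, combined with Arthur's results for the $\GL$-factors, grants the validity of Theorem \ref{prop:character-relation} for $\tilde{M}$. In its equivalent form (Remark \ref{rem:character-relation-equiv}) this yields
$$ (f_{\tilde{M}})^{\tilde{M}\text{-}\EndoE}(\phi_M) = \sum_{\tau_M \in T_-(\tilde{M})/\Sph^1} \Delta_{\tilde{M}}(\phi_M, \tau_M) \, f_{\tilde{M}}(\tau_M); $$
only the elliptic $\tau_M \in T_{\text{ell},-}(\tilde{M})/\Sph^1$ contribute, as $\phi_M$ is elliptic in $T^\EndoE(\tilde{M})_\C$.

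It remains to identify the right-hand side with $f^\EndoE_\gr(\phi)$. By the definition of $\Delta(\phi, \tau)$ in \S\ref{sec:spectral-formalism}, the term $\Delta(\phi, \tau) \, f_{\tilde{G}}(\tau)$ vanishes unless $\tau$ comes from the same Levi $M$ (modulo $W^G_0$) as $\phi$; when it does, writing $\tau$ as the image of some $\tau_M \in T_{\text{ell},-}(\tilde{M})/\Sph^1$, we have
$$ \Delta(\phi, \tau) = \sum_{\tau^\dagger_M \in W^G(M)\tau_M} \Delta_{\tilde{M}}(\phi_M, \tau^\dagger_M), \qquad f_{\tilde{G}}(\tau) = f_{\tilde{M}}(\tau^\dagger_M) $$
for any $\tau^\dagger_M$ in the orbit, by \eqref{eqn:T-induction} together with the $W^G(M)$-invariance of $f_{\tilde{M}}$ under conjugation. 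Collecting terms,
$$ f^\EndoE_\gr(\phi) = \sum_{\tau \in W^G(M)\backslash T_{\text{ell},-}(\tilde{M})/\Sph^1} \;\sum_{\tau^\dagger_M \in W^G(M)\tau_M} \Delta_{\tilde{M}}(\phi_M, \tau^\dagger_M) \, f_{\tilde{M}}(\tau^\dagger_M) = \sum_{\tau_M \in T_{\text{ell},-}(\tilde{M})/\Sph^1} \Delta_{\tilde{M}}(\phi_M, \tau_M) \, f_{\tilde{M}}(\tau_M), $$
which matches what we obtained from the inductive hypothesis.

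The argument is essentially formal once the inductive hypothesis is in place; the only real care required is the orbit-counting bookkeeping in the last step, together with verifying that the transition from $\tilde{M}$ to $\Mp(W^\flat)$ (which is what the induction actually gives us) is licit because endoscopy, parabolic induction, and the transfer factors all decompose cleanly across the $\GL$-factors, as explained at the start of \S\ref{sec:proof-preparation}. The main potential obstacle — namely, matching the normalizations of the $W^G(M)$-sums that define $\Delta(\phi, \tau)$ on $\tilde{G}$ against the native factor $\Delta_{\tilde{M}}(\phi_M, \tau_M)$ on $\tilde{M}$ — is resolved by the ``at most one nonzero term'' observation built into Definition \ref{def:collective-factor} and its spectral analogue.
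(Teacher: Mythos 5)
Your proof is correct and uses the same essential ingredients as the paper's: Theorem \ref{prop:transfer-parabolic}, the inductive hypothesis for the proper Levi $\tilde{M}$, the compatibility \eqref{eqn:T-induction}/\eqref{eqn:para-descent-character} of parabolic descent with the spectral parametrization, and the ``at most one nonzero term'' structure of the collective spectral transfer factor. The paper packages these facts as the commutativity of the bottom layer and four walls of a cubical diagram and then deduces commutativity of the top layer by a diagram chase, whereas you unfold that chase into an explicit chain of equalities; this is a purely presentational difference, and your handling of the $z[s]$-twist (implicitly, via Theorem \ref{prop:transfer-parabolic}) and of the $W^G(M)$-orbit bookkeeping is the same content the paper flags in its walls (a) and (c).
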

\begin{proof}
  Let $L \in \mathcal{L}(M_0)$, $L \neq G$ and assume $\phi$ comes from $\phi_L \in T^\EndoE_\text{ell}(\tilde{L})_\C$. As $L$ is of the form $\prod_{i \in I} \GL(n_i) \times \Mp(W^\flat)$, the diagram
  $$ \begin{tikzcd}
    \Iasp(\tilde{L}) \arrow{d} \arrow{r} & \orbI^\EndoE(\tilde{L}) \arrow{d} \\
    \orbI_\gr(\tilde{L}) \arrow{r} & \orbI^\EndoE_\gr(\tilde{L})
  \end{tikzcd} $$
  commutes by assumption; moreover, every arrow is a $W^G(L)$-equivariant isomorphism. Now consider the diagram
  $$ \begin{tikzcd}[xscale=0.6]
    \Iasp(\tilde{G}) \arrow{rr} \arrow{rd} \arrow{dd} & & \orbI^\EndoE(\tilde{G}) \arrow{rd} \arrow{dd} & \\
    & \orbI_\gr(\tilde{G}) \arrow[crossing over]{rr} & & \orbI^\EndoE_\gr(\tilde{G} )\arrow{dd} \\
    \Iasp(\tilde{L})^{W^G(L)} \arrow{rd} \arrow{rr} & & \orbI^\EndoE(\tilde{L})^{W^G(L)} \arrow{rd} & \\
    & \orbI_\gr(\tilde{L})^{W^G(L)} \arrow[crossing over, leftarrow]{uu} \arrow{rr} & & \orbI^\EndoE_\gr(\tilde{L})^{W^G(L)}
  \end{tikzcd} $$
  in which:
  \begin{compactitem}
    \item $\orbI_\gr(\tilde{G}) \to \orbI_\gr(\tilde{L})^{W^G(L)}$ and $\orbI^\EndoE_\gr(\tilde{G}) \to \orbI^\EndoE_\gr(\tilde{L})^{W^G(L)}$: restriction maps,
    \item $\Iasp(\tilde{G}) \to \Iasp(\tilde{L})^{W^G(L)}$: parabolic descent of test functions,
    \item $\orbI^\EndoE(\tilde{G}) \to \orbI^\EndoE(\tilde{L})^{W^G(L)}$: for any $L^\Endo \in \EndoE_\text{ell}(\tilde{L})$, the $L^\Endo$-component of $f^\EndoE$ is $f^{L^\Endo} \in S\orbI(L^\Endo)^{W^G(L)}$ in the notation of Definition \ref{def:Endo-I}.
  \end{compactitem}
  
  The bottom layer of the diagram is commutative. We claim that all the four walls are commutative. Indeed:
  \begin{compactenum}[(a)]
    \item for the two walls of shape\;
      \begin{tikzpicture}
	\draw (0,0) rectangle (0.5,0.3);
      \end{tikzpicture}, use the compatibility between parabolic descent and geometric transfer (Theorem \ref{prop:transfer-parabolic});
    \item for the leftmost wall\;
      \begin{tikzpicture}[shape border rotate=90]
	\tikzstyle{every node}=[trapezium, draw]
	\node[trapezium left angle=60, trapezium right angle=120] at (0,0) {};
      \end{tikzpicture}, use the compatibility between parabolic descent and induction (see \eqref{eqn:para-descent-character});
    \item the rightmost wall\;
      \begin{tikzpicture}[shape border rotate=90]
	\tikzstyle{every node}=[trapezium, draw]
	\node[trapezium left angle=60, trapezium right angle=120] at (0,0) {};
      \end{tikzpicture}\; may be treated in a similar fashion, by carefully unwinding the $z[s]$-twists in $\orbI^\EndoE(\tilde{G}) \to \orbI^\EndoE(\tilde{L})^{W^G(L)} \to \orbI^\EndoE_\gr(\tilde{L})^{W^G(L)}$ and $\orbI^\EndoE(\tilde{G}) \to \orbI^\EndoE_\gr(\tilde{G})$.
  \end{compactenum}

  Now recall the arguments in Remark \ref{rem:character-relation-equiv}. Establishing \eqref{eqn:character-relation-equiv} for all $T^\EndoE(\tilde{L}) \ni \phi_L \mapsto \phi$ and all $f_{\tilde{G}}$ is equivalent to showing the commutativity of the top layer upon composition with $\orbI^\EndoE_\gr(\tilde{G}) \to \orbI^\EndoE_\gr(\tilde{L})^{W^G(L)}$, or more concretely with the restriction map to $T^\EndoE_\text{ell}(\tilde{L})/W^G(L)$. A straightforward diagram chasing suffices.
\end{proof}

Therefore we are reduced to showing \eqref{eqn:character-relation-equiv} for $\phi \in T^\EndoE_\text{ell}(\tilde{G})$. For $f_{\tilde{G}} \in \Iasp(\tilde{G})$ and $f^\EndoE = \mathcal{T}^\EndoE(f_{\tilde{G}})$, define
\begin{gather}
  f^\gr_{\tilde{G}}(\tau) := \sum_{\phi \in T^\EndoE(\tilde{G})} \Delta(\tau, \phi) f^\EndoE(\phi), \quad \tau \in T_-(\tilde{G}).
\end{gather}
It admits meromorphic continuation to $T_-(\tilde{G})_\C$. By Lemma \ref{prop:spectral-inversion} we have $f^\EndoE(\phi) = \sum_\tau \Delta(\phi, \tau) f^\gr_{\tilde{G}}(\tau)$, and \eqref{eqn:character-relation-equiv} is equivalent to the assertion
\begin{gather}\label{eqn:character-relation-final}
  f^\gr_{\tilde{G}}(\tau) = f_{\tilde{G}}(\tau).
\end{gather}
By the definition of $\Delta$, showing \eqref{eqn:character-relation-equiv} for elliptic $\phi$ amounts to showing \eqref{eqn:character-relation-final} for $\tau \in T_{\text{ell},-}(\tilde{G})$. This is what we will actually prove.

Write
$$ T_{\text{para},-}(\tilde{G}) := T_-(\tilde{G}) \smallsetminus T_{\text{ell},-}(\tilde{G}), $$
a union of connected components. Define $T_{\text{para},-}(\tilde{G})_\C$, $
\tilde{T}_{\text{para},-}(\tilde{G})$ and $\tilde{T}_{\text{para},-}(\tilde{G})_\C$ in a similar fashion. By the foregoing discussion, we already have $f^\gr_{\tilde{G}}(\tau) = f_{\tilde{G}}(\tau)$ for $\tau \in T_{\text{para},-}(\tilde{G})_\C$. Now switch to the global setup.

\textbf{Assumption}. Henceforth we suppose that $\rev: \tilde{G} \to G(F)$ is isomorphic to the localization at $u$ of the adélic covering $\rev: \tildering{G} \to \mathring{G}(\A)$ (as coverings), with the notations in \S\S \ref{sec:stable-simple-trace-formula}-\ref{sec:compression}: here $u \in V$ is a distinguished place of the number field $\mathring{F}$. We also keep the conventions on the test function $f_V = f f^u_V \in \mathcal{H}_\text{simp}(\tilde{G}_V)$ in \S\ref{sec:compression}. The choice of $f \in C^\infty_{c,\asp}(\tilde{G})$ is free.

Fix an archimedean infinitesimal character $\nu$ for $\tildering{G}$. From Corollary \ref{prop:stable-trace-formula-nu} we have $I_\nu(f_V) = I^\EndoE_\nu(f_V)$. By Lemma \ref{prop:compression-EndoE} and the inversion formula, $I^\EndoE_\nu(f_V)$ equals
\begin{align*}
  \sum_{\phi \in T^\EndoE(\tilde{G})_\C} I^\EndoE_\nu(f^u_V, \phi) f^\EndoE(\phi) & = \sum_{\phi, \tau} I^\EndoE_\nu(f^u_V, \phi) \Delta(\phi, \tau) f^\gr_{\tilde{G}}(\tau) \\
  & = \sum_{\tau \in T_-(\tilde{G})_\C/\Sph^1} I^\EndoE_\nu(f^u_V, \tau) f^\gr_{\tilde{G}}(\tau),
\end{align*}
where we put
$$ I^\EndoE_\nu (f^u_V, \tau) := \sum_{\phi \in T^\EndoE(\tilde{G})_\C} I^\EndoE_\nu(f^u_V, \phi) \Delta(\phi, \tau). $$
for every $\tau \in T_-(\tilde{G})_\C$.

\begin{lemma}\label{prop:ell-vs-para}
  For any $f_V$ as above, we have
  $$ I^\EndoE_\nu(f^u_V, \tau) = I_\nu(f^u_V, \tau), \quad \tau \in T_{\mathrm{ell},-}(\tilde{G}), $$
  and
  $$ \sum_{\tau \in T_{\mathrm{ell},-}(\tilde{G})/\Sph^1} I_\nu(f^u_V, \tau) \left( f^\gr_{\tilde{G}}(\tau) - f_{\tilde{G}}(\tau) \right) = \sum_{\tau \in T_{\mathrm{para},-}(\tilde{G})_\C /\Sph^1} \left( I_\nu(f^u_V, \tau) - I^\EndoE_\nu(f^u_V, \tau) \right) f_{\tilde{G}}(\tau) $$
  for every $\nu$.
\end{lemma}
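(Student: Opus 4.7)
The plan is to combine the identity $I_\nu(f_V) = I^\EndoE_\nu(f_V)$ from Corollary \ref{prop:stable-trace-formula-nu} with the two compressed spectral expansions recorded just before the lemma, then exploit the cuspidal reduction of Lemma \ref{prop:reduction-cuspidal-f} and the parabolic reduction of Lemma \ref{prop:reduction-para-phi} to separate the two assertions.

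First I would substitute the compressions
$$ I_\nu(f_V) = \sum_{\tau} I_\nu(f^u_V,\tau)\,f_{\tilde G}(\tau), \qquad I^\EndoE_\nu(f_V) = \sum_{\tau} I^\EndoE_\nu(f^u_V,\tau)\,f^\gr_{\tilde G}(\tau) $$
into $I_\nu(f_V)=I^\EndoE_\nu(f_V)$, split the sum over $\tau \in T_-(\tilde G)_\C/\Sph^1$ into an elliptic and a parabolic part, and apply Lemma \ref{prop:reduction-para-phi} (which yields $f^\gr_{\tilde G}(\tau)=f_{\tilde G}(\tau)$ on $T_{\mathrm{para},-}(\tilde G)_\C$) to the parabolic part. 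This produces the master identity
$$ \sum_{\tau \in T_{\mathrm{ell},-}(\tilde G)/\Sph^1} \bigl[ I_\nu(f^u_V,\tau)\,f_{\tilde G}(\tau) - I^\EndoE_\nu(f^u_V,\tau)\,f^\gr_{\tilde G}(\tau) \bigr] = \sum_{\tau \in T_{\mathrm{para},-}(\tilde G)_\C/\Sph^1} \bigl[ I^\EndoE_\nu - I_\nu \bigr](f^u_V,\tau)\,f_{\tilde G}(\tau). $$

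To extract the first assertion I would then specialise $f = f_u$ to lie in $\Iaspcusp(\tilde G)$. Such $f$ satisfies $f_{\tilde G}(\tau)=0$ for $\tau$ parabolic (Proposition \ref{prop:PW-spectral}), and Lemma \ref{prop:reduction-cuspidal-f} yields $f^\gr_{\tilde G}=f_{\tilde G}$ everywhere. The master identity therefore collapses to
$$ \sum_{\tau \in T_{\mathrm{ell},-}(\tilde G)/\Sph^1} \bigl[ I_\nu(f^u_V,\tau) - I^\EndoE_\nu(f^u_V,\tau) \bigr]\, f_{\tilde G}(\tau) = 0, $$
a finite sum thanks to the compression Lemmas \ref{prop:compression-spectral} and \ref{prop:compression-EndoE}. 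Varying $f$ over $\Iaspcusp(\tilde G) \simeq \mathrm{PW}_{\asp}(\tilde G)_G$, the trace Paley-Wiener theorem provides enough freedom in $f_{\tilde G}|_{T_{\mathrm{ell},-}}$ to separate the finitely many orbits in the support, forcing each coefficient to vanish; this yields $I_\nu(f^u_V,\tau)=I^\EndoE_\nu(f^u_V,\tau)$ for every $\tau \in T_{\mathrm{ell},-}(\tilde G)$. Feeding this back into the master identity and rearranging the signs produces the second assertion by pure algebra.

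The principal obstacle I anticipate is the linear-independence step in the cuspidal argument: one must check that the two compressed coefficient functions $I_\nu(f^u_V,\cdot)$ and $I^\EndoE_\nu(f^u_V,\cdot)$ are genuinely of finite support on $T_{\mathrm{ell},-}(\tilde G)/\Sph^1$ (which requires unpacking both compression lemmas and exploiting the finiteness of $\Pi_{\mathrm{disc},-,\nu}(\tildering G,V)$ at the fixed $K_\infty$-type level) and that cuspidal $f_u$ can be chosen to realise an arbitrary Paley-Wiener datum supported at prescribed orbits. A minor but worth-noting nuisance is that the stabilisation of the simple trace formula relies on admissibility of $f_V$; whenever admissibility is invoked one may harmlessly enlarge $V$ as in Remark \ref{rem:harmless} without affecting either side of the identities in play.
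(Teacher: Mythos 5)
Your argument is correct and follows essentially the same route as the paper: combine $I_\nu = I^\EndoE_\nu$ with the two compressed spectral expansions, extract the elliptic equality from the cuspidal specialisation, and then plug $f^\gr_{\tilde G}=f_{\tilde G}$ on the parabolic locus back in to rearrange. The only difference is the extraction step for the first assertion --- the paper simply takes $f_{\tilde G}$ to be a pseudo-coefficient at a given $\tau \in T_{\mathrm{ell},-}(\tilde G)$, which picks out the single coefficient $I_\nu(f^u_V,\tau)-I^\EndoE_\nu(f^u_V,\tau)$ directly and so sidesteps the finite-support/linear-independence concern you flag as the ``principal obstacle''.
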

\begin{proof}
  From Lemma \ref{prop:compression-spectral} and $I_\nu(f_V) = I^\EndoE_\nu(f_V)$, we obtain
  $$ \sum_{\tau \in T_-(\tilde{G})_\C /\Sph^1} \left( I^\EndoE_\nu(f^u_V, \tau) f^\gr_{\tilde{G}}(\tau) - I_\nu(f^u_V, \tau) f_{\tilde{G}}(\tau) \right) = 0. $$
  No need to worry about $T_{\text{ell},-}(\tilde{G})_\C \smallsetminus T_{\text{ell},-}(\tilde{G})$ in the sum. Given $\tau \in T_{\text{ell},-}(\tilde{G})$, choose $f_{\tilde{G}} \in \Iaspcusp(\tilde{G})$ to be a pseudo-coefficient at $\tau$. Note that \eqref{eqn:character-relation-final} holds for this $f_{\tilde{G}}$ by Lemma \ref{prop:reduction-cuspidal-f}. The first assertion follows. Now resume the setting of an arbitrary $f_{\tilde{G}}$, we have
  $$ \sum_{\tau \in T_{\text{ell},-}(\tilde{G}) /\Sph^1} I_\nu(f^u_V, \tau) \left(f^\gr_{\tilde{G}}(\tau) - f_{\tilde{G}}(\tau) \right) = \sum_{\tau \in T_{\mathrm{para},-}(\tilde{G})_\C /\Sph^1} \left( I_\nu(f^u_V, \tau) f_{\tilde{G}}(\tau) - I^\EndoE_\nu(f^u_V, \tau) f^\gr_{\tilde{G}}(\tau) \right). $$
  
  We have verified \eqref{eqn:character-relation-final} for $\tau \in T_{\text{para},-}(\tilde{G})_\C$. Plugging this into the previous displayed equation yields the second assertion.
\end{proof}

Observe that the function
\begin{gather}\label{eqn:h_G}
  h_{\tilde{G}}: \tau \longmapsto f^\gr_{\tilde{G}}(\tau) - f_{\tilde{G}}(\tau), \quad \tau \in T_-(\tilde{G})
\end{gather}
has been shown to be supported on $T_{\text{ell},-}(\tilde{G})$. It satisfies the other conditions ($\Sph^1$-equivariance and finite support, cf.\ Lemma \ref{prop:spectral-Delta-supp}) characterizing the Paley-Wiener space $\text{PW}_{\asp}(\tilde{G})$, hence comes from an element of $\Iaspcusp(\tilde{G})$ which we still denote by $h_{\tilde{G}}$.

\begin{proposition}[{\cite[Lemma 9.3]{Ar96}}]
  We have $I^\EndoE_\nu(f^u_V, \tau) = I_\nu(f^u_V, \tau)$ for every $\tau \in T_{\mathrm{para},-}(\tilde{G})_\C$ and every infinitesimal character $\nu$.
\end{proposition}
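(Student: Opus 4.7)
The plan is to follow Arthur's proof of \cite[Lemma 9.3]{Ar96} closely, adapting it to the metaplectic context. The starting point is the identity from Lemma \ref{prop:ell-vs-para},
$$ \sum_{\tau \in T_{\mathrm{ell},-}(\tilde{G})/\Sph^1} I_\nu(f^u_V, \tau) h_{\tilde{G}}(\tau) = \sum_{\tau \in T_{\mathrm{para},-}(\tilde{G})_\C /\Sph^1} \Delta I_\nu(f^u_V, \tau) f_{\tilde{G}}(\tau), $$
where $\Delta I_\nu := I_\nu - I^\EndoE_\nu$. The goal is to show that the RHS vanishes for every admissible pair $(f_V = f f^u_V)$; by the Paley--Wiener density of parabolic transforms, this forces $\Delta I_\nu(f^u_V, \cdot) \equiv 0$ on $T_{\mathrm{para},-}(\tilde{G})_\C$.

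First I would fix $\tau_0 \in T_{\mathrm{para},-}(\tilde{G})_\C$ arising from $\tau_L \in T_{\mathrm{ell},-}(\tilde{L})_\C$ for a proper Levi $L \in \mathcal{L}(M_0) \smallsetminus \{G\}$, and choose $f_{\tilde{G}}$ whose Paley--Wiener transform is supported near the $W^G_0$-orbit of $\tau_0$ and vanishes on $T_{\mathrm{ell},-}(\tilde{G})$, so that the parabolic PW data of $f_{\tilde{G}}$ can be prescribed essentially freely. For such $f_{\tilde{G}}$, we have $h_{\tilde{G}}|_{T_{\mathrm{ell},-}(\tilde{G})} = f^\gr_{\tilde{G}}|_{T_{\mathrm{ell},-}(\tilde{G})}$, so LHS becomes a sum over elliptic $\tau$ of $I_\nu(f^u_V, \tau) \cdot \sum_\phi \Delta(\tau, \phi) f^\EndoE(\phi)$. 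For parabolic $\phi$, the inductive hypothesis applied to $\tilde{L}$ (together with Lemma \ref{prop:reduction-para-phi}) expresses $f^\EndoE(\phi)$ in terms of $f_{\tilde{G}}|_{T_{\mathrm{para},-}}$, so those contributions combine with the RHS. The elliptic $\phi$ contribution, which is precisely what we cannot yet describe, must be controlled by a further argument.

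To handle the remaining elliptic-$\phi$ piece, I would exploit the global freedom in $f^u_V$ and the archimedean multipliers from \cite{Li13}, as in Arthur's treatment. By varying $f^u_V$ over admissible simple test functions and using the multiplier convergence estimates recalled in \S\ref{sec:stable-simple-trace-formula}, the identity above refines into a family of identities indexed by separated infinitesimal characters; then the linear independence of the family $\{f^u_V \mapsto I_\nu(f^u_V, \tau_{\mathrm{ell}})\}$, modulo $\Sph^1$ and $W^G_0$, lets us isolate each parabolic coefficient $\Delta I_\nu(f^u_V, \tau)$. The \emph{main obstacle} is the circular character of the argument: taming the elliptic $\phi$ contribution to $f^\gr_{\tilde{G}}$ on $T_{\mathrm{ell},-}(\tilde{G})$ amounts, essentially, to understanding the very character relations we are trying to prove. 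Overcoming it requires a careful ordering of the induction (an outer induction on $\dim_F W$ via our standing Hypothesis, combined with an inner induction on $L \in \mathcal{L}(M_0)$) so that at each stage only the genuinely cuspidal part of the elliptic contribution remains uncontrolled, and this residue can finally be killed by applying the linear independence argument to the resulting cuspidal distributions.
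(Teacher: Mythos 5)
You correctly start from Lemma \ref{prop:ell-vs-para} and correctly isolate the circularity: controlling $f^{\gr}_{\tilde G}$ on $T_{\mathrm{ell},-}(\tilde G)$ appears to require the very elliptic character relations under construction. But the resolution you propose --- an inner induction on $L\in\mathcal L(M_0)$ plus a ``linear independence'' argument on $\{f^u_V\mapsto I_\nu(f^u_V,\tau_{\mathrm{ell}})\}$ --- is not a proof; you never say what the family is linearly independent over, how varying $f^u_V$ or the multipliers separates the elliptic coefficients, or why ``only the genuinely cuspidal part'' remains. As written there is a genuine gap exactly where you flag one.

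The paper does not break the circle by induction on Levi subgroups; it sidesteps it by a smooth--versus--atomic dichotomy. Setting $h_{\omega,\tilde G}:=f^{\gr}_{\omega,\tilde G}-f_{\omega,\tilde G}\in\Iaspcusp(\tilde G)$ (which is legitimate because $\omega$ is supported on the parabolic component $\Omega$), its transfer $h^\EndoE_\omega$ lands in $\Icusp^\EndoE(\tilde G)$. One then shows, for any linear functional $I$ on $\Iaspcusp(\tilde G)$, that $\omega\mapsto I(h_{\omega,\tilde G})$ is given by integration against a \emph{smooth} function on $\Omega/\Sph^1$: transport $I$ to $J$ on $\Icusp^\EndoE(\tilde G)$ via $\mathcal T^\EndoE$, invoke Howe's finiteness conjecture \cite{BM00} on each endoscopic group $G^\Endo$ to reduce $J(h^\EndoE_\omega)$ to finitely many stable orbital integrals $h^\EndoE_\omega(\sigma)$, then apply the inversion formula of Lemma \ref{prop:stable-orbint-Fourier} and the change of variables Lemma \ref{prop:change-variables-spectral}. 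Specializing to $I=I_\nu(f^u_V\,\cdot)$ and combining with Lemma \ref{prop:ell-vs-para} and the finiteness of \cite[Proposition 7.4]{Li13}, the same quantity is a \emph{finite sum} of evaluations $(I_\nu(f^u_V,\tau)-I^\EndoE_\nu(f^u_V,\tau))\,\omega(\tau)$ over $\tau\in\Omega_\C$. On the character lattice of the compact torus $\tilde\Omega_M/\Sph^1$, a rapidly decreasing Fourier transform can equal a finite sum of $\C^\times$-valued characters only if both sides vanish (\cite[\S 2.7, Lemme]{Wa13-4}); this forces $I_\nu(f^u_V,\cdot)=I^\EndoE_\nu(f^u_V,\cdot)$ on $\Omega$, and analytic continuation handles $\Omega_\C$. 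None of the three decisive ingredients --- Howe finiteness, the stable inversion formula, or the smooth-density vs.\ Dirac-sum comparison --- appears in your proposal, so the argument does not close.
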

\begin{proof}
  Let $\Omega$ be a connected component of $T_{\text{para},-}(\tilde{G})$. Set
  $$ \mathrm{PW}_{\asp}(\Omega) := \left\{ a \in \mathrm{PW}_{\asp}(\tilde{G}) : \Supp(a) \subset \Omega \right\}. $$
  For $\omega \in \mathrm{PW}_{\asp}(\Omega)$, let $f_\omega \in C^\infty_{c,\asp}(\tilde{G})$ be such that $\forall \tau, \; f_{\omega,\tilde{G}}(\tau) = \omega(\tau)$. As the first step, let us prove that for every linear functional $I: \Iaspcusp(\tilde{G}) \to \C$, there exists $F \in C^\infty(\Omega)$ such that $F(z\tau)=zF(\tau)$ for all $z \in \Sph^1$, $\tau \in \Omega$, and
  \begin{gather}\label{eqn:F}
    I\left( f^\gr_{\omega,\tilde{G}} - f_{\omega,\tilde{G}} \right) = \int_{\Omega/\Sph^1} F\omega, \quad \forall \; \omega \in \mathrm{PW}_{\asp}(\Omega).
  \end{gather}

  Via $\mathcal{T}^\EndoE: \Iaspcusp(\tilde{G}) \rightiso \Icusp^\EndoE(\tilde{G})$, we transport $I$ to a linear functional $J: \Icusp^\EndoE(\tilde{G}) \to \C$. Let $f \in C^\infty_{c,\asp}(\tilde{G})$. Lemma \ref{prop:reduction-cuspidal-f} applied to \eqref{eqn:h_G} implies that $\mathcal{T}^\EndoE(h_{\tilde{G}})$ sends any $\phi \in T^\EndoE_\text{ell}(\tilde{G})$ to
  \begin{gather}\label{eqn:h-vs-h^EndoE}
    \sum_\tau \Delta(\phi, \tau) h_{\tilde{G}}(\tau) = \sum_\tau \Delta(\phi, \tau) \left( f^\gr_{\tilde{G}}(\tau) - f_{\tilde{G}}(\tau) \right) = f^\EndoE(\phi) - f^\EndoE_\gr(\phi).
  \end{gather}
  As a by product, we conclude that $f^\EndoE - f^\EndoE_\gr$ comes from some element $h^\EndoE \in \Icusp^\EndoE(\tilde{G})$.

  Specialize to the case $f = f_\omega$. Claim: the function
  $$ \omega \mapsto I\underbrace{ \left( f^\gr_{\omega,\tilde{G}} - f_{\omega,\tilde{G}} \right) }_{=: h_{\omega,\tilde{G}}} = J\underbrace{ \left( f^\EndoE_\omega - f^\EndoE_{\omega,\gr} \right) }_{=: h^\EndoE_\omega} $$
  is a finite linear combination of functions of the form
  $$ \omega \mapsto h^\EndoE_\omega(\sigma), \quad \sigma \in \Gamma^\EndoE_\text{reg,ell}(\tilde{G}). $$
  Indeed, the normalized stable orbital integrals $a \mapsto a(\sigma)$ are (weakly) dense in $S\orbI(G^\Endo)^\vee$ for each $G^\Endo$; since $h^\EndoE_\omega \in \Icusp^\EndoE(\tilde{G}) = \bigoplus_{G^\Endo \in \EndoE_\text{ell}(\tilde{G})} S\orbI(G^\Endo)$, Howe's conjecture on finiteness \cite{BM00} applied to each endoscopic group $G^\Endo$ implies our claim.

  It remains to fix $\sigma$ and show that $h^\EndoE_\omega(\sigma) = \int_{\Omega/\Sph^1} F(\sigma, \tau)\omega(\tau) \dd\tau$ for some smooth function $F$ that is independent of $\omega$ and verifies $\forall z \in \Sph^1, \;F(\sigma, z\tau) = zF(\sigma,\tau)$. We apply Lemma \ref{prop:stable-orbint-Fourier} together with \eqref{eqn:h-vs-h^EndoE} to obtain
  \begin{align*}
    h^\EndoE_\omega(\sigma) & = \int_{\phi \in T^\EndoE_\text{ell}(\tilde{G})} S(\sigma, \phi) h^\EndoE_\omega(\phi) \dd\phi \\
    & = \int_{\phi \in T^\EndoE_\text{ell}(\tilde{G})} \sum_{\tau \in T_{\text{ell},-}(\tilde{G})/\Sph^1} S(\sigma, \phi) \Delta(\phi, \tau) h_{\omega,\tilde{G}}(\tau) \dd\phi
  \end{align*}
  where $S(\cdot, \cdot)$ is the ``collective'' version of the smooth functions $S^{G^\Endo}(\cdot, \cdot)$ restricted to the elliptic parameters $(\sigma, \phi)$. Changing variables using Lemma \ref{prop:change-variables-spectral}, we arrive at
  $$ \int_{T_{\text{ell},-}(\tilde{G})/\Sph^1} \sum_{\phi \in T^\EndoE_\text{ell}(\tilde{G})} S(\sigma, \phi) \Delta(\phi, \tau) h_{\omega,\tilde{G}}(\tau) \dd\tau. $$
  The function $F(\sigma, \tau) := \sum_\phi S(\sigma, \phi) \Delta(\phi, \tau)$ is smooth in $\sigma$ and $\tau$ (use the smoothness of $\Delta$), it also has the right $\Sph^1$-equivariance so that the integral makes sense. Hence \eqref{eqn:F} is established.
  
  We are now ready prove the Proposition for $\tau \in \Omega$; the case $\tau \in \Omega_\C$ will follow by analytic continuation. Consider the genuine invariant distribution $I: k \mapsto I_\nu(f^u_V k)$, where $f^u_V$ is fixed and $k \in C^\infty_{c,\asp}(\tilde{G})$. Recall that
  $$ I_\nu \left( f^\gr_{\tilde{G}} - f_{\tilde{G}} \right) = \sum_{\tau \in T_{\text{ell},-}(\tilde{G})/\Sph^1} I_\nu(f^u_V, \tau)\left( f^\gr_{\tilde{G}}(\tau) - f_{\tilde{G}}(\tau) \right). $$
  Take $f = f_\omega$ for $\omega \in \text{PW}_{\asp}(\Omega)$ as before. Using Lemma \ref{prop:ell-vs-para},
  $$ I_\nu(h_{\omega,\tilde{G}}) = \sum_{\tau \in \Omega_\C/\Sph^1} \left( I_\nu(f^u_V, \tau) - I^\EndoE_\nu(f^u_V, \tau) \right) \omega(\tau). $$
  By fixing $f^u_V$ and $\Omega$, the possible $\tilde{K}$-types of the automorphic representations in the spectral expansion of $I_\nu(f_V)$ are pinned down. By of \cite[Proposition 7.4]{Li13}, only finitely many automorphic representations contribute to $I_\nu(f_V)$, and that is why we opt to fix $\nu$; in particular the sum over $\tau$ is actually finite. On the other hand, $I_\nu(h_{\omega,\tilde{G}})$ also equals $\int_{\Omega/\Sph^1} F(\tau)\omega(\tau) \dd\tau$ for some $F \in C^\infty(\Omega)$ with $\forall z \in \Sph^1, \;F(z\tau)=zF(\tau)$, by \eqref{eqn:F}.

  Write $\Omega = \Omega_M/W(\Omega)$ where $M \in \mathcal{L}(M_0)$, $\Omega_M$ is a connected component of $T_{\text{ell},-}(\tilde{M})$ and $W(\Omega) := \Stab_{W^G(M)}(\Omega_M)$; let $\tilde{\Omega}_M$ be the inverse image of $\Omega_M$ in $\tilde{T}_{\text{ell},-}(\tilde{M})$. Then $F$ may be viewed as a $W(\Omega)$-invariant smooth function on $\tilde{\Omega}_M$, so do $(I^\EndoE_\nu(f^u_V, \cdot) - I^\EndoE_\nu(f^u_V, \cdot))|_\Omega$. The symmetry constraints on $\omega$ can thus be removed so that
  $$ \int_{\tilde{\Omega}_M /\Sph^1} F(\tau)\omega(\tau) \dd\tau = \sum_{\substack{\tau \in \tilde{\Omega}_{M,\C}/\Sph^1 \\ \text{finite sum}}} \left( I_\nu(f^u_V, \tau) - I^\EndoE_\nu(f^u_V, \tau) \right) \omega(\tau) $$
  holds for every Paley-Wiener function $\omega$ on the compact torus $\tilde{\Omega}_M/\Sph^1$; here we got rid of $\Sph^1$-equivariance by trivializing the torsors as in Remark \ref{rem:T-splitting}.
  
  Via Fourier transform we obtain an equality between distributions on the Pontryagin dual of $\tilde{\Omega}_M/\Sph^1$, which is a lattice. The left-hand side gives a rapidly decreasing function, whereas the right-hand side gives a finite sum of $\C^\times$-valued characters. The only possibility is that $F = I_\nu(f^u_V,\cdot) - I^\EndoE_\nu(f^u_V, \cdot) = 0$; this is elementary, see \cite[\S 2.7, Lemme]{Wa13-4}.
\end{proof}

\begin{corollary}\label{prop:compressed-h}
  Let $h_{\tilde{G}} \in \Iaspcusp(\tilde{G})$ be attached to $f^\gr_{\tilde{G}} - f_{\tilde{G}}$ as in \eqref{eqn:h_G}. Pick any $h \in C^\infty_{c,\asp}(\tilde{G})$ mapping to $h_{\tilde{G}}$, we have
  $$ I_\nu(f^u_V h) = \sum_{\tau \in T_{\mathrm{ell},-}(\tilde{G})/\Sph^1} I_\nu(f^u_V, \tau) h_{\tilde{G}}(\tau) = 0 $$
  for every $f^u_V$ and every infinitesimal character $\nu$. Consequently, $I(f^u_V h)=0$.
\end{corollary}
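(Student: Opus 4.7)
The plan is to chain three ingredients: the spectral compression at $u$ for cuspidal test functions, the Paley-Wiener description of $\Iaspcusp(\tilde{G})$, and the second identity of Lemma \ref{prop:ell-vs-para} together with the vanishing $I^\EndoE_\nu(f^u_V,\tau)=I_\nu(f^u_V,\tau)$ on $T_{\text{para},-}(\tilde{G})_\C$ just established in the preceding proposition.

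First I would compute $I_\nu(f^u_V h)$ by applying Lemma \ref{prop:compression-spectral} to the test function $f_V := h f^u_V$, which still belongs to $\mathcal{H}_{\mathrm{simp}}(\tilde G_V)$ since cuspidality at the place $u$ is automatic for $h \in C^\infty_{c,\asp}(\tilde G)$ mapping to $h_{\tilde G}\in\Iaspcusp(\tilde G)$ (the other simplicity conditions are built into $f^u_V$). The resulting identity
\[ I_\nu(f^u_V h) = \sum_{\tau \in T_-(\tilde G)_\C/\Sph^1} I_\nu(f^u_V,\tau)\, h_{\tilde G}(\tau) \]
reduces to a sum over $\tau \in T_{\text{ell},-}(\tilde G)/\Sph^1$: indeed, by Proposition \ref{prop:PW-spectral} (together with the decomposition \eqref{eqn:PW-decomp} of $\mathrm{PW}_{\asp}(\tilde G)$), the image of $\Iaspcusp(\tilde G)$ inside the Paley-Wiener space consists precisely of functions supported on $T_{\text{ell},-}(\tilde G)$, so $h_{\tilde G}(\tau)=0$ for all $\tau \in T_{\text{para},-}(\tilde G)_\C$. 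This gives the first equality of the corollary.

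Next, for the vanishing, I would invoke the second equality of Lemma \ref{prop:ell-vs-para}: by definition $h_{\tilde G}(\tau) = f^\gr_{\tilde G}(\tau) - f_{\tilde G}(\tau)$ on $T_{\text{ell},-}(\tilde G)$, so
\[ \sum_{\tau \in T_{\text{ell},-}(\tilde G)/\Sph^1} I_\nu(f^u_V,\tau)\,h_{\tilde G}(\tau) = \sum_{\tau \in T_{\text{para},-}(\tilde G)_\C/\Sph^1} \bigl(I_\nu(f^u_V,\tau) - I^\EndoE_\nu(f^u_V,\tau)\bigr) f_{\tilde G}(\tau). \]
The preceding proposition (Lemma 9.3 analogue) says that each coefficient on the right vanishes, so the right-hand side is zero, giving the second equality.

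Finally, $I(f^u_V h) = \sum_\nu I_\nu(f^u_V h)$ in the sense of the multiplier-convergent expansion recalled after Theorem \ref{prop:simple-trace-formula}, with all $\tilde K_\infty$-types of contributing representations controlled by $f^u_V$ and $h$. Since each $I_\nu(f^u_V h)=0$, we conclude $I(f^u_V h)=0$. No step here looks hard: the whole content is bookkeeping once the two previous results are in place, the only subtle point being to note that $h$ is automatically a simple test function at $u$ (so that Lemma \ref{prop:compression-spectral} applies) and that cuspidality of $h_{\tilde G}$ kills the parabolic part of the spectral sum.
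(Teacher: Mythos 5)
Your proof is correct and takes the same route as the paper's own (terse) proof: expand $I_\nu(f^u_V h)$ by Lemma \ref{prop:compression-spectral}, note that cuspidality of $h_{\tilde G}$ kills the parabolic spectral contribution, identify the elliptic sum with the right-hand side of the second equality of Lemma \ref{prop:ell-vs-para}, apply the preceding proposition to see it vanish, and finally sum over $\nu$. One minor simplification: the remark that $h$ is automatically cuspidal at $u$ is true but not needed, since the standing assumption of \S\ref{sec:compression} is that $f^u_V$ alone already satisfies the simplicity conditions, so $f^u_V f \in \mathcal{H}_{\mathrm{simp}}(\tilde{G}_V)_\Gamma$ for \emph{any} $f \in C^\infty_{c,\asp}(\tilde G)$.
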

\begin{proof}
  Apply Lemma \ref{prop:compression-spectral} plus the second equality of Lemma \ref{prop:ell-vs-para}, and recall that $I = \sum_\nu I_\nu$.
\end{proof}

\subsection{Proof of the main theorem: local-global argument}
The local-global loop is now to be closed. We revert to the given local covering $\rev: \tilde{G} \to G(F)$ attached to $(W, \angles{\cdot|\cdot})$ and try to embed it into an adélic one, by suitably choosing $\mathring{F}$, etc.

We will globalize the data of linear algebra by which $\tilde{G}$ and $\Delta$ are defined; the additive character $\psi$ will be globalized only up to $F^{\times 2}$.

\begin{proposition}\label{prop:globalization}
  Let $T$ be an elliptic maximal $F$-torus of $G$. For every $r \geq 1$, there exist
  \begin{itemize}
    \item a number field $\mathring{F}$,
    \item a non-trivial additive character $\mathring{\psi}=\prod_v \psi_v: \A/\mathring{F} \to \Sph^1$,
    \item a symplectic $\mathring{F}$-vector space $(\mathring{W}, \angles{\cdot|\cdot}_{\mathring{F}})$,
    \item $u_0, \ldots, u_r$: distinct non-archimedean places of $\mathring{F}$,
    \item a maximal $\mathring{F}$-torus $\mathring{T}$ of $\mathring{G}$,
  \end{itemize}
  such that the following properties are satisfied for the adélic metaplectic covering $\rev: \tildering{G} \to \mathring{G}(\A)$ attached to $(\mathring{W}, \mathring{\psi} \circ \angles{\cdot|\cdot}_{\mathring{F}})$.
  \begin{enumerate}
    \item For $i=0, \ldots, r$, we have $\mathring{F}_{u_i} \rightiso F$, under which $(\mathring{W}, \angles{\cdot|\cdot}_{\mathring{F}}) \otimes_{\mathring{F}} \mathring{F}_{u_i} \rightiso (W, \angles{\cdot|\cdot})$; fix such identifications.
    \item The natural homomorphism $H^1(\mathring{F}_v, \mathring{T}_v) \to H^1(\A/\mathring{F}, \mathring{T})$ (see \cite[\S 3.1.2]{Li15}) is an isomorphism at $v=u_0, \ldots, u_r$.
    \item There exists $a_i \in F^\times$ such that
      $$ \forall t \in F, \; \psi_{u_i}(t) = \psi(a_i^2 t). $$
    \item As coverings, $\tilde{G}_{u_i} \to \mathring{G}(\mathring{F}_{u_i})$ is isomorphic to $\tilde{G} \to G(F)$. Moreover, the geometric transfer factors $\Delta_{(n',n'')}$ on $\tilde{G}_{u_i}$ and $\tilde{G}$ agree under this isomorphism.
    \item Under the identifications above, the localization of $\mathring{T}$ at each $u_i$ is conjugate to $T$; in particular, $\mathring{T}$ is $\mathring{F}$-elliptic.
    \item The localization map $\mathring{T}(\mathring{F}) \to T(F)$ at any $u_i$ has dense image.
  \end{enumerate}
\end{proposition}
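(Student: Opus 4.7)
The plan is to proceed in three stages: (i) globalize the local field $F$, the symplectic space $(W, \angles{\cdot|\cdot})$ and the elliptic torus $T$ by a Krasner-type argument, (ii) adjust the global additive character so its localizations at $u_0, \ldots, u_r$ differ from $\psi$ by squares, (iii) verify the cohomological and density assertions. Compatibility of the covering and the transfer factor with the rest will be automatic.

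\emph{Stage 1 (fields, symplectic space, torus).} An elliptic maximal torus of $\Sp(W)$ is encoded by a degree-$2n$ étale $F$-algebra $E$ with an $F$-involution (fixing a subalgebra of dimension $n$) together with a compatible trace pairing realizing $\angles{\cdot|\cdot}$; such data is cut out by finitely many polynomial relations over $F$. By standard Krasner-type arguments (cf.\ \cite[\S 5.2]{Li14a} and the globalization technique of Arthur's \cite{Ar96}), I would find a number field $\mathring{F}$ together with $r+1$ non-archimedean places $u_0, \ldots, u_r$ with $\mathring{F}_{u_i} \rightiso F$, and a global étale $\mathring{F}$-algebra $\mathring{E}$ with involution localizing at each $u_i$ to $(E, \bar{\cdot})$. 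This yields a symplectic $\mathring{F}$-vector space $(\mathring{W}, \angles{\cdot|\cdot}_{\mathring{F}})$ of dimension $2n$ (condition (1) is automatic since symplectic forms of a given dimension are unique over any field) and an $\mathring{F}$-torus $\mathring{T} \subset \mathring{G} := \Sp(\mathring{W})$ satisfying (5). The density statement (6) then follows by weak approximation for $\mathring{T}$ at the single place $u_0$.

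\emph{Stage 2 (additive character).} Pick any non-trivial $\mathring{\psi}_0: \A/\mathring{F} \to \Sph^1$, and write $(\mathring{\psi}_0)_{u_i}(t) = \psi(b_i t)$ for some $b_i \in F^\times$ under the identifications of (1). Rescaling by $c \in \mathring{F}^\times$, the candidate $\mathring{\psi}_c(x) := \mathring{\psi}_0(cx)$ has localization $t \mapsto \psi(cb_i t)$, so condition (3) becomes $c b_i \in F^{\times 2}$ at every $u_i$. Since $F^{\times 2}$ is open in $F^\times$ and $\mathring{F}^\times$ is dense in $\prod_{i=0}^r F_{u_i}^\times$ by weak approximation, one may simultaneously solve the $r+1$ square-congruences; taking $\mathring{\psi} := \mathring{\psi}_c$ for such a $c$ gives (3). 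Condition (4) is then immediate: by \cite[\S 5]{Li11}, Weil's eightfold metaplectic covering and the transfer factor $\Delta_{(n',n'')}$ depend on the additive character only through its class modulo $F^{\times 2}$, so $\tildering{G}_{u_i} \rightiso \tilde{G}$ canonically and the factors match.

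\emph{Stage 3 (cohomology) and the main obstacle.} Condition (2) is the subtle point and will be the principal obstacle. Decomposing $\mathring{T}$ into a product of tori of the form $\Res_{\mathring{K}/\mathring{K}_0} U$ with $U = \Ker\,\mathrm{N}_{\mathring{K}/\mathring{K}_0}$, so that $H^1$ of each factor is an $\F_2$-vector space controlled by the relevant quadratic extensions, one must ensure that at each $u_i$ the localization $H^1(\mathring{F}_{u_i}, \mathring{T}_{u_i}) \to H^1(\A/\mathring{F}, \mathring{T})$ is both surjective and injective. Surjectivity reflects anisotropy at $u_i$ and comes for free from Stage 1; injectivity, however, forces the global structure to accommodate the local one and must be enforced during the Krasner step by demanding that the auxiliary quadratic extensions underlying $\mathring{T}$ split completely at a sufficient supplementary set of places (in particular at all archimedean and ramified places other than the $u_i$), so that no global obstructions arise. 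Folding this arithmetic refinement back into the construction of $\mathring{F}$ and $\mathring{T}$ in Stage 1 is the hardest part of the proof; once it is in place, all six assertions are verified as indicated.
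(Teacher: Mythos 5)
There is a genuine gap in Stage 3, and it is precisely the point you flag as ``the hardest part.''

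Your proposed mechanism for condition (2) --- forcing the auxiliary quadratic extensions underlying $\mathring{T}$ to split completely at a supplementary set of places --- is not the right fix, and in fact cannot be carried out in the form you describe: by Chebotarev, a nontrivial extension of $\mathring{F}$ cannot split at all places outside a finite set, so ``all archimedean and ramified places other than the $u_i$'' does not pin down what you want, and the global class group $H^1(\mathring{F},\mathring{T})$ can still produce classes trivial outside $u_i$ that kill injectivity of $H^1(\mathring{F}_{u_i},\mathring{T}_{u_i}) \to H^1(\A/\mathring{F},\mathring{T})$. The mechanism used in the paper is orthogonal to this: one arranges the Galois descent so that the decomposition group of $u_i$ inside $\Gal{\mathring{E}/\mathring{F}}$ (where $\mathring{E}$ is the global splitting field of $\mathring{T}$) is the \emph{entire} group $\Gal{\mathring{E}/\mathring{F}} = \Gal{E/F}$. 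Concretely: realize $E$ as a completion $\mathring{E}'_{w'}$ of a number field, take $\mathring{F}'$ to be the fixed field of the $\Gal{E/F}$-action on $\mathring{E}'$, note that $w'$ is then the unique place of $\mathring{E}'$ above $u'$, and pick a further Galois extension $\mathring{F}/\mathring{F}'$ of degree $>r$ over which $u'$ splits completely; setting $\mathring{E}:=\mathring{E}'\mathring{F}$ gives $\Gal{\mathring{E}_{w_i}/\mathring{F}_{u_i}} = \Gal{E/F}$ with $\mathring{E}_{u_i}$ a field. Via Tate--Nakayama duality, both $H^1(\mathring{F}_{u_i},\mathring{T}_{u_i})$ and $H^1(\A/\mathring{F},\mathring{T})$ are then computed by the same coinvariants, and the comparison map is the identity; this gives (2) in one stroke. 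Without something of this kind, your Stage 1 leaves the decomposition group at $u_i$ as an unconstrained subgroup and (2) is not established.

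Two secondary points. First, your justification of (6) as ``weak approximation for $\mathring{T}$ at the single place $u_0$'' is not a theorem for general tori: weak approximation can fail. The paper uses \cite[Lemma 1(b)]{KR00}, whose hypotheses require an auxiliary place (here $u_1$) at which the splitting algebra $\mathring{E}$ is a field; this is one of the reasons the statement produces $r+1$ places rather than just one. Second, your Stage 1 globalizes the pair $(L,\tau)$ but never globalizes the remaining datum $c$ (the trace-zero unit parametrizing the symplectic form and the embedding of the torus); this requires a separate weak-approximation argument for the affine $F$-variety $\{\Tr_{L/L^\sharp}=0\}$, which you would need to supply.

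Stages 1 (modulo the above) and 2 are correct and follow the paper's approach; in particular, the adjustment of $\mathring{\psi}$ by squares and the observation that the covering and transfer factor only see $\psi$ modulo $F^{\times 2}$ are exactly as in the paper.
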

\begin{proof}
  The parametrization of conjugacy classes of $T$ can be deduced from that of regular semisimple elements \cite[\S 3.1]{Li11}. They are in bijection with equivalence classes of data $(L, \tau, c)$, where
  \begin{itemize}
    \item $L$ is a finite-dimensional étale $F$-algebra,
    \item $\tau: L \to L$ is an $F$-involution, whose fixed subalgebra we denote by $L^\sharp$,
    \item $c \in L^\times/N_{L/L^\sharp}(L^\times)$ satisfies $\tau(c)=-c$, equivalently $\Tr_{L/L^\sharp}(c)=0$.
  \end{itemize}
  Here $N_{L/L^\sharp}(x)=x\tau(x)$, $\Tr_{L/L^\sharp}(x)=x+\tau(x)$, and there is an evident notion of equivalence between these triples. The condition is that the $F$-vector space $L$ together with the bilinear form $(a,b) \mapsto \Tr_{L/F}(a\tau(b)c)$ is isomorphic to $(W, \angles{\cdot|\cdot})$ as symplectic $F$-vector spaces. The torus $T$ so obtained is isomorphic to $\Ker N_{L/L^\sharp}$. Call $(L,\tau,c)$ or $(L, \tau)$ split if $L \simeq (F \times F)^n$, where $\tau: (x,y) \mapsto (y,x)$ on each factor $F \times F$; split triples correspond to split maximal $F$-tori.
  
  The same parametrization works for any field of characteristic $\neq 2$; the base change relative to any field extension $E/F$ is straightforward --- simply apply $- \otimes_F E$ to étale $F$-algebras with involution. We proceed to globalize $(L,\tau,c)$.
  
  The data $(L,\tau,c)$ parametrizing $T$ can be described via Galois descent, say by comparison with the split one. We shall treat the pair $(L,\tau)$ first. Let $E/F$ be a Galois extension splitting $(L,\tau)$. Take a number field $\mathring{E}'$ with a place $w'$ such that there is an isomorphism $\mathring{E}'_{w'} \rightiso E$, which we fix. Therefore $\Gal{E/F}$ acts on $\mathring{E}'$; denote its fixed field by $\mathring{F}'$. Then $\Gal{E/F} = \Gal{\mathring{E}'/\mathring{F}'}$.
  
  Let $u'$ be the place of $\mathring{F}'$ such that $w'|u'$, then $\mathring{E}'_{w'} \rightiso E$ restricts to $\mathring{F}'_{u'} \rightiso F$. In fact, $w'$ is the unique place of $\mathring{F}'$ above $u'$. Fix an algebraic closure of $\mathring{E}'$ and pick a Galois extension $\mathring{F}/\mathring{F}'$ therein, of degree $> r$ over which $u'$ splits completely. Therefore we obtain distinct places $u_0, \ldots, u_r$ with $\mathring{F}_{u_i} \rightiso F$ for all $i$. Consider
  $$ \begin{tikzcd}[row sep=tiny]
    {} & & \mathring{E} := \mathring{E}' \mathring{F} \arrow[-]{dd}  \arrow[-]{ld} & \\
    w' & \mathring{E}' \arrow[-]{dd} & & \\
    & & \mathring{F} \arrow[-]{ld} & u_i \\
    u' & \mathring{F}' & &
  \end{tikzcd} $$
  For each $i$, take place $w_i$ in $\mathring{E}$ such that $w_i|u_i$. By the foregoing discussion, it satisfies the identification
  \begin{gather}\label{eqn:Gal-identification}
    \Gal{\mathring{E}_{w_i}/\mathring{F}_{u_i}} = \Gal{\mathring{E}'_{w'}/\mathring{F}_{u'}} = \Gal{E/F},
  \end{gather}
  and $\mathring{E}_{u_i} \simeq \mathring{E}_{w_i}$ is a field.
  
  Plugging these into the machine of Galois descent, we produce a global pair $(\mathring{L}, \mathring{\tau})$ over $\mathring{F}$ that splits over $\mathring{E}$ and localizes to $(L, \tau)$ at each $u_i$. Let us globalize the remaining piece $c$. We claim that the simultaneous localization map
  $$ \left\{ \mathring{c} \in \mathring{L}^\times : \Tr(\mathring{c})= 0 \right\} \big/ N(\mathring{L}^\times) \longrightarrow \prod_{i=0}^r \left\{ c \in L^\times = \mathring{L}_{u_i}^\times : \Tr(c)=0 \right\} \big/ N(L^\times), $$
  is surjective, where $N$ (resp. $\Tr$) denotes the relevant norm (resp. trace) map. Since $N(L^\times)$ is open in $L^{\sharp\times}$, it suffices to show the density of $\{\mathring{c} : \mathring{\tau}(\mathring{c})=-\mathring{c}\}$ in $\prod_{i=1}^r \{c : \tau(c)=-c\}$. Indeed, weak approximation holds for the rational variety defined by $\Tr=0$ for any finite set of places $S$; here we take $S=\{u_0, \ldots, u_r\}$. 
  
  Therefore $c$ can also be globalized to $\mathring{c}$. Consequently we get $(\mathring{W}, \angles{\cdot|\cdot}_{\mathring{F}})$ which localizes to $(W, \angles{\cdot|\cdot})$ at each $u_i$, together with the maximal torus $\mathring{T}$ of $\mathring{G}$ parametrized by $(\mathring{L}, \mathring{\tau}, \mathring{c})$ that localizes to $T$ modulo conjugacy. Moreover, $\mathring{T}$ splits over $\mathring{E}$.
  
  The proof of $H^1(\mathring{F}_{u_i}, \mathring{T}_{u_i}) \rightiso H^1(\A/\mathring{F}, \mathring{T})$ is based on \eqref{eqn:Gal-identification}. One can either
  \begin{inparaenum}[(a)]
    \item invoke Tate-Nakayama duality as in \cite[p.528]{Ar88-TF2} to describe these $H^1$, or
    \item use the explicit description of these groups in \cite[\S 3.1.3]{Li15} together with the Galois descent construction.
  \end{inparaenum}

  Next, let us globalize $\psi: F \to \Sph^1$. Fix $\mathring{\psi}_0: \A/\mathring{F} \to \Sph^1$. For each $i$, there exists $b_i \in F^\times$ such that $\forall t, \; \psi(t) = \mathring{\psi}_{0,u_i}(b_i t)$. Since $F^{\times 2} \subset F^\times$ is open, weak approximation for the rational $\mathring{F}$-variety $\Ga$ (take $S=\{u_0, \ldots, u_r \}$) yields $\mathring{b} \in \mathring{F}^\times$ and $a_0, \ldots, a_r \in F^\times$ such that
  $$ \mathring{b}/b_i = a_i^2, \quad i=0, \ldots, r. $$
  The additive character $t \mapsto \mathring{\psi}(t) := \mathring{\psi}_0(\mathring{b}t)$ satisfies our requirements. From the description of metaplectic coverings in terms of Maslov cocycle \cite[\S 2.4]{Li11}, the covering $\tilde{G} \to G(F)$ is determined by $\psi \circ \left( \angles{\cdot|\cdot} \mod F^{\times 2} \right)$; indeed, this just reflects the properties of \emph{Weil index} $\gamma_\psi(\cdots)$. Hence the requirements on the localization of coverings are also satisfied. The coincidence of transfer factors follows by the same reason: see the formulas in \cite[\S 4 and \S 5.3]{Li11}.

  It remains to show that the localization map $\mathring{T}(\mathring{F}) \to T(F)$, say at the place $u_0$, has dense image. To this end, we apply the weak approximation in \cite[Lemma 1(b)]{KR00} to $\mathring{T}$ with $K=\mathring{E}$, $w = u_1$ and $S = \{u_0\}$, noting that $\mathring{E}_{u_i}$ is a field.
\end{proof}

Fix the local data $(T, \cdots)$ and their globalization obtained thus far, with $r=2$. Set $u := u_0$. The next step is to apply simple trace formula with
\begin{compactitem}
  \item a large finite set $V \supset V_\text{ram} \sqcup \{u_1, u_2, u\}$ of places,
  \item the distinguished place $u$,
  \item a suitable test function $f_V = f^u_V h$
\end{compactitem}
under the formalism of \S\ref{sec:proof-preparation}; here $h \in C^\infty_{c,\asp}(\tilde{G})$.

In what follows, we fix $\mathring{\gamma} \in \mathring{T}_\text{reg}(\mathring{F})$ and denote by $\gamma \in T_\text{reg}(F)$ its localization at $u$. Recall that the data $(\mathring{W}, \cdots)$ carry an $\mathfrak{o}_V$-model for large enough $V$.

\begin{proposition}\label{prop:non-vanishing}
  Given $h \in C^\infty_{c,\asp}(\tilde{G})$. We may choose a sufficiently large finite set of places $V \supset V_\mathrm{ram} \sqcup \{u_1, u_2, u\}$ and choose $f^u_V = \prod_{\substack{v \in V \\ v \neq u}} f_v$, satisfying
  \begin{compactenum}[(i)]
    \item $\mathring{\gamma}$ has regular reduction outside $V$ relative to the $\mathfrak{o}_V$-model which is a part of our adélic covering.;
    \item for every $v \in V$, $v \neq u$, $f_{v, \tildering{G}_v}$ is sufficiently close to an anti-genuine Dirac measure concentrated at the image of $\mathring{\gamma}$ in $\mathring{G}(\mathring{F}_v)$;
    \item for $v = u_1, u_2$, we assume in addition that $f_{v, \tilde{G}_v} \in \Iaspcusp(\tildering{G}_v)$
  \end{compactenum}
  such that
  \begin{gather*}
    f^u_V h \in \mathcal{H}_{\mathrm{simp,adm}}(\tilde{G}_V), \\
    I(f^u_V h) = I(f^u_V, \tilde{\gamma}) h_{\tilde{G}}(\tilde{\gamma}), \\
    I(f^u_V, \tilde{\gamma}) \neq 0.
  \end{gather*}
\end{proposition}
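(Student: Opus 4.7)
The plan is to apply the compressed geometric expansion of Lemma~\ref{prop:compressed-geom} to $f_V = f^u_V h$ and to engineer $f^u_V$ so that only the rational class of $\mathring{\gamma}$ survives.

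First I would enlarge $V$ to contain $V_{\text{ram}} \cup \{u, u_1, u_2\}$ together with at least one additional non-archimedean place $w$, large enough that the $\mathfrak{o}_V$-model is defined and $\mathring{\gamma}$ has strongly regular semisimple reduction at every $v \notin V$. At each $v \in V \smallsetminus \{u\}$ I choose a small $\mathring{G}(\mathring{F}_v)$-invariant open neighbourhood $\mathring{\Omega}_v$ of $\mathring{\gamma}_v$ inside the elliptic strongly regular locus (for $v = w$ this is built in), and take $f_v \in C^\infty_{c,\asp}(\tildering{G}_v)$ supported in $\rev^{-1}(\mathring{\Omega}_v)$ with nonzero orbital integral at the image of $\mathring{\gamma}_v$. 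At $v = u_1, u_2$ I additionally require $f_v$ to be cuspidal, which is possible because $\mathring{\gamma}_{u_i}$ is elliptic by Proposition~\ref{prop:globalization}(5). This forces $f^u_V h \in \mathcal{H}_{\text{simp}}(\tilde{G}_V)$, and by enlarging $V$ as in Remark~\ref{rem:harmless} one arranges admissibility without disturbing $\mathring{f} = f^u_V h \cdot f_{K^V}$.

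Next I would examine the geometric expansion
$$ I(f^u_V h) = \sum_{\gamma' \in \Gamma_{\text{ell,reg}}(G)} I(f^u_V,\tilde{\gamma}')\, h_{\tilde{G}}(\tilde{\gamma}'), \qquad I(f^u_V,\tilde{\gamma}') = 8\sum_{\tilde{\gamma}^u} a^{\tildering{G}}_{\text{ell}}(\tilde{\gamma}^u \tilde{\gamma}')\, (f^u_V)_{\tilde{G}^u_V}(\tilde{\gamma}^u). $$
The coefficient $a^{\tildering{G}}_{\text{ell}}$ vanishes unless $\tilde{\gamma}^u \tilde{\gamma}'$ is the canonical image in $\tildering{G}$ of some rational $\mathring{\gamma}^* \in \mathring{G}(\mathring{F})_{\text{ell,ss}}$. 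The support conditions pin $\mathring{\gamma}^*_v$ into $\mathring{\Omega}_v$ for every $v \in V \smallsetminus\{u\}$, and regular reduction at all $v \notin V$ bounds its Steinberg invariants. Shrinking $\mathring{\Omega}_v$ far enough — and using that rational conjugacy classes with fixed invariants in a given stable class are finite (and separated by local invariants at $u_0, u_1, u_2$ via the $H^1$ isomorphism of Proposition~\ref{prop:globalization}(2), together with density of $\mathring{T}(\mathring{F}) \to T(F)$ in item~(6)) — I isolate $\mathring{\gamma}^* = \mathring{\gamma}$ up to rational conjugacy, so that only $\gamma' = \gamma$ contributes.

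For the remaining single term, I use Theorem~\ref{prop:Mp-commute} and the canonical section $i : \mathring{G}(\mathring{F}) \hookrightarrow \tildering{G}$ provided by the Weil representation to choose local lifts $\tilde{\gamma}_v$ with $\prod_v \tilde{\gamma}_v = i(\mathring{\gamma})$; this determines $\tilde{\gamma} = \tilde{\gamma}_u$ up to $\bmu_8$, and we fix this choice. Then
$$ I(f^u_V,\tilde{\gamma}) \;=\; 8\, a^{\tildering{G}}_{\text{ell}}(i(\mathring{\gamma})) \prod_{v \in V,\, v \neq u} (f_v)_{\tilde{G}_v}(\tilde{\gamma}_v), $$
which is nonzero: $a^{\tildering{G}}_{\text{ell}}(i(\mathring{\gamma}))$ is (proportional to) the positive Tamagawa volume $\mes(\mathring{G}_{\mathring{\gamma}}(\mathring{F}) \backslash \mathring{G}_{\mathring{\gamma}}(\A))$, and each local orbital integral was arranged nonzero by construction. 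The main obstacle is the rigidity step in the second paragraph: cutting down the set of rational classes satisfying the adélic support constraints to exactly the class of $\mathring{\gamma}$. The separation of distinct rational classes inside a single stable class cannot be achieved by shrinking supports alone — it genuinely requires the $H^1$ injectivity at $u_0, u_1, u_2$ secured by the globalization, which is precisely the point of passing to three copies of the local place in Proposition~\ref{prop:globalization}.
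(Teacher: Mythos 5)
Your proposal correctly reproduces the opening moves of the paper's proof: enlarge $V$ to secure admissibility and item (i); shrink the supports of $f_v$ ($v \in V\smallsetminus\{u\}$) around $\mathring{\gamma}_v$ to constrain which rational classes can contribute; impose cuspidality at $u_1, u_2$. But there are two problems, one of attribution and one that is a genuine gap.

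On attribution: you invoke ``$H^1$ injectivity at $u_0, u_1, u_2$'' to separate rational classes. The paper uses the $H^1$ isomorphism only at the single place $u = u_0$ (this is Claim~2 of the paper's proof): once shrinking supports and Kottwitz's result (\cite[Proposition 5.6.2]{Li14a}) have forced any contributing $\mathring{\delta}$ to be conjugate to $\mathring{\gamma}$ in $\mathring{G}(\mathring{F}_v)$ at every $v \neq u$, the localization--globalization exact sequence together with $H^1(\mathring{F}_u, \mathring{T}_u) \hookrightarrow H^1(\A/\mathring{F}, \mathring{T})$ forces conjugacy at $u$ as well. The places $u_1, u_2$ enter because the simple trace formula requires cuspidal test functions at two non-archimedean places, not because of any $H^1$ separation there.

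The genuine gap: you claim to ``isolate $\mathring{\gamma}^* = \mathring{\gamma}$ up to rational conjugacy, so that only $\gamma' = \gamma$ contributes.'' This is not achievable. Claims 1 and 2 of the paper only force the contributing $\mathring{\delta}$ to be conjugate to $\mathring{\gamma}$ in $\mathring{G}(\mathring{F}_v)$ at \emph{every} place; they may still be several distinct $\mathring{F}$-rational classes, since $\Ker\bigl[H^1(\mathring{F},\mathring{T}) \to \bigoplus_v H^1(\mathring{F}_v,\mathring{T}_v)\bigr]$ need not vanish. With several rational classes contributing, one must worry that the procedure $\mathring{\delta} \leadsto \tilde{\delta}_V$ (which the paper explicitly flags as a correspondence, not a map) produces lifts $\tilde{\delta}_{V,1}, \tilde{\delta}_{V,2}$ that are conjugate only up to some $\noyau \in \bmu_8$; since the local orbital integrals are anti-genuine, the contributions could then partially cancel and the non-vanishing $I(f^u_V,\tilde{\gamma}) \neq 0$ would not follow. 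This is precisely the metaplectic subtlety the Introduction flags (``it is unclear if two conjugate elements in $\mathring{G}(\mathring{F})$ have conjugate local components''). The paper resolves it by introducing the product $\Delta_{(n,0)}(\tilde{x}) = \prod_v \Delta_{(n,0),v}(\tilde{x}_v)$ built from the Weil representation character $\Theta_\psi$, verifying it is well defined, conjugation-invariant, genuine in each variable, equal to $1$ on $\mathring{G}(\mathring{F})$ and equal to $1$ on $K_v$-elements with regular reduction, then comparing its values on $\mathring{\delta}_1, \mathring{\delta}_2$ to deduce $\noyau = 1$. Your proof omits this step entirely, so it does not establish $I(f^u_V, \tilde{\gamma}) \neq 0$.
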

\begin{proof}
  The admissibility of $f^u_V h$ depends only on its support. Thus upon enlarging $V$ (cf.\ Remark \ref{rem:harmless}), one may assume that $f_V = f^u_V h \in \mathcal{H}_{\mathrm{simp,adm}}(\tilde{G}_V)$ is chosen so that $\Supp(\mathring{f}) \ni \mathring{\gamma}$, and that (i) is satisfied. Lemma \ref{prop:compressed-geom} gives
  \begin{align}
    \label{eqn:I-geom-1} I(f_V) & = \sum_{\delta \in \Gamma_\mathrm{ell,reg}(G)} \overbrace{ I(f^u_V, \tilde{\delta}) h_{\tilde{G}}(\tilde{\delta}) }^{\text{depending only on } \delta} \\
    \label{eqn:I-geom-2} & = \sum_{\mathring{\delta} \in \mathring{G}(\mathring{F})_\mathrm{ell,ss}/\mathrm{conj}} a^{\tildering{G}}(\mathring{\delta}) I^{\tildering{G}}(\mathring{\delta}, \mathring{f}).
  \end{align}
  
  Recall the recipe in \cite[\S 5.2]{Li14b} for passing from \eqref{eqn:I-geom-2} to \eqref{eqn:I-geom-1}. Any regular semisimple class $\mathring{\delta}$ with $I^{\tildering{G}}(\mathring{\delta}, \mathring{f}) \neq 0$ must admit a representative (called an \emph{admissible representative} in \textit{loc.\ cit.}) with image in $\mathring{G}(\A)$ of the form $\delta_V \delta^V$, such that $\delta_v \in K_v$ has regular reduction for all $v \notin V$. Indeed, this is just a paraphrase of the notion of $V$-admissibility in \cite[Définition 5.6.1]{Li14a}. We extract the components in $V$ of $\mathring{\delta}$ in the following manner: using the splittings $\mathring{G}(\mathring{F}) \hookrightarrow \tildering{G}$ and $K^V \hookrightarrow \tildering{G}^V$, one can pick any appropriate representative of $\mathring{\delta}$ as above, and write
  \begin{gather*}
    \mathring{\delta} = \tilde{\delta}_V \delta^V \in \tildering{G}, \\
    \tilde{\delta}_V = \tilde{\delta}^u_V \tilde{\delta}_u \in \tilde{G}^V, \quad \delta^V \in K^V;
  \end{gather*}
  Denote this procedure as
  $$ \mathring{\delta} \leadsto \tilde{\delta}_V. $$
  Attention: this is not necessarily a map from global conjugacy classes into $\Gamma_\text{reg}(\tilde{G}_V)$, but only a correspondence.

  Let $(\tilde{\delta}_v)_{v \in V} \in \prod_{v \in V} \tilde{G}_v$ be such that $(\tilde{\delta}_v)_{v \in V} \mapsto \tilde{\delta}_V$. Our assumptions outside $V$ imply $I^{\tildering{G}}(\mathring{\delta}, \mathring{f}) = I^{\tilde{G}_V}(\tilde{\delta}_V, f_V) = \prod_{v \in V} f_{v,\tilde{G}_v}(\tilde{\delta}_v)$. Set $\tilde{\delta} := \tilde{\delta}_u$. By collecting the contributions from all such $\mathring{\delta}$ and averaging over $\bmu_8$, we get the summand in \eqref{eqn:I-geom-1} indexed by $\delta$.

  \textit{Claim 1}: we can choose $f^u_V$ in the foregoing construction so that every $\mathring{\delta}$ with $I^{\tildering{G}}(\mathring{\delta}, \mathring{f}) \neq 0$ must be conjugate to $\mathring{\gamma}$ in $\mathring{G}(\mathring{F}_v)$ at every place $v \neq u$. Indeed, the adjoint quotient of $\mathring{G}$ is an affine $\mathring{F}$-variety. Thus by taking $f_v$ sufficiently close to an anti-genuine Dirac measure concentrated at the image of $\gamma$ in $G(F_v)$, for each $v \in V \smallsetminus \{u\}$, the condition $I^{\tildering{G}}(\mathring{\delta}, \mathring{f}) \neq 0$ will force $\mathring{\delta}$ to be stably conjugate to $\mathring{\gamma}$ at every place. Since both classes intersect $K_v$ for $v \notin V$, this implies ordinary conjugacy outside $V$ by a result of Kottwitz \cite[Proposition 5.6.2]{Li14a}. As to the places $v \in V \smallsetminus \{u\}$, we take $f_v$ so close to an anti-genuine Dirac measure to force ordinary conjugacy.
  
  Property (ii) is thus inherent in our construction. Moreover, (iii) is also satisfied since $T$ is elliptic. Shrinking the support does not destroy admissibility, therefore our choice of $f^u_V$ is accomplished.

  \textit{Claim 2}: $\mathring{\delta}$ and $\mathring{\gamma}$ are also conjugate in $\mathring{G}(\mathring{F}_u)$. By \cite[\S 3.1.2]{Li15}, there is a exact sequence
  $$ \begin{tikzcd}[row sep=tiny]
    H^1(\mathring{F}, \mathring{T}) \arrow{r} & \bigoplus_v H^1(\mathring{F}_v, \mathring{T}_v) \arrow{r} & H^1(\A/\mathring{F}, \mathring{T}) \\
    & (\lambda_v)_v \arrow[mapsto]{r} & \sum_v \left( \text{image of } \lambda_v \right) 
  \end{tikzcd} $$
  in which the first term measures global conjugacy classes in a stable class intersecting $\mathring{T}$, and the second measures the local situation. Apply this to $\mathring{\gamma}$ and $\mathring{\delta}$ and notice that $H^1(\mathring{F}_u, \mathring{T}_u) \hookrightarrow H^1(\A/\mathring{F}, \mathring{T})$ to get Claim 2.
  Hence $I(f_V) = I(f^u_V, \tilde{\gamma}) h_{\tilde{G}}(\tilde{\gamma})$.
  
  It remains to prove $I(f^u_V, \tilde{\gamma}) \neq 0$. At this stage we may vary $h$ and it suffices to show $I(f_V) \neq 0$. The rational classes $\mathring{\delta}$ contributing to $I(f_V)$ become conjugate in $\mathring{G}(\A)$. If they are also conjugate in $\tildering{G}$, the non-vanishing will follow at once by taking $h_{\tilde{G}}(\tilde{\gamma}) \neq 0$, since $a^{\tildering{G}}(\mathring{\delta}) > 0$. To show this, suppose that $\mathring{\delta}_i$ ($i=1,2$) are elliptic, semisimple regular elements such that
  \begin{compactitem}
    \item $I^{\tildering{G}}(\mathring{\delta}_i, \mathring{f}) \neq 0$ for $i=1,2$,
    \item $\mathring{\delta}_i \leadsto \tilde{\delta}_{V,i}$ for $i=1,2$,
    \item $\tilde{\delta}_{V,1} = \noyau\tilde{\delta}_{V,2}$ as conjugacy classes in $\tilde{G}_V$, where $\noyau \in \bmu_8$.
  \end{compactitem}
  It remains to show $\noyau=1$. For each place $v$, set
  $$ \Delta_{(n,0),v} := \dfrac{\Theta^+_{\psi_v} - \Theta^-_{\psi_v}}{| \Theta^+_{\psi_v} - \Theta^-_{\psi_v} |} $$
  with the notations in \S\ref{sec:Weil-rep}. This is a locally constant function on $\tilde{G}_{v, \text{reg}}$; it is essentially the transfer factor for $(n,0) \in \EndoE_\text{ell}(\tildering{G}_v)$. Let $\tilde{x} \in \tildering{G}$ with an inverse image $(\tilde{x}_v)_v \in \Resprod_v \tilde{G}_v$. Suppose that $\rev(\tilde{x}) \in \mathring{G}(\A)$ is locally stably conjugate to an element of $\mathring{G}_\text{reg}(\mathring{F})$, we define
  $$ \Delta_{(n,0)}(\tilde{x}) := \prod_v \Delta_{(n,0),v}(\tilde{x}_v) $$
  with the following properties.
  \begin{compactenum}[(a)]
    \item The infinite product $\prod_v \Delta_{(n,0), v}(\tilde{x}_v)$ is well-defined: almost all terms are $1$. It depends only on the conjugacy class of $\tilde{x}$.
    \item If $\tilde{x} \in \mathring{G}(\mathring{F})$, then $\Delta_{(n,0)}(\tilde{x})=1$.
    \item If $v \notin V$ and $\delta_v \in K_v$ has regular reduction, then $\Delta_{(n,0),v}(\delta_v)=1$.
    \item $\Delta_{(n,0),v}(\noyau\tilde{x}_v) = \noyau\Delta_{(n,0),v}(\tilde{x}_v)$ for all $\noyau \in \bmu_8$ and $\tilde{x}_v \in \tilde{G}_v$.
  \end{compactenum}
  Since $(\Theta^+_{\psi,v} - \Theta^-_{\psi,v})(\tilde{x}) = \Theta_{\psi_v}(-\tilde{x})$ for all place $v$ and regular semisimple $\tilde{x} \in \tilde{G}_v$ (Definition \ref{def:-1}), these properties are consequences of \cite[Théorème 4.28, Proposition 4.21]{Li11} and the genuineness of the Weil representations, in that order.
  
  Apply this to $\mathring{\delta}_1$, $\mathring{\delta}_2$, we see
  $$ 1 = \Delta_{(n,0)}(\mathring{\delta}_i) = \left(\prod_{v \in V} \Delta_{(n,0),v}\right)(\tilde{\delta}_{V,i}), \quad i=1,2. $$
  A comparison using (d) gives $\noyau=1$, as asserted
\end{proof}

\begin{proof}[Proof of Theorem \ref{prop:character-relation}]
  Let $f \in C^\infty_{c,\asp}(\tilde{G})$. From $f$ we deduce the function $h_{\tilde{G}} \in \Iaspcusp(\tilde{G})$ by \eqref{eqn:h_G}. In view of the reduction steps in \S\ref{sec:proof-preparation}, it remains to show that $h_{\tilde{G}}=0$. This amounts to $h_{\tilde{G}}(\tilde{\gamma})=0$ for all $\gamma \in T_\text{reg}(F)$ and any $\tilde{\gamma} \in \rev^{-1}(\gamma)$, where $T$ is any given elliptic maximal $F$-torus of $G$. Globalize the data $(T, \tilde{G} \to G(F), \cdots)$ by Proposition \ref{prop:globalization} with $r=2$. Choose the $f^u_V$ from Proposition \ref{prop:non-vanishing} for any given $\mathring{\gamma} \in \mathring{T}_\text{reg}(\mathring{F})$. Choose $h \in C^\infty_{c,\asp}(\tilde{G})$ mapping to $h_{\tilde{G}}$, then Corollary \ref{prop:compressed-h} asserts
  $$ I(f^u_V h) = 0. $$
  Meanwhile, Proposition \ref{prop:non-vanishing} says
  $$ I(f^u_V h) = \underbrace{I(f^u_V, \tilde{\gamma})}_{\neq 0} h_{\tilde{G}}(\tilde{\gamma}), $$
  where $\tilde{\gamma}$ is any inverse image of the $u$-component of $\mathring{\gamma}$. Hence $h_{\tilde{G}}(\tilde{\gamma})=0$. Since $\mathring{T}(\mathring{F}) \to T(F)$ has dense image, we conclude that $h_{\tilde{G}}=0$ by the continuity of orbital integrals.
\end{proof}

\bibliographystyle{abbrv}
\bibliography{spectrans}

\begin{flushleft}
  Wen-Wei Li \\
  E-mail address: \href{mailto:wwli@math.ac.cn}{\texttt{wwli@math.ac.cn}}
  Academy of Mathematics and Systems Science, Chinese Academy of Sciences, \\
  55, Zhongguancun donglu, 100190 Beijing, People's Republic of China. \\ \vspace{0.5em}
  University of Chinese Academy of Sciences, \\
  19A, Yuquan lu, 100049 Beijing, People's Republic of China.
\end{flushleft}

\printindex

\end{document}